\documentclass[a4paper,twoside]{article}  

\usepackage{amssymb,amsmath,amsthm,dsfont,amsfonts,color,latexsym}   
\usepackage{hyperref}
\usepackage{CJK}
\usepackage{mathrsfs} 

\definecolor{blue}{rgb}{0.00,0.00,1.00}
\definecolor{red}{rgb}{1.00,0.00,0.00}
\definecolor{green}{rgb}{0,0.60,0}
\definecolor{purple}{RGB}{250,0,150}
\definecolor{orange}{RGB}{255,165,0}
\newcommand{\blue}{\color{blue}}
\newcommand{\red}{\color{red}}

\newcommand{\ub}{\bar{u}}
\newcommand{\thetab}{\bar{\theta}}

\newcommand{\Mt}{\frac{M-\mu}{\sqrt{\mu}}}

\renewcommand{\baselinestretch}{1.2}
\def\ba{\begin{array}{ccc}}
	\def\bal{\begin{array}{lll}}
		\def\ea{\end{array}}

	\def\({\left(}\def\){\right)}
	\def\[{\left[}\def\]{\right]}

	\def \B   {\mathbf{B}}
	\def \L   {\mathcal{L}}
	\def \F_i   {\mathbf{F}}

	\def\g    {\mathbf{g}_2}
	\def\h    {\mathbf{h}}
	\def\gt    {\mathbf{g}_1}

	\def\eps  {\delta}
	\def\intr {\int_{\mathbb{R}^3}}
	\def\intra {\int_{\mathbb{R}}}

	\def \p   {\partial}
	
	\def \pt   {\partial}

	\def \dt    {\partial_{t}}
	\def \dtau    {\partial_{\tau}}
	
	\def \dy    {\partial_x}

	\def \R   {\mathcal{R}}

	\def \dy    {\partial_y}
	\def \dx    {\partial_{x}}

	\def \dya   {\partial^{\alpha}}

    \def \A   {\mathbf{A}}
    \def \B   {\mathbf{B}}
	\def \F_i   {\mathbf{F}}

	\def\eps  {\epsilon}
	\def\intr {\int_{\mathbb{R}^3}}
	\def\intra {\int_{\mathbb{R}}}

	\def \p   {\partial}
	
	\def \pt   {\partial}

	\def \dt    {\partial_{t}}
	\def \dtau    {\partial_{\tau}}
	
	\def \dy    {\partial_x}

	\def \R   {\mathbb{R}}

	\def \dy    {\partial_y}
	\def \dx    {\partial_{x}}

	\def \dya   {\partial^{\alpha}}

	\newcommand{\norm}[1]{\left\lVert#1\right\rVert}

	\newcommand{\norms}[1]{\left\lVert#1\right\rVert_{\sigma}}

	\def\v{\langle v\rangle}
	\def\O{\mathbb{O}}
	\def \dya   {\p^\alpha}

	\def\Tdv   {\nabla_v}

	\def\bq{\begin{equation}}
		\def\eq{\end{equation}}
	\def\be{\begin{equation}}
		\def\ee{\end{equation}}
	\def\bma#1\ema{{\allowdisplaybreaks\begin{align}#1\end{align}}}
	\def\bmas#1\emas{{\allowdisplaybreaks\begin{align*}#1\end{align*}}}
	\def\bln#1\eln{{\allowdisplaybreaks\begin{aligned}#1\end{aligned}}}
	\def\nnm{\notag}
	\def\bgr#1\egr{\allowdisplaybreaks\begin{gather}#1\end{gather}}
	\def\bgrs#1\egrs{\allowdisplaybreaks\begin{gather*}#1\end{gather*}}
	\renewcommand{\theequation}{\arabic{section}.\arabic{equation}}

	\theoremstyle{plain}
	\newtheorem{lem}{\bf Lemma}[section]
	\newtheorem{thm}[lem]{\textbf{Theorem}}
	\newtheorem{prop}[lem]{\textbf{Proposition}}
	\newtheorem{rem}[lem]{\textbf{Remark}}

	\renewcommand{\theequation}{\arabic{section}.\arabic{equation}}

\begin{document}

		\title{Hydrodynamic limit of rarefaction wave for the Vlasov-Maxwell-Landau system with Coulomb potential}
		
		\author{ Guanghui Wang$^1$,\ Lingda Xu$^2$,\, Tong Yang$^3$,\, Mingying Zhong$^{ 4}$\\
\emph
 {\small\it $^1$School of  Mathematics,
    Guangxi University, Nanning, China}\\
      {\small\it E-mail:\ guanghui06@foxmail.com (G.-H. Wang)}\\
  {\small\it $^2$Department of Applied Mathematics, The Hong Kong Polytechnic University, Hong Kong, China}\\
      {\small\it E-mail:\ xuldmath@gmail.com (L.-D. Xu)}\\
      	{\small\it $^3$ Department of Applied Mathematics, The Hong Kong Polytechnic University, Hong Kong,  China}\\
      	{\small\it E-mail:\ t.yang@polyu.edu.hk (T. Yang)}\\
   {\small\it $^4$Center for Applied Mathematical of Guangxi (Guangxi University), Nanning,  China}\\
      {\small\it E-mail:\ zhongmingying@gxu.edu.cn (M.-Y. Zhong)}\\
    }
\date{ }
		
		\pagestyle{myheadings}
		\markboth{Rarefaction wave for the VML system}
		{G.-H. Wang, L.-D. Xu, T.Yang and M.-Y. Zhong}

		\maketitle
		
\begin{abstract}\noindent
In this paper, we investigate the hydrodynamic limit of  rarefaction wave for  the two-species Vlasov-Maxwell-Landau(VML) system with Coulomb potential. We prove that  for any given time interval, 
the solution of the Vlasov-Maxwell-Landau system with appropriate initial data converges to a rarefaction wave as the Knudsen number $\epsilon$ approaches zero.
The main difficulty in the analysis lies in the loss of dissipation in the interaction between the electromagnetic field and the microscopic component, and the weak dissipation induced by  the Lorentz force and the scaling with small parameter $\epsilon$. For this, we introduce a velocity weight function and a space-time scaling parameter 
 together  with suitable
$\epsilon$-dependent energy estimates.
			
			\medskip
			{\bf Key words}.  Vlasov-Maxwell-Landau system, rarefaction wave, hydrodynamic limits.
			
			\medskip
			{\bf 2020 Mathematics Subject Classification:} 76P05, 82C40, 82D05.
		\end{abstract}
		\tableofcontents
		
		\thispagestyle{empty}	
		
\section{Introduction}

\subsection{The model and previous results}
We study the following 1D two-species Vlasov-Maxwell-Landau (VML) system
\be
\left\{\bln     \label{VML1z}
&  \dt F_++v_1\dx F_++(E+v\times B)\cdot\Tdv F_+ =\epsilon^{-1}Q(F_+,F_+)+\epsilon^{-1}Q(F_+,F_-), \\
&  \dt F_-+v_1\dx F_--(E+v\times B)\cdot\Tdv F_- =\epsilon^{-1}Q(F_-,F_-)+\epsilon^{-1}Q(F_-,F_+), \\
& \dt E_1=-\intr (F_+-F_-)v_1dv,\quad \dx E_1=\intr (F_+-F_-)dv,\\
&\dt E_2=-\dx B_3-\intr (F_+-F_-)v_2dv,\\
& \dt E_3=\dx B_2-\intr (F_+-F_-)v_3dv,\\
& \dt B_2=\dx E_3,\quad  \dt B_3=-\dx E_2,\quad B_1\equiv0,
\eln\right.
\ee
where the Knudsen number $\epsilon>0$ is a small parameter proportional to
the mean free path, $F_{\pm}=F_{\pm}(t,x,v)$ are number density distribution functions of ions $(+)$ and electrons $(-)$ having position $x\in \mathbb{R}$ and velocity $v=(v_1,v_2,v_3)\in \mathbb{R}^3$ at time $t\in \mathbb{R}^+$, and $E(t,x)$, $B(t,x)$ denote the electro and magnetic fields, respectively.
		
In this paper, we consider the Landau collision operator
\be\label{landau}		Q(G_1,G_2)(v)
=\nabla_v\cdot\int_{\R^3}\varphi(v-u)\{G_1(u)\nabla_v G_2(v)-\nabla_{u}G_1(u)G_2(v)\}du.
\ee
The non-negative matrix $\varphi$ in the above integral  is given by
\be\label{varphi}
\varphi(v)=\left(\mathbb{I}-\frac{v\otimes v}{|v|^2}\right)|v|^{\gamma+2},\quad\gamma\in[-3,-2),
\ee
where $\mathbb{I}$ is the $3\times3$ identity matrix and $v\otimes v$ is the tensor product. Note that \eqref{landau} with $\gamma=-3$ corresponds to the (Fokker-Planck)-Landau collision operator for Coulomb potential. Through the paper, we  focus on the very soft potentials case $-3\leq\gamma<-2$.
		
\medskip

		The Vlasov-Maxwell-Landau (Boltzmann) system is a fundamental model in  plasma physics that can be used for describing the time evolution of  rarefied  charged entities like electrons and ions. The particles are subject to the effect self-induced Lorentz force governed by the Maxwell's equations. One can find its physical background and explanantion in  \cite{ChapmanCowling,Markowich}.
		In term of mathematical analysis, extenstive study has been done with fruitful results,
such as those in \cite{Duan2014,Guo4,Li1,Strain,Wang} and the references therein.
In more details, the global existence of strong solutions for the VMB sytems with initial data near a global Maxwellian is established, both in periodic domains (cf. \cite{Guo4}) and spatial 3D space (cf. \cite{Strain}) for hard sphere model, and  for the very soft potentials in \cite{DLYZ}.
 And the global existence of strong solutions near a Maxwellian  in the whole space was obtained for the two-species Vlasov-Maxwell-Landau system in \cite{Duan2014,Wang}.
The large time behavior near a global Maxwellian was studied in \cite{Duan5,Duan4}.
Moreover, the authors in \cite{Li1} analyzed spectrum structure and time-decay rates of solutions to the VMB systems,
and  the authors  in \cite{Li4} also studied the Green's function and  the pointwise estimates for global solutions to the two-species VMB system. Note that the authors in \cite{Liu1,Liu3} firstly studied the  Green's function  of Boltzmann equation.
We also refer to \cite{Li2,Li5} for the pointwise behavior of Green's function to the VPB system.
		
		
Important progress has also been made on  the fluid dynamic limits of the VMB system. In the incompressible regime, Jang \cite{Jang,Jiang} investigated a diffusive expansion of the VMB system within the framework of classical solutions, noting that the magnetic effect appears only as a higher-order term. Recently, \cite{YZDL,CYZ} studied the diffusion limit of classical solutions to the VMB system and obtained the optimal convergence rate by introducing a new decomposition of the solution to identify the essential components for generating the initial layer. Ars$\acute{e}$nio and Saint-Raymond \cite{AS} conducted a thorough analysis, justifying various limits to incompressible electromagneto-hydrodynamics for the VMB system under different scalings. 
In addition, \cite{Jang1} provides formal derivations of the fluid equations in the compressible regime.
 \cite{Duan7} verified the compressible Euler-Maxwell limit for  the one-species VMB system.
		
		It is well-known that a typical hyperbolic system like the Euler equation has rich wave phenomena. The main feature of the 1D Euler system is that no matter how smooth or small the initial data is,  singularity will form in  finite time.
		 Research on basic wave patterns, such as  shock waves, rarefaction waves and contact discontinuities  provides understanding on the solution behavior to these systems.
		
		In this paper, we will study hydrodynamic limit for rarefaction wave  of the VML system \eqref{VML1z}. Even though  the wave phenomena for the Boltzmann equation have been extensively studied, cf.  \cite{LYYZ,xinYY,Ukai3,Duan6,HY,LY,HWWY,WangY} and the references therein,
		to our knowledge, there is no corresponding result on the VML system because of the complexity of the system.

For the two-species VML system \eqref{VML1z}, by setting
$$
F_1=F_++F_-\text{\quad and\quad } F_2=F_+-F_-,
$$
one has
\be
\left\{\bln     \label{VML2}
&\dt F_1+v_1\dx F_1+(E+v\times B)\cdot\Tdv F_2=\epsilon^{-1}Q(F_1,F_1),\\
&\dt F_2+v_1\dx F_2+(E+v\times B)\cdot\Tdv F_1=\epsilon^{-1}Q(F_2,F_1),\\
& \dt E=\mathbb{O}\dx  B-\intr F_2v dv,\quad \dx E_1=\intr F_2dv,\\
& \dt B=- \mathbb{O}\dx E,\quad B_1\equiv0,
\eln\right.
\ee
where $\O$ is a $3\times 3$ matrix defined by
\be\label{matrix}
\O=\left(\ba
0 & 0 & 0\\
0 & 0 & -1 \\
0 & 1 & 0
\ea\right).
\ee
We assume that the initial data satisfy
\be\label{initial0}
\left\{\bln
&F_1(0,x,v)=F_{10}(x,v)\to M_{[\rho_\pm,u_\pm,\theta_\pm]}
:=\frac{\rho_\pm}{(2\pi R\theta_\pm)^{\frac32}}e^{-\frac{|v-u_\pm|^2}{2R\theta_\pm}}
 \quad as~~x\to\pm\infty,\\
&F_2(0,x,v)=F_{20}(x,v)\to 0\quad as~~x\to\pm\infty,\\
&(E,B)(0,x)=(E_0,B_0)(x)\to 0\quad as~~x\to\pm\infty,
\eln\right.
\ee
where $(\rho_\pm,u_\pm,\theta_\pm)$ are two constant states with $u_\pm=(u_{1\pm},0,0)$ and $\rho_\pm>0,~\theta_\pm>0$, and $R>0$ is the gas constant.

\if		
		Our aim of this paper is to study the hydrodynamic limits from the VML system to the compressible Euler system around rarefaction waves. To be accurate, for any given time, we construct a family of solutions to the two-species VML system, which converges to the local Maxwellian constructed by the rarefaction wave of the Euler equation as the Knudsen number tends to zero. The rough version of the main theorem is as follows. Due to the complexity of notations, the main theorem will be given in the section 3.
		
		\begin{thm}[Rough version] Letting $\bar{M}$ be the local Maxwellian defined by the rarefaction wave of the Euler system, $\mu$ be the global Maxwellian near $\bar{M}$, $(F_1^{\epsilon},F_2^{\epsilon},E^{\epsilon},B^{\epsilon})$ be the solution of \eqref{VML2} with suitable initial data, we have the following
\begin{align}
\norm{\intr \frac{\big\lvert F^{\epsilon}_1-\bar{M}\big\lvert^2}{\mu}dv}_{L^\infty_{\R}}
+\norm{\intr \frac{\big\lvert F^{\epsilon}_2\big\lvert^2}{\mu}dv}_{L^\infty_{\R}}
+\norm{E^{\epsilon},B^{\epsilon}}_{L^{\infty}_{\R}}\to 0 \text{\quad as\quad } \epsilon\to 0.
\end{align}
\end{thm}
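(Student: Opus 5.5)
The plan is the standard one for hydrodynamic limits near rarefaction waves, adapted to the two-species Landau setting with electromagnetic fields.

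\textbf{Step 1: the approximate wave.} Let $(\bar\rho,\bar u,\bar\theta)(t,x)$ be the smooth approximate rarefaction wave of the compressible Euler system connecting $(\rho_\pm,u_\pm,\theta_\pm)$. It is built from the Burgers equation $w_t+ww_x=0$ with smoothed initial data $w(0,x)=\frac{w_++w_-}{2}+\frac{w_+-w_-}{2}\tanh(x/\epsilon^{a})$ for a suitable exponent $a$. Set $\bar M=M_{[\bar\rho,\bar u,\bar\theta]}$ and choose a global Maxwellian $\mu$ close to $\bar M$, with the wave strength $\delta=|(\rho_+-\rho_-,u_+-u_-,\theta_+-\theta_-)|$ small. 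One has $\|\partial_x(\bar\rho,\bar u,\bar\theta)\|_{L^p}\lesssim \delta\,\epsilon^{-a(1-1/p)}$ and $\|\partial_x(\bar\rho,\bar u,\bar\theta)\|_{L^\infty}\lesssim \min(\delta\epsilon^{-a},1/t)$. We then pass to the scaled variables $y=x/\epsilon$, $\tau=t/\epsilon$. In these variables the collision frequency is of order one and the wave derivatives become $\epsilon\,\partial_x(\bar\rho,\bar u,\bar\theta)$, so the wave is a weak rarefaction on the long time interval $[0,T/\epsilon]$.

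\textbf{Step 2: the perturbation equations.} Write
$$F_1=M+\bar G+\sqrt\mu\, f,\qquad F_2=\sqrt\mu\, g,$$
where $M$ is the local Maxwellian of $F_1$ and $\bar G=\epsilon L_M^{-1}P_1(v_1 \partial_x M)$ is the Chapman--Enskog correction. The fluid part $(\rho-\bar\rho,u-\bar u,\theta-\bar\theta)$ satisfies a viscous conservation law with source terms from the wave. The microscopic parts $f,g$ satisfy linearized Landau equations coupled to the Maxwell system for $(E,B)$.

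\textbf{Step 3: a priori estimates on $[0,T]$.} Three estimates are combined.
\begin{itemize}
\item The zeroth-order fluid estimate uses the entropy $\eta=\bar\theta\,\Phi(\rho/\bar\rho)+\dots$. It yields the good term $\int \bar u_{1x}|\psi_1|^2$ from the compressive structure, which has a favourable sign because $\bar u_{1x}\ge0$.
\item The higher derivatives are controlled by the $\epsilon$-weighted norms $\epsilon^{k}\|\partial_x^k(\cdot)\|$. Here $g$ is dissipated by $L$ together with a macroscopic estimate for the charge density $\int g\,dv$. That density couples to $E_1$ through $\partial_xE_1=\int F_2\,dv$, which gives dissipation of $E_1$ and of $\partial_x(E,B)$ via the Maxwell structure.
\item Because $\gamma<-2$, the Landau dissipation $\|\cdot\|_\sigma$ is too weak to absorb the terms $v\,f$ coming from the Lorentz force, and the field itself does not decay. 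Following the abstract, I would use a time-velocity weight $w=\langle v\rangle^{\ell}e^{q\langle v\rangle^2/(1+\tau)^{\vartheta}}$. Its time derivative produces the extra dissipation $(1+\tau)^{-1-\vartheta}\|\langle v\rangle w g\|^2$, which controls $E\cdot v\, w^2 g$. In addition, $E,B$ are estimated in a norm with a slowly growing time factor and closed with a smallness assumption of the form $\epsilon^{\beta}$.
\end{itemize}

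\textbf{Step 4: closing and convergence.} Closing the estimates yields
$$\sup_t\Bigl(\|(f,g)\|^2+\|(E,B)\|^2\Bigr)\lesssim \epsilon^{\kappa}$$
for some $\kappa>0$ depending on $a$. Sobolev embedding in one dimension then turns this into the $L^\infty_x$ convergence claimed in the statement. The contribution $\|M-\bar M\|$ is handled by the fluid estimate, and $\bar G=O(\epsilon^{1-a})$.

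\textbf{Main obstacle.} I expect the hardest part to be closing the weighted high-order energy estimate for $g$ against the nonlinear Lorentz terms $(E+v\times B)\cdot\nabla_v F_1$. These terms lose both velocity moments and derivatives. Their control has to be matched against the polynomial growth in $\tau\sim\epsilon^{-1}$, so the exponents $a$, $\vartheta$ and $\beta$ must be tuned together. My first attempt would be to choose $\vartheta$ small and put all field terms in the time-integrated dissipation.
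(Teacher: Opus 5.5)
Your architecture matches the paper's: a relative-entropy estimate, a Burnett-type correction subtracted from $G_1$, and a time-decaying exponential velocity weight whose $\tau$-derivative supplies the extra dissipation that absorbs the $\epsilon^a E\cdot v$ terms. Two steps would not close as written, however. I use the paper's notation: $\sigma$ is the wave strength, $\delta$ the smoothing width, and $(\tau,y)=(t,x)/\epsilon^a$, so your $y=x/\epsilon$ is the case $a=1$.

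\textbf{First gap: the decomposition of $F_2$.} Writing $F_2=\sqrt{\mu}\,g$ linearizes the $F_2$-collision around the global $\mu$. But $Q(F_2,F_1)$ with $F_1\approx M$ relaxes $F_2$ towards $\frac{n}{\rho}M$, and $M$ differs from $\mu$ by an amount $\eta_0$. This amount is at least of the order of the wave strength and does not vanish with $\epsilon$.

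Write $g=n\sqrt{\mu}+g^{\perp}$. Pairing the collision term $\epsilon^{a-1}\Gamma(g,\frac{M-\mu}{\sqrt{\mu}})$ with $g$ produces the cross term $\epsilon^{a-1}n\langle \mu^{-1/2}Q(\mu,M),g^{\perp}\rangle$, of size $\epsilon^{a-1}\eta_0|n|\,|g^{\perp}|_\sigma$. After Young's inequality you are left with $C\epsilon^{a-1}\eta_0^2\|n\|^2_{L^2_y}$. The charge density, however, is only dissipated by $\epsilon^{1+a}\|n\|^2_{L^2_y}+\epsilon^{1-a}\|\partial_y n\|^2_{L^2_y}$, so Gronwall over $\tau\le T/\epsilon^a$ costs a factor $e^{C\eta_0^2T/\epsilon}$.

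Equivalently, your $g^\perp$ contains the quasi-equilibrium piece $\frac{n}{\rho}\frac{M}{\sqrt{\mu}}-n\sqrt{\mu}=O(\eta_0 n)$, which the dynamics never damps. So the dissipation $\epsilon^{a-1}\|g^\perp\|_\sigma^2$ that your scheme needs cannot hold. The paper avoids this by projecting around the local Maxwellian, $F_2=\frac{n}{\rho}M+G_2$ with $\int G_2\,dv=0$ (see \eqref{Pd} and \eqref{F2}). Then $\mathbf{g}_2=G_2/\sqrt{\mu}$ is orthogonal to $\ker\mathcal{L}$, the term $\Gamma(\mathbf{g}_2,\frac{M-\mu}{\sqrt{\mu}})$ is absorbable, and $n$ obeys a macroscopic equation with diffusion and Ohm's-law terms.

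\textbf{Second gap: the field estimate.} The field estimate fails for a linear reason your plan does not identify: the expanding wave pumps energy into $(E,B)$. Testing the $E_1$-equation with $E_1$ and using $\partial_yE_1=\epsilon^a n$ gives $-\epsilon^a\int\bar u_1 nE_1\,dy=\frac12\int\partial_y\bar u_1\,E_1^2\,dy$ on the right-hand side. This has the bad sign because $\partial_y\bar u_1>0$: $E_1$ obeys a transport rather than a conservation equation, so expansion makes it grow. The term is present at the linear level for any choice of energy and has no small prefactor.

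The paper's boosted energy $E^2+2E\cdot\bar u\times B+B^2$, used so that Ohm's law appears as $\epsilon^{1+a}|E+u\times B|^2$, produces similar transverse terms $\int\partial_\tau\bar u_1B_3(E_2-u_1B_3)\,dy$ in \eqref{EB0-2}. At second order, $I^{32}_f$ in \eqref{If-3} is linear in $\partial^2(E,B)$ with only the factor $\eta_0$.

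None of these can be absorbed. The only zeroth-order field dissipation, $\epsilon^{1+a}\|E+u\times B\|^2_{L^2_y}$, is far too weak. A slowly growing weight such as $(1+\tau)^{-\gamma}$ with small $\gamma$ adds only $\gamma(1+\tau)^{-1}\|(E,B)\|^2_{L^2_y}$. That cannot absorb $C\|\partial_y\bar u_1\|_{L^\infty_y}\|(E,B)\|^2_{L^2_y}$, which behaves like $C\tau^{-1}\|(E,B)\|^2_{L^2_y}$ once $\epsilon^a\tau\gg\delta/\sigma$.

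The missing idea has three parts:
\begin{itemize}
\item Keep these terms as $C\frac{\epsilon^a}{\delta/\sigma+\epsilon^a\tau}\mathcal{E}(\tau)$ using Lemma \ref{rarefaction4}; for $I^{32}_f$ this requires first using $\epsilon(1+T)\le1$ and $\delta/\sigma\le1$.
\item Close by Gronwall, using that $\int_0^{T/\epsilon^a}\frac{\epsilon^a}{\delta/\sigma+\epsilon^a\tau}\,d\tau=\ln\frac{\delta/\sigma+T}{\delta/\sigma}$ is only logarithmic.
\item Budget for the resulting growth factor $\frac{\sigma}{\delta}(\frac{\delta}{\sigma}+T)$.
\end{itemize}
This factor is why the result holds only on $[0,T]$ under \eqref{condition}, why the bound is $(1+T)^2(\epsilon^{2-a}+\epsilon^{1+a})\delta^{-3}$, and why $\delta$ must finally be tuned as a power of $\epsilon$ in Theorem \ref{mt2}. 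Your closing claim ``$\lesssim\epsilon^\kappa$'' has to account for it. With your smoothing width being a power of $\epsilon$, the factor is itself a negative power of $\epsilon$.
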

		\begin{rem}
			The convergence rate and comments will be listed in Theorem \ref{mt}.
		\end{rem}
	
		The main difficulty is due to the strong interaction of electromagnetic fields $(E,B)$ and $F_2$. New ideas should be applied, we introduce our strategy here.

		\begin{itemize}
			
			\item Introduction of correction term $\bar{G}_1$ for the microscopic equation.
			
			Since the profile we studied is a local Maxwellian, after applying the macro-microscopic decomposition, one finds there are terms with slow convergence rates. We borrow the idea of \cite{LYYZ}, which studied the long-time behavior of rarefaction wave for Boltzmann equation,  to construct a correction term $\bar{G}_1$. Then the errors of the microscopic equation have been improved, see Lemma \ref{Gb}.
			
			\item A carefully refined energy estimate.
			Careful calculations yield that in the lower-order estimates, the higher-order terms will involve small coefficients, for example
			$$\epsilon^{1-a}\int_{\R}\int_{\R^3}\frac{|\partial^{\alpha+1}_y{G}_2|^2}{\mu}dvdy,$$
			where $a$ is the scaling parameter $1/2<a<1$ and $y=x/\epsilon^a$. This observation is the key to designing an $\epsilon$-dependence energy form.
			
			\item An $\epsilon$-dependence energy form.
			
			The Sobolev embedding theorem and the observation in the last item indicate that one actually doesn't need to have a uniform upper bound in the highest-order estimate. An estimate controlling the increasing rates of the highest-order derivatives is enough to obtain the main theorem. That is the reason why we use an $\epsilon$-dependence energy form.
		\end{itemize}		
\fi

\subsection{Macro-micro equations and dissipation}
We now  briefly describe the micro-macro decomposition
around the local Maxwellian defined by the solution of the VML system \eqref{VML2}. For the solution $F_1(t,x,v)$ of the VML system $\eqref{VML2}_1$, as in \cite{Liu2}, we decompose it into
\be\label{mac-mic}
F_1(t,x,v)=M(t,x,v)+G_1(t,x,v),
\ee
where the local Maxwellian $M$ and microscopic component $G_1$ represent the fluid and non-fluid(kinetic) parts in the solution respectively.
Here, the local Maxwellian $M$ associated with the solution $F_1(t,x,v)$ to the VML system \eqref{VML2} is defined by the  conserved quantities, the mass density $\rho(t,x)$, momentums $\rho u_{i}(t,x)~(i=1,2,3)$,
and energy density $\rho(e+\frac{|u|^{2}}{2})(t,x)$,
\bq
 \left\{\bln \label{conserve}
 &\rho(t,x)=\int_{\R^{3}}\varphi_0(v)F_1(t,x,v)dv,\\
 &\rho u_{i}(t,x)=\int_{\R^{3}}\varphi_i(v)F_1(t,x,v)dv,\quad i=1,2,3,\\
 &\rho\(e+\frac{|u|^{2}}{2}\)(t,x)=\int_{\R^{3}}\varphi_4(v)F_1(t,x,v)dv,
 \eln\right.
 \eq
where the five collision invariants $\varphi_i(v)~(i=0,1,2,3,4)$ are given by
 $$
 \varphi_0(v)=1,\quad \varphi_i(v)=v_{i}~(i=1,2,3),\quad \varphi_4(v)=\frac{1}{2}|v|^{2}.
 $$
Then, the local Maxwellian $M$ can be defined by
\be\label{M}
M\equiv M_{[\rho,u,\theta] }(v)=\frac{\rho(t,x)}{(2\pi R\theta(t,x))^{\frac{3}{2}}}e^{-\frac{|v-u(t,x)|^{2}}{2R\theta(t,x)}},
\ee
where $\rho$ is the mass density, $u=(u_{1},u_{2},u_{3})$ is the macroscopic velocity,   and $\theta$ is the temperature which is related to the internal energy with
$$
e=\frac{3}{2}R\theta=\theta,\quad R=\frac23.
$$

For any given Maxwellian $\tilde{M}$ and any functions $f(v),g(v)\in L^2(\R^3_v)$, we define the weighted inner product space in $L^2(\R^3)$ space with
\be
\langle f,g\rangle_{\tilde{M}}=\intr \frac{1}{\tilde{M}}f(v)g(v)dv, \quad \|f\|^{2}_{L^{2}_{v}(\frac{1}{\sqrt{\tilde{M}}})}=\langle f,f\rangle_{\tilde{M}}.\nnm
\ee

With  the micro-macro decomposition in \eqref{mac-mic}, the VML system $\eqref{VML2}_1$ can be rewritten as
\be \label{F_1}
\dt(M+G_1)+v_1\dx(M+G_1)+(E+v\times B)\cdot\Tdv F_2=\epsilon^{-1}L_M G_1+\epsilon^{-1}Q(G_1,G_1),
\ee
where $L_M$ is the linearized collision operator given by
\be\label{collision1}
L_M G_1=Q(G_1,M)+Q(M,G_1).
\ee
Moreover, the null space $N_0$ of the operator $L_M$ is spanned by the following five pairwise orthogonal basis:
\bq
\bln \label{basis}
&\chi_0=\frac{1}{\sqrt{\rho}}M,\quad \chi_j=\frac{v_{i}-u_{i}}{\sqrt{R\rho\theta}}M,\quad  i=1,2,3 ,\\
&\chi_4=\frac{1}{\sqrt{6\rho}}\(\frac{|v-u|^2}{R\theta}-3\)M,\quad\langle\chi_i,\chi_j \rangle_M=\delta_{ij}, \quad i,j=0,1,2,3,4.
 \eln
 \eq
With  these orthonormal functions, we define the macroscopic projection $P_0$ and microscopic projection
$P_1$ as follows:
\be
P_0F_1=\sum_{i=0}^4\langle F_1,\chi_i \rangle_M\chi_i,\quad  P_1=I- P_0. \label{P10}
\ee
Thus,
 \be\label{M1} M=P_{0}F_1,\quad G_1=P_{1}F_1.
\ee	
Next, for $F_2$, there is only one conserved quantity $n = n(t, x)$ defined by
\be
n(t, x)=\intr F_2(t, x,v)dv.
\ee
As in \cite{WangY}, we introduce another macro-micro projection around the local Maxwellian $M$ such that
\be\label{Pd}
 P_dF_2=\langle F_2,\chi_0\rangle_M\chi_0 =\frac{n}{\rho}M,\quad P_rF_2=F_2-P_dF_2=G_2.
\ee

Then, we write $F_2(t, x, v)$  as
\be\label{F2}
F_2=\frac{n}{\rho}M+G_2.
\ee
Hence,
we can rewrite $\eqref{VML2}_2$ as
\bma\label{F_2}	
&\quad\dt(\frac{n}{\rho}M+G_2)+v_1\dx (\frac{n}{\rho}M+G_2)+ (E+v\times B)\cdot\Tdv (M+G_1) \nonumber\\
&= \epsilon^{-1}\L_M G_2+\epsilon^{-1}\frac{n}{\rho}Q(M,G_1)+\epsilon^{-1}Q(G_2,G_1),	
\ema
where $\L_M$ is the linearized collision operator given by
\be\label{collision2}
 \L_M G_2=Q(G_2,M).
\ee

In the following, we will write down the macroscopic equations. By using
\bma
&\intr \varphi_j(\dt F_1+v_1\dx F_1+(E+v\times B)\cdot\Tdv F_2)dv=0,\,\,\, j=0,1,2,3,4,\nnm\\
&\intr \varphi_0(\dt F_2+v_1\dx F_2+(E+v\times B)\cdot\Tdv F_1)dv=0,\nnm
\ema
we have the following system for $(\rho,u,\theta,n)$:
\be\label{macro-eq-nond}
\left\{\bln
&\dt \rho+\dx(\rho u_1)=0,\\
&\dt (\rho u_1)+\dx (\rho u_1^2)+\dx p-n(E_1+(u\times B)_1)=-\intr v_1^2\dx G_1dv+\intr (v\times B)_1 G_2dv,\\
&\dt (\rho u_i)+\dx (\rho u_1u_i)-n(E_i+(u\times B)_i)=-\intr v_1v_i\dx G_1dv+\intr (v\times B)_i G_2dv, ~ i=2,3,\\
&\dt \[\rho\(\theta+\frac12 |u|^2\)\]+\dx \[u_1\(\rho\(\theta+\frac12 |u|^2\)+p\)\]-nu\cdot E =-\frac12\intr v_1|v|^2\dx G_1dv+\intr v\cdot E G_2dv,\\
&\dt n+\dx(n u_1)=-\intr v_1\dx G_2dv,
\eln\right.
\ee
where $p=R\rho\theta$ with $R=\frac23$.
Moreover, the governing equations of the electromagnetic field satisfy
\be\label{EB}
\left\{\bln
& \dt E_1= -nu_1-\intr v_1G_2dv,\quad \dx E_1=n,\\
&\dt E_2=-\dx B_3-nu_2-\intr v_2G_2dv, \\
&\dt E_3=\dx B_2-nu_3-\intr v_3G_2dv,\\
& \dt B_2=\dx E_3,\quad  \dt B_3=-\dx E_2,\quad B_1\equiv0 .
\eln\right.
\ee		

Applying $P_1$ to \eqref{VML2}$_{1}$, we have
\bma\label{G0-1}
&\dt G_1+P_1(v_1\dx M)+P_1(v_1\dx G_1)+P_1[(E+v\times B)\cdot\Tdv G_2]
= \epsilon^{-1}L_MG_1+\epsilon^{-1}Q(G_1,G_1).
\ema
Applying $P_r$ to \eqref{VML2}$_{2}$, we have
\bma\label{G0-2}
&\dt G_2+P_r(v_1\dx G_2 )+\frac{n}{\rho}P_r(\dt M+v_1\dx  M)+ (E+v\times B)\cdot\Tdv G_1 \nnm\\
&+\(\dx(\frac{n}{\rho}) P_r(v_1M)-\frac{E+u\times B}{R\theta}\cdot P_r(vM)\)
=\epsilon^{-1}\L_MG_2 +\epsilon^{-1}Q(F_2,G_1),
\ema
where we have used $P_d[(E+v\times B)\cdot\Tdv G_1]=0.$
Thus,
\bmas
G_1&= \epsilon L_M^{-1} P_1(v_1\dx M)+L_M^{-1}\Lambda_1,\\
G_2&= \epsilon\dx(\frac{n}{\rho})\L_M^{-1}P_r(v_1M)-\epsilon\frac{E+u\times B}{R\theta}\cdot \L_M^{-1} P_r(vM) +\L_M^{-1}\Lambda_2,
\emas
where
\bma
\Lambda_1&= [\epsilon\dt G_1+\epsilon P_1(v_1\dx G_1)-Q(G_1,G_1)]+\epsilon P_1[(E+v\times B)\cdot\Tdv G_2] ,\nnm\\
\Lambda_2&= [\epsilon\dt G_2+\epsilon P_1(v_1\dx G_2)-Q(F_2,G_1)]-\frac{n}{\rho}\epsilon P_r(\dt M+v_1\dx M)
 +\epsilon (E+v\times B)\cdot\Tdv G_1.\nnm
\ema
		
Next, we will study the dissipation structure.
Note that (cf. \cite{WangY})
$$\left\{\begin{aligned}		
&- \intr v_1v_iL_M^{-1} P_1(v_1\dx M)dv
	=\kappa_1(\theta)\dx u_i+\frac13\delta_{1i} \kappa_1(\theta)\dx u_1,\\
&- \intr \frac12v_1|v|^2L_M^{-1} P_1(v_1\dx M)dv
    =\frac13 \kappa_1(\theta)u_1\dx u_1+\kappa_1(\theta) u\cdot\dx u+ \kappa_2(\theta)\dx \theta,\\
&- \intr v_i\L_M^{-1} P_r(v_j M)dv=\delta_{ij}\sigma(\theta),
\end{aligned}\right.
$$
where the coefficients $\kappa_1(\theta)$, $\kappa_2(\theta)>0$ and $\sigma(\theta)>0$ are smooth functions depending  only on the temperature $\theta$. The explicit formula of $\kappa_1(\theta)$ and $\kappa_2(\theta)$ are defined in \eqref{viscosity}-\eqref{heat}. 

Then we have the following system with viscosity compared to \eqref{macro-eq-nond},
\be\label{ori-sys}
\left\{\bln
&\dt \rho+\dx(\rho u_1)=0,\\
&\dt (\rho u_1)+\dx (\rho u_1^2)+\dx p-n(E_1+(u\times B)_1)=\epsilon\frac43\dx(\kappa_1(\theta)\dx u_1)\\
		&\qquad  +\epsilon\sigma(\theta)\frac{[(E+u\times B)\times B]_1}{R\theta}-\intr v_1^2\dx L_M^{-1}\Lambda_1dv+\intr (v\times B)_1 \L_M^{-1}\Lambda_2dv,\\
&\dt (\rho u_i)+\dx (\rho u_1u_i)-n(E_i+(u\times B)_i)=\epsilon\dx(\kappa_1(\theta)\dx u_i)-\epsilon\sigma(\theta)\dx(\frac{n}{\rho})(e_1\times B)_i\\
		&\qquad  +\epsilon\sigma(\theta)\frac{[(E+u\times B)\times B]_i}{R\theta}-\intr v_1v_i\dx L_M^{-1}\Lambda_1dv+\intr (v\times B)_i \L_M^{-1}\Lambda_2dv, ~ i=2,3,\\
&\dt \[\rho\(\theta+\frac12 |u|^2\)\]+\dx \[u_1\(\rho\(\theta+\frac12 |u|^2\)+p\)\]-nu\cdot E=\epsilon\dx(\kappa_2(\theta)\dx \theta)\\
		&\qquad  +\epsilon\frac13\dx(\kappa_1(\theta)u_1\dx u_1)+ \epsilon\dx(\kappa_1(\theta)u\cdot\dx u)-\epsilon\sigma(\theta)\dx(\frac{n}{\rho})E_1\\
		&\qquad  +\epsilon\sigma(\theta)\frac{(E+u\times B)\cdot E}{R\theta}-\frac12\intr v_1|v|^2\dx L_M^{-1}\Lambda_1dv+\intr v\cdot E \L_M^{-1}\Lambda_2dv,\\
&\dt n+\dx(n u_1)-\epsilon\dx\(\sigma(\theta)\dx(\frac{n}{\rho})\)+\epsilon\dx\(\sigma(\theta)\frac{E_1+(u\times B)_1}{R\theta}\)=-\intr v_1\dx \L_M^{-1}\Lambda_2dv,
		\eln\right.
		\ee  where $e_1=(1,0,0)$.

Moreover, the governing equations of the electromagnetic field satisfy
\be\label{EB-inv}
\left\{\begin{aligned}
& \dt E_1= -nu_1+\epsilon\sigma(\theta)\dx(\frac{n}{\rho})-\epsilon\sigma(\theta)\frac{E_1+(u\times B)_1}{R\theta}-\intr v_1\L_M^{-1}\Lambda_2dv, \\
&\dt E_2=-\dx B_3-nu_2-\epsilon\sigma(\theta)\frac{E_2+(u\times B)_2}{R\theta}-\intr v_2\L_M^{-1}\Lambda_2dv, \\
&\dt E_3=\dx B_2-nu_3-\epsilon\sigma(\theta)\frac{E_3+(u\times B)_3}{R\theta}-\intr v_3\L_M^{-1}\Lambda_2dv.
\end{aligned}\right.
\ee

\subsection{Rarefaction wave and  approximation}
Now we turn to describe the rarefaction wave profile to the system $\eqref{VML2}_1$ as in \cite{LYYZ}.
Consider the compressible Euler system
\be
\label{Euler1}
\left\{\begin{aligned}
&\dt \rho+\dx(\rho u_1)=0,\\
&\rho(\dt  u_1+u_1\dx u_1)+\dx p=0,\\
&\frac32R\rho(\dt \theta+u_1\dx \theta)+ p\dx u_1 =0,
\end{aligned}
\right.
\ee
 with Riemann  initial data:
\be \label{Euler-in}
(\rho,u_1,\theta)(0,x)=
\left\{\begin{aligned}
(\rho_-,u_{1-},\theta_-), \quad x<0,\\
(\rho_+,u_{1+},\theta_+), \quad x>0.
\end{aligned}\right.
\ee
Here the two states $(\rho_\pm,u_\pm,\theta_\pm)$ are the same as \eqref{initial0}.
Set the state equation
$$
p=\frac23\rho\theta=k\rho^{\frac53}\exp(S),
$$
where $k=\frac1{2\pi \mathbf{e}}$($\mathbf{e}$ is the natural constant), and $S$ is the macroscopic entropy given by
$$
S=-\frac23\ln\rho+\ln\frac43\pi\theta+1.
$$
Then we can rewrite the Euler system \eqref{Euler1} in terms of $(\rho,u,S)$ with
$u=(u_1,0,0)$ as
\bq\label{Euler2}
 \left\{\bln
&\rho_t+(\rho u_1)_x=0,\\
&\rho u_{1t}+\rho u_1u_{1x}+p_x=0,\\
&S_t+u_1S_x=0,
\eln\right.
\eq
with the following Riemann initial data:
\bq\label{initial1}
(\rho,u_1,S)(0,x)=\left\{\bln(\rho_{-},u_{1-},S^*),~~x<0,\\
(\rho_{+},u_{1+},S^*),~~x>0,
\eln\right.
\eq
where $S^*=S_+=S_-$.

The Euler system \eqref{Euler2} for $\left( {\rho ,{u}_{1},S}\right)$ has three distinct eigenvalues
$$
\lambda_i(\rho,u_1,S)=u_1+(-1)^{\frac{i+1}{2}}\sqrt{p_\rho(\rho,S)},~ i=1,3,\quad
\lambda_2(\rho,u_1,S)=u_1,
$$
where $p_\rho(\rho,S)=\frac53k\rho^{\frac23}\exp{(S)}>0$ and the corresponding right eigenvectors
$$
r_i(\rho,u_1,S)= \big((-1)^{\frac{i+1}{2}}\rho,\sqrt{p_\rho },0 \big)^T~\mathrm{for}~i=1,3,\quad r_2(\rho,u_1,S)=(p_{S},0,-p_{\rho})^T.
$$
The two $i$-Riemann invariants $R_{i,j}~(i=1,3,j=1,2)$ can be defined by (cf.\cite{Lax,Smoller})
$$
R_{i,1}=u_1+(-1)^{\frac{i-1}{2}}\int^{\rho}\frac{\sqrt{p_z(z,S)}}{z}dz,\quad R_{i,2}=S.
$$

In terms of the two $i$-Riemann ($i=1,3$) invariants, we can define the $i$-rarefaction wave curve for the given Riemann initial data (cf. \cite{Lax}).
Without loss of generality, we consider only the 3-rarefaction wave in this paper, and the case for 1-rarefaction wave can be treated similarly.
We start from the Riemann solution to the inviscid Burgers equation
\bq
\left\{\bln\label{bgs}
&w_{t}+ww_{x}=0,\quad (t,x)\in \R^{+}\times\R,\\
&w(0,x)=
 \begin{cases}
  w_{-},\quad x<0,\\
  w_{+},\quad x>0.
 \end{cases}
\eln\right.
\eq
If $w_{-}<w_{+}$, then the Riemann problem \eqref{bgs} admits a self-similar rarefaction wave fan solution
$w^{r}(t,x)=w^{r}(\frac{x}{t})$ connecting $w_{-}$ and $w_{+}$ given by 
\bq\label{rarefaction3}
w^{r}(\frac{x}{t})=
\left\{\bln
&w_{-},\quad\quad\frac{x}{t}\leq w_{-},\\
&\frac{x}{t},\quad\quad w_{-}\leq\frac{x}{t}\leq w_{+},\\
&w_{+},\quad\quad\frac{x}{t}\geq w_{+}.
\eln\right.
\eq
The 3-rarefaction wave $(\rho^r,u^r,\theta^r)(\frac{x}{t})$ with $u^r(\frac{x_1}{t})=(u_1^r,u_2^r,u_3^r)(\frac{x}{t})$ to the Riemann problem \eqref{Euler1}-\eqref{Euler-in} can be defined explicitly by (cf. \cite{Lax,Smoller})
\bq\label{rarefaction2}
\left\{\bln
&\lambda_3(\rho^{r}(x/t),u^r(x/t),S^*)=w^{r}(x/t),\quad w_{\pm}=\lambda_3(\rho_{\pm}, u_{1\pm}, S^*),\\
&u^r_1-\sqrt{15k}(\rho^r)^{\frac13}\exp{(S^*/2)}=u_{1-}-\sqrt{15k}\rho_-^{\frac13}\exp{(S^*/2)},
 \quad u^r_2(x/t)=u^r_3(x/t)=0,\\
&\theta^r(x/t)=\frac32k\exp(S^*)(\rho^r)^{\frac23}(x/t).
\eln\right.
\eq
Since the above 3-rarefaction wave is only Lipschitz continuous, we shall construct an approximate smooth rarefaction wave to the 3-rarefaction wave defined in \eqref{rarefaction2}. As in \cite{Matsumura,xin1993},  the approximate smooth rarefaction wave can be constructed by the
Burgers equation with smooth initial data
 \bq
 \left\{\bln\label{BE}
 &w_{t}+ww_{x}=0,\quad (t,x)\in \R^{+}\times\R,\\
 &w|_{t=0}=\frac{w_{+}+w_{-}}{2}+\frac{w_{+}-w_{-}}{2}\text{tanh}(\frac{x}{\delta}),
 \eln\right.
 \eq
where $\delta>0$ is a small constant chosen later.  By the method of characteristic curves,
 the solution $w_{\delta}(t, x)$ to the problem \eqref{BE} can be given by
$$
w_{\delta}(t, x)=w_{\delta}(x_{0}(t, x)),\quad x=x_{0}(t,x)+w_{\delta}(x_{0}(t, x))t.
$$
Then the approximate rarefaction wave $(\bar{\rho},\bar{u},\bar{\theta})(t,x)$ is
\bq\label{smooth}
\left\{\bln
 &\lambda_3(\bar{\rho}(t,x),\bar{u}_1(t,x),S^*)=w_{\delta}(t,x),\quad w_\pm=\lambda_3(\rho_\pm,u_{1\pm},S^*),\\
 &\bar{u}_1-\sqrt{15k}\bar{\rho}^{\frac13}\exp{(S^*/2)}=u_{1-}-\sqrt{15k}\rho_-^{\frac13}\exp{(S^*/2)},\quad \bar{u}_i(t,x)=0,~i=2,3,\\
&\bar{\theta}(t,x)=\frac32k\exp(S^*)(\bar{\rho})^{\frac23}(t,x),
   \quad \lim_{x\to\pm\infty}(\bar{\rho},\bar{u}_1,\bar{\theta})=(\rho_\pm,u_{1\pm},\theta_\pm).
\eln\right.
\eq
Moreover, the approximate smooth 3-rarefaction wave $(\bar{\rho},\bar{u},\bar{\theta})(t,x)$ satisfies the following Euler system
\be\label{Euler3}
\left\{ \begin{array}{l}
\bar{\rho}_t+(\bar{\rho}\bar{u}_1)_{x}=0,\\
(\bar{\rho}\bar{u}_1)_t+(\bar{\rho}\bar{u}_1^2)_{x}+\bar{p}_{x}=0,\\
(\bar{\rho}\bar{\theta})_t+(\bar{\rho}\bar{u}_1\bar{\theta})_{x}+\bar{p}\bar{u}_{1x}=0,
\end{array}\right.
\ee
where $\bar{p}=\frac23\bar{\rho}\bar{\theta}$.
Properties of $(\bar{\rho},\bar{u},\bar{\theta})(t,x)$ are given in Lemma \ref{rarefaction4}.

With the approximate rarefaction wave $(\bar{\rho},\bar{u},\bar{\theta})(t,x)$, we denote
\be\label{barM}
M_{[\bar{\rho},\bar{u},\bar{\theta}]}(v)
=\frac{\bar{\rho}(t,x)}{(2\pi R\bar{\theta}(t,x))^{3/2}}\exp\(-\frac{|v-\bar{u}(t,x)|^2}{2R\bar{\theta}(t,x)}\).
\ee
As in \cite{duan2021}, we choose the far-field data $(\rho_\pm,u_{1\pm},\theta_\pm)$ defined in \eqref{Euler-in} to be close enough to the constant state $(1,0,\frac32)$ such that the approximate smooth rarefaction wave further satisfies that
\bq\label{approximate}
\eta_0:=\sup_{t\ge 0,x\in\R}\Big\{|\bar{\rho}(t,x)-1|+|\bar{u}(t,x)|+|\bar{\theta}(t,x)-\frac32| \Big\}~\text{being~small}.
 \eq

\subsection{Notations and main results}
Before stating the main results, we list some notations.
Through this paper, $C$ denotes a generic positive constant. 
We denote $(\cdot,\cdot)$ the $L^2$ inner product in $\R_x$ or $\R_x\times\R^3_v$ with its corresponding $L^2$ norm $\|\cdot\|$,
and denote $\langle\cdot,\cdot\rangle$ the $L^2_v$ inner product in $\R^3_v$ with the corresponding $L^2_v$ norm $|\cdot|_2$.
For multi-indices $\alpha=(\alpha_{1},\alpha_{2})$ and $\beta=(\beta_1,\beta_2,\beta_3)$, we denote
$$
\p^{\alpha}_\beta=\p^{\alpha_1}_t\p^{\alpha_2}_x
 \p^{\beta_1}_{v_1}\p^{\beta_2}_{v_2}\p^{\beta_3}_{v_3},
$$
and the lengths of $\alpha,\beta$ are $|\alpha|=\alpha_1+\alpha_2$ and $|\beta|=\beta_1+\beta_2+\beta_3$, where $\alpha_1,\alpha_2,\beta_i,~i=1,2,3,$ are non-negative integers.
Furthermore, $\alpha\leq \bar{\alpha}$ means $\alpha_1\leq \bar{\alpha}_1$ and $\alpha_2\leq \bar{\alpha}_2$. Same as $\beta\leq \bar{\beta}$.
Motivated by \cite{Duan2014,Duan6,Guo2012,Wang}, we introduce the following velocity weight function
\be\label{weight1}
\omega (\alpha,\beta)(t,v):=\v^{2(l-|\alpha|-|\beta|)}e^{q(t)\frac{\v^2}{2}},\quad l\geq|\alpha|+|\beta|, \quad\v=\sqrt{1+|v|^2},
\ee
where
\be
q(t):=\frac{q_2}{(1+t)^{q_1}},\quad q_2\in\Big(0,\frac12\Big).
\ee
Note that $q_1>0$ is a small constant to be chosen later.
Denote the weighted $L^2$ norms as
\bq\label{L2n}
|\p_\beta^\alpha g|_\omega^2
:= |\omega (\alpha,\beta)\p_\beta^\alpha g|_2^2
=\int_{\R^3}\omega^2(\alpha,\beta)|\p_\beta^\alpha g |^2 dv,\quad \|\p_\beta^\alpha g\|_\omega^2=\big\||\p_\beta^\alpha g|_\omega\big\|^2_{L^2}.
 \eq
 Throughout the  paper, $\mu=\mu(v)$ denotes the global Maxwellian
\be\label{global}
\mu=M_{[1,0,\frac{3}{2}]}={(2\pi)^{-\frac32}}e^{-\frac{|v|^2}{2}},\quad v\in\mathbb{R}^3.
\ee
The corresponding Landau collision frequency is
\be\label{sigma}
\sigma_{i j}(v)=\varphi_{i j} *\mu=\int_{\R^3} \varphi_{i j}(v-u)\mu(u)d u, \quad 1 \leq i, j \leq 3,
\ee
where $\varphi_{ij}$ is a non-negative matrix defined in \eqref{varphi}.
Note that $[\sigma_{ij}(v)]_{1\leq i,j\leq3}$ is a positive-definite self-adjoint matrix.
By \eqref{sigma}, the weighted dissipation norms are given by
\bq
 \left\{\bln
&|g|_{\sigma}^2
 :=\sum_{i,j=1}^3\int_{\R^3}\[\sigma_{ij}\p_ig\p_jg+\sigma_{ij}\frac{v_i v_j}4g^2\]dv,\quad
\|g\|_{\sigma}:=\big\||g|_{\sigma}\big\|_{L^2},\\
&|\partial_\beta^\alpha g|_{\sigma,\omega}^2
 := \sum_{i,j=1}^3 \int_{\R^3}\omega^2(\alpha,\beta)
    \sigma_{ij} \Big[ \partial_i \partial_\beta^\alpha g\partial_j \partial_\beta^\alpha g
          +\frac{v_i v_j}4( \partial_\beta^\alpha g)^2 \Big] dv, \quad \|g\|_{\sigma,\omega}:=\big\||g|_{\sigma,\omega}\big\|_{L^2}.
\eln\right.
 \eq
From \cite{SG}, we have
\be\label{sigma1}
|g|_\sigma
\approx\left|\v^{\frac{\gamma+2}{2}} g\right|_2
 +\left|\v^{\frac{\gamma}{2}} \nabla_v g \cdot \frac{v}{|v|}\right|_2
 +\left|\v^{\frac{\gamma+2}{2}} \nabla_v g \times \frac{v}{|v|}\right|_2 .
\ee
For $ a\in [0,1]$, under the perturbation $(\phi,\psi,\zeta)=(\rho-\bar{\rho},u-\bar{u},\theta-\bar{\theta}) $
and $ \gt=\mu^{-\frac12}(G_1-\bar{G}_1)$, $\g=\mu^{-\frac12}G_2$,
we define the instant energy functional:
\bma \label{energy-a1}				
\widetilde{\mathcal{E}}(t)
:=&\sum_{|\alpha|\leq 1}\epsilon^{(2|\alpha|-1)a}\big\{\|\p^{\alpha}(\phi,\psi,\zeta,n,E,B)(t)\|^2_{L^2_x}
  +\|\p^{\alpha}(\gt,\g)(t)\|^2_\omega\big\}\nnm\\
&+\epsilon^{2+a}\sum_{|\alpha|=2}\{\|\p^{\alpha}(\phi,\psi,\zeta,n,E,B)(t)\|^2_{L^2_x}+\|\p^{\alpha}(\gt,\g)(t)\|^2_{\omega}\}\nnm\\
&+\sum_{|\alpha|+|\beta|\leq 2,|\beta|\geq1}\epsilon^{(2|\alpha|-1)a}\|\p_{\beta}^{\alpha}(\gt,\g)(t)\|_{\omega}^{2}. 		
\ema
With the above preparation, we are ready to present the main results as follows.

\begin{thm}\label{mt}
Let $(\bar{\rho},\bar{u},\bar{\theta})(t,x)$ be the smooth rarefaction wave defined in \eqref{smooth} and $M_{[\bar{\rho}, \bar{u}, \bar{\theta}]}(t, x,v)$ denotes the corresponding local Maxwellian.
Assume that $\eta_0:=\sup\limits_{t\ge 0,x\in\R}\big\{|\bar{\rho}(t,x)-1|+|\bar{u}(t,x)|+|\bar{\theta}(t,x)-\frac32| \big\}$
and the strength of the rarefaction wave $\sigma=|\rho_{+}-\rho_{-}|+|u_{+}-u_{-}|+|\theta_+-\theta_-|$ are small enough.
Then, the Cauchy problem of the VML system \eqref{VML2} and \eqref{initial0} with $-3\leq \gamma< -2$ in the Landau collosion operator \eqref{landau}--\eqref{varphi} with the following initial data
$$
\widetilde{\mathcal{E}}(0)
\leq \frac{\epsilon^{2-a}}{\delta^3}+\frac{\epsilon^{1+a}}{\delta^3},
 \quad a\in\Big(\frac13,1\Big)
$$
admits a unique solution  $\[F_1(t,x,v),F_2(t,x,v), E(t,x), B(t,x)\]$ for all
$t\in[0,T]$ and it satisfies
\be
\sup_{0\leq t\leq T}\widetilde{\mathcal{E}}(t)
\leq(1+T)^2\bigg(\frac{\epsilon^{2-a}}{\delta^3}+\frac{\epsilon^{1+a}}{\delta^3}\bigg).
\ee
Here,  $T>0$ can be arbitrary large as $\epsilon$ approaches zero,  $\delta>0$ is a small constant given in  \eqref{BE} and $l\geq2$ in \eqref{weight1}, and $\epsilon,\delta,T,q_1,q_2$ satisfy  for $a\in(\frac13,1),$
\bma\label{condition}
&\epsilon(1+T)\leq\epsilon_1,\quad
   (1+T)\bigg(\frac{\epsilon^{\frac a2}}{\delta^{\frac32}}+\frac{\epsilon^{\frac32a-\frac12}}{\delta^{\frac32}}\bigg)\leq\epsilon_1,
\quad (1+T)^3\bigg(\frac{\epsilon^{1-a}}{\delta^3}+\frac{\epsilon^a}{\delta^3}\bigg)\leq\epsilon_1,\nnm\\
& q_1\in\bigg[\frac{(1+T)^3}{q_2}\bigg(\frac{\epsilon^{1-\frac a2}}{\delta^{\frac32}}+\frac{\epsilon^{\frac12+\frac a2}}{\delta^{\frac32}}\bigg),1\bigg),
  \quad q_2\in\Big(0,\frac12\Big),
\ema
where $0\leq\epsilon_1\ll1$ is a constant independent of $\epsilon,\delta,T$, and $q_1,$ $q_2$ are constants defined in \eqref{weight1}.

Moreover, there exists a constant $C>0$ independent of $\epsilon,\delta$ and $T$, such that
\begin{equation}\label{mt1}
\sup_{t\in[0,T]}\bigg\{\bigg\|\frac{F_1-M_{[\bar{\rho},\bar{u},\bar{\theta}]}}{\sqrt{\mu}},
   \frac{F_2}{\sqrt{\mu}}\bigg\|^2_{ L_x^\infty L_v^2}
 +\|(E,B)\|_{L_x^\infty}^2\bigg\}
\leq C(1+T)^2\bigg(\frac{\epsilon^{2-a}}{\delta^3}+\frac{\epsilon^{1+a}}{\delta^3}\bigg),
\end{equation}
where $a\in(\frac13,1),$ and the global Maxwellian $\mu=M_{[1,0\frac32]}$.
\end{thm}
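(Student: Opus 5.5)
The proof will combine local existence with a continuity (bootstrap) argument on $[0,T]$, built on a closed a priori estimate for $\widetilde{\mathcal{E}}(t)$. Local existence in the weighted space defined by \eqref{weight1}–\eqref{energy-a1} is standard for the VML system near a Maxwellian, following \cite{Duan2014,Wang}; the whole difficulty is the a priori bound. We work in the scaled variables $y=x/\epsilon^a$, $\tau=t/\epsilon^a$, so that the collision term carries the factor $\epsilon^{a-1}$ and derivatives of the rarefaction profile become small. The smooth profile $(\bar\rho,\bar u,\bar\theta)$ of \eqref{smooth} has $\|\partial_x^k(\bar\rho,\bar u,\bar\theta)\|_{L^\infty}\lesssim \delta^{-k}$, $\bar u_{1x}\ge0$ and $\|\partial_x(\bar\rho,\bar u,\bar\theta)\|_{L^p}\lesssim (1+t)^{-1+1/p}$.

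The first step is to remove the slowly decaying part of $G_1$. Following \cite{LYYZ}, set
\[
\bar G_1=\epsilon L_M^{-1}P_1\Big(v_1 M\Big(\tfrac{|v-u|^2\bar\theta_x}{2R\theta^2}+\tfrac{(v-u)\cdot\bar u_x}{R\theta}\Big)\Big).
\]
Then $\gt=\mu^{-1/2}(G_1-\bar G_1)$ satisfies an equation whose source terms are $O(\epsilon\,\partial_x^2(\bar\rho,\bar u,\bar\theta))$ and $O(\epsilon\,|\partial_x(\bar\rho,\bar u,\bar\theta)|^2)$. Their $L^2$ norms in time produce the factors $\epsilon^{2-a}\delta^{-3}$ and $\epsilon^{1+a}\delta^{-3}$ (with the scaling). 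The $(1+T)^2$ factor will come from Gronwall or time integration of these sources.

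Next come the energy estimates, organised in three levels.

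\emph{Zeroth order, macroscopic part.} Use the entropy–entropy flux pair around $(\bar\rho,\bar u,\bar\theta)$ for \eqref{ori-sys}. The term $\bar u_{1x}\ge0$ gives a good sign. The $n$, $E$, $B$ parts are handled with the $\phi$-type energy $\tfrac12(|E|^2+|B|^2)$ plus $\tfrac{n^2}{2\rho}$; the Lorentz work terms $nu\cdot E$ cancel against $\partial_t E$ in \eqref{EB-inv}. The factor $\epsilon\sigma(\theta)|E+u\times B|^2/(R\theta)$ supplies weak dissipation of $E$.

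\emph{Microscopic part.} Estimate $\gt,\g$ in the weighted norms $\|\cdot\|_\omega$, using coercivity of $L_M,\mathcal L_M$ around $\mu$: with $\eta_0$ small, they give $\epsilon^{-1}\|\cdot\|_\sigma^2$.

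\emph{Derivatives.} Carry out the same estimates for $\partial^\alpha$, $|\alpha|\le2$, with the $\epsilon^{(2|\alpha|-1)a}$ weights and the $\epsilon^{2+a}$ weight at top order, together with mixed $\partial^\alpha_\beta$ estimates using $\partial_\beta$ commutators with $v_1\partial_x$.

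The key observation to exploit is that, in the lower-order estimates, the higher-order terms appear multiplied by small powers such as $\epsilon^{1-a}\|\partial_y^{\alpha+1}\g\|^2$. These can therefore be absorbed, and at top order we only need a growth-controlled bound rather than a uniform one.

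The main obstacle is the weak dissipation for very soft potentials combined with the Lorentz force terms, in particular $(v\times B)\cdot\nabla_v\g$ and $E\cdot v\,\g$, which create velocity growth not controlled by $|\cdot|_\sigma$ when $\gamma<-2$. For this I would use the time-dependent exponential factor $e^{q(t)\langle v\rangle^2/2}$. Differentiating it gives the extra dissipation
\[
\frac{q_1q_2}{(1+t)^{1+q_1}}\|\langle v\rangle\,\cdot\|_\omega^2,
\]
which absorbs $|E|\,\|\langle v\rangle^{1/2}\cdot\|^2$ provided $\|E\|_{L^\infty}\lesssim q_1q_2(1+T)^{-1-q_1}$. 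This is exactly where the lower bound on $q_1$ in \eqref{condition} enters. The algebraic weight $\langle v\rangle^{2(l-|\alpha|-|\beta|)}$ absorbs the $v$-derivative losses. The cross term between $E$ and $G_2$ (no dissipation in $E,B$) is closed using the Maxwell structure: estimate $\|E\|$ through $\partial_x E_1=n$ and the interactive functional $(E,\int vG_2)$.

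The final step closes the bootstrap. Under the a priori assumption
\[
\sup_{[0,T]}\widetilde{\mathcal{E}}\le 2(1+T)^2\big(\epsilon^{2-a}+\epsilon^{1+a}\big)\delta^{-3},
\]
Sobolev embedding in $y$ (using the $\epsilon^{(2|\alpha|-1)a}$ weights) gives the $L^\infty$ smallness needed for the nonlinear terms. Here the conditions \eqref{condition}, together with $a>1/3$, ensure that all nonlinear terms and error terms are $\le\epsilon_1$-small. Gronwall then yields the bound with constant 1. Finally, \eqref{mt1} follows from the decomposition
\[
F_1-M_{[\bar\rho,\bar u,\bar\theta]}=(M-M_{[\bar\rho,\bar u,\bar\theta]})+\bar G_1+\sqrt\mu\,\gt,
\]
together with the 1D embedding $\|f\|_{L^\infty}^2\le\|f\|\|\partial_xf\|$. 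The term $\|\bar G_1\|\lesssim\epsilon/\delta$ is dominated by the stated rate.
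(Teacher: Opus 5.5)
\paragraph{The gap: how the electromagnetic field is controlled.}
Your outline has the paper's architecture: the scaling, the correction $\bar G_1$, the entropy estimate, the $\epsilon$-weighted energy, and the time-dependent exponential weight. That weight's extra dissipation $\frac{q_1q_2}{(1+T)^{1+q_1}}\mathcal{F}_\omega$ absorbs $\mathcal{E}^{\frac12}\mathcal{F}_\omega$, exactly as the lower bound on $q_1$ is used in \eqref{q1q2}. What is missing is the mechanism that controls $(E,B)$.

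In the scaled variables the only damping of $(E,B)$ is $\epsilon^{1+a}\|E+u\times B\|^2_{L^2_y}$ and $\epsilon^{1+a}\|\partial^\alpha(E,B)\|^2_{L^2_y}$ with $|\alpha|=1$; there is none for $|\alpha|=2$. Your interactive functional $(E,\int vG_2\,dv)$ gives exactly this and no more, because it must be multiplied by $\epsilon$ to be absorbed by the microscopic dissipation $\epsilon^{a-1}\|\g\|_\sigma^2$.

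Meanwhile the field energy is forced by the rarefaction wave:
\begin{itemize}
\item in $\frac12\frac{d}{d\tau}\|E_1\|^2_{L^2_y}$ the term $-\epsilon^a\int\bar u_1nE_1\,dy=\frac12\int\partial_y\bar u_1E_1^2\,dy$ is nonnegative, so it does not cancel as your Lorentz-work remark suggests;
\item the terms $\int\partial_\tau\bar u_1(E_2-u_1B_3)B_3\,dy$ and $\int\partial_\tau\bar u_1(E_3+u_1B_2)B_2\,dy$ in \eqref{EB0-2}--\eqref{EB0-3}, and their first-order analogues in \eqref{EB-1-2};
\item the top-order commutators $\int|\partial(u,\theta)||\partial^2(E,B)|^2dy$;
\item $I^{32}_f$ in \eqref{If-3}, which pairs $\partial^2(E,B)$ with $\nabla_vM\sqrt{\mu}(\frac1\mu-\frac1M)$ and has only $\eta_0$-smallness.
\end{itemize}
None of these can be absorbed. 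The rarefaction-driven coefficients are of size $\frac{\epsilon^a}{\frac{\delta}{\sigma}+\epsilon^a\tau}\geq\frac{\epsilon^a}{1+T}\gg\epsilon^{1+a}$, and $\partial^2(E,B)$ has no dissipation at all.

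The paper bounds each of them, up to absorbable dissipation, by $C\frac{\epsilon^a}{\frac{\delta}{\sigma}+\epsilon^a\tau}\mathcal{E}(\tau)$. For $I^{32}_f$ this uses $\epsilon^{1+a}\leq\epsilon(1+T)\frac{\epsilon^a}{\frac{\delta}{\sigma}+\epsilon^a\tau}$, which needs $\frac{\delta}{\sigma}\leq1$ and $\epsilon(1+T)\leq1$. It then applies Lemma~\ref{Gronwall} with this kernel, whose integral over $[0,T/\epsilon^a]$ is only $\ln(1+\sigma T/\delta)$. This is why the result holds only on a finite interval with $\epsilon(1+T)\leq\epsilon_1$. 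It is also where the growth in $T$ and the third power of $\delta^{-1}$ arise: the time-integrated sources in \eqref{convergence1} are $C\sigma(1+T)(\epsilon^{2-a}+\epsilon^{1+a})\delta^{-2}$, and Gronwall turns them into $C\sigma^2(1+T)^2(\epsilon^{2-a}+\epsilon^{1+a})\delta^{-3}$.

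\paragraph{Closing the bootstrap and the top-order steps.}
Your claim that Gronwall yields the bound with constant $1$ has no mechanism behind it. In the paper the bootstrap closes through the smallness of the wave strength, $C\sigma^2<\frac12$. This needs the sharp bounds $\|\partial(\bar\rho,\bar u,\bar\theta)\|_{L^p}\leq C\sigma^{1/p}(\frac{\delta}{\sigma}+t)^{-1+1/p}$ of Lemma~\ref{rarefaction4}. Your $(1+t)^{-1+1/p}$ drops the $\sigma$-factors, and for $p=\infty$ it fails near $t=0$ when $\delta\ll\sigma$, which is the regime $\frac{\delta}{\sigma}\leq1$ the paper works in.

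Two further steps are hidden in ``the same estimates for $|\alpha|\leq2$''.
\begin{itemize}
\item At top order the paper estimates $\mu^{-\frac12}\partial^\alpha(F_1,F_2)$ rather than $\partial^\alpha(\gt,\g)$. Otherwise $\partial^\alpha P_1(v_1\partial_yM)$ with $|\alpha|=2$ would involve third derivatives of $(\phi,\psi,\zeta)$.
\item The top-order Lorentz term is split via $\frac1\mu=\frac1M+(\frac1\mu-\frac1M)$. Through Maxwell's equations, the $\frac1M$ part becomes the time derivative of $\frac{1}{R\theta}(|\partial^\alpha E|^2+2\partial^\alpha E\cdot u\times\partial^\alpha B+|\partial^\alpha B|^2)$. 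With the $\mu$-weight alone this coupling is $O(1)$ and has no such structure.
\end{itemize}
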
	
	
\begin{thm}\label{mt2}
Under the same assumptions as Theorem \ref{mt},
by taking $\delta=(1+T)^{\frac25}\max\{\epsilon^{\frac15-\frac a5},\epsilon^{\frac{3a}5-\frac15}\}$ with $a\in(\frac13,1)$,
the unique  solution $\[F_1(t,x,v),F_2(t,x,v), E(t,x), B(t,x)\]$
of the system \eqref{VML2}-\eqref{initial0} satisfies
\be\label{mt2-1}
\sup_{t\in[h,T]}\bigg\{\bigg\|\frac{F_1-M_{[\rho^{r},u^{r},\theta^{r}]}}{\sqrt{\mu}},
   \frac{F_2}{\sqrt{\mu}}\bigg\|_{L_x^\infty L_v^2}
 +\|(E,B)\|_{L_x^\infty}\bigg\}
\leq C\frac1h(1+T)^{\frac25}\[\ln(1+T)+|\ln\epsilon|\]\big(\epsilon^{\frac15-\frac a5}+\epsilon^{\frac{3a}5-\frac15}\big),
\ee
 where $h>0$ is any constant  and can be arbitrarily small.
\end{thm}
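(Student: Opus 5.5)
Theorem \ref{mt2} follows from Theorem \ref{mt} by a triangle inequality: the error is split into the part controlled by the energy estimate and the distance between the smooth and the exact rarefaction waves. For $t\in[h,T]$ we write
$$
F_1-M_{[\rho^r,u^r,\theta^r]}=\big(F_1-M_{[\bar\rho,\bar u,\bar\theta]}\big)+\big(M_{[\bar\rho,\bar u,\bar\theta]}-M_{[\rho^r,u^r,\theta^r]}\big).
$$
The first piece, together with $F_2/\sqrt{\mu}$ and $(E,B)$, is bounded by \eqref{mt1}. Taking square roots there gives the bound
$$
C(1+T)\delta^{-3/2}\big(\epsilon^{1-\frac a2}+\epsilon^{\frac12+\frac a2}\big).
$$

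For the second piece, $\eta_0$ is small, so $\bar\theta$ and $\theta^r$ stay near $\frac32$ and $q_2<\frac12$ keeps the Gaussian tails integrable against $\mu^{-1}$. Hence the mean value theorem in $(\rho,u,\theta)$ yields
$$
\Big|\frac{M_{[\bar\rho,\bar u,\bar\theta]}-M_{[\rho^r,u^r,\theta^r]}}{\sqrt\mu}\Big|_{2}\le C\,|(\bar\rho-\rho^r,\bar u-u^r,\bar\theta-\theta^r)|.
$$
By \eqref{smooth} and \eqref{rarefaction2}, all three components are smooth functions of $w_\delta$ and $w^r$ respectively, so it suffices to bound $\|w_\delta(t)-w^r(t/\cdot)\|_{L^\infty_x}$. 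I take this from the properties in Lemma \ref{rarefaction4}, which I expect to contain the standard estimate (cf.\ \cite{xin1993})
$$
\|w_\delta(t,\cdot)-w^r(\tfrac{\cdot}{t})\|_{L^\infty}\le \frac{C\delta}{t}\big[\ln(1+t)+|\ln\delta|+1\big].
$$
If the lemma does not state it in this form, I would verify it from the characteristic representation $x=x_0+w_\delta(x_0)t$. Outside the fan, the tanh profile differs from $w_\pm$ by $e^{-2|x_0|/\delta}$. Inside the fan, $x/t-w_\delta=x_0/t$ with $|x_0|\lesssim\delta(1+\ln(1+t/\delta))$. For $t\ge h$ this piece is bounded by $\frac{C\delta}{h}[\ln(1+T)+|\ln\delta|]$.

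It remains to balance the two contributions by choosing $\delta$. We need $(1+T)\delta^{-3/2}\epsilon^{1-\frac a2}\sim\delta$, i.e.\ $\delta^{5/2}\sim(1+T)\epsilon^{1-\frac a2}$, i.e.\ $\delta\sim(1+T)^{2/5}\epsilon^{\frac25-\frac a5}$. The statement's choice $\delta=(1+T)^{2/5}\max\{\epsilon^{\frac15-\frac a5},\epsilon^{\frac{3a}5-\frac15}\}$ apparently balances the squared energy quantity $\epsilon^{2-a}\delta^{-3}$ differently. I would therefore recompute the exponents carefully: plug in the given $\delta$ and check that $(1+T)\delta^{-3/2}(\epsilon^{1-\frac a2}+\epsilon^{\frac12+\frac a2})\le C\delta$, which is the inequality that makes both terms $O(\delta)$. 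Then, since $|\ln\delta|\le C(\ln(1+T)+|\ln\epsilon|)$, the total is bounded by $\frac{C}{h}(1+T)^{2/5}[\ln(1+T)+|\ln\epsilon|](\epsilon^{\frac15-\frac a5}+\epsilon^{\frac{3a}5-\frac15})$, noting $h\le T$ and absorbing constants.

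Finally, I must check that this $\delta$ is compatible with \eqref{condition}. In particular $(1+T)^3(\epsilon^{1-a}+\epsilon^a)\delta^{-3}\le\epsilon_1$ has to hold for $\epsilon$ small relative to $T$, and a nonempty range of $q_1$ must be available. This is the main obstacle: the exponent bookkeeping must show that the stated $\delta$ both satisfies the smallness constraints of Theorem \ref{mt} and makes the energy error no larger than the $O(\delta/h)$ smoothing error. That the two exponents $\frac15-\frac a5$ and $\frac{3a}{5}-\frac15$ are both positive exactly when $a\in(\frac13,1)$ suggests this is precisely how the choice was designed.
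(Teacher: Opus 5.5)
Your argument is the paper's. You compare with the smooth wave, bound the perturbation by \eqref{mt1}, and bound the smooth-versus-exact wave by Lemma~\ref{rarefaction4}(3), which states exactly the estimate $Ct^{-1}\delta(\ln(1+t)+|\ln\delta|)$ you anticipated. Your mean-value step makes explicit the passage from $(\rho,u,\theta)$ to Maxwellians, which the paper leaves implicit. What that step needs is $\theta<3$, which follows from small $\eta_0$; $q_2<\frac12$ plays no role for the unweighted $L^2_v$ norm.

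The one real difference is where $\delta$ comes from. The paper does not balance the sharp energy bound. It multiplies $\|(\phi,\psi,\zeta)\|_{L^\infty_y}$ by $\epsilon^{-\frac12}+\epsilon^{a-1}$, mirroring the $\epsilon$-losing quantities $(1+T)\epsilon^{-1}\mathcal{E}$ and $\epsilon^{a-1}\mathcal{E}^{\frac12}$ in \eqref{q1q2}. It then balances $(1+T)(\epsilon^{\frac12-\frac a2}+\epsilon^{\frac{3a}2-\frac12})\delta^{-\frac32}$ against $\delta$, which gives exactly the stated choice. Your suspicion is correct that \eqref{condition} forces this. The balance you computed, $\delta\sim(1+T)^{\frac25}\epsilon^{\frac25-\frac a5}$, would make $(1+T)^3\epsilon^{1-a}\delta^{-3}\sim(1+T)^{\frac95}\epsilon^{-\frac15-\frac{2a}5}$ blow up.

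Your alternative bookkeeping (keep the sharp bound, check \eqref{condition} separately) closes, and it is more transparent than the paper's inflation. Put $m=\max\{\epsilon^{\frac12-\frac a2},\epsilon^{\frac{3a}2-\frac12}\}$, so the stated choice reads $\delta^{\frac52}=(1+T)m$. Use $\epsilon^{\frac a2}\le m$ (it is the geometric mean of the two entries), together with $\epsilon^{1-a}\le m^2$ and $\epsilon^{a}\le m^2$. Then:
\begin{itemize}
\item $(1+T)(\epsilon^{1-\frac a2}+\epsilon^{\frac12+\frac a2})\delta^{-\frac32}\le(\epsilon^{\frac12}+\epsilon^a)\delta$, so the energy part is even of lower order than $\delta$;
\item $(1+T)(\epsilon^{\frac a2}+\epsilon^{\frac{3a}2-\frac12})\delta^{-\frac32}\le 2\delta$;
\item $(1+T)^3(\epsilon^{1-a}+\epsilon^a)\delta^{-3}\le 2(1+T)\delta^2$;
\item the lower endpoint of the $q_1$-interval is at most $q_2^{-1}(1+T)^2(\epsilon^{\frac12}+\epsilon^a)\delta$.
\end{itemize}
Since $\delta=(1+T)^{\frac25}m^{\frac25}$ and $m\to0$ exactly when $a\in(\frac13,1)$, all of these are small once $\epsilon$ is small relative to $T$. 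Finally, $|\ln\delta|\le|\ln\epsilon|$ yields the stated logarithmic factor.
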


\begin{rem}
Under the scaling $a\in(\frac13,1)$ in Theorem \ref{mt}, the reason for taking
$(1+T)^3(\frac{\epsilon^{1-a}}{\delta^3}+\frac{\epsilon^a}{\delta^3})$ and $(1+T)(\frac{\epsilon^{\frac a2}}{\delta^{\frac32}}+\frac{\epsilon^{\frac32a-\frac12}}{\delta^{\frac32}})$ small enough is to absorb the terms $(1+T)\epsilon^{-1}\mathcal{E}(\tau)\mathcal{D}(\tau)$ and $\epsilon^{a-1}\mathcal{E}^{\frac12}(\tau)\mathcal{D}(\tau)$
 in \eqref{In1} and \eqref{first2} by  the energy dissipation functional $\mathcal{D}(\tau)$.
In \cite{duan2021}, the scaling is $\frac{2}{3}\leq a\leq 1$, and
in \cite{xinzeng2010} and \cite{xLi2012}  the scaling are $a=1$ and $a=\frac23$  respectively.

For the range of $q_1$ given in \eqref{condition} is to make sure  the term $\mathcal{E}^{\frac12}(\tau)\mathcal{F}_\omega(\tau)$
in \eqref{convergence1} can be absorbed by the energy dissipation  $\frac{q_1q_2}{(1+T)^{1+q_1}}\mathcal{F}_\omega(\tau)$ as shown in  \eqref{q1q2}. And the reason for taking $q_2\in(0,\frac12)$ is due to the terms $ |\v^b \omega(0,\beta)\p_{\beta} (\mu^{-\frac12}\bar{G}_1)|_2 $ in \eqref{barG7-1}
and $|\v^b \omega(\alpha,0) \mu^{-\frac12} \p^{\alpha}M|_2$ in \eqref{norm-m-1}, in which we need
$|\v^b \omega(\alpha,\beta)\mu^{-\frac12} M^{1-\varepsilon_0}|_2\leq C$ for any $b\geq 0$ and small $\varepsilon_0>0$ where
$ q_2<\frac1{R\theta}-\frac12-\frac{\varepsilon_0}{R\theta}<\frac12$ is used.

\end{rem}

\begin{rem}{ Two key points  in long-time analysis of rarefaction waves.}

\textbf{1.} Second-order electromagnetic-fluid coupling terms in  \eqref{If-3},
$$
I^{32}_f=-\epsilon^a\int_{\R}\int_{\R^3}\frac{\dya F_2}{\sqrt{\mu}}  \underbrace{\dya (E+v\times B)}_{no~dissipation}
                       \cdot\underbrace{\Tdv M\sqrt{\mu}\big(\frac{1}{\mu}-\frac{1}{M}\big)}_{only~smallness}dvdy.
$$

$\bullet$
Within the energy framework of the VML system, the electromagnetic field itself lacks an inherent  dissipation of $\sum\limits_{|\alpha|=2}\|\p^\alpha(E,B)\|^2_{L^2_y}$
(unlike the viscous term in the Navier-Stokes equations \cite{xin1993} or the dissipative part of the collision operator in the Boltzmann equation \cite{LYYZ}). Consequently, it is impossible to directly control or ``absorb" the magnitude of this factor through the natural dissipation of the system.

$\bullet$ Within the framework of this paper, the term $\Tdv M\sqrt{\mu}\big(\frac{1}{\mu}-\frac{1}{M}\big)$ is characterized only by its small magnitude, but without  the dissipative structure and time decay rate.

Thus, we cannot control these terms unless they have  the decay rate.

\textbf{2.}  The small coefficient of dissipation induced by the Lorentz force.

Under the scaling $(\tau,y)=(\epsilon^{-a}t,\epsilon^{-a}x)$ with the small Knudsen number $\epsilon\ll 1$, a  challenge arises from the inherent mismatch between the dissipation generated by the zeroth-order Lorentz force $($see  \eqref{lower6}$)$
$$
\epsilon^{1+a}\intra  \frac{\sigma(\theta)\bar{\theta}}{R\theta^2}(E+u\times B)^2dy,
$$
and the source terms produced by the coupling of the smooth rarefaction wave with the electromagnetic field $($see  \eqref{EB0-2}-\eqref{EB0-3}$)$
\be\label{difficult-1}
-\intra \dtau\bar u_1B_3 (E_2-u_1 B_3) dy,\quad
-\intra \dtau\bar u_1 B_2 (E_3+u_1 B_2)dy.
\ee
 This coefficient $\epsilon^{1+a}$ is  weak.
This  weak dissipation is  insufficient to ``dominate" or ``absorb" the persistent energy input from these coupling terms in the energy inequality. This scaling disparity creates an  obstacle for closing the energy estimates on a long-time scale.
This ultimately precludes a global-in-time stability result within the current analytical framework.

\begin{flushleft}
    \textbf{Strategies:} A refined energy estimate
\end{flushleft}

$\bullet$
 For the first difficulty, the term denoted by  $I^{32}_f$ in \eqref{If-3} which has ``only smallness but no dissipation" can not be  ignored.
Due to the fact that  $ \epsilon(1+T) \le  1$  and $\frac\delta\sigma\leq 1$, $I^{32}_f$ can be tread as
\bma\label{difficult-0}
\epsilon^{2-2a}I^{32}_f
&=-\epsilon^{2-a}\int_{\R}\int_{\R^3}  \dya (E+v\times B)\cdot\Tdv M\dya F_2\big(\frac{1}{\mu}-\frac{1}{M}\big)dvdy\nnm\\
&\leq C\eta_0\(\epsilon^{3-a}\|\dya(E,B)\|^2_{L^2_y}
 +\epsilon^{1-a}\|\mu^{-\frac12}\dya F_2\|^2_\sigma\)\nnm\\
&\leq C\eta_0\epsilon(1+T)\epsilon^{2-2a}\frac{\epsilon^a}{\frac\delta\sigma+\epsilon^a\tau}\mathcal{E}(\tau)
  +\eta_0\epsilon^{1-a}\|\mu^{-\frac12}\dya F_2\|^2_\sigma\nnm\\
&\leq C\eta_0\frac{\epsilon^a}{\frac\delta\sigma+\epsilon^a\tau}\mathcal{E}(\tau)+ C\eta_0\mathcal{D}(\tau),
\ema
where $\eta_0>0$ is defined in \eqref{approximate}.  

 $\bullet$ As for the second difficulty (the ``weak dissipation"), by Lemma \ref{rarefaction4},
 the terms in \eqref{difficult-1} can be bounded by
 \be\label{difficult-2}
  C\sum_{|\alpha|=1}\|\p^\alpha\bar{u}_1\|_{L^\infty_y}\|(E,B)\|^2_{L^2_y}\le C\frac{\epsilon^a}{\frac{\delta}{\sigma}+\epsilon^a\tau}\mathcal{E}(\tau).
 \ee
 The coefficient of \eqref{difficult-2} has time decay $\frac{\epsilon^a}{\frac{\delta}{\sigma}+\epsilon^a\tau}$, which mirrors the slow decay of the background rarefaction wave. This transforms the slow decay property of the background wave into a time-decreasing suppression of the error term.

 $\bullet$ After incorporating the estimates for the  terms \eqref{difficult-0}-\eqref{difficult-2} into the complete energy inequality, one arrives at a standard differential inequality of the form:
 $$
 \mathcal{E}(\tau)+\int^{\tau_1}_0\mathcal{D}(\tau)d\tau
 \leq \int^{\tau_1}_0\frac{\epsilon^a}{\frac{\delta}{\sigma}+\epsilon^a\tau}\mathcal{E}(\tau)d\tau+O(\epsilon ).
 $$
 Here, $\tau_1=T/\epsilon^a$ and the dissipation functional $\mathcal{D}(\tau)$ contains the weak Lorentz force dissipation
 $\|E+u\times B\|^2_{L^2_y}$ with the small coefficient $\epsilon^{1+a}$ $($see \eqref{energy-D}$).$
 The final step for closing the estimate is the application of Gronwall's inequality in Lemma \ref{Gronwall} to obtain
 $$
 \mathcal{E}(\tau)
 \leq O(\epsilon )\exp\(\int^{\tau_1}_0\frac{\epsilon^a}{\frac{\delta}{\sigma}+\epsilon^a\tau}d\tau\)
 =O(\epsilon )\sigma\(\frac{\delta}{\sigma}+T\)
 \leq O(\epsilon )\sigma(1+T).
 $$
In summary, as the Knudsen number $\epsilon\to 0$, then $T\to+\infty$.
This implies the almost global-in-time convergence from the solutions of the VML system to the 3-rarefaction wave.
\end{rem}

The rest of this paper will be organized as follows:
In section 2, we will introduce a scaling and reformulate the system \eqref{VML2}-\eqref{initial0}.
In Section 3, we will give the lower order energy estimates of the macroscopic component $(\phi,\psi,\zeta,n)$ and the electromagnetic field $(E,B)$ by the entropy and the microscopic component $(\gt,\g)$ by using the properties of the linearized operator.
In Sections 4 and 5, we will establish the high order energy estimates and the weighted energy estimates, respectively.
In Section 6, we will establish the existence of local-in-time solutions as well as the convergence to the local Maxwellian defined by the rarefaction wave of the Euler system  as $\epsilon\to0$.
In the  Appendix, we will state certain properties for the smooth rarefaction wave and give some basic estimates frequently used in the previous sections.

\section{Reformulation and a priori estimate}
\setcounter{equation}{0}
\subsection{Scaling and reformulation}
In this section, we will introduce a scaling for the independent variable
and the perturbation and then reformulate the system. Firstly, we define the scaled independent variables
\be\label{Scaling}
y = \frac{x}{\epsilon^a},\quad \tau=\frac{t}{\epsilon^a},\quad
\text{ for }~ a\in\Big(\frac13,1\Big),\quad (t,x)\in\R^+\times\R.
\ee
Under the above scaling, for multi-index $\alpha=(\alpha_{1},\alpha_{2})$ and $\beta=(\beta_1,\beta_2,\beta_3)$, we denote
$$
\p^{\alpha}_\beta=\p^{\alpha_1}_{\tau}\p^{\alpha_2}_y
 \p^{\beta_1}_{v_1}\p^{\beta_2}_{v_2}\p^{\beta_3}_{v_3},
$$
where $\alpha_1,\alpha_2,\beta_i,~i=1,2,3,$ are non-negative integers.
Furthermore, the velocity weight function defined in \eqref{weight1} can be rewritten as
\be\label{weight}
\omega(\alpha,\beta)(\tau,v):=\v^{2(l-|\alpha|-|\beta|)}e^{q(\tau)\frac{\v^2}{2}},\quad l\geq|\alpha|+|\beta|, \quad\v=\sqrt{1+|v|^2},
\ee
where
\be
q(\tau):=\frac{q_2}{(1+\epsilon^a\tau)^{q_1}},\quad q_2\in\Big(0,\frac12\Big).
\ee
Then under the scaling \eqref{Scaling}, the system \eqref{VML2} can be rewritten as
\be
\left\{\bln \label{VML3}
&\dtau F_1+v_1\dy F_1+\epsilon^a(E+v\times B)\cdot\Tdv F_2={\epsilon^{a-1}}Q(F_1,F_1),\\
&\dtau F_2+v_1\dy F_2+\epsilon^a(E+v\times B)\cdot\Tdv F_1=\epsilon^{a-1}Q(F_2,F_1),\\
&\dtau E=\mathbb{O}\dy  B-\epsilon^a\intr F_2v dv,\quad \dy E_1=\epsilon^a\intr F_2dv,\\
&\dtau B=- \mathbb{O}\dy E,\quad B_1\equiv0,
\eln\right.
\ee
with the initial data
\be\label{initial3}
F_i(0,y,v) =F_{i0}(\epsilon^a y,v),~i=1,2,\quad (E,B)(0,y) =(E_0,B_0)(\epsilon^a y).
\ee
Secondly, we set the scaled perturbation as $(\phi,\psi,\zeta)=(\phi,\psi,\zeta)(\tau,y)$
 as
\bq
\left\{\bln
&\phi=\rho(\tau,y)-\bar{\rho}(\tau,y),\\
&\psi=(\psi_{1},\psi_{2},\psi_{3})^{T}(\tau,y) =(u_1-\bar{u}_{1},u_2,u_3)^{T}(\tau,y) ,\\
&\zeta=\theta(\tau,y)-\bar{\theta}(\tau,y).
\eln\right.
\eq
Then, subtracting \eqref{Euler3} from \eqref{ori-sys},
$(\phi,\psi,\zeta)(\tau,y)$   satisfy the following equations
\bma\label{pert-sys}
\left\{\begin{aligned}
&\dtau \phi+u_1\dy\phi+\rho\dy\psi_1+\phi\dy \bar{u}_1+\psi_1\dy \bar{\rho}=0,\\
&\rho\dtau \psi_1+\rho u_1\dy \psi_1+\rho\psi_1\dy \bar{u}_1+R\rho\dy\zeta+R{\theta}\dy\phi+R\({\theta}
        -\frac{\bar{\theta}\rho}{\bar{\rho}}\)\dy \bar{\rho}\\
&\qquad\qquad=\epsilon^{1-a}\frac4{3}\dy(\kappa_1(\theta)\dy \psi_1)+H_e^1+H^1_{f}+H^1_{n},\\
&\rho\dtau \psi_i+\rho u_1\dy \psi_i={\epsilon}^{1-a}\dy(\kappa_1(\theta)\dy \psi_i)+H_e^i +H_f^i +H^i_n,\quad i=2,3,\\
&\rho\dtau \zeta+\rho u_1\dy \zeta+\rho\psi_1\dy \bar{\theta}+R\rho\theta\dy\psi_1+R \rho\zeta\dy \bar{u}_1
   ={\epsilon}^{1-a}\dy(\kappa_2(\theta)\dy \zeta)+H_e^4+H^4_f+H^4_n,\\
&\dtau n+\dy(n u_1)-\epsilon^{1-a}\dy\(\sigma(\theta)\dy(\frac{n}{\rho})\)+\epsilon\dy\(\sigma(\theta)\frac{E_1+(u\times B)_1}{R\theta}\)
   =-\intr v_1\dy \L_M^{-1}\Lambda_2dv,
\end{aligned}\right.
\ema
where
\be\label{Hf}
\left\{\begin{aligned}
H^1_{f}&:=\epsilon^{1-a}\frac4{3}\dy(\kappa_1(\theta)\dy \bar{u}_1),\quad 
H^i_{f}:=\epsilon^{1-a}\dy(\kappa_1(\theta)\dy \bar{u}_i)=0, \quad i=2,3,\\
H^4_f&:=\epsilon^{1-a}\dy(\kappa_2(\theta)\dy \bar{\theta})
 +\frac43\epsilon^{1-a}{\kappa_1(\theta)}(\dy u_1)^2
 +\epsilon^{1-a}\sum^3_{i=2}\kappa_1(\theta)(\dy \psi_i)^2,\\
\end{aligned}\right.
\ee
\be\label{Hn}
\left\{\begin{aligned}
H^i_n&:=-\intr v_1v_i\dy ( L_M^{-1}\Lambda_1)dv+\epsilon^a\intr (v\times B)_i \L_M^{-1}\Lambda_2dv, \quad i=1,2,3,\\
H^4_n&:=\intr v_1\[(v\cdot u)-\frac12 |v|^2\]\dy ( L_M^{-1}\Lambda_1)dv
          +\epsilon^a\intr (E+u\times B)\cdot v \L_M^{-1}\Lambda_2dv,\\
\end{aligned}\right.
\ee
\be\label{He}
\left\{\begin{aligned}
H_e^i&:=\epsilon^a{n}(E+u\times B)_i
  -{\epsilon}\sigma(\theta)\dy(\frac{n}{\rho})(e_1\times B)_i
  +\frac{{\epsilon}^{1+a}\sigma(\theta)}{R\theta}[(E+u\times B)\times B]_i, \\
H^4_e&:=\epsilon^{1+a}\sigma(\theta)\frac{|E+u\times B|^2}{R\theta}-\epsilon{\sigma(\theta)}\dy(\frac{n}{\rho})(E+u\times B)_1.
\end{aligned}\right.
\ee
For the electronic-magnetic fields $(E,B)$, from \eqref{EB-inv}, we have
\be\label{EB-inv-s}
\left\{\begin{aligned}
& \dtau E_1= -\epsilon^{a}nu_1+\epsilon\sigma(\theta)\dy(\frac{n}{\rho})
   -\epsilon^{1+a}\sigma(\theta)\frac{E_1+(u\times B)_1}{R\theta}
   -\epsilon^{a}\intr v_1\L_M^{-1}\Lambda_2dv, \\
&\dtau E_2=-\dy B_3-\epsilon^{a}nu_2
   -\epsilon^{1+a}\sigma(\theta)\frac{E_2+(u\times B)_2}{R\theta}
   -\epsilon^{a}\intr v_2\L_M^{-1}\Lambda_2dv, \\
&\dtau E_3=\dy B_2-\epsilon^{a}nu_3
   -\epsilon^{1+a}\sigma(\theta)\frac{E_3+(u\times B)_3}{R\theta}
   -\epsilon^{a}\intr v_3\L_M^{-1}\Lambda_2dv.
\end{aligned}\right.
\ee
Next, for the non-fluid part $(G_1,G_2)$, from \eqref{G0-1}-\eqref{G0-2}, we have
\bma
\dtau G_1&+P_1(v_1\dy M)+P_1(v_1\dy G_1)+\epsilon^{a}P_1[(E+v\times B)\cdot\Tdv G_2]
  =\epsilon^{a-1}L_MG_1+\epsilon^{a-1}Q(G_1,G_1),\label{G1}\\
\dtau G_2&+P_r(v_1\dy G_2 )+\frac{n}{\rho}P_r(\dtau M+v_1\dy M)+\epsilon^a  (E+v\times B)\cdot\Tdv G_1 \nonumber\\
  &+\(\dy(\frac{n}{\rho})P_r(v_1M)-\epsilon^a\frac{E+u\times B}{R\theta}\cdot P_r(vM)\)
  =\epsilon^{a-1}\L_MG_2 +\epsilon^{a-1}Q(F_2,G_1).\label{G2}
\ema
Thus,
\bmas
G_1&=\epsilon^{1-a} L_M^{-1} P_1(v_1\dy M)+L_M^{-1}\Lambda_1,\\
G_2&= \epsilon^{1-a}\dy(\frac{n}{\rho})\L_M^{-1}P_r(v_1M)
     -\epsilon\frac{E+u\times B}{R\theta}\cdot \L_M^{-1}P_r(vM)+\L_M^{-1}\Lambda_2,
\emas
where
\bma
\Lambda_1=&\,\epsilon^{1-a}[\dtau G_1+ P_1(v_1\dy G_1)]-Q(G_1,G_1)+\epsilon P_1[(E+v\times B)\cdot\Tdv G_2],\label{lambda1}\\
\Lambda_2=&\,\epsilon^{1-a}[\dtau G_2+P_r(v_1\dy G_2)]
      -Q(F_2,G_1)-\epsilon^{1-a}\frac{n}{\rho}P_r(\dtau M+v_1\dy M)\nnm\\
      &+\epsilon (E+v\times B)\cdot\Tdv G_1.\label{lambda2}
\ema

Since the slow decay rate with respect to the  Knudsen number of the term $\|\dy(\bar{u},\bar{\theta})\|^{2}$ in the term $P_1(v_1 \dy M)$ in \eqref{G1}, we have to subtract from $G_1(t,x,v)$ the term $\bar{G}_1$:
\be\label{barG1-1}
\bar{G}_1=\frac{1}{R\theta}\epsilon^{1-a}L^{-1}_{M}{P_{1}\[v_{1}\(\frac{|v-u|^{2}}{2\theta}\dy\bar{\theta}+v\cdot\nabla_y\bar{u}\)M\]}.
\ee
Noticing that the correction function $\bar{G}_1$ was first introduced in \cite{LYYZ} for the stability of the rarefaction wave to the 1D Boltzmann equation.

Inspired by \cite{Guo1}, we denote
\be\label{decompose}
\gt:=\frac{1}{\sqrt{\mu}}(G_1-\bar{G}_1),\quad \g:=\frac{1}{\sqrt{\mu}} {G}_2,
\ee
which satisfy
\bma\label{g1}
&\quad\p_\tau\gt+v_1\p_y\gt+\epsilon^a\frac{1}{\sqrt{\mu}}(E+v\times B)\cdot\nabla_v(\sqrt{\mu}\g)-{\epsilon}^{a-1} L \gt \nnm\\
&=\frac{1}{\sqrt{\mu}}P_0\big[v_1\sqrt{\mu}\p_y\gt+\epsilon^a(E+v\times B)\cdot\nabla_v(\sqrt{\mu}\g)\big]\nnm\\		
&\quad-\frac1{\sqrt{\mu}}P_1\bigg\{v_1\(\frac{|v-u|^2\dy\zeta}{2R\theta^2}+\frac{(v-u)\cdot\dy\psi}{R\theta}\)M\bigg\}+R_{g1}+R_{\Gamma1},
\ema
and
\bma
\label{g2}
&\quad\dtau \g+v_1\dy \g+\epsilon^a \frac{1}{\sqrt{\mu}}\big[(E+v\times B)\cdot\Tdv (\sqrt{\mu}\gt)\big]-\epsilon^{a-1} \L \g\nonumber\\
&= \frac{1}{\sqrt{\mu}}P_d (v_1\sqrt{\mu}\dy\g) -\epsilon^a \frac{1}{\sqrt{\mu}}(E+v\times B)\cdot\Tdv \bar{G}_1 +R_{g2}+R_{\Gamma2},
\ema	
where
\be
\left\{\bln  \label{remainder}
R_{\Gamma1}:=&\,{\epsilon}^{a-1}\bigg[\Gamma\(\frac{M-\mu}{\sqrt{\mu}},\gt\)
  +\Gamma\(\gt,\frac{M-\mu}{\sqrt{\mu}}\)+\Gamma\(\frac {G_1}{\sqrt{\mu}},\frac {G_1}{\sqrt{\mu}}\)\bigg],\\		
R_{\Gamma2}:=&\,\epsilon^{a-1}\bigg[\Gamma\(\g,\frac{M-\mu}{\sqrt{\mu}}\)+\Gamma\(\frac{F_2}{\sqrt{\mu}},\frac{G_1}{\sqrt{\mu}}\)\bigg],\\
R_{g1}:=&\,-\frac{P_1(v_1\p_y\bar{G}_1)}{\sqrt{\mu}}-\frac{\p_\tau\bar{G}_1}{\sqrt{\mu}},\\
R_{g2}:=&\,-\frac{1}{\sqrt{\mu}}\(\dy(\frac{n}{\rho})P_r(v_1M)-\epsilon^a\frac{E+u\times B}{R\theta}\cdot P_r(vM)
                                       +\frac{n}{\rho}P_r (\dtau M+v_1\dy M)\).
\eln\right.
\ee	
Here, for any well-defined $\h_1,\h_2$,
\be\label{L12}
\Gamma(\h_1,\h_2):=\frac1{\sqrt\mu}Q(\sqrt\mu\h_1,\sqrt\mu\h_2),\quad
L \h_1:=\Gamma(\sqrt\mu,\h_1)+\Gamma(\h_1,\sqrt\mu),\quad
\L \h_1:=\Gamma(\h_1,\sqrt\mu),
\ee
then a direct calculation yields
\begin{align*}		
&\frac1{\sqrt{\mu}}L_M(\sqrt{\mu}\h_1)=L \h_1+\Gamma\(\h_1,\frac{M-\mu}{\sqrt{\mu}}\)+\Gamma\(\frac{M-\mu}{\sqrt{\mu}},\h_1\),\\
&\frac1{\sqrt{\mu}}\L_M(\sqrt{\mu}\h_1)=\L \h_1+\Gamma\(\h_1,\frac{M-\mu}{\sqrt{\mu}}\).
		\end{align*}

\subsection{A Priori estimates}
In this subsection, we will introduce the a priori estimates for the system \eqref{VML3} around the smooth rarefaction wave.
For this, we define the following instant energy functional:
\bma \label{energy-a}				
\mathcal{E}(\tau)
:=&\,\sum_{|\alpha|\leq 1}\big\{\|\p^{\alpha}(\phi,\psi,\zeta,n,{E},{B})(\tau)\|^2_{L^2_y}+\|\p^{\alpha}(\gt,\g)(\tau)\|^2_{\omega}\big\}\nnm\\
&+\epsilon^{2-2a}\sum_{|\alpha|=2}\{\|\p^{\alpha}(\phi,\psi,\zeta,n,E,B)(\tau)\|^2_{L^2_y}+\|\p^{\alpha}(\gt,\g)(\tau)\|_{\omega}^{2}\}\nnm\\
 &+\sum_{|\alpha|+|\beta|\leq 2,|\beta|\geq1}\|\p_{\beta}^{\alpha}(\gt,\g)(\tau)\|_{\omega}^{2},  		
\ema
and the corresponding energy dissipation functional:
\bma\label{energy-D}
\mathcal{D}(\tau)
:=&\,
  \epsilon^{1+a}\sum_{|\alpha|=1}\|\p^\alpha(E,B)(\tau)\|_{L^2_y}^2
  +\epsilon^{1+a}\|(n,E+u\times B)(\tau)\|^2_{L^2_y}\nnm\\
&+\epsilon^{1-a}\sum_{1\leq|\alpha|\leq 2}\|\p^\alpha(\phi,\psi,\zeta,n)(\tau)\|^2_{L^2_y}
 +\epsilon^{1-a}\sum_{|\alpha|=2}\|\p^\alpha (\gt,\g)(\tau)\|_{\sigma,\omega}^2\nnm\\
&+\epsilon^{a-1}\sum_{|\alpha|\leq 1}\|\p^{\alpha}(\gt,\g)(\tau)\|_{\sigma,\omega}^{2}
 +\epsilon^{a-1}\sum_{|\alpha|+|\beta|\leq2,|\beta|\geq1}\|\p_{\beta}^{\alpha}(\gt,\g)(\tau)\|_{\sigma,\omega}^2.
\ema
Note that $\mathcal{E}(\tau)=\widetilde{\mathcal{E}}(\eps^{-a}t)$. Furthermore, we also denote
\bma\label{energy-F}
\mathcal{F}_{\omega}(\tau)
:=&\,\epsilon^a\sum_{|\alpha|\leq 1}\|\v\p^{\alpha}(\gt,\g)(\tau)\|_{\omega}^{2}
  +\epsilon^{2-a}\sum_{|\alpha|=2}\|\v\mu^{-\frac12}\p^{\alpha}(F_1,F_2)(\tau)\|_\omega^{2}\nnm\\
&+\epsilon^a\sum_{|\alpha|+|\beta|\leq 2,|\beta|\geq1}\|\v\p^{\alpha}_{\beta}(\gt,\g)(\tau)\|_\omega^{2}.
\ema

Next, we will prove the following a prior estimate.
\begin{prop}(A priori estimate)\label{priori3}
Suppose that the Cauchy problem \eqref{VML3}-\eqref{initial3} has a solution $(F_1,F_2,E,B)$.
For  $\tau_1=\frac{T}{\epsilon^a}>0,~T<+\infty$ and $l\geq2$ in \eqref{weight},
$\epsilon,T,q_1,q_2$ satisfy \eqref{condition}, if
\be\label{priori1}
\sup_{0\leq\tau\leq \tau_1}\mathcal{E}(\tau)
 \leq(1+T)^2\bigg(\frac{\epsilon^{2-a}}{\delta^3}+\frac{\epsilon^{1+a}}{\delta^3}\bigg),~~a\in\Big(\frac13,1\Big),
\ee
then
\be\label{priori4}
\sup_{0\leq\tau\leq\tau_1}\mathcal{E}(\tau)
 \leq \frac12(1+T)^2\bigg(\frac{\epsilon^{2-a}}{\delta^3}+\frac{\epsilon^{1+a}}{\delta^3}\bigg),
\ee
where $\delta>0$ is a small constant given by \eqref{BE}.
\end{prop}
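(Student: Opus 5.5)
The plan is a continuity argument: assuming \eqref{priori1}, we derive a closed energy--dissipation inequality of the form
\be\label{plan-ineq}
\mathcal{E}(\tau)+\int_0^{\tau}\mathcal{D}(s)\,ds+\frac{q_1q_2}{(1+T)^{1+q_1}}\int_0^\tau\mathcal{F}_\omega(s)\,ds
\leq C\mathcal{E}(0)+C\int_0^{\tau}\frac{\epsilon^a}{\frac{\delta}{\sigma}+\epsilon^a s}\mathcal{E}(s)\,ds+C(1+T)\Big(\frac{\epsilon^{2-a}}{\delta^3}+\frac{\epsilon^{1+a}}{\delta^3}\Big)
\ee
for $0\le\tau\le\tau_1$. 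We then close it by Gronwall's inequality (Lemma \ref{Gronwall}). We build \eqref{plan-ineq} in three layers, matching the weights in \eqref{energy-a}.

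\textbf{Lower-order macroscopic estimate.} We multiply \eqref{pert-sys} by the derivatives of the relative entropy $\bar\theta\,\eta(\rho,u,\theta;\bar\rho,\bar u,\bar\theta)$, which is equivalent to $|(\phi,\psi,\zeta)|^2$ under \eqref{approximate}. We add the $n$-equation tested with $n$, and the Maxwell system \eqref{EB-inv-s} tested with $(E,B)$. Using $\dy E_1=\epsilon^a n$, the Lorentz coupling $\epsilon^a n(E+u\times B)$ cancels against the field energy. This leaves the weak dissipation $\epsilon^{1+a}\|E+u\times B\|^2$ and the $\epsilon^{1-a}$ viscous dissipation of $\dy(\phi,\psi,\zeta,n)$.

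The rarefaction source terms are handled with Lemma \ref{rarefaction4}. The terms with the good sign $\dy\bar u_1>0$ are discarded. The cross terms \eqref{difficult-1} are bounded as in \eqref{difficult-2} by $\frac{\epsilon^a}{\delta/\sigma+\epsilon^a\tau}\mathcal{E}$. The errors $H_f$ from $\dy^2\bar u$ and $\dy^2\bar\theta$ yield the forcing terms $\epsilon^{2-a}/\delta^3$ and $\epsilon^{1+a}/\delta^3$ after integration over $[0,\tau_1]$ with $\tau_1=T/\epsilon^a$. The terms $H_n$ are handled through $L_M^{-1}\Lambda_1$ and $L_M^{-1}\Lambda_2$ and charged to the microscopic dissipation.

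\textbf{Microscopic and derivative estimates.} We take $\p^\alpha$ with $|\alpha|\le1$ of \eqref{g1}--\eqref{g2}, inner-product with $\omega^2\p^\alpha(\gt,\g)$, and use coercivity of $L$ and $\L$ on the range of $P_1$ and $P_r$. This gives $\epsilon^{a-1}\|\p^\alpha(\gt,\g)\|^2_{\sigma,\omega}$. The time derivative of $e^{q(\tau)\v^2/2}$ produces the extra good term $\frac{q_1q_2}{(1+T)^{1+q_1}}\mathcal{F}_\omega$. This term absorbs the growth $\v$ coming from the Lorentz term $\epsilon^a(v\times B)\cdot\nabla_v$ and from $E\cdot v$, which is the bookkeeping behind the lower bound on $q_1$ in \eqref{condition}. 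The macro source $P_1\{v_1(\dots)\dy(\zeta,\psi)M\}$ is controlled by $\eta\,\epsilon^{a-1}\|\cdot\|_\sigma^2+C\epsilon^{1-a}\|\dy(\psi,\zeta)\|^2$, both contained in $\mathcal{D}$. The remainders $R_{g1}$ and $R_{g2}$ from $\bar G_1$ in \eqref{barG1-1} give $\delta^{-3}$ forcing via \eqref{barG7-1}.

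For the highest order $|\alpha|=2$, we work with $F_1$ and $F_2$ directly, with the weight $\epsilon^{2-2a}$ and the $1/\mu-1/M$ correction. The term $I^{32}_f$ is treated exactly as in \eqref{difficult-0}, using $\epsilon(1+T)\le\epsilon_1$. The mixed derivatives with $|\beta|\ge1$ are then obtained inductively in $|\beta|$, using the commutator $[\p_\beta,v_1\p_y]$ together with the lower-$\beta$ dissipation.

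\textbf{Nonlinear closure and main obstacle.} All nonlinear $\Gamma$-terms and products are bounded by $C\epsilon^{a-1}\mathcal{E}^{1/2}\mathcal{D}+C(1+T)\epsilon^{-1}\mathcal{E}\mathcal{D}+C\mathcal{E}^{1/2}\mathcal{F}_\omega$, using 1D Sobolev embedding adapted to the $\epsilon$-weights. Under \eqref{priori1} and \eqref{condition}, each coefficient is at most $\epsilon_1$, so these terms are absorbed. Gronwall then gives
\[
\mathcal{E}(\tau)\le\Big[C\mathcal{E}(0)+C(1+T)\Big(\frac{\epsilon^{2-a}}{\delta^3}+\frac{\epsilon^{1+a}}{\delta^3}\Big)\Big]\Big(1+\frac{\sigma T}{\delta}\Big)^{C\sigma}.
\]
Because $\sigma$ is small, $(1+\sigma T/\delta)^{C\sigma}$ is at most a small multiple of $(1+T)$. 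With $\mathcal{E}(0)$ bounded as in Theorem \ref{mt}, the right-hand side is at most $\frac12(1+T)^2(\cdots)$ once $\epsilon_1$ is small.

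I expect the main obstacle to be the $|\alpha|=2$ electromagnetic--fluid coupling. There the fields $(E,B)$ have no dissipation at second order, and the weak $\epsilon^{1+a}$ Lorentz dissipation must be balanced against the $\epsilon^{2-2a}$ weight. Correctly matching the powers of $\epsilon$, $\delta$ and $T$ there is what forces the restriction $a\in(\frac13,1)$.
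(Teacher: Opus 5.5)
Your overall architecture matches the paper's: relative-entropy estimate coupled with the $n$- and Maxwell energies, weighted micro estimates where $q(\tau)$ generates $\mathcal{F}_\omega$, the $\epsilon^{2-2a}$-weighted estimate on $\partial^\alpha(F_1,F_2)$ for $|\alpha|=2$, mixed derivatives, absorption under \eqref{condition}, then Gronwall. The closing step, however, fails as written.

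The Gronwall weight $\frac{\epsilon^a}{\frac{\delta}{\sigma}+\epsilon^a\tau}$ is the rescaled bound on $\|\partial_y\bar u_1\|_{L^\infty_y}$ from Lemma \ref{rarefaction4} with $p=\infty$. That bound carries no factor $\sigma$: in the fan $\partial_x\bar u_1\sim 1/t$ whatever the wave strength. So the exponent is not $C\sigma$, and
$\exp\big(\int_0^{\tau_1}\frac{\epsilon^a\,d\tau}{\frac{\delta}{\sigma}+\epsilon^a\tau}\big)=\frac{\sigma}{\delta}\big(\frac{\delta}{\sigma}+T\big)$.
This factor is at least $1$, grows like $\sigma T/\delta$, and costs a full power of $\delta^{-1}$. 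Making $\epsilon_1$ small cannot help, since $\epsilon_1$ only controls the coefficients of $\int\mathcal{D}$ and $\int\mathcal{F}_\omega$, which were already absorbed. With your forcing $C(1+T)\big(\frac{\epsilon^{2-a}}{\delta^3}+\frac{\epsilon^{1+a}}{\delta^3}\big)$ you would end with $\delta^{-4}$ and no factor $\frac12$.

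The paper's bookkeeping is sharper. The rarefaction errors are $(1+T)\frac{\epsilon^2}{\delta(\frac{\delta}{\sigma}+\epsilon^a\tau)^2}$ and $\frac{\epsilon^{1+2a}}{\delta(\frac{\delta}{\sigma}+\epsilon^a\tau)^2}$. Their integrals over $[0,\tau_1]$ are $C\sigma(1+T)\big(\frac{\epsilon^{2-a}}{\delta^2}+\frac{\epsilon^{1+a}}{\delta^2}\big)$, which carries one factor $\sigma$ and only $\delta^{-2}$; see \eqref{convergence2}. The Gronwall factor is at most $\frac{\sigma}{\delta}(1+T)$ because $\frac{\delta}{\sigma}\le1$, so it supplies the third power of $\delta^{-1}$ and a second $\sigma$. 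The result is $C\sigma^2(1+T)^2\big(\frac{\epsilon^{2-a}}{\delta^3}+\frac{\epsilon^{1+a}}{\delta^3}\big)$, and the $\frac12$ in \eqref{priori4} comes from taking the wave strength small enough that $C\sigma^2<\frac12$. One caveat: with a generic constant $C$ in front of the integral in \eqref{convergence2}, the Gronwall factor is raised to the power $C$. The paper computes it with power one, so that constant must be kept under control.

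The initial-data term is a second gap. From $\mathcal{E}(0)\le\frac{\epsilon^{2-a}}{\delta^3}+\frac{\epsilon^{1+a}}{\delta^3}$ alone, $C\mathcal{E}(0)$ cannot be bounded by $\frac12(1+T)^2(\cdots)$ for bounded $T$. Indeed, at $\tau=0$, \eqref{priori4} already demands $\mathcal{E}(0)\le\frac12(1+T)^2(\cdots)$, and after Gronwall the term would also pick up the factor $\sigma T/\delta$. You need an explicit smallness hypothesis, for instance $\mathcal{E}(0)\le C\sigma\big(\frac{\epsilon^{2-a}}{\delta^2}+\frac{\epsilon^{1+a}}{\delta^2}\big)$, so that it merges with the forcing before Gronwall is applied. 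The paper passes from \eqref{convergence1} to \eqref{convergence2} by simply dropping $C\mathcal{E}(0)$, so this assumption is implicit there too.

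Finally, a smaller correction. $\mathcal{D}$ contains $\epsilon^{1-a}\|\partial^\alpha\partial_y\phi\|^2_{L^2_y}$ and $\epsilon^{1+a}\|\partial^\alpha B\|^2_{L^2_y}$, and neither comes from viscosity or from the Lorentz damping; in particular the continuity equation has no diffusion. The paper recovers them through the interaction functionals $\epsilon^{1-a}\int\partial^\alpha\psi_1\,\partial^\alpha\partial_y\phi\,dy$ and $-\epsilon^{1+a}\int E\cdot\mathbb{O}\partial_yB\,dy$ (Lemmas \ref{pphi1}, \ref{pphi}, \ref{EB1}). It obtains the time derivatives from the equations (Lemmas \ref{time1}, \ref{psia}). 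Your outline omits both steps.
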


Moreover, using the a priori estimate \eqref{priori1} and \eqref{approximate}, we have
\be
|\rho(t,x)-1|\leq|\rho(t,x)-\bar{\rho}(t,x)|+|\bar{\rho}(t,x)-1|\leq|\phi|+\eta_0.\nnm
\ee
Then, $(\rho,u,\theta)(t,x)$ satisfies for all $(t,x)\in\R^+\times\R$,
\bq\label{local}
\eta_1:=\sup_{ t\ge 0,x\in\R}\Big\{|\rho(t,x)-1|+|u(t,x)|+|\theta(t,x)-\frac32| \Big\}
 \leq\sup_{t\ge 0,x\in\R}|(\phi,\psi,\zeta)|+\eta_0.
 \eq
\section{Lower order energy estimates}\setcounter{equation}{0}
In this section, we will consider the energy estimate of zero-th order derivative in the following lemma.
\begin{lem}\label{lolem}
Under the same assumptions as in Proposition \ref{priori3} and by letting $\sigma\frac{\epsilon^a}{\delta}$ small enough
and $\frac{\delta}{\sigma}\leq 1$, we have the following estimate:
\bma
&\frac{d}{d\tau}\int_{\R}(\eta+E^2+2E\cdot\bar u \times B+B^2+n^2)dy+\frac{d}{d\tau}\|(\gt,\g)\|^2
  +\epsilon^{1-a}\frac{d}{d\tau}\int_{\R}\psi_1\p_y\phi dy\nnm\\
& 
  + \epsilon^{a-1}\|(\gt,\g)\|^2_\sigma
  +\epsilon^{1-a}\sum_{|\alpha|=1}\|\p^\alpha(\phi,\psi,\zeta,n)\|^2_{L^2_y}
  +\epsilon^{1+a}\|(n,E+u\times B)\|^2_{L^2_y}\nnm\\
\leq&\,\lambda\mathcal{D}(\tau)
   +C\lambda^{-1}\(\eta^2_1+\mathcal{E}^{\frac12}(\tau)+\sigma^2\frac{\epsilon^{2a}}{\delta^2}+(1+T)\epsilon^{-1}\mathcal{E}(\tau)\)\mathcal{D}(\tau)
   +C\mathcal{E}^{\frac12}(\tau)\mathcal{F}_\omega(\tau)\nnm\\
& +C(1+T)\frac{\epsilon^2}{\delta(\frac{\delta}{\sigma}+\epsilon^a\tau)^2}
  +C\frac{\epsilon^a}{\frac{\delta}{\sigma}+\epsilon^a\tau}\mathcal{E}(\tau)
  +C\lambda^{-1}\epsilon^{1-a}\sum_{|\alpha|=1}\|\p^\alpha(\gt,\g)\|^2_\sigma,
\ema
where $\lambda>0$ is a small constant, and $\eta_1$ is defined in \eqref{local} and $\eta$ is entropy function defined by \eqref{entropy}.
\end{lem}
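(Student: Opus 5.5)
The estimate will be assembled from four pieces, then summed and the bad terms absorbed. First comes a relative-entropy estimate for the fluid perturbation. With the entropy function $\eta$ of \eqref{entropy}, built from $(\phi,\psi,\zeta)$ around $(\bar\rho,\bar u,\bar\theta)$ in the standard way for the compressible Navier--Stokes/Euler system, I multiply the equations of \eqref{pert-sys} by the corresponding entropy multipliers. Concretely this means pairing the continuity equation with $R\bar\theta(\phi/\bar\rho)$-type terms, the momentum equation with $\psi$, and the energy equation with $\zeta/\theta$, then integrating over $y$. Using \eqref{Euler3}, this gives
\[
\frac{d}{d\tau}\int_{\R}\eta\,dy+\epsilon^{1-a}\int_{\R}\big(|\dy\psi|^2+|\dy\zeta|^2\big)dy+\int_{\R}\dy\bar u_1(\cdots)dy.
\]
The last term is nonnegative by the 3-rarefaction property $\dy\bar u_1\ge0$ (Lemma \ref{rarefaction4}) and is kept or discarded. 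The error terms $H_f$ are controlled via Lemma \ref{rarefaction4}: $\epsilon^{1-a}\dy^2\bar u_1$ and $\epsilon^{1-a}\dy^2\bar\theta$ are paired with $\psi,\zeta$ and bounded through $\|\dy^2(\bar u,\bar\theta)\|_{L^1}$-type decay. This produces the source term $C(1+T)\epsilon^2/(\delta(\delta/\sigma+\epsilon^a\tau)^2)$ together with a piece of $\frac{\epsilon^a}{\delta/\sigma+\epsilon^a\tau}\mathcal E$. The terms $H_n$ are written out using $\Lambda_1,\Lambda_2$ and integrated by parts. They are then bounded by $\lambda\mathcal D+C\lambda^{-1}\epsilon^{a-1}\|(\gt,\g)\|_\sigma^2$-type quantities and by the term $\epsilon^{1-a}\|\p^\alpha(\gt,\g)\|_\sigma^2$ on the right-hand side, the $\bar G_1$ contribution being handled as in \cite{LYYZ}.

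Second, the $n$ equation and the electromagnetic field are treated together. I multiply the $n$-equation by $n$ and the system \eqref{EB-inv-s} by $E$, and use Maxwell's equations for $B$. The electric work term $\epsilon^a nE_1$ from $H_e$ cancels with the $-\epsilon^a nu_1E_1$ terms only up to background contributions. To cancel the coupling $\epsilon^a n(u\times B)$ and $\dtau B$ against $u$, I add the cross term $2E\cdot\bar u\times B$ to the energy. Its time derivative produces, via $\dtau E=\O\dy B-\cdots$ and $\dtau B=-\O\dy E$, the terms $\dtau\bar u_1 B_3(E_2-u_1B_3)$ of \eqref{difficult-1}. These are bounded by \eqref{difficult-2}, giving $C\frac{\epsilon^a}{\delta/\sigma+\epsilon^a\tau}\mathcal E$. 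The Lorentz dissipation $\epsilon^{1+a}\int\frac{\sigma(\theta)}{R\theta}|E+u\times B|^2$ appears together with $H^4_e$ in the entropy identity. The diffusion $\epsilon^{1-a}\sigma\dy(n/\rho)\dy n$ yields $\epsilon^{1-a}\|\dy n\|^2$, and Gauss's law $\dy E_1=\epsilon^a n$ gives the $\epsilon^{1+a}\|n\|^2$ dissipation after pairing with the $\epsilon\sigma\dy(E_1/R\theta)$ term. Commutators involving $\theta-\bar\theta$ are absorbed using $\eta_1^2\mathcal D$ and $\sigma^2\epsilon^{2a}\delta^{-2}\mathcal D$, where the smallness $\sigma\epsilon^a/\delta$ from $\|\dy\bar u\|_{L^\infty}\le C\sigma\epsilon^a/\delta$ is used.

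Third, the dissipation of $\dy\phi$ is missing from the entropy estimate, so I recover it by multiplying the $\psi_1$ equation by $\epsilon^{1-a}\dy\phi$. The term $\dtau\psi_1\dy\phi$ is converted into $\frac{d}{d\tau}\int\psi_1\dy\phi$ plus $\dy\psi_1\dtau\phi$, with the mass equation substituted. This yields $\epsilon^{1-a}\|\dy\phi\|^2$ modulo $\epsilon^{1-a}\|\dy\psi\|^2$, which is already controlled, and higher-order terms $\epsilon^{2-2a}\dy^2\psi\,\dy\phi$ bounded by $\lambda\mathcal D$. The $\tau$-derivatives $\dtau(\phi,\psi,\zeta,n)$ follow from the equations themselves.

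Fourth comes the microscopic $L^2$ estimate. I take the inner product of \eqref{g1} with $\gt$ and of \eqref{g2} with $\g$. Coercivity of $L$ and $\L$ on the microscopic parts gives $\epsilon^{a-1}\|(\gt,\g)\|_\sigma^2$, with the $P_d$-component of $\g$ controlled through $n$ and the $P_0$-component of $\gt$ being zero. The source term $P_1\{v_1(\cdots\dy\zeta+\dy\psi)M\}$ is bounded by $\lambda\epsilon^{a-1}\|\gt\|_\sigma^2+C\lambda^{-1}\epsilon^{1-a}\|\dy(\psi,\zeta)\|^2$, and $R_{g2}$ is treated the same way. The remainders $R_{\Gamma i}$ are handled by the trilinear estimates for $\Gamma$ with very soft potentials, which give $(\eta_1+\mathcal E^{1/2})\mathcal D$. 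Here the loss of velocity weight is compensated by $\mathcal E^{1/2}\mathcal F_\omega$, this being exactly where the exponential weight $e^{q\v^2/2}$ enters. The Lorentz-force terms $\epsilon^a(E+v\times B)\cdot\nabla_v$ are bounded by $\epsilon^a\|(E,B)\|_{L^\infty}\|\v\,\cdot\|^2$. By Sobolev embedding with the $\epsilon$-weighted $\mathcal E$, they are controlled by $(1+T)\epsilon^{-1}\mathcal E\,\mathcal D+\mathcal E^{1/2}\mathcal F_\omega$. The $\bar G_1$ terms in $R_{g1}$ give the time-decaying source.

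Summing the four estimates, with a small constant multiplying the $\psi_1\dy\phi$ estimate, yields the lemma. The main obstacle is the electromagnetic coupling: closing with only the weak $\epsilon^{1+a}$ Lorentz dissipation. This is why the corrector $2E\cdot\bar u\times B$ is added, so that every remaining term carries a factor $\p\bar u$ and can be placed in the Gronwall-type term $\frac{\epsilon^a}{\delta/\sigma+\epsilon^a\tau}\mathcal E$.
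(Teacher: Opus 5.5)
Your plan follows the paper's proof. The entropy estimate on \eqref{pert-sys} with the $(E,B)$ estimate and the corrector $2E\cdot\bar u\times B$, the $n$-energy, the $\epsilon^{1-a}\psi_1\partial_y\phi$ cross term with $\partial_\tau(\phi,\psi,\zeta,n)$ read off from the equations, and the $L^2$ estimate of $(\mathbf{g}_1,\mathbf{g}_2)$ are Lemmas~\ref{lower1-1}, \ref{n1}, \ref{pphi1}, \ref{time1} and \ref{gg1}.

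\textbf{Where it fails as written: the final summation.}
\begin{itemize}
\item You bound the $H_n$ terms by ``$\lambda\mathcal{D}+C\lambda^{-1}\epsilon^{a-1}\|(\mathbf{g}_1,\mathbf{g}_2)\|_\sigma^2$-type quantities''.
\item You add the microscopic estimate with unit weight and put a small weight only on the $\psi_1\partial_y\phi$ estimate.
\item By your own bound on the source $P_1\{v_1(\cdots)M\}$, and the analogous one for $R_{g2}$, the microscopic estimate carries $C\epsilon^{1-a}\|\partial_y(\psi,\zeta,n)\|^2+C\epsilon^{1+a}\|E+u\times B\|^2$ on its right-hand side, with $C$ not small.
\item So it must be multiplied by a small $\kappa$ before the fluid/field dissipation can absorb those terms.
\item Its dissipation $\kappa\sigma_1\epsilon^{a-1}\|(\mathbf{g}_1,\mathbf{g}_2)\|_\sigma^2$ then cannot absorb a bare $C\lambda^{-1}\epsilon^{a-1}\|(\mathbf{g}_1,\mathbf{g}_2)\|_\sigma^2$ coming from the fluid side.
\end{itemize}

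\textbf{Why it actually closes.} The argument works only because no such term exists.
\begin{itemize}
\item Since \eqref{pert-sys} is already in Chapman--Enskog form, $H_n$ involves only $L_M^{-1}\Lambda_1$ and $\mathcal{L}_M^{-1}\Lambda_2$.
\item By self-adjointness, $\int v_1v_iL_M^{-1}\Lambda_1\,dv=R\theta\int\mathbf{B}_{1i}\big(\frac{v-u}{\sqrt{R\theta}}\big)\frac{\Lambda_1}{M}\,dv$, and similarly with $\mathbf{A}_1$. Lemma~\ref{Burnett3} then absorbs the velocity growth; your proposal does not mention this step.
\item Apart from terms carrying a factor $\bar G_1$, $n$ or $(E,B)$, $\Lambda_1$ and $\Lambda_2$ are linear in $(G_1,G_2)$ only through $\epsilon^{1-a}\partial(G_1,G_2)$.
\item Hence the only linear microscopic output is $C\lambda^{-1}\epsilon^{1-a}\sum_{|\alpha|=1}\|\partial^\alpha(\mathbf{g}_1,\mathbf{g}_2)\|_\sigma^2$. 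This is $\epsilon^{2-2a}$ times first-order dissipation, so it is harmless later.
\item The zeroth-order $\epsilon^{a-1}\|(\mathbf{g}_1,\mathbf{g}_2)\|_\sigma^2$ appears only with prefactors such as $\mathcal{E}$ or $\sigma^2\epsilon^2/\delta^2$.
\item With this in hand, add Lemma~\ref{gg1} with a small weight, and the $\psi_1\partial_y\phi$ and $\partial_\tau$ estimates with successively smaller weights.
\end{itemize}

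\textbf{A smaller omission: the cubic term in the $n$-energy.} The term $-\frac12\int\partial_y\psi_1\,n^2\,dy$ is not among the terms you account for. Neither is the cubic term $\epsilon\,\sigma(\theta)\partial_y(n/\rho)(e_1\times B)\cdot\psi$ from $H_e$. Neither can be bounded by $\eta_1^2\mathcal{D}$, $\sigma^2\epsilon^{2a}\delta^{-2}\mathcal{D}$ or $\mathcal{E}^{1/2}\mathcal{D}$, because zeroth-order $n$, $\psi$ and $(E,B)$ are not in $\mathcal{D}$ with a usable weight.
\begin{itemize}
\item The paper splits $C\|n\|_{L^\infty_y}\|\partial_y\psi_1\|_{L^2_y}\|n\|_{L^2_y}\le C\frac{\epsilon^a}{\delta/\sigma+\epsilon^a\tau}\|n\|_{L^2_y}^2+C(\frac{\delta}{\sigma}+\epsilon^a\tau)\epsilon^{-a}\|n\|_{L^\infty_y}^2\|\partial_y\psi_1\|_{L^2_y}^2$ and then uses $\frac{\delta}{\sigma}+\epsilon^a\tau\le 1+T$.
\item This is where $(1+T)\epsilon^{-1}\mathcal{E}\mathcal{D}$ comes from. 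It does not come from the Lorentz term of the microscopic estimate, which after integration by parts in $v$ costs only $\mathcal{E}^{1/2}\mathcal{F}_\omega$.
\end{itemize}

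\textbf{The $H_f$ source.} The stated source $C(1+T)\epsilon^2\delta^{-1}(\frac{\delta}{\sigma}+\epsilon^a\tau)^{-2}$ comes from the same time-weighted Cauchy--Schwarz, combined with $\|\partial_y^2(\bar u_1,\bar\theta)\|_{L^2_y}^2\le C\epsilon^{3a}\delta^{-1}(\frac{\delta}{\sigma}+\epsilon^a\tau)^{-2}$. The $L^1$ bound on $\partial_y^2\bar u_1$ that you suggest produces a larger source than the one in the statement.
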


The proof of Lemma \ref{lolem} is divided in the following four subsections.
\subsection{Estimates of the macro-components and electromagnetic field}
\begin{lem}\label{lower1-1}
Under the same assumptions as  in Proposition \ref{priori3} and by letting $\sigma\frac{\epsilon}{\delta}$ small enough
and $\frac{\delta}{\sigma}\leq 1$, we have
\bma
&\frac{d}{d\tau}\int_{\R}(\eta+E^2+2E\cdot\bar u \times B+B^2)dy
 +\epsilon^{1-a}\|\dy(\psi,\zeta)\|^2_{L^2_y}
 +\epsilon^{1+a}\|(n,E+u\times B)\|^2_{L^2_y}\nnm\\
& +\|\sqrt{\dy\bar{u}_1}(\phi,\psi_1,\zeta)\|^2_{L^2_y}
  +\sum^3_{i=2} \|\sqrt{\dy\bar u_1}(E_i^2+B^2_i)\|^2_{L^2_y}\nnm\\
\leq&\lambda\mathcal{D}(\tau)
   +C\lambda^{-1}\(\sigma^2\frac{\epsilon^2}{\delta^2}+\mathcal{E}^{\frac12}(\tau)+(1+T)\epsilon\mathcal{E}(\tau)\)\mathcal{D}(\tau)
   +C\frac{\epsilon^a}{\frac{\delta}{\sigma}+\epsilon^a\tau}\mathcal{E}(\tau)\nnm\\
&+C(1+T)\frac{\epsilon^2}{\delta(\frac{\delta}{\sigma}+\epsilon^a\tau)^2}
 +C\lambda^{-1}\epsilon^{1-a}\sum_{|\alpha|=1}\|\p^\alpha(\gt,\g)\|^2_\sigma,\nnm
\ema
where $\lambda>0$ is a small constant and $\eta$ is entropy function defined by \eqref{entropy}.
\end{lem}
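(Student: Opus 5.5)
We will follow the relative-entropy method of \cite{LYYZ,xinzeng2010,duan2021}, adapted to the electromagnetic coupling. First we introduce the entropy function
$$
\eta=R\bar\theta\Big\{\rho\Phi\Big(\frac{\bar\rho}{\rho}\Big)+\frac32\rho\Phi\Big(\frac{\theta}{\bar\theta}\Big)\Big\}+\frac12\rho|\psi|^2,\qquad \Phi(s)=s-\ln s-1,
$$
which is equivalent to $|(\phi,\psi,\zeta)|^2$ by \eqref{local}. We multiply the equations of \eqref{pert-sys} by the corresponding entropy variables ($R\bar\theta(1/\bar\rho-1/\rho)$, $\psi$ and $\zeta/\theta$) and integrate in $y$. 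Using the Euler system \eqref{Euler3} for the background, this gives
$$
\frac{d}{d\tau}\int\eta\,dy+\epsilon^{1-a}\|\dy(\psi,\zeta)\|^2
$$
together with the good term $\int\dy\bar u_1\,(\ldots)$, which is coercive in $(\phi,\psi_1,\zeta)$ because $\dy\bar u_1>0$ for a 3-rarefaction wave. The remaining terms are $H_f$, $H_n$ and $H_e$, plus cubic terms in the perturbation.

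The terms $H_f^1$ and $H_f^4$ are bounded by Lemma~\ref{rarefaction4} (the decay of $\dy\bar u,\dy\bar\theta$ and of $\p^2_y$). The bound $\|\dy\bar u\|_{L^2_y}\lesssim\epsilon^{a}$-scaled rates gives $C(1+T)\frac{\epsilon^2}{\delta(\frac\delta\sigma+\epsilon^a\tau)^2}$ after Cauchy–Schwarz with $\lambda\mathcal D$. In $H_n$ we substitute \eqref{lambda1}–\eqref{lambda2} and integrate by parts. Every term then becomes a product of $\p^\alpha(\gt,\g)$ (bounded via $\epsilon^{a-1}\|\cdot\|_\sigma$, $|\alpha|\le1$) with $\dy(\psi,\zeta)$ or $\bar G_1$-type terms, so it is absorbed into $\lambda\mathcal D+C\lambda^{-1}\epsilon^{1-a}\sum_{|\alpha|=1}\|\p^\alpha(\gt,\g)\|_\sigma^2$. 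The nonlinear $Q(G_1,G_1)$ contributions give $\mathcal E^{1/2}\mathcal D$.

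For the field, we multiply the $E$-equations in \eqref{EB-inv-s} by $E$ and use the Maxwell structure for $B$, so that $\dy(E\cdot\O B)$ integrates out. The Lorentz work $\epsilon^a n u\cdot E$ then cancels against $\psi\cdot H_e$ and the $\zeta/\theta$ term $H_e^4$. The dissipative piece $\epsilon^{1+a}\sigma(\theta)|E+u\times B|^2/(R\theta)$ combines with its counterpart in $H^4_e$ (with factor $\zeta/\theta$, which is small) to leave $\epsilon^{1+a}\|E+u\times B\|^2$. The cancellation is not exact because $u=\bar u+\psi$; the mismatch $\epsilon^a n\bar u\cdot E$ and the $(\bar u\times B)$ parts are why we add the cross term $2E\cdot\bar u\times B$ to the energy. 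Differentiating that term in $\tau$ and using $\dtau B=-\O\dy E$ converts the mismatch into the terms \eqref{difficult-1}, namely $\dtau\bar u_1 B_i(E_j\pm u_1B_i)$, up to total derivatives. These are bounded as in \eqref{difficult-2} by $C\frac{\epsilon^a}{\frac\delta\sigma+\epsilon^a\tau}\mathcal E$, while the terms $\dy\bar u_1(E_i^2+B_i^2)$ produced by the same computation supply the stated good term. The $\epsilon^{1+a}\|n\|^2$ dissipation comes from testing the $n$ equation through Gauss's law $\dy E_1=\epsilon^a n$: the term $\epsilon\,\dy(\sigma\frac{E_1}{R\theta})$ paired with $n$ produces $\epsilon^{1+a}\sigma n^2/(R\theta)$. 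Terms involving $\epsilon\sigma\dy(n/\rho)$ are bounded by Cauchy–Schwarz into $\epsilon^{1-a}\|\dy n\|^2$ and $\epsilon^{1+a}\|E+u\times B\|^2$.

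\textbf{Main obstacle.} The main difficulty is closing the field estimate with only the weak $\epsilon^{1+a}$ dissipation, specifically controlling $\epsilon^a\int n\,\bar u\cdot E$-type and $\dtau\bar u_1 B(E+u\times B)$-type terms. We handle them with the time-decaying rarefaction factor rather than by absorption. The cubic terms such as $\epsilon^a n\psi\cdot E$ are bounded using $\|n\|_{L^\infty}\le C\mathcal E^{1/2}$ against $\epsilon^{1+a}\|n\|^2$, and under \eqref{condition} some are crudely bounded by $(1+T)\epsilon\mathcal E\mathcal D$. Throughout, the smallness of $\sigma\epsilon/\delta$ is what lets the $\dy\bar\rho$-weighted terms be dominated by the good term.
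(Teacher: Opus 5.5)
There is a genuine gap in where you get the $\epsilon^{1+a}\|n\|^2_{L^2_y}$ dissipation. The energy in this lemma is only $\eta+E^2+2E\cdot\bar u\times B+B^2$, with no $n^2$, so you cannot test the $n$-equation of \eqref{pert-sys} here. Doing so adds $\frac{d}{d\tau}\|n\|^2_{L^2_y}$ and imports all the terms of Lemma~\ref{n1}. For example, $-\frac12\int\partial_y\psi_1\,n^2dy$ is bounded there only by $C(1+T)\epsilon^{-1}\mathcal{E}(\tau)\mathcal{D}(\tau)$, far worse than the $(1+T)\epsilon\,\mathcal{E}(\tau)\mathcal{D}(\tau)$ the statement allows.

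In the paper the $n$-dissipation comes from the field equation itself. Testing \eqref{EB-inv-s}$_1$ with $E_1$ gives the quadratic term $-\epsilon\int\sigma(\theta)\partial_y(\frac n\rho)E_1\,dy$ on the left. Integrating it by parts with Gauss's law $\partial_yE_1=\epsilon^{a}n$ yields
\begin{equation*}
-\epsilon\int_{\mathbb{R}}\sigma(\theta)\partial_y\Big(\frac{n}{\rho}\Big)E_1\,dy
=\epsilon^{1+a}\int_{\mathbb{R}}\frac{\sigma(\theta)}{\rho}n^2\,dy
+\epsilon\int_{\mathbb{R}}\partial_y\sigma(\theta)\,\frac{n}{\rho}\,E_1\,dy .
\end{equation*}
The first term is the dissipation. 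The second carries the factor $\partial_y\theta=\partial_y\bar\theta+\partial_y\zeta$ and is bounded by $\lambda\epsilon^{1+a}\|n\|^2_{L^2_y}+C\lambda^{-1}\frac{\epsilon^{1+a}}{(\frac\delta\sigma+\epsilon^a\tau)^2}\|E\|^2_{L^2_y}+C\lambda^{-1}\mathcal{E}^{\frac12}(\tau)\mathcal{D}(\tau)$. The inequality $\frac{\epsilon^{1+a}}{(\frac\delta\sigma+\epsilon^a\tau)^2}\le\sigma\frac{\epsilon}{\delta}\cdot\frac{\epsilon^a}{\frac\delta\sigma+\epsilon^a\tau}$ is exactly where the smallness of $\sigma\epsilon/\delta$ is needed, since $E_1$ has no good rarefaction term to absorb it.

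Your alternative, Cauchy--Schwarz on $\epsilon\int\sigma\partial_y(\frac n\rho)E_1\,dy$, does not close. It leaves either $C\lambda^{-1}\epsilon^{1+a}\|E_1\|^2_{L^2_y}$, which is not in $\mathcal{D}(\tau)$ (only $E+u\times B$ is) and cannot be absorbed, or a non-small multiple of $\epsilon^{1-a}\|\partial_yn\|^2_{L^2_y}$. The latter is absent from the left-hand side and may enter the right-hand side only through $\lambda\mathcal{D}(\tau)$.

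Relatedly, the cross term $2E\cdot\bar u\times B$ is not what handles $\epsilon^{a}n\bar u\cdot E=\epsilon^{a}n\bar u_1E_1$; Gauss's law turns that into $\frac12\int\partial_y\bar u_1E_1^2\,dy$. The cross term comes from testing the $E_2,E_3$ equations with $E_2-\bar u_1B_3$ and $E_3+\bar u_1B_2$. This makes $\epsilon^{a}n\psi\cdot(E+u\times B)$ cancel exactly against $\psi\cdot H_e$, and makes the Ohmic terms combine into $\frac{\sigma(\theta)\bar\theta}{R\theta^2}|E+u\times B|^2$.

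A second, smaller inaccuracy concerns $H^1_f$ and $H^4_f$. These cannot be handled by Cauchy--Schwarz against $\lambda\mathcal{D}(\tau)$, since $\|(\psi_1,\zeta)\|^2_{L^2_y}$ is not in $\mathcal{D}$. Integrating by parts first leaves $C\lambda^{-1}\epsilon^{1-a}\|\partial_y\bar u_1\|^2_{L^2_y}$, of size $\frac{\sigma\epsilon}{\frac\delta\sigma+\epsilon^a\tau}$, which is not bounded by $(1+T)\frac{\epsilon^2}{\delta(\frac\delta\sigma+\epsilon^a\tau)^2}$. The paper instead uses the time-weighted split
\begin{equation*}
\epsilon^{1-a}\int_{\mathbb{R}}|\partial_y^2\bar u_1||\psi_1|\,dy
\le C\frac{\epsilon^a}{\frac\delta\sigma+\epsilon^a\tau}\|\psi_1\|^2_{L^2_y}
+C\Big(\frac\delta\sigma+\epsilon^a\tau\Big)\epsilon^{2-3a}\|\partial_y^2\bar u_1\|^2_{L^2_y},
\end{equation*}
and similarly for $|\partial_y(\bar u_1,\bar\theta)|^2|\zeta|$. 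The first piece goes into $C\frac{\epsilon^a}{\frac\delta\sigma+\epsilon^a\tau}\mathcal{E}(\tau)$. By Lemma~\ref{rarefaction4} (after rescaling), the second is at most $C\frac{\epsilon^2}{\delta(\frac\delta\sigma+\epsilon^a\tau)}\le C(1+T)\frac{\epsilon^2}{\delta(\frac\delta\sigma+\epsilon^a\tau)^2}$. The weight $\frac\delta\sigma+\epsilon^a\tau\le 1+T$ is the origin of the factor $1+T$.
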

\begin{proof}
\begin{flushleft}
\textbf{Step 1. } The entropy-entropy flux pair.
\end{flushleft}
In the following, we will consider the lower order energy estimates of the macroscopic component $(\phi,\psi,\zeta)$
by the entropy-entropy flux.
Define the  entropy-entropy flux pair $(\eta,q)$ by
\bma
\eta
&=R\rho\bar{\theta}\Phi\(\frac{\bar{\rho}}{\rho}\)
  +\rho\bar{\theta}\Phi\(\frac{\theta}{\bar{\theta}}\)
  +\rho\frac{|u-\bar{u}|^2}{2}, \label{entropy}
  \\
q&=u_1\eta+R(u_1-\bar{u}_1)(\rho\theta-\bar{\rho}\bar{\theta})
=u_1\eta  +R(\phi\psi_1\bar{\theta}+\rho\psi_1\zeta),\nnm
\ema
where $\Phi(s)$ is a convex function defined by
$$
\Phi(s)=s-\ln s-1.
$$
We observe from $\Phi(1)=\Phi'(1)=0,\ \Phi''(1)=1$ and $\Phi'''(s)=-2/s^3$ that
$$
\Phi\(\frac{\bar{\rho}}{\rho}\)=\frac12\frac{\phi^2}{\rho^2}+O(1)\phi^3,\quad
\Phi\(\frac{\theta}{\bar{\theta}}\)=\frac12\frac{\zeta^2}{\bar{\theta}^2}+O(1)\zeta^3.
$$
Firstly, by direct computations, we have
\bma\label{ee-1}
&\dtau\bigg(R\rho\bar{\theta}\Phi(\frac{\bar{\rho}}{\rho})+\rho\bar{\theta}\Phi(\frac{\theta}{\bar{\theta}})\bigg)
  +\dy\(R\rho u_1\bar{\theta}\Phi(\frac{\bar{\rho}}{\rho})+\rho u_1\bar{\theta}\Phi(\frac{\theta}{\bar{\theta}})\)\nnm\\
=&\rho\(\dtau\bar{\theta}+u_1\p_y\bar{\theta}\)\(R\Phi(\frac{\bar{\rho}}{\rho})+\Phi(\frac{\theta}{\bar{\theta}})\)
  +R\frac{\bar{\theta}}{\rho\bar{\rho}}\(\bar{\rho}\p_y\bar{u}_1-\psi_1\p_y\bar{\rho}\)\phi^2			\nonumber\\
   &+R\frac{\bar{\theta}}{\rho}\phi\(\dtau\phi+ u_1\p_y\phi\)-\frac{\rho}{\bar{\theta}\theta}\(\dtau\bar{\theta}+u_1\p_y\bar{\theta}\)\zeta^2+\frac{\rho}{\theta}\zeta\(\dtau\zeta+u_1\dy\zeta\).
\ema
Next, multiplying $\eqref{pert-sys}_{1}$ by $\frac{R\bar{\theta}}{\rho}\phi$, 
$\eqref{pert-sys}_5$ by $\frac{\zeta}{\theta}$, we have
\bma
R\frac{\bar{\theta}}{\rho}\phi\(\dtau\phi+u_1\p_y\phi\)
=&-R\bar{\theta}\p_y\psi_1\phi-R\frac{\bar{\theta}}{\rho}\phi\psi_1\p_y\bar{\rho}
  -R\frac{\bar{\theta}}{\rho}\p_y\bar{u}_1\phi^2,\label{ee1}\\
\frac{\rho}{\theta}\zeta\(\dtau\zeta+u_1\p_y\zeta\)
=&-R\rho\zeta\p_y\psi_1-\frac{\rho}{\theta}\zeta\psi_1\p_y\bar{\theta}-R\frac{\rho}{\theta} \p_y\bar{u}_1\zeta^2
  +\epsilon^{1-a}\p_y(\kappa_2(\theta)\p_y\zeta)\frac{\zeta}{\theta}\nnm\\
&+\frac{\zeta}{\theta}(H_e^4+H^4_f+H^4_n).\label{ee2}
\ema
Multiplying \eqref{pert-sys}$_{i+1}$ by $\psi_i$, $i=1,2,3$ and using $\eqref{macro-eq-nond}_1$,
the summation of the resulting equations yields
\bma
&\quad\dtau\(\rho\frac{\psi^2}{2}\)+\dy\(\rho u_1\frac{\psi^2}{2}\)+\rho\dy \bar{u}_1 \psi^2_1
 +R\rho\psi_1\dy\zeta+R\theta\psi_1\dy\phi+R\psi_1\(\theta-\frac{\bar{\theta}\rho}{\rho} \)\p_y\bar{\rho}\nnm\\
&=\epsilon^{1-a}\bigg(\frac13\p_y(\kappa_1(\theta)\dy\psi_1)\psi_1+\sum^3_{i=1}\p_y(\kappa_1(\theta)\dy \psi_i)\psi_i\bigg)
 +\sum_{i=1}^3\psi_i\(H^i_e+H^i_f+H^i_n\).
\ema
Moreover, by $\eqref{Euler3}_4$, we have
\be \label{ee3}
\dtau\bar{\theta}+u_{1}\p_{y}\bar{\theta}
=-R\bar{\theta}\p_{y}\bar{u}_1+\psi_1\p_{y}\bar{\theta}.
\ee

Combining \eqref{ee-1}-\eqref{ee3}, we obtain
\bma\label{lower2}
&\dtau\(R\rho\bar{\theta}\Phi(\frac{\bar{\rho}}{\rho})+\rho\frac{\psi^2}{2}+\rho\bar{\theta}\Phi(\frac{\theta}{\bar{\theta}})\)
+\p_y\(R\rho u_1\bar{\theta}\Phi(\frac{\bar{\rho}}{\rho})+\rho u_1\frac{\psi^2}{2}
			 +\rho u_1\bar{\theta}\Phi(\frac{\theta}{\bar{\theta}})\)\nnm\\
=&- \(R\rho\bar{\theta} \p_y\bar{u}_1\(R\Phi(\frac{\bar{\rho}}{\rho})+\Phi(\frac{\theta}{\bar{\theta}})\)
              +\rho \p_y\bar{u}_1 \psi^2_1\)
  -\rho\psi_1 \p_y\bar{\theta}\(R\ln\frac{\bar{\rho}}{\rho}+\ln\frac{\theta}{\bar{\theta}}\) \nnm\\
&-\big[R\p_y(\rho\zeta\psi_1)+R\p_y(\bar{\theta}\phi\psi_1)\big]+ \sum_{i=1}^3\psi_i\(H^i_e+H^i_f+H^i_n\)+\frac{\zeta}{\theta}\(H^4_e+H^4_f+H^4_n\) \nnm\\
&+ \epsilon^{1-a}\p_y(\kappa_2(\theta)\p_y\zeta)\frac{\zeta}{\theta}
  +\epsilon^{1-a}\bigg(\frac13\p_y(\kappa_1(\theta)\dy\psi_1)\psi_1+\sum^3_{i=1}\p_y(\kappa_1(\theta)\dy \psi_i)\psi_i\bigg) ,
\ema
where we have used the fact that
\bma
&\rho\psi_1 \p_y\bar{\theta}\(R\Phi(\frac{\bar{\rho}}{\rho})+\Phi(\frac{\theta}{\bar{\theta}})\)
 -\frac{\rho}{\bar{\theta}}\zeta\psi_1 \p_y\bar{\theta}
 -R\bar{\theta} \p_y(\psi_1\phi)=-R\dy(\bar{\theta}\phi\psi_1)-\rho\psi_1 \p_y\bar{\theta}\(R\ln\frac{\bar{\rho}}{\rho}+\ln\frac{\theta}{\bar{\theta}}\).\nnm
\ema
Furthermore, there exists a positive constant $C>0$ such that (cf. \cite{duan2021,WangY,LYYZ})
$$
\(R\rho\bar{\theta} \p_y\bar{u}_1\(R\Phi(\frac{\bar{\rho}}{\rho})+\Phi(\frac{\theta}{\bar{\theta}})\)
   +\rho\p_y\bar{u}_1 \psi^2_1\)+\rho\psi_1 \p_y\bar{\theta}\(R\ln\frac{\bar{\rho}}{\rho}+\ln\frac{\theta}{\bar{\theta}}\)\geq C^{-1}\p_y\bar{u}_{1}(\phi^2+\psi_1^2+\zeta^2).
$$
Then integrating \eqref{lower2} over $\R_y$, we can obtain
\bma\label{lower}
&\frac{d}{d\tau}\int_{\R}\eta(\tau, y)dy+\|\sqrt{\p_y\bar{u}_1}(\phi,\psi_1,\zeta)\|^2_{L^2_y}\nnm\\
\leq&\int_\R\[\epsilon^{1-a}\p_y(\kappa_2(\theta)\p_y\zeta)\frac{\zeta}{\theta}
  +\epsilon^{1-a}\bigg(\frac13\p_y(\kappa_1(\theta)\dy\psi_1)\psi_1+\sum^3_{i=1}\p_y(\kappa_1(\theta)\dy \psi_i)\psi_i\bigg)\]dy\nnm\\
&+\int_\R\[\sum_{i=1}^3\psi_iH^i_f+\frac{\zeta}{\theta}H^4_f+H^4_n\frac{\zeta}{\theta}\]dy
 -\int_\R\psi\cdot\intr v_1v\dy ( L_M^{-1}\Lambda_1)dvdy\nnm\\
&+\epsilon^a\int_\R\psi\cdot\intr (v\times B)\L_M^{-1}\Lambda_2dvdx
 +\epsilon^a\int_\R n\psi\cdot(E+u\times B)dy\nnm\\
&+{\epsilon}^{1+a}\int_{\R}\[-\frac{\sigma(\theta)}{R\theta}(E+u\times B)\cdot(\psi\times B)
                           +\sigma(\theta)\frac{|E+u\times B|^2}{R\theta}\frac{\zeta}{\theta}\]dy\nnm\\
&-{\epsilon}\int_{\R}\[\sigma(\theta)\dy(\frac{n}{\rho})(e_1\times B)\cdot\psi
                    +{\sigma(\theta)}\dy(\frac{n}{\rho})(E+u\times B)_1\frac{\zeta}{\theta}\]dy,
\ema
where $H^i_f~(i=1,2,3,4)$ and $H^4_n$ are defined in \eqref{Hf}, \eqref{Hn} and $\Lambda_1,\Lambda_2$ are defined in \eqref{lambda1}-\eqref{lambda1}.
\begin{flushleft}
\textbf{Step 2. } Low order energy estimates on  $(E,B)$.
\end{flushleft}
We first study $E_1$. Taking the inner product between \eqref{EB-inv-s}$_1$  and $E_1$, then using the integration by parts
and the fact that $\dy E_1=\epsilon^a n$, we have
\bma
&\quad\frac12\frac{d}{d\tau}\intra E_1^2dy-\epsilon\intra  \sigma(\theta) \dy(\frac{n}{\rho})E_1dy
  +\epsilon^{1+a}\intra \frac{\sigma(\theta)}{R\theta}(E_1+(u\times B)_1)E_1dy\nnm\\
&=-\int_{\R}\epsilon^au_1nE_1 dy-\epsilon^a\int_{\R}E_1\int_{\R^3} v_1 \L_M^{-1}\Lambda_2 dvdy\nnm\\
&=\frac12\intr \dy \bar{u}_1 E_1^2dy-\epsilon^a\intra n\psi_1 E_1dy-\epsilon^a\int_{\R}E_1\int_{\R^3} v_1\L_M^{-1}\Lambda_2 dvdy.
\ema
And for $(E_2,B_3)$, we take the inner product between \eqref{EB-inv-s}$_2$ and $E_2-\bar{u}_1B_3$ to have
\bmas
&\quad\frac12 \frac{d}{d\tau}\intra  E_2^2dy-\int_{\R}\p_\tau E_2\bar{u}_1 B_3 dy+\intra \dy B_3 (E_2-\bar{u}_1B_3)dy \\
&\quad+\epsilon^a \intra n\psi_2(E_2-\bar u_1B_3)dy +\epsilon^{1+a}\intra \frac{\sigma(\theta)}{R\theta}(E_2+(u\times B)_2)(E_2-\bar u_1 B_3)dy\\
&= -\epsilon^a\int_{\R}(E_2-\bar u_1 B_3)\int_{\R^3} v_2 \L_M^{-1}\Lambda_2 dvdy.
\emas
By using $\dtau B_3=-\dy E_2$ and the fact that
\bmas
\int_{\R}\dy B_3 E_2dy=&-\int_{\R}B_3 \dy E_2dy=\int_{\R}B_3\p_\tau B_3dy=\frac12\frac{d}{d\tau}\int_{\R}B^2_3dy,\\
-\intra \dtau E_2\bar u_1 B_3dy
 =&-\frac{d}{d\tau}\intra E_2\bar u_1 B_3dy+\intra \dtau\bar u_1 E_2 B_3dy+\frac12\intra \dy \bar u_1 E_2^2dy,
\emas
we have
\bma\label{EB0-2}
&\frac12 \frac{d}{d\tau}\intra (E_2^2-2E_2\bar u_1 B_3+B_3^2)dy+\frac12\intra \dy\bar u_1 (E_2^2+B^3_3)dy\nnm \\
&+\epsilon^a \intra n\psi_2(E_2-\bar u_1B_3)dy
 +\epsilon^{1+a}\intra  \frac{\sigma(\theta)}{R\theta}(E_2+(u\times B)_2)(E_2-\bar u_1 B_3)dy\nnm\\
=&-\intra \dtau\bar u_1 (E_2-u_1 B_3) B_3dy-\intra \dtau\bar u_1 u_1 B_3^2dy
  -\epsilon^a\int_{\R}(E_2-\bar u_1 B_3)\int_{\R^3} v_2 \L_M^{-1}\Lambda_2 dvdy.
\ema
Similarly, taking the inner product between \eqref{EB-inv-s}$_3$ and $E_3+\bar u_1 B_2$, we have
\bma\label{EB0-3}
&\frac12 \frac{d}{d\tau} \intra (E_3^2+2E_3\bar u_1 B_2+B_2^2)dy+\frac12\intra \dy\bar u_1 (E_3^2+B^2_2)dy\nnm \\
&+ \epsilon^a\intra n\psi_3(E_3+\bar u_1B_2)dy
 +\epsilon^{1+a}\intra  \frac{\sigma(\theta)}{R\theta}(E_3+(u\times B)_3)(E_3+\bar u_1 B_2)dy\nnm\\
=&-\intra \dtau\bar u_1 (E_3+u_1 B_2) B_2dy+\intra \dtau\bar u_1 u_1 B_2^2dy
 -\epsilon^a\int_{\R}(E_3+\bar u_1 B_2)\int_{\R^3} v_3\L_M^{-1} \Lambda_2 dvdy.
\ema
Note that $B_1=0$ and $\bar{u}\times B=(0,-\bar{u}_1B_3,\bar{u}_1B_2).$
Thus,
\bma\label{EUB}
&\frac12\frac{d}{d\tau} \intra (E^2+2E\cdot\bar u \times B+B^2)dy+\frac12\sum^3_{i=2}\intra \dy\bar u_1 (E_i^2+B^2_i)dy \nnm\\
&+\epsilon^{1+a}\intra \frac{\sigma(\theta)}{\rho}|n|^2  dy
  +\epsilon^{1+a}\intra  \frac{\sigma(\theta)}{R\theta}(E+u\times B)\cdot(E+\bar u \times B)dy\nnm\\
\leq& C\sum_{|\alpha|=1}\|\p^\alpha\bar{u}_1\|_{L^\infty_y}\|(E,B)\|^2_{L^2_y}
  -\epsilon\int_{\R}\dy\sigma(\theta) (\frac{n}{\rho})E_1 dy\nnm\\
&-\epsilon^a\intra n\psi\cdot(E+\psi\times B)dy
 -\epsilon^a\int_{\R} (E+\bar u \times B)\cdot\int_{\R^3} v \L_M^{-1}\Lambda_2 dvdy,
\ema
where we have used the fact that
\bma
 \psi\cdot(E+\bar u \times B)&=\psi\cdot(E+ u \times B)-\psi\cdot(\psi\times B)=\psi\cdot(E+ u \times B),\nnm\\
-\epsilon\sigma(\theta) \dy (\frac{n}{\rho})E_1
&=-\epsilon\dy \(\sigma(\theta) (\frac{n}{\rho})E_1\)
 +\epsilon\dy\sigma(\theta)(\frac{n}{\rho})E_1
 +\epsilon^{1+a}\frac{n^2\sigma(\theta)}{\rho}.
\ema
By summing of \eqref{EUB} and \eqref{lower}, and using
\bma
&-(E+\bar u \times B)\cdot v+\psi\cdot(v\times B)=-(E+u\times B)\cdot v,\\
&\frac{\sigma(\theta)}{R\theta}(E+u\times B)\cdot(E+\bar u \times B)
+\frac{\sigma(\theta)}{R\theta}(E+u\times B)\cdot(\psi\times B)\nnm\\
&\qquad-\sigma(\theta)\frac{|E+u\times B|^2}{R\theta}\frac{\zeta}{\theta}
 =\frac{\sigma(\theta)\bar{\theta}}{R\theta^2}|E+u\times B|^2,
\ema
we have
\bma\label{lower6}
&\frac{d}{d\tau}\int_{\R}\eta+\frac12(E^2+2E\cdot\bar u \times B+B^2)dy
 +\|\sqrt{\p_y\bar{u}_1}(\phi,\psi_1,\zeta)\|^2_{L^2_y}
 +\frac12\sum^3_{i=2}\intra \dy\bar u_1 (E_i^2+B^2_i)dy\nnm\\
&+\epsilon^{1+a}\intra \frac{\sigma(\theta)}{\rho}|n|^2  dy
 +\epsilon^{1+a}\intra  \frac{\sigma(\theta)\bar{\theta}}{R\theta^2}(E+u\times B)^2dy\nnm\\
\leq&\int_{\R}\[\epsilon^{1-a}\p_y(\kappa_2(\theta)\p_y\zeta)\frac{\zeta}{\theta}
  +\epsilon^{1-a}\bigg(\frac13\p_y(\kappa_1(\theta)\dy\psi_1)\psi_1+\sum^3_{i=1}\p_y(\kappa_1(\theta)\dy \psi_i)\psi_i\bigg)\]dy\nnm\\
&-\int_{\R}\[\epsilon\sigma(\theta)\dy(\frac{n}{\rho})(e_1\times B)\cdot\psi
 +\epsilon\dy\sigma(\theta) (\frac{n}{\rho})E_1
 +\epsilon\sigma(\theta)\dy(\frac{n}{\rho})(E+u\times B)_1\frac{\zeta}{\theta}\]dy\nnm\\
&+\int_{\R}\[\psi\cdot\intr v_1v\dy ( L_M^{-1}\Lambda_1)dv
   +H^4_n\frac{\zeta}{\theta}
 +\epsilon^a(E+u\times B)\cdot\int_{\R^3} v \L_M^{-1}\Lambda_2 dv\]dy\nnm\\
&+\int_{\R}\(\sum_{i=1}^3\psi_iH^i_f+\frac{\zeta}{\theta}H^4_f\)dy
 +C\sum_{|\alpha|=1}\|\p^\alpha\bar{u}_1\|_{L^\infty_y}\|(E,B)\|^2_{L^2_y}
:=\sum^5_{i=1}I_i,
\ema
where $H^i_f~(i=1,2,3,4)$, $H^4_n$ are defined in \eqref{Hf}-\eqref{Hn}
and $\Lambda_1,\Lambda_2$ are defined in \eqref{lambda1}-\eqref{lambda1}.

Firstly, for $I_5$, by Lemma \ref{rarefaction4}, we have
\be
I_5=C\sum_{|\alpha|=1}\|\p^\alpha\bar{u}_1\|_{L^\infty_y}\|(E,B)\|^2_{L^2_y}\leq C\frac{\epsilon^a}{\frac{\delta}{\sigma}+\epsilon^a\tau}\mathcal{E}(\tau).
\ee
Next, we will estimate $I_1$ which is the dissipation term. Note that
\bma
&\quad\int_{\R}\epsilon^{1-a}\(\frac13\p_y(\kappa_1(\theta)\dy\psi_1)\psi_1+\sum^3_{i=1}\p_y(\kappa_1(\theta)\dy \psi_i)\psi_i\)dy\nnm\\
&=-\epsilon^{1-a}\int_{\R}\frac13\kappa_1(\theta)|\dy\psi_1|^2+\sum^3_{i=1}\kappa_1(\theta)|\dy \psi_i|^2dy.
\ema
Then by Lemma \ref{rarefaction4}, we have
\bma
\int_{\R}\epsilon^{1-a}\p_y(\kappa_2(\theta)\p_y\zeta)\frac{\zeta}{\theta}dy
=&-\epsilon^{1-a}\int_{\R}\frac{\kappa_2(\theta)}{\theta}|\p_y\zeta|^2dy
  +\epsilon^{1-a}\int_{\R}\frac{\kappa_2(\theta)\zeta\p_y\zeta\p_y\theta}{\theta^2} dy\nnm\\
\leq&-\epsilon^{1-a}\int_{\R}\frac{\kappa_2(\theta)}{\theta}|\p_y\zeta|^2dy
  +C\epsilon^{1-a}\int_{\R}\zeta|\p_y\zeta|^2+\zeta\p_y\zeta\p_y\bar{\theta} dy\nnm\\
\leq&-\epsilon^{1-a}\int_{\R}\frac{\kappa_2(\theta)}{\theta}|\p_y\zeta|^2dy
  +C\mathcal{E}^{\frac12}(\tau)\mathcal{D}(\tau)\nnm\\
&+\lambda\epsilon^{1-a}\|\p_y\zeta\|^2_{L^2_y}
  +C\lambda^{-1}\frac{\epsilon^{1+a}}{(\frac{\delta}{\sigma}+\epsilon^a\tau)^2}\|\zeta\|^2_{L^2_y},
\ema
where $\lambda>0$ is a small constant.

For $I_2$, by Lemma \ref{rarefaction4}, we have for $\frac{\delta}{\sigma}\leq 1$,
\bma
I_2
\leq&\,C\int_{\R}\epsilon|\p_y(n,\phi)||(E,B)||(\psi,\zeta)|
  +\epsilon|\p_y\bar{\rho}||n||(E,B)||(\psi,\zeta)|+\epsilon|\p_y\theta||n||E|dy\nnm\\
\leq&\,C(\frac{\delta}{\sigma}+\epsilon^a\tau)\epsilon^{2-a}\int_{\R}|\p_y(n,\phi)|^2|(E,B)|^2dy
  +C\frac{\epsilon^a}{(\frac{\delta}{\sigma}+\epsilon^a\tau)}\|(\psi,\zeta)\|^2_{L^2_y}\nnm\\
 &+C\epsilon^{1+a}\int_{\R}|n|^2|(E,B)|^2dy
  +C\epsilon^{1-a}\int_{\R}|\p_y(\bar{\rho},\bar{\theta})|^2|(\psi,\zeta)|^2dy\nnm\\
 &+\lambda\epsilon^{1-a}\|\p_y\zeta\|^2_{L^2_y}+C\lambda^{-1}\epsilon^{1+a}\int_{\R}|n|^2|E|^2dy
  +\lambda\epsilon^{1+a}\|n\|^2_{L^2_y}+C\lambda^{-1}\epsilon^{1-a}\int_{\R}|\dy\bar{\theta}|^2|E|^2dy\nnm\\
\leq&\,{ C(1+T)\epsilon\mathcal{E}(\tau)\mathcal{D}(\tau)}
  +C\lambda^{-1}\mathcal{E}(\tau)\mathcal{D}(\tau)
  +\lambda\mathcal{D}(\tau)\nnm\\
&+C\frac{\epsilon^a}{\frac{\delta}{\sigma}+\epsilon^a\tau}\|(\psi,\zeta)\|^2_{L^2_y}
  +C\lambda^{-1}\frac{\epsilon^{1+a}}{(\frac{\delta}{\sigma}+\epsilon^a\tau)^2}\|(\psi,\zeta,E)\|^2_{L^2_y}.
\ema
Next, we will estimate $I_4$ in \eqref{lower6}. From the definition of $H^i_f~(i=1,2,3,4)$ in \eqref{Hf} and Lemma \ref{rarefaction4},
we have for $\frac{\delta}{\sigma}\leq 1$,
\bma
I_4\leq&\,C\epsilon^{1-a}\int_{\R}\(|\p^2_y(\bar{u}_1,\bar{\theta})|+|\p_y(\bar{u}_1,\bar{\theta})|^2\)|(\psi_1,\zeta)|
 +|\p_y(\bar{u}_1,\bar{\theta})||(\psi_1,\zeta)||\p_y\zeta|
 +|\zeta||\p_y\psi|^2dy\nnm\\
\leq&\,C\frac{\epsilon^a}{(\frac{\delta}{\sigma}+\epsilon^a\tau)}\|(\psi_1,\zeta)\|^2_{L^2_y}
  +C(\frac{\delta}{\sigma}+\epsilon^a\tau)\epsilon^{2-3a}\int_{\R}|\p^2_y(\bar{u}_1,\bar{\theta})|^2+|\p_y(\bar{u}_1,\bar{\theta})|^4dy\nnm\\
&+\lambda\epsilon^{1-a}\|\p_y\zeta\|^2_{L^2_y}
 +C\lambda^{-1}\epsilon^{1-a}\int_{\R}|\p_y(\bar{u}_1,\bar{\theta})|^2|(\psi_1,\zeta)|^2dy
 +C\lambda^{-1}\mathcal{E}^{\frac12}(\tau)\mathcal{D}(\tau)\nnm\\
\leq&\,C\frac{\epsilon^a}{(\frac{\delta}{\sigma}+\epsilon^a\tau)}\|(\psi_1,\zeta)\|^2_{L^2_y}
 +C(1+T)\frac{\epsilon^2}{\delta(\frac{\delta}{\sigma}+\epsilon^a\tau)^2}
 +\lambda\epsilon^{1-a}\|\p_y\zeta\|^2_{L^2_y}\nnm\\
&+C\lambda^{-1}\frac{\epsilon^{1+a}}{(\frac{\delta}{\sigma}+\epsilon^a\tau)^2}\|(\psi_1,\zeta)\|^2_{L^2_y}
 +C\lambda^{-1}\mathcal{E}^{\frac12}(\tau)\mathcal{D}(\tau).
\ema
Then, we need to estimate the remaining term $I_3$ in \eqref{lower6}.
For the terms involving $H^4_n$, by using the self-adjoint property of $L_M^{-1}$ and the Burnett functions in \eqref{Burnett1}-\eqref{Burnett2}, we have
\bma\label{lambda1.1-1}
&\int_{\R}\frac{\zeta}{\theta}\int_{\R^3}\(\frac{1}{2}v_{1}|v|^{2}-v_{1}u\cdot v\)\p_y\(L_{M}^{-1}\Lambda_1\) dvdy\nnm\\
=&-\int_{\R} \p_y\(\frac{\zeta}{\theta}\)\int_{\R^3}\(\frac{1}{2}v_{1}|v|^{2}-v_{1}u\cdot v\)L_{M}^{-1}\Lambda_1 dvdy
 +\sum^3_{i=1}\int_{\R}\p_y u_i\frac{\zeta}{\theta}\int_{\R^3}v_{1}v_iL_{M}^{-1}\Lambda_1 dvdy\nnm\\
=&-\int_{\R} \p_y\(\frac{\zeta}{\theta}\)(R\theta)^{\frac32}\int_{\R^3}\A_{1}\left(\frac{v-u}{\sqrt{R\theta}}\right)\frac{\Lambda_1}{M}dv
 +\sum^3_{i=1}\int_{\R}\p_y u_i\frac{\zeta}{\theta}R\theta\int_{\R^3}\B_{1i}\(\frac{v-u}{\sqrt{R\theta}}\)\frac{\Lambda_1}{M}dvdy\nnm\\
:=&\,I_{\A}+I_{\B}.
\ema
In the following we only estimate $I_{\A}$, the term $I_{\B}$ can be treated  in the same way.
From the definition of $\Lambda_1$ in \eqref{lambda1}, we have
\bma\label{lambda1.1-2}
I_{\A}
=&-\int_{\R} \p_y\(\frac{\zeta}{\theta}\)(R\theta)^{\frac32}\int_{\R^3}\A_{1}\left(\frac{v-u}{\sqrt{R\theta}}\right)
  \frac{\epsilon^{1-a}\dtau G_1}{M}dvdy\nnm\\
&-\int_{\R} \p_y\(\frac{\zeta}{\theta}\)(R\theta)^{\frac32}\int_{\R^3}\A_{1}\left(\frac{v-u}{\sqrt{R\theta}}\right)
  \frac{\epsilon^{1-a}P_1(v_1\dy G_1)}{M}dvdy\nnm\\
&+\int_{\R} \p_y\(\frac{\zeta}{\theta}\)(R\theta)^{\frac32}\int_{\R^3}\A_{1}\left(\frac{v-u}{\sqrt{R\theta}}\right)
  \frac{Q(G_1,G_1)}{M}dvdy\nnm\\
&-\int_{\R} \p_y\(\frac{\zeta}{\theta}\)(R\theta)^{\frac32}\int_{\R^3}\A_{1}\left(\frac{v-u}{\sqrt{R\theta}}\right)
  \frac{\epsilon P_1[(E+v\times B)\cdot\Tdv G_2]}{M}dvdy\nnm\\
:=&\,I^1_{\A}+I^2_{\A}+I^3_{\A}+I^4_{\A}.
\ema
For $I^1_{\A}$, recalling that $G_1=\bar{G}_1+\sqrt{\mu}\gt$, by  Lemma  \ref{Burnett3} and \eqref{Gb2}, we have
\bma
I^1_{\A}
&= \int_{\R} \p_y\(\frac{\zeta}{\theta}\)
 (R\theta)^{\frac32}\int_{\R^3}\A_{1}\left(\frac{v-u}{\sqrt{R\theta}}\right)\frac{\epsilon^{1-a}(\dtau\bar{G}_1+\sqrt{\mu}\dtau\gt)}{M}dvdy\nnm\\
&\leq \lambda\epsilon^{1-a}(\|\p_y\zeta\|^2_{L^2_y}+\|\zeta \p_y\theta\|^2_{L^2_y})
  +C\lambda^{-1}\epsilon^{1-a}\|\mu^{-\frac12}\dtau\bar{G}_1\|^2
  +C\lambda^{-1}\epsilon^{1-a}\|\v^{-\frac12}\dtau\gt\|^2\nnm\\
&\leq \lambda\mathcal{D}(\tau)
   +\lambda\mathcal{E}(\tau)\mathcal{D}(\tau)
   +\lambda\frac{\epsilon^{1+a}}{(\frac{\delta}{\sigma}+\epsilon^a\tau)^2}\|\zeta\|^2_{L^2_y}
   +C\lambda^{-1}\epsilon^{1-a}\|\p_\tau\gt\|^2_\sigma\nnm\\
&\quad+C\lambda^{-1}\frac{\epsilon^3}{\delta(\frac{\delta}{\sigma}+\epsilon^a\tau)^2}
   +C\lambda^{-1}\sigma^2\frac{\epsilon^2}{\delta^2}\mathcal{D}(\tau).
\ema
Similarly, we have
\bma
I^2_{\A}=&\,\int_{\R} \p_y\(\frac{\zeta}{\theta}\)
 (R\theta)^{\frac32}\int_{\R^3}\A_{1}\left(\frac{v-u}{\sqrt{R\theta}}\right)\frac{\epsilon^{1-a}P_1(\p_y G_1)}{M}dvdy\nnm\\
\leq&\,\lambda\mathcal{D}(\tau)
   +\lambda\mathcal{E}(\tau)\mathcal{D}(\tau)
   +\lambda\frac{\epsilon^{1+a}}{(\frac{\delta}{\sigma}+\epsilon^a\tau)^2}\|\zeta\|^2_{L^2_y}
   +C\lambda^{-1}\epsilon^{1-a}\|\p_y\gt\|^2_\sigma\nnm\\
&+C\lambda^{-1}\frac{\epsilon^3}{\delta(\frac{\delta}{\sigma}+\epsilon^a\tau)^2}
   +C\lambda^{-1}\sigma^2\frac{\epsilon^2}{\delta^2}\mathcal{D}(\tau).
\ema
And then by using \eqref{decompose}, \eqref{fast decay} and Lemma \ref{QGG}, we have
\bma
I^3_{\A}&=\int_{\R} \p_y\(\frac{\zeta}{\theta}\)
 (R\theta)^{\frac32}\int_{\R^3}\A_{1}\(\frac{v-u}{\sqrt{R\theta}}\)\frac{Q(G_1,G_1)}{M}dvdy\nnm\\
&\leq \lambda\mathcal{D}(\tau)
   +\lambda\frac{\epsilon^{1+a}}{(\frac{\delta}{\sigma}+\epsilon^a\tau)^2}\|\zeta\|^2_{L^2_y}
   +C\lambda^{-1}\frac{\epsilon^3}{\delta(\frac{\delta}{\sigma}+\epsilon^a\tau)^2}
   +C\lambda^{-1}\(\sigma^2\frac{\epsilon^2}{\delta^2}+\mathcal{E}(\tau)\)\mathcal{D}(\tau).
\ema
For $I^4_{\A}$, by \eqref{decompose}, \eqref{fast decay}, using the fact $\nabla_v\cdot(v\times B)=0$
and $\p_{v_i}M=-\frac{v_i-u_i}{R\theta}M(i=1,2,3)$, we have
\bma\label{lambda1.1-5}
I^4_{\A}&= \int_{\R} \p_y\(\frac{\zeta}{\theta}\)
 (R\theta)^{\frac32}\int_{\R^3}\A_{1}\left(\frac{v-u}{\sqrt{R\theta}}\right)\frac{\epsilon P_1[(E+v\times B)\cdot\Tdv G_2]}{M}dvdy\nnm\\
&= \epsilon \int_{\R} \p_y\(\frac{\zeta}{\theta}\)
 (R\theta)^{\frac32}\int_{\R^3}\A_{1}\left(\frac{v-u}{\sqrt{R\theta}}\right)\frac{(E+v\times B)\cdot\Tdv G_2}{M}dvdy\nnm\\
 &\quad+\epsilon \int_{\R} \p_y\(\frac{\zeta}{\theta}\)
 (R\theta)^{\frac32}\int_{\R^3}\frac{1}{M}\A_{1}\left(\frac{v-u}{\sqrt{R\theta}}\right) \chi_i dv
   \sum^4_{i=1}\intr (E+v\times B)G_2\cdot \Tdv \(\frac{\chi_i}{M}\) dvdy\nnm\\
&\leq \lambda\mathcal{D}(\tau)
   +\lambda\mathcal{E}(\tau)\mathcal{D}(\tau)
   +\lambda\frac{\epsilon^{1+a}}{(\frac{\delta}{\sigma}+\epsilon^a\tau)^2}\|\zeta\|^2_{L^2_y}
   +C\lambda^{-1}\epsilon^2\mathcal{E}(\tau)\mathcal{D}(\tau).
\ema
Combining \eqref{lambda1.1-2}-\eqref{lambda1.1-5}, we have
\bma
I_{\A}=&\,\int_{\R} \p_y\(\frac{\zeta}{\theta}\)(R\theta)^{\frac32}\int_{\R^3}\A_1\(\frac{v-u}{\sqrt{R\theta}}\)\frac{\Lambda_1}{M}dvdy\nnm\\
\leq&\,\lambda\mathcal{D}(\tau)
   +\lambda\frac{\epsilon^{1+a}}{(\delta+\epsilon^a\tau)^2}\|\zeta\|^2_{L^2_y}
   +C\lambda^{-1}\frac{\epsilon^3}{\delta(\frac{\delta}{\sigma}+\epsilon^a\tau)^2}\nnm\\
  &+C\lambda^{-1}\(\sigma^2\frac{\epsilon^2}{\delta^2}+\mathcal{E}(\tau)\)\mathcal{D}(\tau)
   +C\lambda^{-1}\epsilon^{1-a}\sum_{|\alpha|=1}\|\p^\alpha\gt\|^2_\sigma.
\ema

Similar to the estimation on $I_{\A}$, we can estimate the second term $I_{\B}$ in \eqref{lambda1.1-1} as follows:
\bma
I_{\B}=&\,\sum^3_{i=1}\int_{\R}\p_y u_i\frac{\zeta}{\theta}R\theta\int_{\R^3}\B_{1i}\(\frac{v-u}{\sqrt{R\theta}}\)\frac{\Lambda_1}{M}dvdy\nnm\\
\leq&\,\lambda\frac{\epsilon^{1+a}}{(\frac{\delta}{\sigma}+\epsilon^a\tau)^2}\|\zeta\|^2_{L^2_y}
   +C\lambda^{-1}\frac{\epsilon^3}{\delta(\frac{\delta}{\sigma}+\epsilon^a\tau)^2}\nnm\\
 &+C\lambda^{-1}\(\sigma^2\frac{\epsilon^2}{\delta^2}+\mathcal{E}(\tau)\)\mathcal{D}(\tau)
   +C\lambda^{-1}\epsilon^{1-a}\sum_{|\alpha|=1}\|\p^\alpha\gt\|^2_\sigma.\nnm
\ema
Then we have
\bma
&\int_{\R}\frac{\zeta}{\theta}\int_{\R^3}\(\frac{1}{2}v_{1}|v|^{2}-v_{1}u\cdot v\)\p_y\(L_{M}^{-1}\Lambda_1\) dvdy\nnm\\
\leq&\,\lambda\mathcal{D}(\tau)
   +\lambda\frac{\epsilon^{1+a}}{(\frac{\delta}{\sigma}+\epsilon^a\tau)^2}\|\zeta\|^2_{L^2_y}
   +C\lambda^{-1}\frac{\epsilon^3}{\delta(\frac{\delta}{\sigma}+\epsilon^a\tau)^2}\nnm\\
  &+C\lambda^{-1}\(\sigma^2\frac{\epsilon^2}{\delta^2}+\mathcal{E}(\tau)\)\mathcal{D}(\tau)
   +C\lambda^{-1}\epsilon^{1-a}\sum_{|\alpha|=1}\|\p^\alpha\gt\|^2_\sigma.\nnm
\ema
Similarly, we have
\bma
&-\sum^3_{i=1}\int_{\R}\psi_i\intr v_1v_i\dy (L_M^{-1}\Lambda_1)dvdy
 =\sum^3_{i=1}\int_{\R}\dy\psi_i\intr v_1v_i L_M^{-1}\Lambda_1 dvdy\nnm\\
\leq&\,\lambda\mathcal{D}(\tau)
   +C\lambda^{-1}\frac{\epsilon^3}{\delta(\frac{\delta}{\sigma}+\epsilon^a\tau)^2}
   +C\lambda^{-1}\(\sigma^2\frac{\epsilon^2}{\delta^2}+\mathcal{E}(\tau)\)\mathcal{D}(\tau)
  +C\lambda^{-1}\epsilon^{1-a}\sum_{|\alpha|=1}\|\p^\alpha\gt\|^2_\sigma.\nnm
\ema
Then by \eqref{lambda2}, Lemma \ref{Burnett5}, Lemma \ref{Gamma}, we have
\bma
&\quad\epsilon^a\int_{\R} (E+u\times B)\cdot\int_{\R^3} v \L_M^{-1}\Lambda_2 dvdy
 +\epsilon^a\int_{\R}\frac{\zeta}{\theta}\intr [E+(u\times B)]\cdot v \L_M^{-1}\Lambda_2dvdy\nnm\\
&\leq\lambda\mathcal{D}(\tau)
 +C\lambda^{-1}\epsilon^{1-a}\sum_{|\alpha|=1}\|\p^\alpha\g\|^2_\sigma
 +C\lambda^{-1}\mathcal{E}(\tau)\mathcal{D}(\tau)
 +C\lambda^{-1}\frac{\epsilon^{1+a}}{(\frac{\delta}{\sigma}+\epsilon^a\tau)^2}\|(n,E,B)\|^2_{L^2_y}.
\ema
Combining the estimates for $I_i$, $i=1,2,...,5$ and by choosing $\sigma\frac{\epsilon}{\delta}$ small enough,
we complete the proof of Lemma \ref{lower1-1}.
\end{proof}

\subsection{Low order energy estimates on $n$}
\begin{lem}\label{n1}
Under the same assumptions as in Proposition \ref{priori3} and letting $\sigma\frac{\epsilon}{\delta}$ small enough,
$\frac{\delta}{\sigma}\leq 1$, we have
\bma
&\frac{d}{d\tau}\|n\|^2_{L^2_y} +\|\sqrt{\dy\bar{u}_1}n\|^2_{L^2_y}
 +\epsilon^{1-a}\|\dy n\|^2_{L^2_y}
 +\epsilon^{1+a}\|n\|^2_{L^2_y}\nnm\\
\leq&\,\lambda\mathcal{D}(\tau)
 +C\lambda^{-1}\epsilon^{1-a}\sum_{|\alpha|=1}\|\p^\alpha\g\|^2_\sigma
 +C\frac{\epsilon^a}{(\frac{\delta}{\sigma}+\epsilon^a\tau)}\mathcal{E}(\tau)
 +C(1+T)\epsilon^{-1}\mathcal{E}(\tau)\mathcal{D}(\tau),\nnm
\ema
where $\lambda>0$ is a small constant.
\end{lem}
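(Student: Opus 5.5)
We will take the $L^2_y$ inner product of the $n$-equation $\eqref{pert-sys}_5$ with $n$ and integrate by parts.

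\textbf{Transport term.} Since $\dy(nu_1)n=\frac12\dy(u_1n^2)+\frac12\dy u_1\,n^2$ and $u_1=\bar u_1+\psi_1$, this term contributes $\frac12\int\dy\bar u_1 n^2dy$ to the left. The leftover $\frac12\int\dy\psi_1 n^2dy$ is bounded by $\|\dy\psi_1\|_{L^2_y}\|n\|_{L^2_y}\|n\|_{L^\infty_y}$. After Sobolev, and using the factors $\epsilon^{1-a}$ and $\epsilon^{1+a}$ in $\mathcal D$, this is at most $C\epsilon^{-1}\mathcal E\mathcal D$.

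\textbf{Diffusion term.} Integrating by parts gives $\epsilon^{1-a}\int\sigma(\theta)\dy(n/\rho)\dy n\,dy$. Expanding $\dy(n/\rho)=\dy n/\rho-n\dy\rho/\rho^2$ yields the dissipation $\epsilon^{1-a}\|\dy n\|^2_{L^2_y}$. The commutator involves $\dy\rho=\dy\bar\rho+\dy\phi$. The $\dy\bar\rho$ part is handled with Lemma \ref{rarefaction4} exactly as in $I_2$ of Lemma \ref{lower1-1}, giving $\lambda\epsilon^{1-a}\|\dy n\|^2+C\lambda^{-1}\epsilon^{1+a}(\frac\delta\sigma+\epsilon^a\tau)^{-2}\|n\|^2$. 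Under $\frac\delta\sigma\le1$ and $\epsilon(1+T)$ small, this is $\le C\frac{\epsilon^a}{\frac\delta\sigma+\epsilon^a\tau}\mathcal E$. The $\dy\phi$ part is cubic and goes into $\mathcal E^{1/2}\mathcal D$.

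\textbf{Electric term.} Integrating by parts gives $-\epsilon\int\sigma(\theta)\frac{E_1+(u\times B)_1}{R\theta}\dy n\,dy$. Here I will use $\dy E_1=\epsilon^a n$, so that $\epsilon\int\frac{\sigma(\theta)}{R\theta}E_1\dy n\,dy=-\epsilon^{1+a}\int\frac{\sigma(\theta)}{R\theta}n^2dy-\epsilon\int\dy(\frac{\sigma(\theta)}{R\theta})E_1n\,dy$. The first piece produces the dissipation $\epsilon^{1+a}\|n\|^2_{L^2_y}$ with a good sign. In the second piece, the $\dy\bar\theta$ part is bounded by $\epsilon|\dy\bar\theta||E||n|\le C\frac{\epsilon^a}{\frac\delta\sigma+\epsilon^a\tau}\mathcal E$, using $\epsilon^{1-a}\le1$ together with the rarefaction decay. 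The $\dy\zeta$ part is cubic. Because $B_1=0$, we have $(u\times B)_1=u_2B_3-u_3B_2=\psi_2B_3-\psi_3B_2$. That contribution is therefore cubic: $\epsilon|\psi||B||\dy n|\le\lambda\epsilon^{1-a}\|\dy n\|^2+C\lambda^{-1}\epsilon^{1+a}\|\psi\|_{L^\infty}^2\|B\|^2$, and then $\le(1+T)\epsilon^{-1}\mathcal E\mathcal D$. The $(1+T)$ factor enters by trading $\epsilon^{1+a}$ against the available dissipation, as in $I_2$ where $(\frac\delta\sigma+\epsilon^a\tau)\le C(1+T)$ was used.

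\textbf{Main obstacle: the right-hand side.} After integration by parts the source term is $\int\dy n\int v_1\L_M^{-1}\Lambda_2\,dv\,dy$. I expand $\Lambda_2$ from \eqref{lambda2} as in $I_3$ of Lemma \ref{lower1-1}.
\begin{itemize}
\item The pieces $\epsilon^{1-a}\dtau G_2$ and $\epsilon^{1-a}P_r(v_1\dy G_2)$ give $\lambda\epsilon^{1-a}\|\dy n\|^2+C\lambda^{-1}\epsilon^{1-a}\sum_{|\alpha|=1}\|\p^\alpha\g\|_\sigma^2$, using Lemma \ref{Burnett5} and self-adjointness.
\item The piece $Q(F_2,G_1)$ is handled by Lemma \ref{Gamma} and is quadratic: $\mathcal E\mathcal D$.
\item The piece $\epsilon^{1-a}\frac n\rho P_r(\dtau M+v_1\dy M)$ splits into a $\bar M$-part and a perturbation part. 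The $\bar M$-part is $\epsilon^{1-a}|n||\p(\bar\rho,\bar u,\bar\theta)||\dy n|$, giving the decay term $C\frac{\epsilon^a}{\frac\delta\sigma+\epsilon^a\tau}\mathcal E$. The perturbation part is cubic.
\item The field piece $\epsilon(E+v\times B)\cdot\Tdv G_1$ is bounded by $\epsilon\|(E,B)\|_{L^\infty}\|\gt\|_\sigma\|\dy n\|$ plus a $\bar G_1$ part carrying the factor $\epsilon^{1-a}\dy(\bar u,\bar\theta)$. Both fit into $(1+T)\epsilon^{-1}\mathcal E\mathcal D$ or $\lambda\mathcal D$.
\end{itemize}
Collecting all terms and choosing $\lambda$ small yields the stated inequality.
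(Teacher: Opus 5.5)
Your overall architecture is the paper's: test the $n$-equation with $n$, extract $\epsilon^{1+a}\|n\|^2_{L^2_y}$ from $\dy E_1=\epsilon^a n$, use $(u\times B)_1=\psi_2B_3-\psi_3B_2$, and treat $\Lambda_2$ through self-adjointness and Lemma \ref{Burnett5}. However, the step that produces the characteristic term $C(1+T)\epsilon^{-1}\mathcal E\mathcal D$ is wrong.

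\textbf{The transport term.} The remainder $\frac12\int\dy\psi_1\,n^2\,dy$ is cubic and has no $\epsilon$ prefactor. A cubic quantity cannot be bounded by the quartic $\epsilon^{-1}\mathcal E\mathcal D$ alone. Your recipe, using $\epsilon^{1-a}\|\dy\psi_1\|^2_{L^2_y}\le\mathcal D$, $\epsilon^{1+a}\|n\|^2_{L^2_y}\le\mathcal D$ and $\|n\|_{L^\infty_y}\le C\mathcal E^{1/2}$, actually gives $C\epsilon^{-1}\mathcal E^{1/2}\mathcal D$. Under \eqref{priori1} you only know $\epsilon^{-1}\mathcal E^{1/2}\lesssim(1+T)(\epsilon^{-a/2}+\epsilon^{(a-1)/2})\delta^{-3/2}$, which is large. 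So this term can neither be absorbed into $\lambda\mathcal D$ nor matched with anything on the right side of the lemma. Interpolating $\|n\|_{L^\infty_y}$ against $\|\dy n\|_{L^2_y}$ only improves the prefactor to $\epsilon^{\frac a2-1}\mathcal E^{1/2}$, which is still not small.

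The missing step is to split with the rarefaction decay weight, as the paper does in \eqref{In1}:
\begin{align*}
C\|n\|_{L^\infty_y}\|\dy\psi_1\|_{L^2_y}\|n\|_{L^2_y}
&\le C\frac{\epsilon^a}{\frac\delta\sigma+\epsilon^a\tau}\|n\|^2_{L^2_y}
+C\Big(\frac\delta\sigma+\epsilon^a\tau\Big)\epsilon^{-a}\|n\|^2_{L^\infty_y}\|\dy\psi_1\|^2_{L^2_y}\\
&\le C\frac{\epsilon^a}{\frac\delta\sigma+\epsilon^a\tau}\mathcal E+C(1+T)\epsilon^{-1}\mathcal E\mathcal D.
\end{align*}
The second line uses $\frac\delta\sigma+\epsilon^a\tau\le 1+T$, $\|n\|^2_{L^\infty_y}\le C\mathcal E$ and $\|\dy\psi_1\|^2_{L^2_y}\le\epsilon^{a-1}\mathcal D$. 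This term, not the $(u\times B)_1$ term, is the source of $(1+T)\epsilon^{-1}\mathcal E\mathcal D$. It is exactly what the smallness of $(1+T)^3(\epsilon^{1-a}+\epsilon^a)\delta^{-3}$ in \eqref{condition} is designed to absorb.

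\textbf{Smaller, repairable slips of the same kind.}
\begin{itemize}
\item In your $(u\times B)_1$ term, $\epsilon^{1+a}\|\psi\|^2_{L^\infty_y}\|B\|^2_{L^2_y}$ is not $\lesssim\mathcal E\mathcal D$, since neither $\|\psi\|_{L^2_y}$ nor $\|B\|_{L^2_y}$ appears in $\mathcal D$. It goes into the Gronwall term via $\epsilon^{1+a}\le\epsilon(1+T)\frac{\epsilon^a}{\frac\delta\sigma+\epsilon^a\tau}$. Alternatively, split $\epsilon|\psi||B||\dy n|$ with the decay weight as above, which gives $(1+T)\epsilon\,\mathcal E\mathcal D$.
\item The piece $\frac n\rho Q(M,\bar G_1)$ inside $Q(F_2,G_1)$, and the $\bar G_1$-part of $\epsilon(E+v\times B)\cdot\Tdv G_1$, are linear in the perturbation times the background. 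They produce $C\frac{\epsilon^{1+a}}{(\frac\delta\sigma+\epsilon^a\tau)^2}\|(n,E,B)\|^2_{L^2_y}$, not $\mathcal E\mathcal D$ or $\lambda\mathcal D$.
\item Converting such terms, and your diffusion commutator, into $C\frac{\epsilon^a}{\frac\delta\sigma+\epsilon^a\tau}\mathcal E$ uses $\frac{\epsilon}{\frac\delta\sigma+\epsilon^a\tau}\le\sigma\frac\epsilon\delta$, i.e. the hypothesis that $\sigma\frac\epsilon\delta$ is small. The conditions $\frac\delta\sigma\le1$ and $\epsilon(1+T)\ll1$ that you cite bound the denominator from above, which is the wrong direction here.
\end{itemize}
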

\begin{proof}
For $n$, multiplying \eqref{pert-sys}$_6$ by $n$, we have
\bma
&\frac12 \frac{d}{d\tau}\intra n^2dy+\frac12\intr \dy \bar{u}_1 n^2dy
 +\epsilon^{1-a}\intra \frac{\sigma(\theta)}{\rho}|\dy n|^2dy
 +\epsilon^{1+a}\int_{\R}\frac{\sigma(\theta)}{R\theta}|n|^2dy\nnm\\
=&-\frac12\intra \dy \psi_1 n^2dy+\epsilon^{1-a}\intra  \frac{\sigma(\theta)}{\rho^2}n\dy  n\dy\rho dy
 -\epsilon\int_{\R}\dy\(\frac{\sigma(\theta)}{R\theta}\)\(E_1+(u\times B)_1\)ndy\nnm\\
&+\epsilon\int_{\R}\frac{\sigma(\theta)}{R\theta}n\dy(\psi_2B_3-\psi_3B_2)dy
 +\int_{\R}\dy n\int_{\R^3} v_1\L_M^{-1}\Lambda_2 dvdy\nnm\\
=&\,I_{n}^1+I_{n}^2+I_{n}^3+I_{n}^4+I^5_n,
\ema
where we have used the fact $\dy E_1=\epsilon^a n$ and
\bma
&-\epsilon\dy\(\sigma(\theta)\frac{E_1+(u\times B)_1}{R\theta}\)n\nnm\\
=&-\epsilon\dy\(\frac{\sigma(\theta)}{R\theta}\)\(E_1+(u\times B)_1\)n
 -\epsilon^{1+a}\frac{\sigma(\theta)}{R\theta}n^2
 +\epsilon\frac{\sigma(\theta)}{R\theta}n\dy(\psi_2B_3-\psi_3B_2).\nnm
\ema
By Lemma \ref{rarefaction4}, it holds that for $\frac{\delta}{\sigma}\leq 1$,
\bma
I_n^1
&\leq C\|n\|_{L^\infty_y}\|\dy\psi_1\|_{L^2_y}\|n\|_{L^2_y}
\leq C\frac{\epsilon^a}{(\frac{\delta}{\sigma}+\epsilon^a\tau)}\|n\|^2_{L^2_y}
     +C(\frac{\delta}{\sigma}+\epsilon^a\tau)\epsilon^{-a}\|n\|^2_{L^\infty_y}\epsilon^{a-1}\epsilon^{1-a}\|\dy\psi_1\|^2_{L^2_y}\nnm\\
&\leq C\frac{\epsilon^a}{(\frac{\delta}{\sigma}+\epsilon^a\tau)}\|n\|^2_{L^2_y}
 + C(1+T)\epsilon^{-1}\mathcal{E}(\tau)\mathcal{D}(\tau),\label{In1}\\
I_n^2
&\leq C\epsilon^{1-a}\int_{\R}(|\dy\bar\rho|+|\dy \phi|)|n||\dy n|dy
 \leq\lambda\epsilon^{1-a}\|\dy n\|^2_{L^2_y}+C\lambda^{-1}\epsilon^{1-a}\int_{\R}|\dy\bar\rho|^2|n|^2+|\dy\phi|^2|n|^2dy\nnm\\
&\leq\lambda\mathcal{D}(\tau)
 +C\lambda^{-1}\frac{\epsilon^{1+a}}{(\frac{\delta}{\sigma}+\epsilon^a\tau)^2}\|n\|^2_{L^2_y}
 +C\lambda^{-1}\mathcal{E}(\tau)\mathcal{D}(\tau),\nnm\\
I_n^3
&=-\epsilon\intra \dy(\frac{\sigma(\theta)}{R\theta})(E_1+(u\times B)_1)ndy
 \leq C\epsilon\int_{\R}(|\dy \bar{\theta}|+|\dy \zeta|)|E+u\times B||n|dy\nnm\\
&\leq\lambda\mathcal{D}(\tau)
   +C\lambda^{-1}\frac{\epsilon^{1+a}}{(\frac{\delta}{\sigma}+\epsilon^a\tau)^2}\|n\|^2_{L^2_y}
   +C\lambda^{-1}\mathcal{E}(\tau)\mathcal{D}(\tau),\nnm\\
I^4_n
&\leq C\epsilon\int_{\R}|n||B||\p_y\psi|+|n||\p_yB||\psi|dy\nnm\\
&\leq \lambda\epsilon^{1-a}\|\p_y\psi\|^2_{L^2_y}+C\lambda^{-1}\epsilon^{1+a}\int_{\R}|n|^2|B|^2dy
 +C\frac{\epsilon^a}{(\frac{\delta}{\sigma}+\epsilon^a\tau)}\|n\|^2_{L^2_y}
 +C(1+T)\epsilon^{2-a}\int_{\R}|\psi|^2|\p_yB|^2dy\nnm\\
&\leq\lambda\mathcal{D}(\tau)+C\lambda^{-1}\mathcal{E}(\tau)\mathcal{D}(\tau)
 +C\frac{\epsilon^a}{(\frac{\delta}{\sigma}+\epsilon^a\tau)}\mathcal{E}(\tau)
 +C(1+T)\epsilon^{1-a}\mathcal{E}(\tau)\mathcal{D}(\tau).\nnm
\ema
And then  by \eqref{lambda2} and Lemma \ref{Burnett5}, we have
$$
I_n^5
\leq  \lambda\epsilon^{1-a}\|\dy n\|^2_{L^2_y}
 +C\lambda^{-1}\epsilon^{1-a}\sum_{|\alpha|=1}\|\p^\alpha\g\|^2_\sigma
 +C\lambda^{-1}\mathcal{E}(\tau)\mathcal{D}(\tau)
 +C\lambda^{-1}\frac{\epsilon^{1+a}}{(\frac{\delta}{\sigma}+\epsilon^a\tau)^2}\|(n,E,B)\|^2.
$$
Thus combining all estimates for $I_n^i$, $i=1,2,...,5$ and letting $\sigma\frac{\epsilon}{\delta}$ small enough, we have proved Lemma \ref{n1}.
\end{proof}
\subsection{Estimates on $ \dy\phi $ and  $ \dtau(\phi,\psi,\zeta,n) $}

In this subsection, we estimate the terms involving time-derivatives and $\phi_y$.
Under the scaling transformation \eqref{Scaling}, subtracting \eqref{Euler3} from \eqref{macro-eq-nond},
$(\phi,\psi,\zeta)(\tau,y)$ satisfy the following system
\bma\label{pert-sys-n}
\left\{\begin{aligned}
&\dtau \phi+u_1\dy\phi+\rho\dy\psi_1+\phi\dy \bar{u}_1+\psi_1\dy \bar{\rho}=0,\\
&\dtau \psi_1+ u_1\dy \psi_1+\psi_1\dy \bar{u}_1+R\dy\zeta+R\frac{\theta}{\rho}\dy\phi
   +R\(\frac{\theta}{\rho}-\frac{\bar{\theta}}{\bar{\rho}}\)\dy \bar{\rho}\\
&\qquad\qquad=\epsilon^a\frac{n}{\rho}(E_1+(u\times B)_1)-\frac{1}{\rho}\intr v_1^2\dy G_1dv
   +\frac{\epsilon^a}{\rho}\intr (v\times B)_1 G_2dv,\\
&\dtau \psi_i+ u_1\dy \psi_i=\epsilon^a\frac{n}{\rho}(E_i+(u\times B)_i)-\frac{1}{\rho}\intr v_1v_i\dy G_1dv
   +\frac{\epsilon^a}{\rho}\intr (v\times B)_i G_2dv,~ i=2,3,\\
&\dtau \zeta+ u_1\dy \zeta+\psi_1\dy \bar{\theta}+R\theta\dy\psi_1+R \zeta\dy \bar{u}_1\\
&\qquad\qquad=\frac{1}{\rho}\intr v_1\[(v\cdot u)-\frac12 |v|^2\]\dy G_1dv+\frac{\epsilon^a}{\rho}\intr [E+(u\times B)]\cdot vG_2dv.
\end{aligned}\right.
\ema

\begin{flushleft}
\textbf{Step 1.} Estimates on $\epsilon^{1-a}\|\dy\phi\|^2_{L^2_y}$.
\end{flushleft}
 We will prove the following lemma.
\begin{lem}\label{pphi1}
Under the same assumptions as in Proposition \ref{priori3}, we have
\bmas
&\quad \epsilon^{1-a}\|\dy\phi\|^2_{L^2_y}
 +\epsilon^{1-a}\frac{d}{d\tau}\int_{\R}\psi_1\p_y\phi dy\nnm\\
&\leq C\epsilon^{1-a}\|\p_y(\psi_1,\zeta)\|^2_{L^2_y}
 +C\frac{\epsilon^{1+a}}{(\frac{\delta}{\sigma}+\epsilon^a\tau)^2}\|(\phi,\psi_1,\zeta)\|^2_{L^2_y}
 +C\frac{\epsilon^3}{\delta(\frac{\delta}{\sigma}+\epsilon^a\tau)^2}\nnm\\
&\quad+ C\(\mathcal{E}(\tau)+\sigma^2\frac{\epsilon^2}{\delta^2}\)\mathcal{D}(\tau)
 +C\epsilon^{1-a}\|\dy\gt\|^2_\sigma.
\emas
\end{lem}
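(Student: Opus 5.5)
The plan is the standard hyperbolic trick of multiplying the momentum equation by $\dy\phi$. Take $\eqref{pert-sys-n}_2$ and multiply it by $\epsilon^{1-a}\dy\phi$. The term $R\frac{\theta}{\rho}\dy\phi$ then produces the good quantity
\[
\epsilon^{1-a}\int_{\R}R\frac{\theta}{\rho}|\dy\phi|^2dy\ \ge\ c\,\epsilon^{1-a}\|\dy\phi\|^2_{L^2_y},
\]
which holds because $(\rho,\theta)$ stays near $(1,\frac32)$ by \eqref{local}.

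For the time derivative, write
\[
\dtau\psi_1\,\dy\phi=\dtau(\psi_1\dy\phi)-\psi_1\dy\dtau\phi .
\]
Integrating by parts in $y$ turns the second piece into $\dy\psi_1\,\dtau\phi$. Then substitute $\dtau\phi$ from $\eqref{pert-sys-n}_1$:
\[
\dtau\phi=-u_1\dy\phi-\rho\dy\psi_1-\phi\dy\bar u_1-\psi_1\dy\bar\rho .
\]
This gives the term $\epsilon^{1-a}\frac{d}{d\tau}\int_{\R}\psi_1\dy\phi\,dy$ together with the following remainders:
\begin{itemize}
\item $\epsilon^{1-a}\rho|\dy\psi_1|^2$, which is bounded by $C\epsilon^{1-a}\|\dy\psi_1\|^2$;
\item $\epsilon^{1-a}u_1\dy\phi\,\dy\psi_1$, handled by Cauchy–Schwarz with a small weight on $\|\dy\phi\|^2$;
\item $\epsilon^{1-a}\dy\psi_1(\phi\dy\bar u_1+\psi_1\dy\bar\rho)$, handled by Young's inequality together with $\|\dy(\bar u,\bar\rho)\|_{L^\infty_y}\le C\epsilon^a(\frac\delta\sigma+\epsilon^a\tau)^{-1}$ from Lemma \ref{rarefaction4}. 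This yields $C\frac{\epsilon^{1+a}}{(\frac\delta\sigma+\epsilon^a\tau)^2}\|(\phi,\psi_1)\|^2$.
\end{itemize}

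The remaining linear terms of $\eqref{pert-sys-n}_2$ times $\epsilon^{1-a}\dy\phi$ are treated as follows. The terms $u_1\dy\psi_1$ and $R\dy\zeta$ are absorbed by Young's inequality into $\frac{c}{4}\epsilon^{1-a}\|\dy\phi\|^2+C\epsilon^{1-a}\|\dy(\psi_1,\zeta)\|^2$. The terms $\psi_1\dy\bar u_1$ and $R(\frac\theta\rho-\frac{\bar\theta}{\bar\rho})\dy\bar\rho$ are bounded by $|(\phi,\psi_1,\zeta)||\dy(\bar u,\bar\rho)|$, which gives the $\frac{\epsilon^{1+a}}{(\cdot)^2}\|(\phi,\psi_1,\zeta)\|^2$ contribution. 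The electromagnetic terms $\epsilon^a\frac n\rho(E_1+(u\times B)_1)$ and $\frac{\epsilon^a}{\rho}\int(v\times B)_1G_2dv$ are cubic. They are bounded by $C\epsilon\int|\dy\phi||n||E+u\times B|$, and similarly with $G_2$, using $L^\infty$ bounds from the a priori assumption; this gives $C\mathcal{E}(\tau)\mathcal{D}(\tau)$ by matching $\epsilon^{1-a}$ and $\epsilon^{1+a}$ (resp. $\epsilon^{a-1}$) weights in $\mathcal D$.

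The main step is the microscopic flux term $-\epsilon^{1-a}\int\dy\phi\frac1\rho\int v_1^2\dy G_1dv\,dy$. Split $G_1=\bar G_1+\sqrt\mu\,\gt$.

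\textbf{The $\gt$ part.} Since $v_1^2\sqrt\mu$ decays rapidly, $|\int v_1^2\sqrt\mu\dy\gt dv|\le C|\dy\gt|_\sigma$. This gives $\frac c4\epsilon^{1-a}\|\dy\phi\|^2+C\epsilon^{1-a}\|\dy\gt\|_\sigma^2$.

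\textbf{The $\bar G_1$ part.} Use the estimates on $\bar G_1$ (\eqref{Gb2}, Lemma \ref{Burnett3}). $\dy\bar G_1$ is of size $\epsilon^{1-a}$ times $|\dy^2(\bar u,\bar\theta)|+|\dy(\bar u,\bar\theta)|(|\dy(\bar u,\bar\theta)|+|\dy(\phi,\psi,\zeta)|)$. The purely background part gives, after Young,
\[
\epsilon^{3-3a}\|\dy^2\bar u\|^2+\dots\le C\frac{\epsilon^3}{\delta(\frac\delta\sigma+\epsilon^a\tau)^2}.
\]
This uses $\|\dy^2\bar u\|^2_{L^2_y}\le C\epsilon^{3a}\delta^{-1}(\frac\delta\sigma+\epsilon^a\tau)^{-2}$ in scaled variables. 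The mixed part produces $\sigma^2\frac{\epsilon^2}{\delta^2}\mathcal D(\tau)$, since $\|\dy\bar u\|_{L^\infty_y}\le C\sigma\epsilon^a/\delta$.

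The main obstacle I anticipate is exactly this bookkeeping of $\bar G_1$ and the powers of $\epsilon$ from the scaling. Once those are checked against Lemma \ref{rarefaction4}, absorbing the $\frac c4$-terms into the left side completes the proof.
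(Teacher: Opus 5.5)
Your proposal is correct and follows the paper's proof essentially step by step. You multiply $\eqref{pert-sys-n}_2$ by $\epsilon^{1-a}\dy\phi$, convert the time-derivative term via $\eqref{pert-sys-n}_1$ and integration by parts, treat the background-coupling and electromagnetic terms by Young's inequality with Lemma \ref{rarefaction4}, and split $G_1=\bar G_1+\sqrt{\mu}\,\gt$ using \eqref{Gb2} for the microscopic flux. The only cosmetic difference is that the paper observes the two $u_1\dy\psi_1\dy\phi$ terms cancel exactly, whereas you bound each by Cauchy--Schwarz; that is equally valid.
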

\begin{proof}
Multiplying \eqref{pert-sys-n}$_2$ by $\epsilon^{1-a}\p_y \phi$ and integrating over $\R_y$, we have
\bma\label{phi1}
&\epsilon^{1-a}\int_{\R}\p_\tau \psi_1\p_y\phi dy+\epsilon^{1-a}\int_{\R}\frac{R\theta}{\rho}|\p_y\phi|^2dy\nnm\\
=&-\epsilon^{1-a}\int_{\R}\(u_1\dy\psi_1+R \p_y\zeta+\psi_1 \dy\bar{u}_1
  +R\(\frac{\theta}{\rho}-\frac{\bar{\theta}}{\bar{\rho}}\) \dy \bar{\rho}\)\p_y\phi dy\nnm\\
&+\epsilon\int_{\R}\frac{n}{\rho}(E_1+(u\times B)_1)\p_y\phi dy
 -\epsilon^{1-a}\int_{\R}\frac{1}{\rho}\p_y\phi\int_{\R^3}v_1^2 \dy G_1 dvdy\nnm\\
& +\epsilon\int_{\R}\frac{1}{\rho}\dy\phi\intr (v\times B)_1 G_2dvdy.
\ema
By \eqref{pert-sys-n}$_1$ and integration by parts, we have
\bma
\int_{\R}\p_\tau \psi_1\p_y\phi dy
=&\frac{d}{d\tau}\int_{\R}\psi_1\p_y\phi dy-\int_{\R}\psi_1\dy\p_\tau\phi dy
 =\frac{d}{d\tau}\int_{\R}\psi_1\p_y\phi dy+\int_{\R}\dy\psi_1\p_\tau\phi dy\nnm\\
=&\frac{d}{d\tau}\int_{\R}\psi_1\p_y\phi dy
  -\int_{\R}\p_y\psi_1\(u_1\dy\phi+\rho\dy\psi_1+\psi_1\dy\bar{\rho}+\phi\dy\bar{u}_1\)dy.\nnm
\ema
Substituting the above equality into \eqref{phi1}, we have
\bma
&\epsilon^{1-a}\frac{d}{d\tau}\int_{\R}\psi_1\p_y\phi dy
 +\epsilon^{1-a}\int_{\R}\frac{R\theta}{\rho}|\p_y\phi|^2dy\nnm\\
=&\,\epsilon^{1-a}\int_{\R}\p_y\psi_1\(u_1\dy\phi+\rho\dy\psi_1+\psi_1\dy\bar{\rho}+\phi\dy\bar{u}_1\)dy\nnm\\
&-\epsilon^{1-a}\int_{\R}\(u_1\dy\psi_1+R \p_y\zeta+\psi_1 \dy\bar{u}_1
  +R\(\frac{\theta}{\rho}-\frac{\bar{\theta}}{\bar{\rho}}\) \dy \bar{\rho}\)\p_y\phi dy\nnm\\
&+\epsilon\int_{\R}\frac{n}{\rho}(E_1+(u\times B)_1)\p_y\phi dy
 -\epsilon^{1-a}\int_{\R}\frac{1}{\rho}\p_y\phi\int_{\R^3}v_1^2 \dy G_1 dvdy\nnm\\
&+\epsilon\int_{\R}\frac{1}{\rho}\dy\phi\intr (v\times B)_1 G_2dvdy
:=\sum_{i=1}^{5}I_{\phi}^{i}.\nnm
\ema
For $I_{\phi}^1,I_{\phi}^2$, by Lemma \ref{rarefaction4} and using the fact that  $(\dy\psi_1u_1\dy\phi-u_1\dy\psi_1\dy\phi)=0$, we have
\bma
I_{\phi}^1+I_{\phi}^2
&\leq C\epsilon^{1-a}\int_{\R}|\p_y\zeta||\p_y\phi|+|\p_y\psi_1|^2
    +|\p_y(\phi,\psi_1)||(\phi,\psi_1,\zeta)||\p_y(\bar{\rho},\bar{u}_1)|dy\nnm\\
&\leq \lambda\epsilon^{1-a}\|\p_y\phi\|^2_{L^2_y}
    +C\lambda^{-1}\epsilon^{1-a}\|\p_y(\psi_1,\zeta)\|^2_{L^2_y}
    +C\lambda^{-1}\frac{\epsilon^{1+a}}{(\frac{\delta}{\sigma}+\epsilon^a\tau)^2}\|(\phi,\psi_1,\zeta)\|^2_{L^2_y}.\nnm
\ema		
For $I_{\phi}^3$, we have,
\bma
\int_{\R}\epsilon\frac{n}{\rho}(E_1+(u\times B)_1)\p_y\phi dy
&\leq \lambda\epsilon^{1-a}\|\p_y\phi\|^2_{L^2_y}+C\lambda^{-1}\epsilon^{1+a}\int_{\R}|n|^2|E+u\times B|^2dy\nnm\\
&\leq \lambda\epsilon^{1-a}\|\p_y\phi\|^2_{L^2_y}+C\lambda^{-1}\mathcal{E}(\tau)\mathcal{D}(\tau).\nnm
\ema
For $I_{\phi}^4$, by $G_1=\bar{G}+\sqrt{\mu}\gt$, we have from \eqref{Gb2} that
\bma
I_{\phi}^4&=-\epsilon^{1-a}\int_{\R}\frac{1}{\rho}\p_y\phi\int_{\R^3}v_1^2 \dy(\bar{G}+\sqrt{\mu}\gt) dvdy\nnm\\
&\leq \lambda\epsilon^{1-a}\|\p_y\phi\|^2_{L^2_y}
   +C\lambda^{-1}\frac{\epsilon^3}{\delta(\delta+\epsilon^a\tau)^2}
   +C\lambda^{-1}\sigma^2\frac{\epsilon^2}{\delta^2}\mathcal{D}(\tau)
   +C\lambda^{-1}\epsilon^{1-a}\|\dy\gt\|^2_\sigma.\nnm
\ema
Similarly, by $G_2=\sqrt{\mu}\g$, we have
\be
\I_{\phi}^5=\epsilon\int_{\R}\frac{1}{\rho}\dy\phi\intr (v\times B)_1(\sqrt{\mu}\g)dvdy
\leq\lambda\epsilon^{1-a}\|\p_y\phi\|^2_{L^2_y}
 +C\lambda^{-1}\epsilon^2\mathcal{E}(\tau)\mathcal{D}(\tau).\nnm
\ee
Combining the  estimates of $I^i_{\phi}~(i=1,2,3,4,5)$, and by letting $\lambda$, $\epsilon$ small enough,
we complete the proof of the  lemma.
\end{proof}

\begin{flushleft}
\textbf{Step 2. } Estimate on  $\epsilon^{1-a}\|\p_\tau(\phi,\psi,\zeta,n)\|^2_{L^2_y}$.
\end{flushleft}		
\begin{lem}\label{time1}
Under the same assumptions as in Proposition \ref{priori3}, we have
\begin{align*}
&\epsilon^{1-a}\|\p_\tau(\psi,\phi,\zeta,n)\|_{L^2_y}\nnm\\
\leq&\,C\epsilon^{1-a}\|\p_y(\psi,\phi,\zeta,n)\|^2_{L^2_y}
  +C\frac{\epsilon^{1+a}}{(\frac{\delta}{\sigma}+\epsilon^a\tau)^2}\|(\phi,\psi,\zeta,n)\|^2_{L^2_y}
  +C\frac{\epsilon^3}{\delta(\frac{\delta}{\sigma}+\epsilon^a\tau)^2}\\
&+C\(\mathcal{E}(\tau)+\sigma^2\frac{\epsilon^2}{\delta^2}\)\mathcal{D}(\tau)
   +C\epsilon^{1-a}\|\dy(\gt,\g)\|^2_\sigma.
\end{align*}
\end{lem}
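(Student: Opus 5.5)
The time derivatives carry no independent dissipation, so I will read them off the equations. Use the perturbation system \eqref{pert-sys-n} for $(\phi,\psi,\zeta)$, and $\eqref{pert-sys}_5$ (equivalently $\eqref{macro-eq-nond}_5$) for $n$. Each of $\dtau\phi,\dtau\psi,\dtau\zeta,\dtau n$ is then an explicit sum of terms. I square each equation, multiply by $\epsilon^{1-a}$, integrate over $\R_y$, and bound every term on the right separately. The statement is really a bound for $\epsilon^{1-a}\|\p_\tau(\psi,\phi,\zeta,n)\|^2_{L^2_y}$ (the square is evidently intended), and that is what this produces.

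\textbf{Step 1: terms linear in the perturbation.} These are $u_1\dy\phi$, $\rho\dy\psi_1$, $R\dy\zeta$, $R\theta\rho^{-1}\dy\phi$, $R\theta\dy\psi_1$ and $\dy(nu_1)=u_1\dy n+n\dy\bar u_1+n\dy\psi_1$. Their $L^2$ norms are bounded by $C\|\dy(\phi,\psi,\zeta,n)\|_{L^2_y}$, using that $|u|,|\theta|,\rho$ are bounded by \eqref{local}. The nonlinear piece $n\dy\psi_1$ gives $\epsilon^{1-a}\|n\|^2_{L^\infty}\|\dy\psi_1\|^2\le C\mathcal{E}(\tau)\mathcal{D}(\tau)$.

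The profile terms are $\phi\dy\bar u_1$, $\psi_1\dy\bar\rho$, $\psi_1\dy\bar\theta$, $R\zeta\dy\bar u_1$, $(\theta/\rho-\bar\theta/\bar\rho)\dy\bar\rho$ and $n\dy\bar u_1$. Lemma \ref{rarefaction4} gives $\|\dy(\bar\rho,\bar u,\bar\theta)\|_{L^\infty_y}\le C\epsilon^a/(\frac\delta\sigma+\epsilon^a\tau)$. These terms therefore contribute $C\frac{\epsilon^{1+a}}{(\frac\delta\sigma+\epsilon^a\tau)^2}\|(\phi,\psi,\zeta,n)\|^2_{L^2_y}$.

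\textbf{Step 2: electromagnetic terms.} The terms $\epsilon^a\frac n\rho(E+u\times B)$ and $\frac{\epsilon^a}{\rho}\intr (E+u\times B)\cdot vG_2dv$ have squared weighted contribution $\epsilon^{1+a}\|n\|_{L^\infty}^2\|E+u\times B\|^2\le C\mathcal{E}\mathcal{D}$. For the terms containing $G_2=\sqrt\mu\g$, I write
\[
\epsilon^{1-a}\epsilon^{2a}\|(E,B)\|^2_{L^\infty}\|\g\|^2\le \epsilon^{2}\|(E,B)\|^2_{L^\infty}\,\epsilon^{a-1}\|\g\|_\sigma^2\le C\epsilon^2\mathcal{E}\mathcal{D},
\]
as in the estimate for $I^5_\phi$.

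\textbf{Step 3: microscopic flux terms (main obstacle).} These are $\intr v_1^2\dy G_1dv$, $\intr v_1v_i\dy G_1dv$, $\intr v_1(v\cdot u-\frac12|v|^2)\dy G_1dv$ and $\intr v_1\dy G_2dv$. Split $G_1=\bar G_1+\sqrt\mu\gt$. For the $\gt,\g$ parts, $v$-polynomials against $\sqrt\mu$ are integrable. Since the $\sigma$-norm controls a $\v^{(\gamma+2)/2}$-weighted $L^2$ norm, which dominates any $\sqrt\mu$-weighted norm, we get $|\intr v_1^2\sqrt\mu\dy\gt dv|\le C|\dy\gt|_\sigma$. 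This gives $C\epsilon^{1-a}\|\dy(\gt,\g)\|^2_\sigma$.

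The $\bar G_1$ part is handled exactly as in $I^4_\phi$, via \eqref{Gb2}. Because $\bar G_1=O(\epsilon^{1-a})$ times first derivatives of the profile and of $(\rho,u,\theta)$, the bound $\epsilon^{1-a}\|\mu^{-1/2}\dy\bar G_1\|^2$ yields $C\frac{\epsilon^3}{\delta(\frac\delta\sigma+\epsilon^a\tau)^2}+C\sigma^2\frac{\epsilon^2}{\delta^2}\mathcal{D}(\tau)$. The bookkeeping here is the delicate part: the second derivatives of $(\phi,\psi,\zeta)$ produced by $\dy\bar G_1$ must be placed into $\epsilon^{1-a}\|\p^2_y(\cdot)\|^2\subset\mathcal{D}$, with coefficient $\sigma^2\epsilon^2/\delta^2$ coming from $\|\dy\bar u\|_{L^\infty}^2$. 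For $n$, no correction is needed since $G_2=\sqrt\mu\g$.

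Summing the four squared equations gives the stated inequality.
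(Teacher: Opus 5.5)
Your proposal is correct and is essentially the paper's proof. The paper multiplies the equations of \eqref{pert-sys-n} and the $n$-equation by $\epsilon^{1-a}\p_\tau(\phi,\psi,\zeta,n)$ and closes with Cauchy--Schwarz, Sobolev, Lemma \ref{rarefaction4} and \eqref{Gb2}, which amounts to your squaring of the equations. There are two small inaccuracies, neither of which affects the bound. First, the $G_2$ terms should be controlled through the moment $|\intr v\sqrt{\mu}\,\g\,dv|\le C|\g|_\sigma$ rather than through $\|\g\|$, since $\|\g\|\le C\|\g\|_\sigma$ fails for $\gamma<-2$. Second, $\dy\bar G_1$ contains only first derivatives of $(\phi,\psi,\zeta)$, multiplied by $\dy(\bar u,\bar\theta)$, and that product is where the factor $\sigma^2\epsilon^2/\delta^2$ comes from; no second derivatives of the perturbation appear.
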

\begin{proof}
Multiplying the equation $\eqref{pert-sys-n}_1$ by $\epsilon^{1-a}\p_\tau\phi$,
$\eqref{pert-sys-n}_{i+1}$ by $\epsilon^{1-a}\p_\tau\psi_i~(i=1,2,3)$,
$\eqref{pert-sys-n}_5$ by $\epsilon^{1-a}\p_\tau\zeta$,
and integrating  over $\R_y$,
we have
\bma
\epsilon^{1-a}\|\p_{\tau}\phi\|^2_{L^2_y}
 =&-\epsilon^{1-a}\int_{\R}u_1\p_y\phi\p_\tau\phi dy
   -\epsilon^{1-a}\int_{\R}\(\rho\dy\psi_1+\phi\dy \bar{u}_1+\psi_1\dy \bar{\rho}\)\p_{\tau}\phi\,dy,\nnm\\
\epsilon^{1-a}\|\p_{\tau}\psi\|^2_{L^2_y}
 =&-\epsilon^{1-a}\int_{\R}u_1\p_y\psi\cdot\p_\tau\psi dy+\epsilon\int_{\R}\frac{n}{\rho}(E+u\times B)\cdot\p_\tau\psi dy
  \nnm\\
 & -\epsilon^{1-a}\int_{\R}\[\frac{R\theta}{\rho} \p_y \phi+R\p_y \zeta+\psi_1\dy\bar{u}_1
     +R\(\frac{\theta}{\rho}-\frac{\bar{\theta}}{\bar{\rho}}\)\p_y\bar{\rho}\]\p_{\tau}\psi_1 dy\nnm\\
 &-\epsilon^{1-a}\int_{\R}\frac{1}{\rho}\p_\tau\psi\cdot\intr v_1v\dy G_1dvdy
  +\epsilon\int_{\R}\frac{1}{\rho}\p_\tau\psi\cdot\intr (v\times B)G_2dvdy,\nnm\\
\epsilon^{1-a}\|\p_{\tau}\zeta\|^2_{L^2_y}
 =&-\epsilon^{1-a}\int_{\R}u_1\p_y\zeta\p_\tau\zeta dy
  -\epsilon^{1-a}\int_{\R}\(R\theta\dy\psi_1+\psi_1\dy \bar{\theta}+R \zeta\dy \bar{u}_1\)\p_{\tau}\zeta dy\nnm\\
 &+\epsilon^{1-a}\int_{\R}\frac{1}{\rho}\p_\tau\zeta\intr v_1\[(v\cdot u)-\frac12 |v|^2\]\dy G_1dvdy\nnm\\
  & +\epsilon\int_{\R}\frac{1}{\rho}\p_\tau\zeta\intr [E+(u\times B)]\cdot vG_2dvdy.
\ema
Moreover, taking inner product between $\eqref{macro-eq-nond}_6$ and $\epsilon^{1-a}\p_\tau n$ to yield
$$
\epsilon^{1-a}\|\p_{\tau}n\|^2_{L^2_y}
= -\epsilon^{1-a}\int_{\R}\dy(nu_1)\dtau n dy-\int_{\R}\dtau n\int_{\R^3}v_1\dy G_2dvdy.
$$
By Cauchy-Schwarz inequality and the Sobolev embedding inequality, one gets
\begin{align*}
\epsilon^{1-a}\|\p_\tau(\psi,\phi,\zeta,n)\|^2_{L^2_y}
\leq&\, \lambda\epsilon^{1-a}\|\p_\tau(\psi,\phi,\zeta,n)\|^2_{L^2_y}
 +C\lambda^{-1}\(\mathcal{E}(\tau)+\sigma^2\frac{\epsilon^2}{\delta^2}\)\mathcal{D}(\tau)\\
&+C\lambda^{-1}\epsilon^{1-a}\|\p_y(\psi,\phi,\zeta,n)\|^2_{L^2_y}
 +C\lambda^{-1}\epsilon^{1-a}\|\dy(\gt,\g)\|^2_\sigma\\
&+C\lambda^{-1}\frac{\epsilon^{1+a}}{(\frac{\delta}{\sigma}+\epsilon^a\tau)^2}\|(\phi,\psi,\zeta,n)\|^2_{L^2_y}
  +C\lambda^{-1}\frac{\epsilon^3}{\delta(\frac{\delta}{\sigma}+\epsilon^a\tau)^2}.
\end{align*}
Hence, by choosing $\lambda>0$ small enough, we cmpletes the proof of the lemma.
\end{proof}

\subsection{Estimate on  $(\gt,\g)$}
In this subsection, we will estimate the microscopic part starting from  the following lemma.
\begin{lem}\label{gg1}
Under the same assumptions as in Proposition \ref{priori3}, we have
\bma
&\frac{d}{d\tau}\|(\gt,\g)\|^2+\sigma_1\epsilon^{a-1}\|(\gt,\g)\|_\sigma^2\nnm \\
\leq&\, C\epsilon^{1-a}\|\dy(\psi,\zeta,n)\|^2_{L^2_y}
  +C\epsilon^{1-a}\norm{\p_y(\gt,\g)}_\sigma^2
  +C\epsilon^{1+a}\|E+u\times B\|^2_{L^2_y}
   +C\frac{\epsilon^{1+a}}{(\frac{\delta}{\sigma}+\epsilon^a\tau)^2}\mathcal{E}(\tau)\nnm\\
 &+C\(\eta^2_1+\mathcal{E}(\tau)+\sigma^2\frac{\epsilon^{2a}}{\delta^2}\)\mathcal{D}(\tau)
  +C\frac{\epsilon^3}{\delta(\delta+\epsilon^a\tau)^2}
  + C\mathcal{E}^{\frac12}(\tau)\mathcal{F}_\omega(\tau),\nnm
\ema
where $\eta_1>0$ is defined in \eqref{local}.
\end{lem}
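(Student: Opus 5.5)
We take the unweighted $L^2_{y,v}$ inner product of \eqref{g1} with $\gt$ and of \eqref{g2} with $\g$, sum, and integrate over $\R_y\times\R^3_v$.

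\textbf{Dissipation and cancellations.} The transport terms $v_1\p_y\gt$, $v_1\p_y\g$ integrate to zero. The coercivity of $-L$ and $-\L$ on the microscopic parts gives $-\epsilon^{a-1}(L\gt,\gt)-\epsilon^{a-1}(\L\g,\g)\ge \sigma_1\epsilon^{a-1}\|(\gt,\g)\|_\sigma^2$. Since $\gt$ and $\g$ are microscopic only with respect to $M$, not $\mu$, we write $\gt=P_1^\mu\gt+(\gt-P_1^\mu\gt)$. The macroscopic defect is controlled by $|M-\mu|$, which contributes the $\eta_1^2\mathcal{D}$ term, and by the $\bar G_1$-correction, which contributes the $\sigma^2\epsilon^{2a}\delta^{-2}\mathcal{D}$ term.

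The two Lorentz force terms
$$\epsilon^a\Big(\mu^{-\frac12}(E+v\times B)\cdot\nabla_v(\sqrt\mu\g),\gt\Big)+\epsilon^a\Big(\mu^{-\frac12}(E+v\times B)\cdot\nabla_v(\sqrt\mu\gt),\g\Big)$$
are handled jointly. Integrating by parts in $v$ and using $\nabla_v\cdot(v\times B)=0$ and $\nabla_v\sqrt\mu=-\frac v2\sqrt\mu$, they reduce to $\epsilon^a\int E\cdot v\,\gt\g$. We bound this by $C\epsilon^a\|E\|_{L^\infty_y}\|\v\gt\|\|\v\g\|$. Because $\|\v^{1/2}\cdot\|$ is not controlled by the $\sigma$-norm for $\gamma<-2$, we put it into $C\mathcal{E}^{1/2}\mathcal{F}_\omega$, using $\epsilon^a\|\v(\gt,\g)\|^2_\omega\le\mathcal{F}_\omega$.

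\textbf{Linear source terms.}
\begin{itemize}
\item $-\mu^{-1/2}P_1\{v_1(\dots\p_y\zeta+\dots\p_y\psi)M\}$: by Cauchy--Schwarz with $\lambda\epsilon^{a-1}\|\gt\|^2_\sigma$, it gives $C\epsilon^{1-a}\|\p_y(\psi,\zeta)\|^2$.
\item The $P_0$ and $P_d$ terms with $v_1\p_y\g$ and $v_1\p_y\gt$: they give $C\epsilon^{1-a}\|\p_y(\gt,\g)\|^2_\sigma$.
\item $R_{g2}$: it contains $\p_y(n/\rho)$, which gives $\epsilon^{1-a}\|\p_y n\|^2$ plus the $\epsilon^{1+a}(\delta/\sigma+\epsilon^a\tau)^{-2}\mathcal{E}$ term. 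It also contains $\epsilon^a(E+u\times B)\cdot P_r(vM)$; pairing this with $\lambda\epsilon^{a-1}\|\g\|_\sigma^2$ gives $\epsilon^{1+a}\|E+u\times B\|^2$, exactly as stated. Finally, it contains $\frac n\rho P_r(\dtau M+v_1\p_yM)$, which is bounded using Lemma \ref{rarefaction4} and the perturbation derivatives.
\item $R_{g1}$ and $\epsilon^a\mu^{-1/2}(E+v\times B)\cdot\nabla_v\bar G_1$: these use the bounds \eqref{Gb2} on $\bar G_1$, giving $\epsilon^3\delta^{-1}(\delta+\epsilon^a\tau)^{-2}$ and $\mathcal{E}\mathcal{D}$ contributions.
\end{itemize}

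\textbf{Nonlinear terms.} The terms $R_{\Gamma1}$ and $R_{\Gamma2}$ are estimated by the trilinear estimate for $\Gamma$ (Lemma \ref{Gamma}). This yields $\epsilon^{a-1}\big(\eta_1+\mathcal{E}^{1/2}+\|\bar G_1\text{-terms}\|\big)\|(\gt,\g)\|_\sigma\times\|\cdot\|_\sigma$, which is absorbed into $(\eta_1^2+\mathcal{E}+\sigma^2\epsilon^{2a}\delta^{-2})\mathcal{D}$.

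\textbf{Main obstacle.} The hardest step is the velocity growth in the Lorentz and $\Gamma$ terms for very soft potentials, where the dissipation is weak at large $|v|$. This forces us to use the exponential weight and the $\mathcal{F}_\omega$ functional, at the cost of the $\mathcal{E}^{1/2}\mathcal{F}_\omega$ term.
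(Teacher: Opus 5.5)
Your proposal is correct and follows the paper's proof essentially step by step. It uses the same $L^2$ energy identity, combines the two Lorentz terms into $\epsilon^a\int E\cdot v\,\gt\g\,dv\,dy$ and places them in $C\mathcal{E}^{\frac12}\mathcal{F}_\omega$, applies Cauchy--Schwarz against $\lambda\epsilon^{a-1}\|(\gt,\g)\|_\sigma^2$ for the linear and $\bar G_1$ source terms, and uses the trilinear bounds of Lemmas \ref{GMG} and \ref{QGG} for $R_{\Gamma1},R_{\Gamma2}$.

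One correction: there is no \emph{macroscopic defect}. The conditions $\int\varphi_i(G_1-\bar G_1)\,dv=0$ (both $G_1$ and $\bar G_1$ lie in $N_0^\perp$) and $\int G_2\,dv=0$ are moment conditions that do not depend on the reference Maxwellian. Hence $\gt\perp\ker L$ and $\g\perp\ker\L$ in $L^2_v$, and \eqref{H1} applies directly. The $\eta_1^2\mathcal{D}$ term therefore comes only from the $\Gamma(\frac{M-\mu}{\sqrt\mu},\cdot)$ pieces of $R_{\Gamma1},R_{\Gamma2}$. You also need to handle the $\epsilon^a(E+v\times B)\cdot\nabla_v(\sqrt\mu\g)$ part inside $P_0$, which by \eqref{pov} costs only $C\epsilon^{2}\mathcal{E}\mathcal{D}$.
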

\begin{proof}
Multiplying \eqref{g1} by $\gt$, \eqref{g2} by $\g$  and integrating  over $\R^3_v\times\R_y$,
we have the following by using \eqref{H1}
\bma
&\frac{1}{2}\frac{d}{d\tau}\norm{(\gt,\g)}^2+\sigma_1\epsilon^{a-1}\norms{(\gt,\g)}^2\nnm\\
\leq&\,-\epsilon^a\int_{\R}\int_{\R^3}\frac{1}{\mu}(E+v\times B)\cdot\nabla_v\big(\mu\gt\g \big)dvdy\nnm\\
&+\int_{\R}\int_{\R^3}\frac{1}{\sqrt{\mu}}P_0\big[v_1 \sqrt{\mu}\dy\gt
 +\epsilon^a(E+v\times B)\cdot\nabla_v(\sqrt{\mu}\g)\big]\gt dvdy\nnm\\
&+\int_{\R}\int_{\R^3}\frac{1}{\sqrt{\mu}}\[P_d(v_1 \sqrt{\mu}\p_y\g)- \epsilon^a (E+v\times B)\cdot\Tdv \bar{G}_1\]\g dvdy\nnm\\
&-\int_{\R}\int_{\R^3}\frac1{\sqrt{\mu}}P_1\[v_1\(\frac{|v-u|^2\p_y\zeta}{2R\theta^2}
 +\frac{(v-u)\p_y\psi}{R\theta}\)M\]\gt dvdy\nnm\\
&+\int_{\R}\int_{\R^3} (R_{g1}+R_{\Gamma1})\gt
 + (R_{g2}+R_{\Gamma2} )\g dvdy\nnm\\
:=&\,J_{g}^1+J_{g}^2+J_{g}^3+J_{g}^4+J_{g}^5,
\ema
where $R_{g1},R_{g2},R_{\Gamma1},R_{\Gamma2}$ are defined in \eqref{remainder}.

For $J_{g}^1$, by the integration by parts and using the fact that $\nabla_v\cdot(E+v\times B)=0,~v\cdot(v\times B)=0$, we have
\bma\label{Jg1}
J_g^1
&=\epsilon^a\int_{\R}\int_{\R^3}\nabla_v\(\frac{1}{\mu}\)\cdot(E+v\times B)\big(\mu\gt\g \big)dvdy\nnm\\
&\leq C\epsilon^{a}\|E\|_{L^\infty_y}\|\langle v\rangle^{\frac12}(\gt,\g)\|^2
\leq  C\mathcal{E}^{\frac12}(\tau)\mathcal{F}_\omega(\tau).
\ema
For $J_g^2$, $J_g^3$ and $J_g^4$,
from \eqref{P10}, \eqref{Pd} and using the fact that $\nabla_v\cdot(E+v\times B)=0$, we have
\be\label{pov}
P_0\[ (E+v\times B)\cdot\nabla_v(\sqrt{\mu}\g)\]=- \sum_{j=1}^4\chi_{j}\int_{\R^3}(E+v\times B)(\sqrt{\mu}\g)\cdot\nabla_{v}\(\frac{\chi_j}{M}\)dv.
\ee	
Hence
\bma\label{Jg234}				
J_g^2+J_g^3+J_g^4
\leq&\,\lambda\epsilon^{a-1}\norm{(\gt,\g)}_\sigma^2
     +C\lambda^{-1}\epsilon^{1+a}\|(E,B)\|^2_{L^\infty_y}\|\g\|^2_\sigma\nnm\\
&+C\lambda^{-1}\epsilon^{1-a}\norm{\p_y(\gt,\g)}_\sigma^2
     +C\lambda^{-1}\epsilon^{1-a}\|\dy(\psi,\zeta)\|^2_{L^2_y}\nnm\\
&+C\lambda^{-1}\epsilon^{1+3a}\|(E,B)\|^2_{L^2_y} \| |\mu^{-\frac12}\v^{\frac32}\nabla_v\bar{G}_1|_2 \|^2_{L^\infty_y}\nnm\\
\leq&\,\lambda\epsilon^{a-1}\norm{(\gt,\g)}_\sigma^2
     +C\lambda^{-1}\epsilon^2\mathcal{E}(\tau)\mathcal{D}(\tau)
     +C\lambda^{-1}\frac{\epsilon^{3+a}}{(\frac{\delta}{\sigma}+\epsilon^a\tau)^2}\|(E,B)\|^2_{L^2_y}\nnm\\
&+C\lambda^{-1}\epsilon^{1-a}\norm{\p_y(\gt,\g)}_\sigma^2
     +C\lambda^{-1}\epsilon^{1-a}\|\dy(\psi,\zeta)\|^2_{L^2_y}.
\ema
Next, we focus on $J_g^5$. For terms involving $R_{g1}$ and $R_{g2}$, we use \eqref{Gb2} to have
\bma
\int_{\R}\int_{\R^3} R_{g1}\gt dvdy
&\leq \lambda\epsilon^{a-1}\norms{\gt}^2
 +C\lambda^{-1}\epsilon^{1-a}\sum_{|\alpha|=1}\left\|\langle v\rangle\frac{\p^\alpha\bar{G}_1}{\sqrt{\mu}}\right\|_{2}^2\nnm\\
&\leq \lambda\epsilon^{a-1}\norms{\gt}^2
 +C\lambda^{-1}\sigma^2\frac{\epsilon^2}{\delta^2}\mathcal{D}(\tau)
 +C\lambda^{-1}\frac{\epsilon^3}{\delta(\frac{\delta}{\sigma}+\epsilon^a\tau)^2},
\ema
and
\bma
\int_{\R}\int_{\R^3} R_{g2}\g dvdy
&\leq \lambda\epsilon^{a-1}\norms{\g}^2
 +C\lambda^{-1}\epsilon^{1-a}\|\p_y n\|^2_{L^2_y}
 +C\lambda^{-1}\frac{\epsilon^{1+a}}{(\frac{\delta}{\sigma}+\epsilon^a\tau)^2}\|n\|^2_{L^2_y}\nnm\\
&\quad+C\lambda^{-1}\epsilon^{1+a}\|E+u\times B\|^2_{L^2_y}
  +C\lambda^{-1}\(\sigma^2\frac{\epsilon^{2a}}{\delta^2}+\mathcal{E}(\tau)\)\mathcal{D}(\tau).
\ema
For terms involving $R_{\Gamma1}$ and $R_{\Gamma2}$, recalling that $F_2=\frac n\rho M+\sqrt{\mu}\g,G_1=\bar{G}_1+\sqrt{\mu}\gt$, by using Lemma \ref{GMG} and Lemma \ref{QGG}, we have
\bma
\int_{\R}\int_{\R^3}R_{\Gamma1}\gt+R_{\Gamma2}\g dvdy
\leq&\lambda\epsilon^{a-1}\|(\gt,\g)\|^2_\sigma
 +C\lambda^{-1}\(\eta^2_1+\mathcal{E}(\tau)+\sigma^2\frac{\epsilon^{2a}}{\delta^2}\)\mathcal{D}(\tau)\nnm\\
&+C\lambda^{-1}\frac{\epsilon^{1+a}}{(\frac{\delta}{\sigma}+\epsilon^a\tau)^2}\|n\|^2_{L^2_y}
 +C\lambda^{-1}\frac{\epsilon^3}{\delta(\frac{\delta}{\sigma}+\epsilon^a\tau)^2}.
\ema
Then combining all estimates for $J_g^i$, $i=1,2,...,5$ and letting $\lambda$ small enough, we complete the proof of the lemma.
\end{proof}
Combining Lemma \ref{lower1-1}-Lemma \ref{gg1} and by letting $\sigma\frac{\epsilon^a}{\delta}$ small enough,
we obtain the conclusion in Lemma \ref{lolem}.

\section{High order energy estimates}
\setcounter{equation}{0}
In this section, we will derive some energy estimates of high order derivatives.

\begin{lem}\label{lolem1}
Under the same assumptions as  in Proposition \ref{priori3} and  by letting $\frac{\epsilon^a}{\delta}$ small enough, we have the following estimate for $|\alpha|=1$ and $|\alpha_1|=2$,
\bma
& \epsilon^{2-2a}\frac{d}{d\tau}\intra \frac{|\p^{\alpha_1}(F_1,F_2)|^2}{\mu}+\frac1{R\theta}(|\p^{\alpha_1}E|^2+2\p^{\alpha_1} E\cdot u\times\p^{\alpha_1}B+|\p^{\alpha_1}B|^2)dy
\nnm\\
&~+\frac{d}{d\tau}\|\p^\alpha(\phi,\psi,\zeta,n,E,B)\|^2_{L^2_y}
 -\epsilon^{1+a}\frac{d}{d\tau}\int_{\R}E\cdot\O\dy B dy+\frac{d}{d\tau}\|\p^{\alpha} (\gt,\g)\|^2
\nnm\\
&~+\epsilon^{1+a}\|\p^{\alpha}(E,B)\|^2_{L^2_y}+ \epsilon^{a-1}\|\p^{\alpha} (\gt,\g)\|^2_{\sigma}
 +\epsilon^{1-a} (\|\p^{\alpha_1}(\phi,\psi,\zeta,n)\|^2_{L^2_y}  + \|\p^{\alpha_1} (\gt,\g)\|_{\sigma}^2)\nnm\\
&\leq  \lambda \mathcal{D}(\tau)
 +C\lambda^{-1}\(\eta^2_1+\epsilon^a+\sigma^2\frac{\epsilon^{2a}}{\delta^2}+\epsilon^{a-1}\mathcal{E}^{\frac12}(\tau)
          +\epsilon^{-a}\mathcal{E}(\tau)
          +(1+T)\epsilon^{a-1}\mathcal{E}(\tau)\)\mathcal{D}(\tau)\nnm\\
&~~+C\frac{\epsilon^{1+2a}}{\delta(\frac{\delta}{\sigma}+\epsilon^a\tau)^2}
  +C\frac{\epsilon^a}{\frac{\delta}{\sigma}+\epsilon^a\tau}\mathcal{E}(\tau)
 +C\mathcal{E}^{\frac12}(\tau)\mathcal{F}_\omega(\tau),\nnm
\ema
where $\lambda>0$ is a small constant, and $\eta_1>0$ is defined in \eqref{local}.
\end{lem}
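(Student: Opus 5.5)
The plan is to prove Lemma \ref{lolem1} in three blocks, mirroring the zeroth-order analysis of Lemma \ref{lolem}, and then sum them.

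\textbf{Block 1: first-order derivatives ($|\alpha|=1$).} I apply $\p^\alpha$ to the perturbation system \eqref{pert-sys}, to the field equations \eqref{EB-inv-s} together with $\dtau B=-\O\dy E$, and to the microscopic equations \eqref{g1}--\eqref{g2}. For the fluid part, I take the $L^2_y$ inner product of $\p^\alpha$\eqref{pert-sys} with $\p^\alpha(\phi,\psi,\zeta,n)$, using the coefficients of the linearized symmetrizer as in the entropy computation. This gives $\frac{d}{d\tau}\|\p^\alpha(\phi,\psi,\zeta,n)\|^2$ together with viscous dissipation $\epsilon^{1-a}\|\p^{\alpha}\dy(\psi,\zeta,n)\|^2$.

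The missing $\epsilon^{1-a}\|\p^\alpha\dy\phi\|^2$ and the time derivatives at second order are recovered exactly as in Lemmas \ref{pphi1} and \ref{time1}, applied to $\p^\alpha$ of \eqref{pert-sys-n}. For the fields, I pair $\p^\alpha$\eqref{EB-inv-s} with $\p^\alpha E$ and the $B$ equation with $\p^\alpha B$. The curl terms cancel, and the Lorentz relaxation term $\epsilon^{1+a}\frac{\sigma}{R\theta}\p^\alpha(E+u\times B)$ gives only partial dissipation. To get $\epsilon^{1+a}\|\p^\alpha(E,B)\|^2$, I add the interactive functional $-\epsilon^{1+a}\int E\cdot\O\dy B\,dy$. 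Differentiating it and using $\dtau E=\O\dy B-\dots$ and $\dtau B=-\O\dy E$ produces $\epsilon^{1+a}(\|\dy B\|^2-\|\dy E_{2,3}\|^2)$. The remaining $\|\dy E\|$ part is controlled through $\dy E_1=\epsilon^a n$ and the relaxation term.

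For $\p^\alpha(\gt,\g)$ I repeat Lemma \ref{gg1}, using the coercivity of $L,\L$ to get $\epsilon^{a-1}\|\p^\alpha(\gt,\g)\|_\sigma^2$. The transport-field term $\epsilon^a\p^\alpha[(E+v\times B)\cdot\nabla_v\cdots]$ is handled by the weight $e^{q\v^2/2}$ trick, which produces $\mathcal{E}^{1/2}\mathcal{F}_\omega$.

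\textbf{Block 2: second-order derivatives ($|\alpha_1|=2$).} Here I do not split into macro and micro parts. Instead, I work directly with $\p^{\alpha_1}F_1,\p^{\alpha_1}F_2$ in the $L^2(\mu^{-1/2})$ framework, multiplied by $\epsilon^{2-2a}$. This avoids the bad term $\p^{\alpha_1}P_1(v_1\dy M)$, which would lose derivatives.

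Taking $\p^{\alpha_1}$ of \eqref{VML3} and pairing with $\p^{\alpha_1}F_i/\mu$, the force term $\epsilon^a\p^{\alpha_1}E\cdot\nabla_v M$ in the $F_2$ equation yields $\epsilon^a\p^{\alpha_1}E\cdot\int v\,\p^{\alpha_1}F_2 \frac{M}{R\theta\mu}dv$. I split this as $\frac{1}{\mu}=\frac1M+(\frac1\mu-\frac1M)$. The $\frac1M$ part combines with Maxwell's equations $\dtau\p^{\alpha_1}E=\O\dy\p^{\alpha_1}B-\epsilon^a\int v\p^{\alpha_1}F_2$, which explains the weight $\frac1{R\theta}$ and the cross term $2\p^{\alpha_1}E\cdot u\times\p^{\alpha_1}B$ in the energy. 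The commutators with $\dtau\theta$ and $\dy\theta$ are bounded by $\frac{\epsilon^a}{\delta/\sigma+\epsilon^a\tau}\mathcal{E}$ via Lemma \ref{rarefaction4}. The $(\frac1\mu-\frac1M)$ part is the term $I_f^{32}$ singled out in the Remark, and I treat it exactly as in \eqref{difficult-0}, using $\epsilon(1+T)\le1$ and smallness $\eta_0$.

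The collision terms give $\epsilon^{1-a}\|\p^{\alpha_1}(\gt,\g)\|_\sigma^2$ after subtracting the macroscopic parts. Those macroscopic parts contribute $\epsilon^{1-a}\|\p^{\alpha_1}(\phi,\psi,\zeta,n)\|^2$ and are recovered as in Block 1 at one order higher. Nonlinear terms $\epsilon^{a-1}\Gamma(\cdot,\cdot)$ are bounded via Lemmas \ref{GMG} and \ref{QGG} and Sobolev embedding in $y$, which costs the factors $\epsilon^{a-1}\mathcal{E}^{1/2}$ and $\epsilon^{-a}\mathcal{E}$. The $\bar G_1$ terms give the source $\frac{\epsilon^{1+2a}}{\delta(\delta/\sigma+\epsilon^a\tau)^2}$ via \eqref{Gb2}.

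\textbf{Main obstacle.} I expect the hardest part to be the top-order field--fluid coupling. There is no dissipation of $\p^{\alpha_1}(E,B)$, so every term pairing $\p^{\alpha_1}(E,B)$ with $\p^{\alpha_1}F_2$ must be either cancelled exactly by the $\frac1{R\theta}$-weighted Maxwell energy, or absorbed using $I_f^{32}$-type smallness. Terms such as $\p^{\alpha_1}(u\times B)$ hitting $\p\bar u_1$ must produce the decaying factor $\frac{\epsilon^a}{\delta/\sigma+\epsilon^a\tau}$.

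A secondary difficulty is the cubic term $\|n\|_{L^\infty}\|\p^\alpha\dy\psi\|\|\p^\alpha n\|$, which forces the $(1+T)\epsilon^{a-1}\mathcal{E}\mathcal{D}$ loss, as in \eqref{In1}. Summing all blocks with $\lambda$ small and $\frac{\epsilon^a}{\delta}$ small then gives the lemma.
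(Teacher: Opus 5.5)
Your proposal is correct and follows essentially the paper's route. The main blocks match: symmetrized first-order estimates for $(\phi,\psi,\zeta,n)$ with the $\p^\alpha\dy\phi$ and $\dtau^2$ cross-term lemmas, the first-order field estimate completed by $-\epsilon^{1+a}\int_{\mathbb{R}}E\cdot\O\dy B\,dy$, the unweighted first-order micro estimate, and the top-order estimate on $\mu^{-1/2}\p^{\alpha_1}(F_1,F_2)$ with the $\frac{1}{R\theta}$-weighted Maxwell energy and the $\frac1\mu=\frac1M+(\frac1\mu-\frac1M)$ splitting.

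One bookkeeping correction concerns where the $(1+T)\epsilon^{a-1}\mathcal{E}(\tau)\mathcal{D}(\tau)$ loss comes from. In the paper it arises from the perturbative part $\p(\psi,\zeta)$ of the commutator $\int_{\mathbb{R}}|\p(u,\theta)||\p^{\alpha_1}(E,B)|^2dy$ in that weighted Maxwell energy. There $\p^{\alpha_1}(E,B)$ has no dissipation, so Lemma \ref{rarefaction4} alone does not suffice and the \eqref{In1}-type splitting with $\frac{\delta}{\sigma}+\epsilon^a\tau\le 1+T$ is needed. By contrast, the first-order cubic term $\|n\|_{L^\infty_y}\|\p^\alpha\dy\psi\|\|\p^\alpha n\|$ you single out costs only $\epsilon^{a-1}\mathcal{E}^{\frac12}(\tau)\mathcal{D}(\tau)$.
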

The proof of Lemma \ref{lolem1} is divided in  the following five subsections.

\subsection{Estimate on derivatives of  $ (\phi,\psi,\zeta)$}
 For  the macroscopic component $ (\phi,\psi,\zeta)$, applying the operator $\p^\alpha~(|\alpha|=1)$ to $\eqref{pert-sys}_1$ together with
$\frac1\rho\eqref{pert-sys}_i~(i=2,3,4,5)$ yields
\be\label{first1}
\left\{\begin{aligned}
&\dtau \p^\alpha\phi+u_1\dy\p^\alpha\phi+\rho\dy\p^\alpha\psi_1=N_1+W_1,\\
&\dtau \p^\alpha\psi_1+u_1\dy \p^\alpha\psi_1+R\dy\p^\alpha\zeta+R\frac{\theta}{\rho}\dy\p^\alpha\phi
=N_2+W_2\\
&\qquad+\epsilon^{1-a}\frac43\p^\alpha\[\frac1\rho\dy(\kappa_1(\theta)\dy \psi_1)\]
  +\p^\alpha\[\frac1\rho(H_e^1+H^1_f+H^1_n)\],\\
&\dtau \p^\alpha\psi_i+u_1\dy \p^\alpha\psi_i
=W_{i+1}+\epsilon^{1-a}\p^\alpha\[\frac1\rho\dy(\kappa_1(\theta)\dy \psi_i)\]
  +\p^\alpha\[\frac1\rho({ H_f^i}+H_e^i+H^i_n)\],\quad i=2,3,\\
&\dtau \p^\alpha\zeta+u_1\dy \p^\alpha\zeta+R\theta\dy\p^\alpha\psi_1
   =N_5+W_5+\epsilon^{1-a}\p^\alpha\[\frac1\rho\dy(\kappa_2(\theta)\dy \zeta)\]+\p^\alpha\[\frac1\rho(H_e^4+H^4_f+H^4_n)\],
\end{aligned}\right.
\ee
where
\bq\label{N1}
\left\{\bln
N_{1}=&-\(\p^\alpha\psi_1\p_{y}\bar{\rho}+\p^\alpha\phi\p_{y}\bar{u}_1\)
       -\(\psi_1\p_{y}\p^\alpha\bar{\rho}+\phi\p_{y}\p^\alpha\bar{u}_1\),\\
N_{2}=&-\Big(\p^\alpha\psi_1\p_y\bar{u}_1+R\p^\alpha\Big(\frac{\theta}{\rho}-\frac{\bar{\theta}}{\bar{\rho}}\Big)\p_y\bar{\rho}\Big)
       -\Big(\psi_1\p_{y}\p^\alpha\bar{u}_1+R\Big(\frac{\theta}{\rho}-\frac{\bar{\theta}}{\bar{\rho}}\Big)\p_y\p^\alpha\bar{\rho}\Big),\\
N_5=&-\(\p^\alpha\psi_1\p_{y}\bar{\theta}+R\p^\alpha\zeta\p_{y}\bar{u}_1\)
       -\(\psi_1\p_{y}\p^\alpha\bar{\theta}+R\zeta\p_{y}\p^\alpha\bar{u}_1\),
\eln\right.
\eq
and
\bma\label{W1}
    \left\{\begin{aligned}
        &W_1=-\(\p^\alpha u_1\p_{y}\phi+\p^\alpha\rho\p_{y}\psi_1\),\quad
        W_2=-\(\p^\alpha u_1\p_{y}\psi_1+\p^\alpha\(\frac{R\theta}{\rho}\)\p_{y}\phi\),\\
        &W_{i+1}=-\p^\alpha u_1\p_{y}\psi_i,\quad i=2,3,\quad
        W_5=-\(\p^\alpha u_1\p_{y}\zeta+R\p^\alpha\theta\p_{y}\psi_1\).
    \end{aligned}
    \right.
\ema
\begin{lem}\label{mac1}
Under the same assumptions as in Proposition \ref{priori3} and by letting $\sigma\frac{\epsilon^a}{\delta}$ small enough,
it holds that for $|\alpha|=1$,
\bma
&\frac{d}{d\tau}\|\p^\alpha(\phi,\psi,\zeta)\|^2_{L^2_y}
 +\epsilon^{1-a}\|\p^\alpha\dy(\psi,\zeta)\|^2_{L^2_y}\nnm\\
\leq&\,\lambda\mathcal{D}(\tau)
 +C\lambda^{-1}\epsilon^{1-a}\sum_{|\alpha_1|=2}\|\p^{\alpha_1}(\gt,\g)\|^2_{\sigma}
 +C\frac{\epsilon^a}{(\frac{\delta}{\sigma}+\epsilon^a\tau)}\mathcal{E}(\tau)\nnm\\
&+C\lambda^{-1}\frac{\epsilon^{1+4a}}{\delta^3(\frac{\delta}{\sigma}+\epsilon^a\tau)^2}
 +C\(\sigma^2\frac{\epsilon^{1+a}}{\delta^2}+\epsilon^{a-1}\mathcal{E}^{\frac12}(\tau)\)\mathcal{D}(\tau),\nnm
\ema
where $\lambda>0$ is a small constant.
\end{lem}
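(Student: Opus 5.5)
I will prove Lemma \ref{mac1} by an $L^2$ energy estimate on the differentiated system \eqref{first1}, mirroring Step 1 of Lemma \ref{lower1-1} but without the entropy structure. Multiply \eqref{first1}$_1$ by $\frac{R\theta}{\rho^2}\p^\alpha\phi$, \eqref{first1}$_2$ and \eqref{first1}$_3$ by $\p^\alpha\psi_1$ and $\p^\alpha\psi_i$, and \eqref{first1}$_4$ by $\frac{1}{\theta}\p^\alpha\zeta$, then integrate over $\R_y$. These weights symmetrize the hyperbolic part. The cross terms $R\frac{\theta}{\rho}\dy\p^\alpha\phi\,\p^\alpha\psi_1+\frac{R\theta}{\rho}\dy\p^\alpha\psi_1\p^\alpha\phi$ and $R\dy\p^\alpha\zeta\,\p^\alpha\psi_1+R\dy\p^\alpha\psi_1\p^\alpha\zeta$ then become exact $y$-derivatives, up to terms with $\dy(\rho,\theta)$ as coefficients. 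The weighted energy $\int(\frac{R\theta}{\rho^2}|\p^\alpha\phi|^2+|\p^\alpha\psi|^2+\frac1\theta|\p^\alpha\zeta|^2)dy$ is equivalent to $\|\p^\alpha(\phi,\psi,\zeta)\|^2_{L^2_y}$, by \eqref{local} and smallness of $\eta_1$.

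The commutator terms come from $\dtau$ and $\dy$ hitting the weights $\frac{R\theta}{\rho^2},\frac1\theta$, and from the transport $u_1\dy$. They are bounded by $|\p(\bar\rho,\bar u,\bar\theta)|+|\p(\phi,\psi,\zeta)|$ times $|\p^\alpha(\phi,\psi,\zeta)|^2$. The rarefaction parts give $C\frac{\epsilon^a}{\frac\delta\sigma+\epsilon^a\tau}\mathcal{E}(\tau)$ by Lemma \ref{rarefaction4}. The perturbation parts are cubic; using Sobolev for $\|\p(\phi,\psi,\zeta)\|_{L^\infty_y}$ and the factor $\epsilon^{1-a}$ in $\mathcal{D}$ attached to $\|\p^{\alpha}\dy(\cdot)\|$, they give $C\epsilon^{a-1}\mathcal{E}^{\frac12}\mathcal{D}$. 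The terms $N_1,N_2,N_5$ are of the same type. The $W_i$ terms are quadratic in first derivatives of the perturbation, so their products with $\p^\alpha(\cdot)$ are again $\lesssim\epsilon^{a-1}\mathcal{E}^{\frac12}\mathcal{D}$. The viscous terms $\epsilon^{1-a}\p^\alpha[\frac1\rho\dy(\kappa_1\dy\psi_i)]\p^\alpha\psi_i$ and the $\kappa_2$ analogue are integrated by parts once. Their leading part is $-\epsilon^{1-a}\int\frac{\kappa_1}{\rho}|\dy\p^\alpha\psi|^2$ (resp. $\frac{\kappa_2}{\rho\theta}|\dy\p^\alpha\zeta|^2$), which produces the dissipation on the left side. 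The remainders carry $\p(\rho,\theta)$ or $\p^\alpha\kappa(\theta)$ and are handled by Young's inequality and the same rarefaction and Sobolev bounds. The $H_f$ terms involve $\epsilon^{1-a}\p^\alpha\dy^2(\bar u_1,\bar\theta)$. After pairing with $\p^\alpha(\psi,\zeta)$ and using Cauchy--Schwarz with weight $\frac{\epsilon^a}{\frac\delta\sigma+\epsilon^a\tau}$, Lemma \ref{rarefaction4} gives the source $C\lambda^{-1}\frac{\epsilon^{1+4a}}{\delta^3(\frac\delta\sigma+\epsilon^a\tau)^2}$; higher $y$-derivatives of $\bar u$ gain extra $\epsilon^a/\delta$ factors. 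The $H_e$ terms contain $\epsilon^a n(E+u\times B)$ and $\epsilon\dy(n/\rho)B$. After differentiation they are products of an $\mathcal{E}$-controlled factor and a $\mathcal{D}$-controlled factor, and so contribute $\epsilon^{a-1}\mathcal{E}^{\frac12}\mathcal{D}$.

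The main obstacle is $\p^\alpha[\frac1\rho H_n^i]$, which contains $\dy L_M^{-1}\Lambda_1$ and $\epsilon^a(v\times B)\L_M^{-1}\Lambda_2$. First integrate by parts in $y$ to move one derivative onto $\p^\alpha\psi$ or $\p^\alpha\zeta/\theta$. This gives $\int\dy\p^\alpha\psi\cdot\p^\alpha\intr v_1vL_M^{-1}\Lambda_1dv$, and similarly with the Burnett functions $\A_1,\B_{1i}$ as in \eqref{lambda1.1-1}. Next substitute $\Lambda_1$ from \eqref{lambda1}. The term $\epsilon^{1-a}\p^\alpha\dtau G_1$ (and $\epsilon^{1-a}\p^\alpha P_1(v_1\dy G_1)$) splits into a $\bar G_1$ part, estimated via \eqref{Gb2} to give the $\sigma^2\frac{\epsilon^{1+a}}{\delta^2}\mathcal{D}$ and source terms, and a part $\epsilon^{1-a}\|\p^{\alpha_1}\gt\|_\sigma$ with $|\alpha_1|=2$. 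Young's inequality against $\lambda\epsilon^{1-a}\|\dy\p^\alpha(\psi,\zeta)\|^2$ produces $C\lambda^{-1}\epsilon^{1-a}\sum_{|\alpha_1|=2}\|\p^{\alpha_1}(\gt,\g)\|^2_\sigma$. The bilinear $Q(G_1,G_1)$ term is handled by Lemma \ref{QGG} and gives $\epsilon^{a-1}\mathcal{E}^{\frac12}\mathcal{D}$; the electromagnetic term is handled as in \eqref{lambda1.1-5}. Terms where $\p^\alpha$ falls on $u,\theta$ inside the Burnett functions are lower order. Finally, absorb the $\lambda\epsilon^{1-a}\|\p^\alpha\dy(\psi,\zeta)\|^2$ contributions into the left side, which yields Lemma \ref{mac1}.
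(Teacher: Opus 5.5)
Your proposal follows the paper's proof essentially step for step: the same multipliers $\frac{R\theta}{\rho^2}\p^\alpha\phi$, $\p^\alpha\psi_i$ and $\frac1\theta\p^\alpha\zeta$, the same grouping into weight-commutator, $N$, $W$, viscous, $H_f$, $H_e$ and $H_n$ terms, and the same treatment of $H_n$ by one integration by parts, the Burnett functions and the split $G_1=\bar G_1+\sqrt\mu\,\gt$. Two bookkeeping points need correcting: the source $C\lambda^{-1}\frac{\epsilon^{1+4a}}{\delta^3(\frac\delta\sigma+\epsilon^a\tau)^2}$ from $\epsilon^{1-a}\int|\p^\alpha\dy^2(\bar u_1,\bar\theta)||\p^\alpha(\psi,\zeta)|dy$ comes from pairing against $\lambda\epsilon^{1-a}\|\p^\alpha(\psi,\zeta)\|^2_{L^2_y}\le\lambda\mathcal{D}(\tau)$, as the paper does, whereas the time weight $\frac{\epsilon^a}{\frac\delta\sigma+\epsilon^a\tau}$ you propose would leave $C\frac{\epsilon^{2+2a}}{\delta^3(\frac\delta\sigma+\epsilon^a\tau)}$ instead; and the leading $\bar G_1$ piece $\epsilon^{1-a}\p^\alpha\dtau\bar G_1$ is a second-order derivative, so it needs \eqref{Gb3} and not only \eqref{Gb2} to fit under the stated source term.
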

		
\begin{proof}
For $|\alpha|=1$, multiplying $\eqref{first1}_{1}$ by $\frac{R\theta}{\rho^2}\p^\alpha\phi$,
$\eqref{first1}_{i+1}$ by $\p^\alpha\psi_i~(i=1,2,3)$,
$\eqref{first1}_5$ by $\frac{\p^\alpha\zeta}{\theta}$, respectively,
then  integrating together over $\R_y$ yields
\bma\label{First}
&\frac{d}{d\tau}\int_{\R}\frac{R\theta}{2\rho^2}|\p^\alpha\dy\phi|^2+\frac12|\p^\alpha\dy\psi|^2+\frac{1}{2\theta}|\p^\alpha\dy\zeta|^2dy\nnm\\
&\quad+\int_{\R}\[\frac{\mu(\theta)}{3\rho}|\p^\alpha\dy\psi_1|^2+\frac{\mu(\theta)}{\rho}|\p^\alpha\dy\psi|^2+\frac{\kappa(\theta)}{\rho\theta}|\p^\alpha\dy\zeta|^2\]dy\nnm\\
=&\, \int_{\R}\[\p_\tau\(\frac{R\theta}{2\rho^2}\)
     +\p_y\(\frac{Ru_1\theta}{2\rho^2}\)\]|\p^\alpha\phi|^2+\frac{\p_y u_1}{2}|\p^\alpha\psi|^2
     +\[\p_\tau\(\frac{1}{2\theta}\)+\p_y\(\frac{u_1}{2\theta}\)\]|\p^\alpha\zeta|^2dy\nnm\\
&-\int_{\R} \frac{R\theta}{\rho}\Big(\p_y\p^\alpha\psi_1\p^\alpha\phi +\p_y\p^\alpha\phi\p^\alpha\psi_1\Big)
       +R\Big(\p_y\p^\alpha\zeta\p^\alpha\psi_1+\p_y\p^\alpha\psi_1\p^\alpha\zeta\Big)dy\nnm\\
&+\epsilon^{1-a}\bigg[\int_{\R} \frac43\p^\alpha\(\frac1\rho\dy(\kappa_1(\theta)\dy \psi_1)\)\p^{\alpha}\psi_1+ \p^\alpha\(\frac1\rho\dy(\kappa_1(\theta)\dy \psi_i)\)\p^{\alpha}\psi_i\nnm\\
&+ \p^\alpha\(\frac1\rho\dy(\kappa_2(\theta)\dy \zeta)\)\frac{\p^\alpha\zeta}{\theta}dy\bigg] + \[\int_{\R}N_{1}\frac{R\theta}{\rho^2}\p^\alpha\phi
       +N_2\p^\alpha\psi_1+N_3\frac{\p^\alpha\zeta}{\theta}dy\] \nnm\\
&+\int_{\R} W_1\frac{R\theta}{\rho^2}\p^\alpha\phi+W_2\p^\alpha\psi_1
                +W_3\p^\alpha\psi_2+W_4\p^\alpha\psi_3+W_5\frac{\p^\alpha\zeta}{\theta} dy\nnm\\
&+\intra\sum_{i=1}^3\p^\alpha\[\frac1\rho(H_e^i+H_f^i+H_n^i)\]\p^\alpha\psi_i
   + \p^\alpha\[\frac1\rho(H_e^4+H_f^4+H_n^4)\]\frac{\p^\alpha\zeta}{\theta}dy
:=\sum_{i=1}^6I_{\alpha}^i.
\ema
For $I^2_{\alpha}$ we have
\bma
 &R\Big(\p_y\p^\alpha\zeta\p^\alpha\psi+\p_y\p^\alpha\psi\p^\alpha\zeta\Big)
  =R\p_y(\p^\alpha\psi\p^\alpha\zeta),\nnm\\
&\frac{R\theta}{\rho}\Big(\p_y\p^\alpha\psi\p^\alpha\phi +\p_y\p^\alpha\phi\p^\alpha\psi\Big)
  =\frac{R\theta}{\rho}\p_y(\p^\alpha\psi\p^\alpha\phi)
  =-\p_y\(\frac{R\theta}{\rho}\)(\p^\alpha\psi\p^\alpha\phi)
    +\p_y(\frac{R\theta}{\rho}\p^\alpha\psi\p^\alpha\phi).\nnm
 \ema
Then,
 for $I_{\alpha}^1,I^2_\alpha$ and $I_{\alpha}^5$, by Lemma \ref{rarefaction4}, we have for $|\alpha|=1$,
\bma\label{first2}
I_\alpha^1+I^2_\alpha+I_{\alpha}^5
&\leq C\int_{\R}|\p^{\alpha}(\bar{\rho},\bar{u}_1,\bar{\theta})||\p^\alpha(\phi,\psi,\zeta)|^2
  +|\p^{\alpha}(\phi,\psi,\zeta)||\p^\alpha(\phi,\psi,\zeta)|^2dy\nnm\\
&\leq C\frac{\epsilon^a}{\delta+\epsilon^a\tau}\|\p^\alpha(\phi,\psi,\zeta)\|^2_{L^2_y}
  +C\|\p^{\alpha}(\phi,\psi,\zeta)\|_{L^2_y}\|\p^\alpha(\phi,\psi,\zeta)\|^2_{L^4_y}\nnm\\
&\leq C\frac{\epsilon^a}{\frac{\delta}{\sigma}+\epsilon^a\tau}\mathcal{E}(\tau)
  +C\epsilon^{a-1}\mathcal{E}^{\frac12}(\tau)\mathcal{D}(\tau).
\ema
For $I_{\alpha}^3$ and $I_{\alpha}^4$, by Lemma \ref{rarefaction4}, we have for $|\alpha|=1$,
\bma\label{ia3}
I_\alpha^3
&\leq C\epsilon^{1-a}\int_{\R}|\p^\alpha(\rho,\theta)|^2||\p^\alpha(\psi,\zeta)|^2
 +|\p^\alpha\dy\theta||\p^\alpha(\psi,\zeta)|^2
 +|\p^\alpha(\rho,\theta)||\p^\alpha\dy(\psi,\zeta)||\p^\alpha(\psi,\zeta)|dy\nnm\\
&\leq \lambda\epsilon^{1-a}\|\p^\alpha\dy(\psi,\zeta)\|^2_{L^2_y}
 +C\lambda^{-1}\(\mathcal{E}(\tau)+\sigma^2\frac{\epsilon^{2a}}{\delta^2}\)\mathcal{D}(\tau),
\ema
and
\bma\label{first3}
I_{\alpha}^4
&\leq C\int_{\R}|N_1||\p^\alpha\phi|+|N_2||\p^\alpha\psi|+|N_3||\p^\alpha\zeta| dy\nnm\\
&\leq C\int_{\R}|\p_y(\bar{\rho},\bar{u}_1,\bar{\theta})||\p^\alpha(\phi,\psi,\zeta)|^2
 +|(\phi,\psi,\zeta)|\(|\p^\alpha\p_y(\bar{\rho},\bar{u}_1,\bar{\theta})|
        +|\p^\alpha(\bar{\rho},\bar{\theta})||\p_y\bar{\rho}|\)|\p^\alpha(\phi,\psi,\zeta)|dy\nnm\\
&\leq C\frac{\epsilon^a}{\frac{\delta}{\sigma}+\epsilon^a\tau}\mathcal{E}(\tau).
\ema
To estimate $I_\alpha^6$, we first study terms involving $H^i_f~(i=1,2,3,4)$ in \eqref{Hf}. By Lemma \ref{rarefaction4}, we have
\bma
&\sum_{i=1}^3\intra\p^\alpha\(\frac1\rho H_f^i\)\p^\alpha\psi_i dy
   +\intra\p^\alpha\(\frac1\rho H_f^4\)\frac{\p^\alpha\zeta}{\theta}dy\nnm\\
\leq&\,C\epsilon^{1-a}\int_{\R}|\p^\alpha(\psi,\zeta)|
   (|\p^{\alpha}\p_{yy}(\bar{u}_1,\bar{\theta})|
     +|\p^{\alpha}(\bar u_1,\bar\theta)||\p^{\alpha}\dy(\bar u_1,\bar\theta)|
     +|\p^{\alpha}\bar\theta||\dy\bar{u}_1|^2)dy\nnm\\
&+C\epsilon^{1-a}\int_{\R}|\p^\alpha(\psi,\zeta)||\p^{\alpha}\bar\rho|
     (|\p_{yy}\bar{u}_1|+|\dy(\bar{u}_1,\bar{\theta})|^2)\nnm\\
&+C\epsilon^{1-a}\int_{\R}|\p^\alpha(\psi,\zeta)||\p^{\alpha}(\phi,\zeta)|
     (|\p^{\alpha}\dy(\bar u_1,\bar\theta)|+|\p^{\alpha}(\bar u_1,\bar\theta)|^2)\nnm\\
&+C\epsilon^{1-a}\int_{\R}|\p^\alpha(\psi,\zeta)|(|\p^{\alpha}(\psi,\zeta)||\p^{\alpha}\dy\psi|+|\p^{\alpha}\phi||\dy\psi|^2)dy\nnm\\
\leq&\,
\lambda\epsilon^{1-a}\|\p^\alpha(\psi,\zeta)\|^2_{L^2_y}
 +C\lambda^{-1}\frac{\epsilon^{1+4a}}{\delta^3(\frac{\delta}{\sigma}+\epsilon^a\tau)^2}
 +C\(\sigma^2\frac{\epsilon^{1+a}}{\delta^2}+\mathcal{E}^{\frac12}(\tau)\)\mathcal{D}(\tau).
\ema
Secondly, we estimate $H^j_e~(j=1,2,3,4)$ in \eqref{He} which interact with electromagnetic fields.
\be
\sum_{i=1}^3\intra\p^\alpha\left(\frac{1}{\rho} H_e^i\right)\p^\alpha \psi_i dy
 +\intra\p^\alpha\left(\frac{1}{\rho} H_e^4\right)\frac{\p^\alpha \zeta}{\theta} dy
\leq\lambda\mathcal{D}(\tau)
 +C\lambda^{-1}\epsilon^{2a-2}\mathcal{E}(\tau)\mathcal{D}(\tau).
\ee
Lastly, we study terms involving $\Lambda_1$ in $H^j_n~(j=1,2,3,4)$ in \eqref{Hn} and the terms involving $\Lambda_2$ is similar. Noth that
\be
\int_{\R^3}v_i v_1 L_M^{-1} \Lambda_1 d v
 =\int_{\R^3} L_M^{-1}\left\{R \theta \hat{\B}_{i1}\(\frac{v-u}{\sqrt{R \theta}}\)M\right\} \frac{\Lambda_1}{M} dv
 =R\theta\int_{\R^3} \B_{i1}\(\frac{v-u}{\sqrt{R \theta}}\)\frac{\Lambda_1}{M} dv,\nnm
\ee
for $i=1,2,3$, where we have used the self-adjoint property of $L_M^{-1}$ and the definition of Burnett functions in \eqref{Burnett1}-\eqref{Burnett2}. Then,
\bma
&-\intra\p^\alpha\(\frac{1}{\rho}\int_{\R^3} v_i v_1 \dy L_M^{-1}\Lambda_1 dv\)\p^\alpha\psi_i dy
 =-\int_{\R}\p^\alpha\[\frac1{\rho} \dy\(\int_{\R^3} R\theta \B_{i1}\(\frac{v-u}{\sqrt{R\theta}}\)\frac{\Lambda_1}{M} dv\)\]\p^\alpha\psi_i dy\nnm\\
=&\,\int_{\R}\p^\alpha\[\frac{1}{\rho}\int_{\R^3} R\theta \B_{i1}\(\frac{v-u}{\sqrt{R \theta}}\)\frac{\Lambda_1}{M} dv\]\p^\alpha\dy\psi_i dy
  +\int_{\R}\p^\alpha\[\dy\(\frac{1}{\rho}\)\int_{\R^3} R\theta \B_{i1}\(\frac{v-u}{\sqrt{R\theta}}\)\frac{\Lambda_1}{M} dv\]\p^\alpha\psi_i dy\nnm\\
:=&\,I_{\Lambda^1_1}+I_{\Lambda^2_1}.
\ema
For $I_{\Lambda^1_1}$, by the definition of $\Lambda_1$ in \eqref{lambda1}, we have
\bma
I_{\Lambda^1_1}
=&\int_{\R^3}\p^\alpha\[\int_{\R^3}\frac{1}{\rho} R\theta \B_{i1}\(\frac{v-u}{\sqrt{R\theta}}\)\frac{\epsilon^{1-a}\dtau G_1}{M} dv\]
  \p^\alpha\dy\psi_i dy\nnm\\
&+\int_{\R^3}\p^\alpha\[\int_{\R^3}\frac{1}{\rho} R\theta \B_{i1}\(\frac{v-u}{\sqrt{R \theta}}\)\frac{\epsilon^{1-a} P_1(v\dy G_1)}{M} d v\]
  \p^\alpha \dy\psi_i dy\nnm\\
&-\int_{\R^3}\p^\alpha\[\int_{\R^3}\frac{1}{\rho} R\theta \B_{i1}\(\frac{v-u}{\sqrt{R \theta}}\)
  \frac{\epsilon P_1[(E+v\times B)\cdot\nabla_v G_1]}{M}dv\] \p^\alpha\dy\psi_i dy\nnm\\
&-\int_{\R^3}\p^\alpha\[\int_{\R^3}\frac{1}{\rho} R\theta \B_{i1}\(\frac{v-u}{\sqrt{R \theta}}\)\frac{Q(G_1,G_1)}{M} dv\]\p^\alpha\dy\psi_i dy\nnm\\
:=&I_{\Lambda^1_1}^1+I_{\Lambda^1_1}^2+I_{\Lambda^1_1}^3+I_{\Lambda^1_1}^4.
\ema
Noting that $G_1=\bar{G}_1+\sqrt{\mu}\gt$, from Lemma \ref{Burnett3}, \eqref{Gb2}-\eqref{Gb3} and letting $\sigma\frac{\epsilon^a}{\delta}\leq1$, we have
\bma
I_{\Lambda^1_1}^1
&=\int_{\R^3}\p^\alpha\[\int_{\R^3}\frac{1}{\rho} R\theta \B_{i1}\(\frac{v-u}{\sqrt{R \theta}}\)\epsilon^{1-a}\(\frac{ \dtau\bar{G}}{M}+\frac{\sqrt{\mu}}{M}  \dtau\gt\) dv\] \p^\alpha\dy\psi_i dy\nnm\\
&\leq \lambda\epsilon^{1-a}\|\p^\alpha\dy\psi\|^2_{L^2_y}
 +C\lambda^{-1}\(\sigma^2\frac{\epsilon^2}{\delta^2}+\mathcal{E}(\tau)\)\mathcal{D}(\tau)\nnm\\
&\quad +C\lambda^{-1}\frac{\epsilon^{3+2a}}{\delta^3(\frac{\delta}{\sigma}+\epsilon^a\tau)^2}
 +C\lambda^{-1}\epsilon^{1-a}\|\p^{\alpha}\dtau \gt\|^2_{\sigma}.
\ema
Similarly, for $I_{\Lambda^1_1}^2$ and $I_{\Lambda^1_1}^3$, we have
\bma
I_{\Lambda^1_1}^2+I_{\Lambda^1_1}^3
&\leq\lambda\epsilon^{1-a}\|\p^\alpha\dy\psi\|^2_{L^2_y}
 +C\lambda^{-1}\(\sigma^2\frac{\epsilon^2}{\delta^2}+\mathcal{E}(\tau)\)\mathcal{D}(\tau)\nnm\\
&\quad +C\lambda^{-1}\frac{\epsilon^{3+2a}}{\delta^3(\frac{\delta}{\sigma}+\epsilon^a\tau)^2}
 +C\lambda^{-1}\epsilon^{1-a}\|\p^{\alpha}\dy\gt\|^2_{\sigma}.\nnm
\ema
 As for the term $I_{\Lambda^1_1}^4$, we use Lemmas \ref{Burnett3} and \ref{QGG} to write
\bma
I_{\Lambda^1_1}^4
=&\,-\int_{\R}\int_{\R^3}\p^\alpha\[\frac{1}{\rho} R\theta \B_{i1}\(\frac{v-u}{\sqrt{R \theta}}\)\frac{\sqrt{\mu}}{M}\]
   \Gamma\(\frac{G_1}{\sqrt{\mu}},\frac{G_1}{\sqrt{\mu}}\) dv \p^\alpha \dy\psi_i dy\nnm\\
&-\int_{\R}\int_{\R^3}\frac{1}{\rho} R\theta \B_{i1}\(\frac{v-u}{\sqrt{R \theta}}\)\frac{\sqrt{\mu}}{M}
   \p^\alpha\Gamma\(\frac{G_1}{\sqrt{\mu}},\frac{G_1}{\sqrt{\mu}}\) dv \p^\alpha \dy\psi_i dy\nnm\\
\leq&\, \lambda\epsilon^{1-a}\|\p^\alpha\dy\psi\|^2_{L^2_y}
    +C\lambda^{-1}\frac{\epsilon^{3+2a}}{\delta^3(\delta+\epsilon^a\tau)^2}
    +C\lambda^{-1}\(\frac{\epsilon^2}{\delta^2}+\mathcal{E}(\tau)\)\mathcal{D}(\tau).
\ema
Therefore,  by combining  the estimates $I_{\Lambda^1_1}^1-I_{\Lambda^1_1}^4$, we can get
\bma
I_{\Lambda^1_1}
\leq\lambda\epsilon^{1-a}\|\p^\alpha\dy\psi\|^2_{L^2_y}
 +C\lambda^{-1}\(\sigma^2\frac{\epsilon^2}{\delta^2}+\mathcal{E}(\tau)\)\mathcal{D}(\tau)
 +C\lambda^{-1}\frac{\epsilon^{3+2a}}{\delta^3(\frac{\delta}{\sigma}+\epsilon^a\tau)^2}
 +C\lambda^{-1}\epsilon^{1-a}\sum_{|\alpha|=2}\|\p^{\alpha}\gt\|^2_{\sigma}.\nnm
\ema
For $I_{\Lambda_1^2}$, by using  an arguments similar to  the one for $I_{\Lambda_1^1}$, we have
\bma
I_{\Lambda^2_1}
\leq\lambda\epsilon^{1-a}\|\p^\alpha\dy\psi\|^2_{L^2_y}
 +C\lambda^{-1}\(\sigma^2\frac{\epsilon^2}{\delta^2}+\mathcal{E}(\tau)\)\mathcal{D}(\tau)
 +C\lambda^{-1}\frac{\epsilon^{3+2a}}{\delta^3(\frac{\delta}{\sigma}+\epsilon^a\tau)^2}
 +C\lambda^{-1}\epsilon^{1-a}\sum_{|\alpha|=2}\|\p^{\alpha}\gt\|^2_{\sigma}.\nnm
\ema
Similarly, for $|\alpha|=1$, by \eqref{lambda2} and Lemma \ref{Burnett3} we have
{
\bma
&\quad\epsilon^a\sum^3_{i=1}\int_{\R}\p^\alpha\int_{\R^3}\frac1\rho(v\times B)_i \L_M^{-1}\Lambda_2dv\p^\alpha\psi_idy
 +\epsilon^a\int_{\R}\p^\alpha\int_{\R^3}\frac1\rho[E+(u\times B)]\cdot v \L_M^{-1}\Lambda_2dv\frac{\p^\alpha\zeta}{\theta}dy\nnm\\
&\leq\lambda\mathcal{D}(\tau)
 +C\lambda^{-1}\(\sigma^2\frac{\epsilon^2}{\delta^2}+\mathcal{E}(\tau)\)\mathcal{D}(\tau)
 +C\lambda^{-1}\frac{\epsilon^{3+2a}}{\delta^3(\frac{\delta}{\sigma}+\epsilon^a\tau)^2}
 + C\lambda^{-1}\epsilon^{1-a}\sum_{|\alpha|=2}\|\p^{\alpha}\gt\|^2_{\sigma}.
\ema}
Combining the estimates for $I_{\alpha}^i$, $i=1,2,...,5,$ we complete the proof of the  lemma.
\end{proof}

\subsection{Estimate on derivatives of $(E,B)$ and  $n$}
\begin{flushleft}
\textbf{Step 1. } Estimates on  $\|\p^\alpha(E,B)\|_{L^2_y}$ with $|\alpha|=1$.
\end{flushleft}

\begin{lem}\label{EB1}
Under the same assumptions as in Proposition \ref{priori3} and by letting $\sigma\frac{\epsilon^a}{\delta}$ small enough,
it holds that for $|\alpha|=1$,
\bma\label{EB-first}
&2\frac{d}{d\tau}\|\p^{\alpha}(E,B)\|^2_{L^2_y}
  -\epsilon^{1+a}\frac{d}{d\tau}\int_{\R}E\cdot\O\dy B dy
  + \epsilon^{1+a}\|\p^{\alpha}(E,n)\|^2_{L^2_y}
  +\epsilon^{1+a}\|\p^{\alpha} B\|^2_{L^2_y}\nnm\\
\leq&\, C\frac{\epsilon^a}{\frac{\delta}{\sigma}+\epsilon^a\tau}\mathcal{E}(\tau)
 +C\(\eta^2_0+\epsilon^{a}+\sigma^2\frac{\epsilon^{2a}}{\delta^2}+\epsilon^{a-1}\mathcal{E}^{\frac12}(\tau)\)\mathcal{D}(\tau)
 +C\epsilon^{1-a}\sum_{|\alpha_1|=2}\|\p^{\alpha_1}\g\|^2_\sigma.
\ema

\end{lem}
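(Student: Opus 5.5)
Apply $\p^\alpha$ ($|\alpha|=1$) to the Maxwell system \eqref{VML3}$_{3,4}$, in the form \eqref{EB-inv-s} together with $\dtau B=-\O\dy E$ and $\dy E_1=\epsilon^a n$. This gives equations for $\p^\alpha(E,B)$ whose sources are $-\epsilon^a\p^\alpha(nu)$, $\epsilon\p^\alpha(\sigma(\theta)\dy(n/\rho))$, the damping $-\epsilon^{1+a}\p^\alpha\big(\sigma(\theta)(E+u\times B)/(R\theta)\big)$ and the kinetic term $-\epsilon^a\p^\alpha\intr v\L_M^{-1}\Lambda_2dv$.

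\textbf{Step 1: energy identity.} Take the $L^2_y$ inner product with $\p^\alpha E$ and $\p^\alpha B$. The curl terms $\O\dy\p^\alpha B\cdot\p^\alpha E-\O\dy\p^\alpha E\cdot\p^\alpha B$ integrate to zero. The damping term splits as
\[
\epsilon^{1+a}\frac{\sigma(\theta)}{R\theta}|\p^\alpha E|^2+\epsilon^{1+a}\frac{\sigma(\theta)}{R\theta}\p^\alpha E\cdot(u\times\p^\alpha B)+\text{commutators}.
\]
The first piece is dissipative in $\p^\alpha E$. In the second, $u=\bar u+\psi$ with $|\bar u|\le\eta_0$, so it is bounded by $C\eta_0\epsilon^{1+a}\|\p^\alpha(E,B)\|^2$. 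The commutators carry $\p^\alpha(\theta,u)$ and are handled as $C\frac{\epsilon^a}{\delta/\sigma+\epsilon^a\tau}\mathcal{E}+C\epsilon^{a-1}\mathcal{E}^{1/2}\mathcal{D}$, exactly as in \eqref{first2}.

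The $n$-dissipation comes from $\p^\alpha E_1$. Using $\dy E_1=\epsilon^an$, the term $\epsilon\sigma(\theta)\dy\p^\alpha(n/\rho)\,\p^\alpha E_1$ integrates by parts to $-\epsilon^{1+a}\sigma(\theta)\rho^{-1}|\p^\alpha n|^2$ plus lower-order pieces. This is the same mechanism as in \eqref{EUB}.

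The term $\epsilon^a\p^\alpha(nu)\cdot\p^\alpha E$ requires care. For $E_1$, write it as $\epsilon^a\p^\alpha(nu_1)\p^\alpha E_1$ and use $n=\epsilon^{-a}\dy E_1$ to obtain $\frac12\dy\bar u_1|\p^\alpha E_1|^2$ plus perturbations, bounded by the rarefaction-decay term through Lemma \ref{rarefaction4}. For $E_{2,3}$, $\bar u_2=\bar u_3=0$, so $nu_i=n\psi_i$ is cubic and goes into $\mathcal{E}^{1/2}\mathcal{D}$.

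\textbf{Step 2: recovering $\epsilon^{1+a}\|\p^\alpha B\|^2$.} This is the main obstacle, since there is no direct damping of $B$. I would use the cross term $-\epsilon^{1+a}\frac{d}{d\tau}\int E\cdot\O\dy B\,dy$. Differentiating and substituting the equations gives
\[
-\int\dtau E\cdot\O\dy B\,dy=-\int|\O\dy B|^2dy+\ldots=-\|\dy(B_2,B_3)\|^2+\ldots,
\]
while $-\int E\cdot\O\dy\dtau B\,dy=-\int E\cdot\O\dy(-\O\dy E)\,dy$ produces $\|\dy(E_2,E_3)\|^2$ with the unfavorable sign. That term is absorbed by the $\p^\alpha E$ dissipation only if the first-step identity is multiplied by a large constant, which explains the coefficient $2$ in the statement. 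The remaining terms from the damping and source contributions are bounded by $\epsilon^{2+2a}\|(E+u\times B,\dy n)\|^2$ and hence are higher order. For $\alpha=\p_\tau$, $\|\dtau B\|=\|\dy E_{2,3}\|$, so the $\tau$-derivative case reduces to the $y$-derivative case.

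\textbf{Step 3: kinetic source.} For $\epsilon^a\p^\alpha\intr v\L_M^{-1}\Lambda_2\,dv$, use \eqref{lambda2} and Lemma \ref{Burnett3}, moving $\L_M^{-1}$ onto Burnett-type functions. The part $\epsilon^{1-a}\p^\alpha\dtau G_2$ is bounded by $\epsilon^{1-a}\sum_{|\alpha_1|=2}\|\p^{\alpha_1}\g\|_\sigma^2$ after Cauchy--Schwarz against $\epsilon^{1+a}\|\p^\alpha E\|^2$, because $\epsilon^{2a}\cdot\epsilon^{2-2a}/\epsilon^{1+a}=\epsilon^{1-a}$. The $\bar G_1$ pieces give $\sigma^2\epsilon^{2a}\delta^{-2}\mathcal{D}$. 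The $\Gamma$ terms, via Lemmas \ref{GMG} and \ref{QGG}, give $(\eta_0^2+\epsilon^a)\mathcal{D}$.

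Collecting Steps 1--3 yields \eqref{EB-first}.
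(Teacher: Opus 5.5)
Your proposal is correct and follows essentially the same route as the paper. Both arguments use the $\p^\alpha$-energy identity from \eqref{EB-inv-1}, with the $n$-dissipation produced by $\dy E_1=\epsilon^a n$, the term $\frac12\dy\bar u_1|\p^\alpha E_1|^2$, and the $\eta_0$-smallness of $\bar u\times\p^\alpha B$; then the cross term $-\epsilon^{1+a}\int_{\R}E\cdot\O\dy B\,dy$ is added, twice the first estimate absorbs $\epsilon^{1+a}\|\O\dy E\|^2_{L^2_y}$, and $\|\dtau B\|_{L^2_y}=\|\O\dy E\|_{L^2_y}$ handles $\alpha=\p_\tau$. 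The only differences are cosmetic: the paper tests the undamped system \eqref{EB-2} against $-\epsilon^{1+a}\O\dy B$, and the factors $\eta_0^2$ and $\epsilon^a$ in front of $\mathcal{D}(\tau)$ come from Young's inequality on $\bar u\times\p^\alpha B$ and from the cross-term sources, not from the $\Gamma$ terms in $\Lambda_2$.
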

\begin{proof}	
Applying $\p^{\alpha}$ to \eqref{EB-inv-s}, we have
\be\label{EB-inv-1}
\left\{\begin{aligned}
\dtau \p^{\alpha}E_{1}=&-\epsilon^a\p^{\alpha}(nu_1)
  +\epsilon\frac{\sigma(\theta)}{\rho}{\p^{\alpha}\p_yn}
 -\epsilon^{1+a}\frac{\sigma(\theta)}{R\theta}\p^{\alpha}\big(E+u\times B\big)_1
 -\epsilon^a\intr v_1\p^{\alpha}\L_M^{-1}\Lambda_{2}dv+{L}^2_{E_1}, \\
\dtau\p^{\alpha} E_{2}=&-\p^{\alpha}\p_y B_3-\epsilon^a\p^{\alpha}(n\psi_2)
 -\epsilon^{1+a}\frac{\sigma(\theta)}{R\theta}\p^{\alpha}\big(E+u\times B\big)_2
 -\epsilon^a\intr v_2\p^{\alpha}\L_M^{-1}\Lambda_{2}dv+{L}^2_{E_2}, \\
\dtau \p^{\alpha}E_{3}=&\p^{\alpha}\p_y B_2-\epsilon^a\p^{\alpha}(n\psi_3)
 -\epsilon^{1+a}\frac{\sigma(\theta)}{R\theta}\p^{\alpha}\big(E+u\times B\big)_3
 -\epsilon^a\intr v_3\p^{\alpha}\L_M^{-1}\Lambda_2dv+{L}^2_{E_3},
\end{aligned}\right.
\ee
where
\begin{align*}				
L^2_{E_1}:=&-\epsilon\p^{\alpha}\(\frac{\sigma(\theta)}{\rho^2}\p_y\rho n\)			+\bigg[\epsilon\p^{\alpha}(\frac{\sigma(\theta)}{\rho})\p_yn
  -\epsilon^{1+a}\p^{\alpha}(\frac{\sigma(\theta)}{R\theta})(E+u\times B)_1\bigg],\\	
{L}^2_{E_2}:=&-\epsilon^{1+a}\p^{\alpha}(\frac{\sigma(\theta)}{R\theta})(E+u\times B)_2,\quad			{L}^2_{E_3}:=-\epsilon^{1+a}\p^{\alpha}(\frac{\sigma(\theta)}{R\theta})(E+u\times B)_3.
\end{align*}
Taking the inner product between \eqref{EB-inv-1}$_i$ and $\p^{\alpha}E_i~(i=1,2,3)$, we have
\bma\label{EB-1-1}
&\frac12\frac{d}{d\tau} \|\p^{\alpha}E\|^2_{L^2_y}
 +\epsilon^{1+a}\int_{\R}\frac{\sigma(\theta)}{R\theta}|\p^{\alpha}E|^2dy
 -\epsilon\int_{\R}\frac{\sigma(\theta)}{\rho}\p^{\alpha}\dy n\p^{\alpha}E_1dy\nnm\\
=&\int_{\R}\O\p^\alpha\dy B\cdot\p^{\alpha}Edy
  -\epsilon^a\int_{\R}\p^{\alpha}(n\bar{u}_1)\p^{\alpha}E_1dy
  -\epsilon^a\int_{\R}\p^{\alpha}(n\psi)\cdot\p^{\alpha}E dy\nnm\\
&-\epsilon^{1+a}\int_{\R}\frac{\sigma(\theta)}{R\theta}\p^\alpha(u\times B)\cdot\p^{\alpha}E dy
 -\epsilon^a\int_{\R}\p^{\alpha}E\cdot\int_{\R^3}v \p^{\alpha}\L_M^{-1}\Lambda_{2}dvdy
  +\sum^3_{i=1}\intra{L}^2_{E_i}\p^{\alpha}E_idy,
\ema
where $\O$ is a $3\times 3$ matrix defined by \eqref{matrix}. By using the fact $\dtau B=-\O\dy E$, $\dy E_1=\epsilon^{a}n$ and the integration by parts, we have
\be\label{EB-1-2a}
\left\{\begin{aligned}
&\int_{\R}\O\p^\alpha\dy B\cdot\p^{\alpha}Edy
 =-\int_{\R}\O\p^\alpha B\cdot\p^{\alpha}\dy Edy
  =\int_{\R}\p^\alpha B\cdot\O\p^{\alpha}\dy Edy
  =-\frac12\frac{d}{d\tau}\int_{\R}|\p^{\alpha}B|^2dy,\\
&-\epsilon\int_{\R}\frac{\sigma(\theta)}{\rho}\p^{\alpha}\dy n\p^{\alpha}E_1dy
  =\epsilon\int_{\R}\dy\(\frac{\sigma(\theta)}{\rho}\)\p^{\alpha}n\p^{\alpha}E_1 dy
   +\epsilon^{1+a}\int_{\R}\frac{\sigma(\theta)}{\rho}|\p^{\alpha}n|^2dy,\\
&-\epsilon^a\int_{\R}\p^{\alpha}(n\bar{u}_1)\p^{\alpha}E_1dy	
=\frac12\intr \p_y \bar{u}_1|\p^{\alpha} E_1|^2dy
  -\epsilon^a\int_{\R}n\p^\alpha\bar{u}_1\p^\alpha E_1 dy.
\end{aligned}\right.
\ee
Submitting \eqref{EB-1-2a} into \eqref{EB-1-1}, we have
\bma\label{EB-1-2}
&\frac12\frac{d}{d\tau} \|\p^{\alpha}(E,B)\|^2_{L^2_y}
 +\epsilon^{1+a}\int_{\R}\frac{\sigma(\theta)}{R\theta}|\p^{\alpha}E|^2dy
 +\epsilon^{1+a}\int_{\R}\frac{\sigma(\theta)}{\rho}|\p^{\alpha}n|^2dy\nnm\\
=&\frac12\intr \p_y \bar{u}_1|\p^{\alpha} E_1|^2dy
  -\epsilon^a\int_{\R}n\p^\alpha\bar{u}_1\p^\alpha E_1 dy
  -\epsilon^a\int_{\R}\p^{\alpha}(n\psi)\cdot\p^{\alpha}E dy\nnm\\
&-\epsilon^{1+a}\int_{\R}\frac{\sigma(\theta)}{\rho}\p^\alpha(u\times B)\cdot\p^{\alpha}E dy
 -\epsilon^a\int_{\R}\p^{\alpha}E\cdot\int_{\R^3}v \p^{\alpha}\L_M^{-1}\Lambda_{2}dvdy
  +\sum^3_{i=1}\intra{L}^2_{E_i}\p^{\alpha}E_idy\nnm\\
&-\epsilon\int_{\R}\dy\(\frac{\sigma(\theta)}{\rho}\)\p^{\alpha}n\p^{\alpha}E_1 dy
:=\sum^7_{i=1}J^i_E.
\ema

For $J^1_E,J^2_E$, by Lemma \ref{rarefaction4} and $\p_y E_1=\epsilon^a n$, we have
$$
J^1_E+J^2_E
\leq C\|\p^\alpha\bar{u}_1\|_{L^\infty_y}\|\p^{\alpha}E_1\|^2_{L^2_y}
\leq C\frac{\epsilon^a}{\frac{\delta}{\sigma}+\epsilon^a\tau}\mathcal{E}(\tau).
$$	
For $J^3_E$, it holds that
\bma\label{J3E}
J^3_E
&\leq C\epsilon^a\int_{\R}|\p^\alpha n||\psi||\p^{\alpha}E|+|n||\p^\alpha \psi||\p^{\alpha}E|dy
 \leq C\epsilon^{a-1}\mathcal{E}^{\frac12}(\tau)\mathcal{D}(\tau).
\ema
For $J^4_E$, note that $B_1=0$ and $\bar{u}\times \p^{\alpha}B=(0,-\bar{u}_1\p^{\alpha}B_3,\bar{u}_1\p^{\alpha}B_2)$.
Thus, we have
\bma
J^4_E
=&-\epsilon^{1+a}\int_{\R}\frac{\sigma(\theta)}{R\theta}\p^\alpha(\bar u\times B)\cdot\p^{\alpha}E dy
 -\epsilon^{1+a}\int_{\R}\frac{\sigma(\theta)}{R\theta}\p^\alpha(\psi\times B)\cdot\p^{\alpha}E dy\nnm\\
\leq&C\epsilon^{1+a}\int_{\R}|\bar{u}_1||\p^{\alpha}B||\p^{\alpha}E|dy
  +C\epsilon^{1+a}\int_{\R}|\p^{\alpha}\bar{u}_1||B||\p^{\alpha}E|dy\nnm\\
&+C\epsilon^{1+a}\int_{\R}|\psi||\p^{\alpha}B||\p^{\alpha}E|dy
 +C\epsilon^{1+a}\int_{\R}|\p^{\alpha}\psi||B||\p^{\alpha}E|dy\nnm\\
\leq&\lambda\epsilon^{1+a}\|\p^{\alpha}E\|^2_{L^2_x}
 +C\lambda^{-1}\Big(\eta_0^2+\mathcal{E}^{\frac12}(\tau)\Big)\mathcal{D}(\tau)
 +C\frac{\epsilon^{1+2a}}{\frac{\delta}{\sigma}+\epsilon^a\tau}\mathcal{E}(\tau).\nnm
\ema
For $J^5_E$, by the definition of $\Lambda_2$ in \eqref{lambda2}, we have
\bma\label{J2E}
J^5_E
\leq&\,\lambda\epsilon^{1+a}\|\p^\alpha E\|^2_{L^2_y}
 +C\lambda^{-1}\(\sigma^2\frac{\epsilon^{2a}}{\delta^2}+\mathcal{E}(\tau)\)\mathcal{D}(\tau)\nnm\\
&+C\lambda^{-1}\frac{\epsilon^{1+3a}}{\delta^2(\frac{\delta}{\sigma}+\epsilon^a\tau)^2}\|n\|^2_{L^2_y}
 +C\lambda^{-1}\epsilon^{1-a}\sum_{|\alpha_1|=2}\|\p^{\alpha_1}\g\|^2_\sigma.
\ema
For $J^6_E$, 
by using Lemma \ref{rarefaction4}, we have
\bma\label{J6E}
J^6_E
\leq&\,C\epsilon\int_{\R}|\p^{\alpha}E||\p^\alpha(\rho,\theta)|(|\dy\rho||n|+|\p^\alpha n|)dy
 +C\epsilon\int_{\R}|\p^{\alpha}E||\p^\alpha\dy\rho||n|dy\nnm\\
&+C\epsilon^{1+a}\int_{\R}|\p^{\alpha}E||\p^\alpha\theta||E+u\times B|dy\nnm\\
\leq&\,\lambda\epsilon^{1+a}\|\p^\alpha E\|^2_{L^2_y}
 +C\lambda^{-1}\(\sigma^2\frac{\epsilon^{2a}}{\delta^2}+\mathcal{E}(\tau)\)\mathcal{D}(\tau).
\ema
For $J^7_E$, by Lemma \ref{rarefaction4}, we have
\bma
J^7_E
&\leq C\epsilon\int_{\R}|\dy(\rho,\theta)||\p^\alpha n||\p^\alpha E_1|dy\nnm\\
&\leq \lambda\epsilon^{1+a}\|\p^\alpha E_1\|^2_{L^2_y}
 +C\lambda^{-1}\epsilon^{1-a}\int_{\R}|\dy(\rho,\theta)|^2|\p^\alpha n|^2dy\nnm\\
&\leq\lambda\epsilon^{1+a}\|\p^\alpha E_1\|^2_{L^2_y}
 +C\lambda^{-1}\(\sigma^2\frac{\epsilon^{2a}}{\delta^2}+\mathcal{E}(\tau)\)\mathcal{D}(\tau).\nnm
\ema
Submitting the estimates for $J_E^i$, $i=1,2,...,7$ into \eqref{EB-1-2} and letting $\lambda$ small enough,
and using the fact
$$
\|\dtau B\|^2_{L^2_x}=\|\O\dy E\|^2_{L^2_x},
$$
we have
\bma\label{EB-first-1}
&\frac{d}{d\tau}\|\p^{\alpha}(E,B)\|^2_{L^2_y}
  +\epsilon^{1+a}\|\p^{\alpha}(E,n)\|^2_{L^2_y}
  +\epsilon^{1+a}\|\dtau B\|^2_{L^2_y}\nnm\\
\leq&\, C\frac{\epsilon^a}{\frac{\delta}{\sigma}+\epsilon^a\tau}\mathcal{E}(\tau)
 +C\(\eta^2_0+\sigma^2\frac{\epsilon^{2a}}{\delta^2}+\epsilon^{a-1}\mathcal{E}^{\frac12}(\tau)\)\mathcal{D}(\tau)
 +C\epsilon^{1-a}\sum_{|\alpha_1|=2}\|\p^{\alpha_1}\g\|^2_\sigma.
\ema

In the following, we shall estimate the dissipation of $\epsilon^{1+a}\| \dy B\|^2_{L^2_y}$ in \eqref{EB-first}.
Under the scaling \eqref{Scaling}, we rewrite \eqref{EB} as the following:
\be\label{EB-2}
\left\{\begin{aligned}
&\dtau E-\O\dy B=-\epsilon^anu-\epsilon^a\intr vG_{2}dv, \\
&\dtau B=-\O\dy E,\quad\dy E_1=\epsilon^{a}n.
\end{aligned}\right.
\ee
Taking the inner product between \eqref{EB-2}$_1$ and $-\epsilon^{1+a}\O\dy B$ and using $\eqref{EB-2}_2$, we have
\bma\label{EB-first-2}
&-\epsilon^{1+a}\frac{d}{d\tau}\int_{\R}E\cdot\O\dy B dy+\epsilon^{1+a}\|\O\dy B\|^2_{L^2_y}\nnm\\
=&\epsilon^{1+a}\|\O\dy E\|^2_{L^2_y}+\epsilon^{1+2a}\int_{\R}nu\cdot\O\dy B dy+\epsilon^{1+2a}\int_{\R}\O\dy B\cdot\intr vG_{2}dv\nnm\\
\leq&\epsilon^{1+a}\|\O\dy E\|^2_{L^2_y}+C\epsilon^{a}\mathcal{D}(\tau).
\ema
By summing of $2*\eqref{EB-first-1}$ and \eqref{EB-first-2}, we can prove Lemma \ref{EB1}.
\end{proof}

\begin{flushleft}
\textbf{Step 2. } Estimates on $\|\p^\alpha n\|_{L^2_y}$ with $|\alpha|=1$.
\end{flushleft}	
\begin{lem}\label{n1-1}
Under the same assumptions as in Proposition \ref{priori3} and by letting $\frac{\epsilon^a}{\delta}$ small enough, it holds that for $|\alpha|=1$,
\bmas
&\frac{d}{d\tau} \intra |\p^{\alpha}n|^2dy
 +\epsilon^{1-a}\|\p^{\alpha}\dy n\|^2_{L^2_y}
 +\epsilon^{1+a}\|\p^{\alpha}n\|^2_{L^2_y}
 +\|\sqrt{\dy\bar{u}_1}\p^\alpha n\|^2_{L^2_y}\\
\leq&\,C\(\epsilon^{a-1}\mathcal{E}^{\frac12}(\tau)+\epsilon^{-a}\mathcal{E}(\tau)+\sigma^2\frac{\epsilon^{2a}}{\delta^2}\)\mathcal{D}(\tau)
 +C\epsilon^{1-a}\sum_{|\alpha_1|=2}\|\p^{\alpha_1}\g\|^2_\sigma
 +C\frac{\epsilon^a}{(\frac{\delta}{\sigma}+\epsilon^a\tau)}\mathcal{E}(\tau).
 \emas
\end{lem}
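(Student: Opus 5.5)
Apply $\p^\alpha$ ($|\alpha|=1$) to the $n$-equation $\eqref{pert-sys}_5$, multiply by $\p^\alpha n$ and integrate over $\R_y$, in the same way as in the proof of Lemma \ref{n1}. Write $\dy(nu_1)=u_1\dy n+n\dy\bar u_1+n\dy\psi_1$. After differentiating, the transport term $u_1\dy\p^\alpha n$ is integrated by parts. This gives $\frac12\int\dy u_1|\p^\alpha n|^2$. The $\dy\bar u_1$ part, combined with the $\p^\alpha n\,\dy\bar u_1$ term, produces the good term $\|\sqrt{\dy\bar u_1}\p^\alpha n\|^2_{L^2_y}$ on the left, up to a harmless constant. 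The $\dy\psi_1$ part is cubic and is bounded by $\|\dy\psi_1\|_{L^\infty_y}\|\p^\alpha n\|^2_{L^2_y}\le C\epsilon^{a-1}\mathcal{E}^{\frac12}\mathcal{D}$, using Sobolev embedding and the $\epsilon^{2-2a}$ weight on second derivatives in $\mathcal{E}$. The remaining terms coming from $\p^\alpha(n\dy\bar u_1)$, namely $n\,\p^\alpha\dy\bar u_1$ and $\p^\alpha\bar u_1\dy n$, are handled with Lemma \ref{rarefaction4}. They give $C\frac{\epsilon^a}{\frac\delta\sigma+\epsilon^a\tau}\mathcal{E}(\tau)$, plus a $\sigma^2\frac{\epsilon^{2a}}{\delta^2}\mathcal{D}$ contribution when a second derivative of $\bar u$ appears.

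The diffusion term $-\epsilon^{1-a}\p^\alpha\dy(\sigma(\theta)\dy(n/\rho))$ gives, after integration by parts, the dissipation $\epsilon^{1-a}\int\frac{\sigma(\theta)}{\rho}|\p^\alpha\dy n|^2$. The commutators with $\dy\rho$, $\p^\alpha\rho$ and $\p^\alpha\theta$ are split into $\bar\rho,\bar\theta$ parts and $\phi,\zeta$ parts. The former are bounded via Lemma \ref{rarefaction4} by $\lambda\epsilon^{1-a}\|\p^\alpha\dy n\|^2+C\sigma^2\frac{\epsilon^{2a}}{\delta^2}\mathcal{D}$. The latter are cubic and are bounded by $\epsilon^{a-1}\mathcal{E}^{\frac12}\mathcal{D}$ or $\epsilon^{-a}\mathcal{E}\mathcal{D}$. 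For the electromagnetic term $\epsilon\p^\alpha\dy(\sigma(\theta)\frac{E_1+(u\times B)_1}{R\theta})$, I will use the same identity as in Lemma \ref{n1}: $\dy E_1=\epsilon^a n$ gives $\epsilon\,\p^\alpha\dy E_1=\epsilon^{1+a}\p^\alpha n$. This produces the dissipation $\epsilon^{1+a}\int\frac{\sigma}{R\theta}|\p^\alpha n|^2$. The rest, namely $\epsilon\p^\alpha\dy(\psi_2B_3-\psi_3B_2)$ and the derivatives falling on $\sigma/R\theta$, is either integrated by parts once onto $\p^\alpha n$ or handled by Cauchy--Schwarz. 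Using $\epsilon(1+T)\le\epsilon_1$, this gives $\lambda\mathcal{D}$ plus $C(\epsilon^{-a}\mathcal{E}+\epsilon^{a-1}\mathcal{E}^{1/2})\mathcal{D}$ and a decay term $\frac{\epsilon^a}{\frac\delta\sigma+\epsilon^a\tau}\mathcal{E}$.

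The main obstacle is the kinetic source $-\int v_1\p^\alpha\dy\L_M^{-1}\Lambda_2dv$. I integrate by parts in $y$ so that it becomes $\int\p^\alpha\dy n\int v_1\p^\alpha\L_M^{-1}\Lambda_2\,dv$. Then I use the self-adjointness of $\L_M^{-1}$ and the Burnett-type representation of Lemma \ref{Burnett5} to rewrite it as $\int\hat{\mathbf A}\,\p^\alpha\Lambda_2/M\,dv$. The terms are then split according to \eqref{lambda2}. The term $\epsilon^{1-a}\p^\alpha(\dtau G_2+P_r(v_1\dy G_2))$ yields $\lambda\epsilon^{1-a}\|\p^\alpha\dy n\|^2+C\epsilon^{1-a}\sum_{|\alpha_1|=2}\|\p^{\alpha_1}\g\|_\sigma^2$. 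The term $\epsilon^{1-a}\frac n\rho P_r(\dtau M+v_1\dy M)$ is quadratic in $n$ and the macroscopic derivatives; after Lemma \ref{rarefaction4} it gives $\sigma^2\frac{\epsilon^{2a}}{\delta^2}\mathcal{D}+\epsilon^{-a}\mathcal{E}\mathcal{D}$. The term $Q(F_2,G_1)$ is handled with Lemma \ref{QGG} and \eqref{Gb2}, using $F_2=\frac n\rho M+\sqrt\mu\g$ and $G_1=\bar G_1+\sqrt\mu\gt$. The Lorentz term $\epsilon(E+v\times B)\cdot\Tdv G_1$ is quadratic and is absorbed into $\epsilon^{a-1}\mathcal{E}^{1/2}\mathcal{D}$. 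The delicate point is to track the $\epsilon$-powers. The second-order $\g$ dissipation carries only the factor $\epsilon^{1-a}$, so every product must be placed against exactly one dissipative factor. Finally, choosing $\lambda$ small and $\frac{\epsilon^a}{\delta}$ small closes the inequality.
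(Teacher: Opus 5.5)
Your proposal is correct and follows the paper's proof essentially step by step: you differentiate the $n$-equation, test with $\p^\alpha n$, integrate the transport term by parts to obtain $\|\sqrt{\dy\bar u_1}\p^\alpha n\|^2_{L^2_y}$, use $\dy E_1=\epsilon^a n$ to produce the $\epsilon^{1+a}$ dissipation, and move the $\dy$ of the kinetic source onto $\p^\alpha n$ before splitting $\Lambda_2$ according to \eqref{lambda2}. Two bookkeeping fixes are needed: for $\int\dy\psi_1|\p^\alpha n|^2dy$ use $\|\dy\psi_1\|^2_{L^\infty_y}\le C\epsilon^{a-1}\mathcal{D}(\tau)$ together with $\|\p^\alpha n\|^2_{L^2_y}\le\mathcal{E}^{\frac12}(\tau)\big(\epsilon^{a-1}\mathcal{D}(\tau)\big)^{\frac12}$ (drawing on the $\epsilon^{2-2a}$-weighted second derivatives in $\mathcal{E}$, as you suggest, only gives $\epsilon^{\frac32(a-1)}\mathcal{E}^{\frac12}\mathcal{D}$), and, since the stated right-hand side contains no $\lambda\mathcal{D}(\tau)$, the only $\lambda$-terms you may generate are $\lambda\epsilon^{1-a}\|\p^\alpha\dy n\|^2_{L^2_y}$ and $\lambda\epsilon^{1+a}\|\p^\alpha n\|^2_{L^2_y}$, which are absorbed on the left.
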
	
\begin{proof}		
For $\p^\alpha n$, applying $\p^\alpha$ to $\eqref{pert-sys}_6$ and using  $\p_y E_1=\epsilon^a n$ and
\bma
&\epsilon\p^{\alpha}\dy\(\frac{\sigma(\theta)}{R\theta}E_1\)
=\epsilon\p^{\alpha}\[\dy\(\frac{\sigma(\theta)}{R\theta}\)E_1+\frac{\sigma(\theta)}{R\theta}\dy E_1\]\nnm\\
=&\epsilon\[\p^{\alpha}\dy\(\frac{\sigma(\theta)}{R\theta}\)E_1+\dy\(\frac{\sigma(\theta)}{R\theta}\)\p^{\alpha}E_1
  +\p^\alpha\(\frac{\sigma(\theta)}{R\theta}\)\dy E_1\]
  +\epsilon^{1+a}\frac{\sigma(\theta)}{R\theta}\p^{\alpha}n,\nnm
\ema
we have
\bma\label{nx}
&\dtau \p^{\alpha}n+\p^{\alpha}\dy(n u_1)
   +\epsilon^{1+a}\frac{\sigma(\theta)}{R\theta}\p^{\alpha}n
   -\epsilon^{1-a}\dy\(\frac{\sigma(\theta)}{\rho}\p^{\alpha}\dy n\)\nnm\\
&\quad=\dy R^1_n+R^2_n-\intr v_1\p^{\alpha}\dy \L_M^{-1}\Lambda_2dv,
\ema
where
\bma
R^1_n&= \epsilon^{1-a}\(\p^{\alpha}\(\frac{\sigma(\theta)}{\rho}\)\dy n\)
 -\epsilon^{1-a}\p^{\alpha}\(\sigma(\theta)\frac{n}{\rho^2}\dy\rho\),\nnm\\
R^2_n&= -\epsilon\[\p^{\alpha}\dy\(\frac{\sigma(\theta)}{R\theta}\)E_1+\dy\(\frac{\sigma(\theta)}{R\theta}\)\p^{\alpha}E_1
  +\p^\alpha\(\frac{\sigma(\theta)}{R\theta}\)\dy E_1\]
  -\epsilon\p^{\alpha}\(\sigma(\theta)\frac{(u\times B)_1}{R\theta}\).\nnm
\ema
Multiplying \eqref{nx} by $\p^{\alpha}n$ and then integrating  over $\R_y$, we have
\bmas
&\frac12\frac{d}{d\tau}\intra |\p^{\alpha}n|^2dy
 +\epsilon^{1-a}\intra \frac{\sigma(\theta)}{\rho}|\p^{\alpha}\dy n|^2dy
 +\epsilon^{1+a}\intra \frac{\sigma(\theta)}{R\theta}|\p^{\alpha}n|^2dy\\
=&\,-\intra\p^{\alpha}\dy(n u_1)\p^{\alpha}n dy
 +\intra \dy R^1_n\p^{\alpha}n+R^2_n\p^{\alpha}ndy
 -\int_{\R}\p^{\alpha}n\int_{\R^3}v_1\dy\p^{\alpha}\L_M^{-1}\Lambda_2 dvdy\\
:=&\,J_{n}^1+J_{n}^2+J_{n}^3.
\emas
For $J_n^1$, by integration by parts and choosing $\frac{\epsilon^a}{\delta}$ small enough, we have
\bma\label{Jn1}
J_n^1
=&-\intra\dy\p^{\alpha}n u_1\p^{\alpha}n+\dy u_1|\p^{\alpha}n|^2+n\dy\p^{\alpha} u_1\p^{\alpha}n +\dy n\p^{\alpha}u_1\p^{\alpha}n dy\nnm\\
\leq&-\intra \frac12|\p^{\alpha}n|^2\p_y \bar{u}_1dy-\intra \frac12|\p^{\alpha}n|^2\p_y\psi_1dy\nnm\\
&+\int_{\R}|\p^\alpha\psi_1||\dy n||\p^\alpha n|+|\p^\alpha\bar{u}_1||\dy n||\p^\alpha n|
        +|n||\dy\p^\alpha\psi_1||\p^\alpha n|+|\dy\p^\alpha\bar{u}_1||n||\p^\alpha n|dy\nnm\\
\leq&-\intra \frac12|\p^{\alpha}n|^2\p_y \bar{u}_1dy
 +C\epsilon^{a-1}\mathcal{E}^{\frac12}(\tau)\mathcal{D}(\tau)
 +C\frac{\epsilon^a}{(\frac{\delta}{\sigma}+\epsilon^a\tau)}\mathcal{E}(\tau).
\ema
For $J_n^2$, by  Lemma \ref{rarefaction4}, 
we have
\bma
\begin{aligned}
&\quad \intra \dy R^1_n\p^{\alpha}ndy=-\intra R^1_n\p^{\alpha}\dy ndy\nnm\\
&\leq \lambda\epsilon^{1-a}\|\dy\p^{\alpha}n\|^2_{L^2_y}
 +C\lambda^{-1}\epsilon^{1-a}\int_{\R}|\p^\alpha(\rho,\theta)|^2|\p^\alpha n|^2
   +|n|^2|\p^\alpha\dy\rho|^2
   +|\p^\alpha(\rho,\theta)|^2|n|^2|\dy\rho|^2dy\\	
&\leq \lambda\epsilon^{1-a}\|\dy\p^{\alpha}n\|^2_{L^2_y}
 +C\lambda^{-1}\(\mathcal{E}(\tau)+\sigma^2\frac{\epsilon^{2a}}{\delta^2}\)\mathcal{D}(\tau).
\end{aligned}
\ema
And by using $(u\times B)_1=(u_2B_3-u_3B_2)=(\psi_2B_3-\psi_3B_2)$, we have
\bma
\int_{\R}R^2_n\p^{\alpha}ndy
 \leq&\, C\epsilon\int_{\R}|\p^{\alpha}n|\Big[|E_1|(|\p^\alpha\dy\theta|+|\p^\alpha\theta|^2)
    +|\p^\alpha\theta|(|\p^{\alpha}E_1| +|(u\times B)_1|)
    +|\p^\alpha(u\times B)_1|\Big]dy\nnm\\
\leq&\lambda\epsilon^{1+a}\|\p^{\alpha}n\|^2_{L^2_y}
   +C\lambda^{-1}\epsilon^{1-a}\int_{\R}|E_1|^2\Big(|\p^\alpha\dy\theta|^2+|\p^\alpha\theta|^4\Big)dy\nnm\\
&+C\lambda^{-1}\epsilon^{1-a}\int_{\R}\Big[|\p^\alpha\theta|^2(|\p^{\alpha}E_1|^2 +|(u\times B)_1|^2)
  +|\p^\alpha(u\times B)_1|^2\Big]dy\nnm\\
\leq&\,\lambda\epsilon^{1+a}\|\p^{\alpha}n\|^2_{L^2_y}
 +C\lambda^{-1}\frac{\epsilon^{1+a}}{(\frac{\delta}{\sigma}+\epsilon^a\tau)^2}\mathcal{E}(\tau)
 +C\lambda^{-1}\epsilon^{-a}\mathcal{E}(\tau)\mathcal{D}(\tau).
\ema
For $J_n^3$, by \eqref{lambda2}, we have
\bma
J_n^3
=&\,\int_{\R}\dy\p^{\alpha}n\int_{\R^3}v_1\p^{\alpha}\L_M^{-1}\Lambda_2 dvdy \nnm\\
\leq&\,\lambda\epsilon^{1-a}\|\dy\p^{\alpha}n\|^2_{L^2_y}
 +C\lambda^{-1}\frac{\epsilon^{1+a}}{(\frac{\delta}{\sigma}+\epsilon^a\tau)^2}\|\p^\alpha(E,B)\|^2_\sigma\nnm\\
&+C\lambda^{-1}\(\sigma^2\frac{\epsilon^{2a}}{\delta^2}+\mathcal{E}(\tau)\)\mathcal{D}(\tau)
 +C\lambda^{-1}\epsilon^{1-a}\sum_{|\alpha_1|=2}\|\p^{\alpha_1}\g\|^2_\sigma.\nnm
\ema
Then, combining all estimates for $J_n^i$, $i=1,2,3$ with $\lambda$ being small enough, we can complete the proof of  the lemma.
\end{proof}
\subsection{Estimates on derivatives of $\phi$}
In this subsection,  we continue to estimate  $\epsilon^{1-a}|\p^\alpha\dy\phi|^2$ and $\epsilon^{1-a}|\dtau^2(\phi,\psi,\zeta)|^2$.
For this, we apply the operator $\p^\alpha(|\alpha|=1)$ to the system \eqref{pert-sys-n} to have
\bma\label{pert-sys-n1}
\left\{\begin{aligned}
&\dtau \p^\alpha\phi+u_1\dy\p^\alpha\phi+\rho\dy\p^\alpha\psi_1=N_1+W_1,\\
&\dtau \p^\alpha\psi_1+ u_1\dy \p^\alpha\psi_1+R\dy\p^\alpha\zeta+R\frac{\theta}{\rho}\dy\p^\alpha\phi
=N_2+W_2\\
&\qquad +\p^\alpha\[\epsilon^a\frac{n}{\rho}(E_1+(u\times B)_1)-\frac{1}{\rho}\intr v_1^2\dy G_1dv
   +\frac{\epsilon^a}{\rho}\intr (v\times B)_1 G_2dv\],\\
&\dtau \p^\alpha\psi_i+ u_1\dy \p^\alpha\psi_i=W_{i+1}\\
&\qquad +\p^\alpha\[\epsilon^a\frac{n}{\rho}(E_i+(u\times B)_i)-\frac{1}{\rho}\intr v_1v_i\dy G_1dv
   +\frac{\epsilon^a}{\rho}\intr (v\times B)_i G_2dv\],~ i=2,3,\\
&\dtau \p^\alpha\zeta+ u_1\dy \p^\alpha\zeta+R\theta\dy\p^\alpha\psi_1=N_5+W_5\\
&\qquad +\p^\alpha\[\frac{1}{\rho}\intr v_1\((v\cdot u)-\frac12 |v|^2\)\dy G_1dv
   +\frac{\epsilon^a}{\rho}\intr \(E+(u\times B)\)\cdot vG_2dv\],
\end{aligned}\right.
\ema
where $N_k$ and $W_k$, $k=1,2,3,4,5$ are defined in \eqref{N1}-\eqref{W1}.
\begin{flushleft}
\textbf{Step 1.}  Estimates on $\epsilon^{1-a}\|\p^\alpha\dy\phi\|^2_{L^2_y}$.
\end{flushleft}
\begin{lem}\label{pphi}
Under the same assumptions as in Proposition \ref{priori3}, we have
\begin{align*}
&\epsilon^{1-a}\frac{d}{d\tau}\int_{\R}\p^\alpha\psi_1\p^\alpha\dy\phi dy
  +\epsilon^{1-a}\|\p^\alpha\dy\phi\|^2_{L^2_y}\nnm\\
\leq&\,C\epsilon^{1-a}\|\p^\alpha\dy(\psi_1,\zeta)\|^2_{L^2_y}
 +C\(\sigma^2\frac{\epsilon^{2a}}{\delta^2}+\mathcal{E}(\tau)\)\mathcal{D}(\tau)
 +C\epsilon^{1-a}\|\p^\alpha\dy\gt\|^2_\sigma\nnm\\
&+C\frac{\epsilon^{1+3a}}{\delta^2(\frac{\delta}{\sigma}+\epsilon^a\tau)^2}\|(\phi,\psi_1,\zeta)\|^2_{L^2_y}
 +C\frac{\epsilon^{3+2a}}{\delta^3(\frac{\delta}{\sigma}+\epsilon^a\tau)^2}.
\end{align*}
\end{lem}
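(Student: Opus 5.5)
This is the first-order analogue of Lemma \ref{pphi1}, and I would follow the same route with \eqref{pert-sys-n1} in place of \eqref{pert-sys-n}. Multiply \eqref{pert-sys-n1}$_2$ by $\epsilon^{1-a}\p^\alpha\dy\phi$ and integrate over $\R_y$. The term $R\frac{\theta}{\rho}\dy\p^\alpha\phi$ yields the good term $\epsilon^{1-a}\int_{\R}\frac{R\theta}{\rho}|\p^\alpha\dy\phi|^2dy$, and $\frac{R\theta}{\rho}$ is bounded below by \eqref{local}. For the time derivative, write
$$\int_{\R}\dtau\p^\alpha\psi_1\,\p^\alpha\dy\phi\, dy=\frac{d}{d\tau}\int_{\R}\p^\alpha\psi_1\p^\alpha\dy\phi\, dy+\int_{\R}\dy\p^\alpha\psi_1\,\dtau\p^\alpha\phi\, dy,$$
and replace $\dtau\p^\alpha\phi$ using \eqref{pert-sys-n1}$_1$, i.e. by $-u_1\dy\p^\alpha\phi-\rho\dy\p^\alpha\psi_1+N_1+W_1$.

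The convective terms $u_1\dy\p^\alpha\psi_1\p^\alpha\dy\phi$ cancel exactly, as in Lemma \ref{pphi1}. Terms such as $\rho|\dy\p^\alpha\psi_1|^2$ and $R\dy\p^\alpha\zeta\,\p^\alpha\dy\phi$ are handled by Cauchy--Schwarz with a small $\lambda$ multiplying $\epsilon^{1-a}\|\p^\alpha\dy\phi\|^2$. This produces $C\epsilon^{1-a}\|\p^\alpha\dy(\psi_1,\zeta)\|^2$.

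\textbf{Linear wave terms $N_1,N_2$.} These contain factors $\p^\alpha\psi_1\dy\bar\rho$, $\p^\alpha\phi\dy\bar u_1$, and similar ones. Lemma \ref{rarefaction4} gives $|\dy(\bar\rho,\bar u_1)|\le C\sigma\epsilon^a/\delta$, so $\epsilon^{1-a}|\dy\bar\rho|^2|\p^\alpha\phi|^2$ is controlled by $\sigma^2\epsilon^{2a}\delta^{-2}\mathcal D$. The zeroth-order factors $(\phi,\psi_1,\zeta)\dy\p^\alpha(\bar\rho,\bar u_1)$ use the bound $|\p^2\bar u|\lesssim \frac{\sigma\epsilon^{2a}}{\delta(\frac\delta\sigma+\epsilon^a\tau)}$. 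This produces the term $\frac{\epsilon^{1+3a}}{\delta^2(\frac\delta\sigma+\epsilon^a\tau)^2}\|(\phi,\psi_1,\zeta)\|^2$. The quadratic $W_1,W_2$ terms, e.g. $\p^\alpha u_1\dy\psi_1$ and $\p^\alpha(\theta/\rho)\dy\phi$, are bounded via Sobolev ($L^\infty$ of first derivatives from $\mathcal E$) by $C\mathcal E\mathcal D$.

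\textbf{Field and kinetic source terms.} The electromagnetic source $\epsilon^a\p^\alpha[\frac n\rho(E_1+(u\times B)_1)]$ and the term $\epsilon^a\p^\alpha\intr(v\times B)_1G_2dv$ are handled like $I^3_\phi,I^5_\phi$. They yield $\lambda\epsilon^{1-a}\|\p^\alpha\dy\phi\|^2+C\mathcal E\mathcal D$, using the $\epsilon^{1+a}$ dissipation of $(n,E+u\times B,\p^\alpha(E,B))$ and the $\epsilon^{a-1}$ dissipation of $\g$.

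The main obstacle is the kinetic flux $-\epsilon^{1-a}\int_{\R}\p^\alpha\dy\phi\,\p^\alpha[\frac1\rho\intr v_1^2\dy G_1dv]dy$. Split $G_1=\bar G_1+\sqrt\mu\gt$. The $\gt$ part gives $C\epsilon^{1-a}\|\p^\alpha\dy\gt\|_\sigma^2$, plus lower-order pieces where $\p^\alpha$ falls on $1/\rho$; those are absorbed into $\mathcal E\mathcal D$ and $\sigma^2\epsilon^{2a}\delta^{-2}\mathcal D$. For the $\bar G_1$ part, I use \eqref{Gb2}--\eqref{Gb3}: $\|\mu^{-1/2}\p^\alpha\dy\bar G_1\|$ consists of $\epsilon^{1-a}$ times second and third derivatives of $(\bar u,\bar\theta)$, plus products with derivatives of $(\phi,\psi,\zeta)$. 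The pure rarefaction piece produces the source $C\frac{\epsilon^{3+2a}}{\delta^3(\frac\delta\sigma+\epsilon^a\tau)^2}$, with $\frac{\delta}{\sigma}\le1$ used to tidy powers. The mixed pieces give $\sigma^2\frac{\epsilon^{2a}}{\delta^2}\mathcal D$. Choosing $\lambda$ small and absorbing all $\lambda\epsilon^{1-a}\|\p^\alpha\dy\phi\|^2$ terms completes the estimate.
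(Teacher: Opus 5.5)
Your proposal is correct and follows the paper's proof essentially step for step. The shared route is: test \eqref{pert-sys-n1}$_2$ with $\epsilon^{1-a}\partial^\alpha\partial_y\phi$; trade the time-derivative term via \eqref{pert-sys-n1}$_1$, with the exact cancellation of the $u_1$-convection terms; estimate $N_1,N_2,W_1,W_2$ with Lemma \ref{rarefaction4} and the field terms as in Lemma \ref{pphi1}; and handle the kinetic flux by splitting $G_1=\bar G_1+\sqrt{\mu}\,\mathbf{g}_1$ and using \eqref{Gb2}--\eqref{Gb3}.

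Two bookkeeping corrections. First, the second-derivative bound is $|\partial^\alpha\partial_y\bar u_1|\le C\epsilon^{2a}\delta^{-1}(\frac{\delta}{\sigma}+\epsilon^a\tau)^{-1}$, without your extra factor $\sigma$; this is exactly what produces the stated $\frac{\epsilon^{1+3a}}{\delta^2(\frac{\delta}{\sigma}+\epsilon^a\tau)^2}\|(\phi,\psi_1,\zeta)\|^2_{L^2_y}$ term. Second, in the cubic $W$-terms you should take the $L^\infty$ norm of $\partial^\alpha f$ (for $f=\phi,\psi_1$) from $\mathcal{D}$, using $\epsilon^{1-a}\|\partial^\alpha f\|_{L^\infty_y}^2\le C\epsilon^{1-a}\|\partial^\alpha f\|_{L^2_y}\|\partial^\alpha\partial_y f\|_{L^2_y}\le C\mathcal{D}(\tau)$, and pair it with $\|\partial_y(\phi,\psi_1)\|^2_{L^2_y}\le\mathcal{E}(\tau)$; bounding that $L^\infty$ norm by $\mathcal{E}$ alone, as your parenthetical suggests, costs a factor $\epsilon^{a-1}$.
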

\begin{proof}
Multiplying \eqref{pert-sys-n1}$_2$ by $\epsilon^{1-a}\p^\alpha\dy \phi$ and integrating over $\R_y$, we have
\bma\label{phi2}
&\epsilon^{1-a}\int_{\R}\p_\tau \p^\alpha\psi_1\p^\alpha\dy\phi dy
  +\epsilon^{1-a}\int_{\R}\frac{R\theta}{\rho}|\p^\alpha\p_y\phi|^2dy\nnm\\
=&-\epsilon^{1-a}\int_{\R}\(u_1\p^\alpha\dy\psi_1+R \p^\alpha\dy\zeta-N_2-W_2\)\p^\alpha\p_y\phi dy\nnm\\
&+\epsilon\int_{\R}\p^\alpha\[\frac{n}{\rho}(E_1+(u\times B)_1)\]\p^\alpha\p_y\phi dy
 -\epsilon^{1-a}\int_{\R}\p^\alpha\dy\phi\p^\alpha\[\frac{1}{\rho}\int_{\R^3}v_1^2 \dy G_1 dv\]dy\nnm\\
&+\epsilon\int_{\R}\p^\alpha\dy\phi\p^\alpha\[\frac{1}{\rho}\intr (v\times B)_1 G_2dv\]dy.
\ema
By \eqref{pert-sys-n1}$_1$ and integration by parts, we have
\bma\label{phipsi}
\int_{\R}\p_\tau \p^\alpha\psi_1\p^\alpha\p_y\phi dy
&=\frac{d}{d\tau}\int_{\R}\p^\alpha\psi_1\p^\alpha\p_y\phi dy
  -\int_{\R}\p^\alpha\psi_1\dy\p_\tau\p^\alpha\phi dy\nnm\\
&= \frac{d}{d\tau}\int_{\R}\p^\alpha\psi_1\p^\alpha\dy\phi dy
  +\int_{\R}\p^\alpha\dy\psi_1\p_\tau\p^\alpha\phi dy\nnm\\
&= \frac{d}{d\tau}\int_{\R}\p^\alpha\psi_1\p^\alpha\dy\phi dy
  -\int_{\R}\p^\alpha\dy\psi_1\(u_1\p^\alpha\dy\phi+\rho\p^\alpha\dy\psi_1-N_1-W_1\)dy.
\ema
Substituting \eqref{phipsi} into \eqref{phi2}, we have
\bma
&\epsilon^{1-a}\frac{d}{d\tau}\int_{\R}\p^\alpha\psi_1\p^\alpha\dy\phi dy
  +\epsilon^{1-a}\int_{\R}\frac{R\theta}{\rho}|\p^\alpha\p_y\phi|^2dy\nnm\\
=&\int_{\R}\p^\alpha\dy\psi_1\(u_1\p^\alpha\dy\phi+\rho\p^\alpha\dy\psi_1-N_1-W_1\)dy\nnm\\
&-\epsilon^{1-a}\int_{\R}\(u_1\p^\alpha\dy\psi_1+R \p^\alpha\dy\zeta-N_2-W_2\)\p^\alpha\p_y\phi dy\nnm\\
&+\epsilon\int_{\R}\p^\alpha\[\frac{n}{\rho}(E_1+(u\times B)_1)\]\p^\alpha\p_y\phi dy
 -\epsilon^{1-a}\int_{\R}\p^\alpha\dy\phi\p^\alpha\[\frac{1}{\rho}\int_{\R^3}v_1^2 \dy G_1 dv\]dy\nnm\\
&+\epsilon\int_{\R}\p^\alpha\dy\phi\p^\alpha\[\frac{1}{\rho}\intr (v\times B)_1 G_2dv\]dy.
:=\sum_{i=1}^{5}J_{\phi}^{i}.
\ema
For $J_{\phi}^1,J_{\phi}^2$, by Lemma \ref{rarefaction4} and using  $(\p^\alpha\dy\psi_1u_1\p^\alpha\dy\phi-u_1\p^\alpha\dy\psi_1\p^\alpha\dy\phi)=0$,
we have
\bma
J_{\phi}^1+J_{\phi}^2
&\leq C\epsilon^{1-a}\int_{\R}|\p^\alpha\dy\psi_1||\p^\alpha\dy\zeta|+|\p^\alpha\dy\psi_1|^2
 +|\p^\alpha\p_y(\phi,\psi_1)||\p^\alpha(\phi,\psi_1,\zeta)||\p_y(\bar{\rho},\bar{u}_1)|\nnm\\
&\quad+|\p^\alpha\dy(\phi,\psi_1)||(\phi,\psi_1,\zeta)||\p^\alpha\dy(\bar{\rho},\bar{u}_1)|
 +|\p^\alpha\dy(\phi,\psi_1)||\p^\alpha(\rho,u_1,\theta)||\dy(\phi,\psi_1)|dy\nnm\\
&\leq \lambda\epsilon^{1-a}\|\p^\alpha\dy\phi\|^2_{L^2_y}
 +C\lambda^{-1}\(\sigma^2\frac{\epsilon^{2a}}{\delta^2}+\mathcal{E}(\tau)\)\mathcal{D}(\tau)\nnm\\
&\quad+C\lambda^{-1}\epsilon^{1-a}\|\p^\alpha\dy(\psi_1,\zeta)\|^2_{L^2_y}
 +C\lambda^{-1}\frac{\epsilon^{1+3a}}{\delta^2(\frac{\delta}{\sigma}+\epsilon^a\tau)^2}\|(\phi,\psi_1,\zeta)\|^2_{L^2_y}.
\ema
For $J_{\phi}^3$, we have
\bma
J_{\phi}^3
& \leq \lambda\epsilon^{1-a}\|\p^\alpha\dy\phi\|^2_{L^2_y}
 +C\lambda^{-1}\epsilon^{1+a}\int_{\R}|\p^\alpha n|^2|(E,B)|^2+|n|^2|\p^\alpha(E,B)|^2dy\nnm\\
&\quad+C\lambda^{-1}\epsilon^{1+a}\int_{\R}|\p^\alpha(\rho,u)|^2|n|^2|(E,B)|^2dy\nnm\\
& \leq \lambda\epsilon^{1-a}\|\p^\alpha\dy\phi\|^2_{L^2_y}
 +C\lambda^{-1}\mathcal{E}(\tau)\mathcal{D}(\tau).
\ema
For $J_{\phi}^4$, by using  $G_1=\bar{G}_1+\sqrt{\mu}\gt$ and \eqref{Gb2}-\eqref{Gb3}, we have
\bma
J_{\phi}^4
& \leq\lambda\epsilon^{1-a}\|\p^\alpha\dy\phi\|^2_{L^2_y}
 +C\lambda^{-1}\epsilon^{1-a}\int_{\R}\left|\int_{\R^3}v^2_1\sqrt{\mu}\p^\alpha\dy\(\frac{\bar{G}_1}{\sqrt{\mu}}+\gt\)dv\right|^2dy\nnm\\
&\quad+C\lambda^{-1}\epsilon^{1-a}\int_{\R}|\p^\alpha\rho|^2\left|\int_{\R^3}v^2_1\sqrt{\mu}\dy\(\frac{\bar{G}_1}{\sqrt{\mu}}+\gt\)dv\right|^2dy\nnm\\
& \leq \lambda\epsilon^{1-a}\|\p^\alpha\dy\phi\|^2_{L^2_y}
 +C\lambda^{-1}\epsilon^{1-a}\|\p^\alpha\dy\gt\|^2_\sigma
 +C\lambda^{-1}\(\sigma^2\frac{\epsilon^2}{\delta^2}+\mathcal{E}(\tau)\)\mathcal{D}(\tau)
 +C\lambda^{-1}\frac{\epsilon^{3+2a}}{\delta^3(\frac{\delta}{\sigma}+\epsilon^a\tau)^2}.
\ema
Similarly, for $J_{\phi}^5$, by using  $G_2=\sqrt{\mu}\g$, we have
\bma
J_{\phi}^5
&\leq\lambda\epsilon^{1-a}\|\p^\alpha\dy\phi\|^2_{L^2_y}
 +C\lambda^{-1}\epsilon^{1+a}\int_{\R}|B|^2\left|\int_{\R^3}|v|\sqrt{\mu}\p^\alpha \g dv\right|^2dy\nnm\\
&\quad+C\lambda^{-1}\epsilon^{1+a}\int_{\R}(|\p^\alpha\rho|^2|B|^2+|\p^\alpha B|^2)\left|\int_{\R^3}|v|\sqrt{\mu} \g dv\right|^2dy\nnm\\
&\leq \lambda\epsilon^{1-a}\|\p^\alpha\dy\phi\|^2_{L^2_y}
 +C\lambda^{-1}\mathcal{E}(\tau)\mathcal{D}(\tau).
\ema
Finally, by combining the estimates of $J^i_\phi,~i=1,2,...,5$, and choosing $\lambda>0$ small enough,
we complete the proof of the lemma.
\end{proof}

\begin{flushleft}
\textbf{Step 2. } Estimate on $\epsilon^{1-a}\|\dtau^2(\phi,\psi,\zeta,n)\|^2_{L^2_y}$.
\end{flushleft}	
\begin{lem}\label{psia}
Under the same assumptions as in Proposition \ref{priori3}, 
we have
\begin{align*}
&\epsilon^{1-a}\|\dtau^2(\psi,\phi,\zeta,n)\|_{L^2_y}\nnm\\
\leq&\,C\epsilon^{1-a}\|\dtau\dy(\psi,\phi,\zeta,n)\|^2_{L^2_y}
  +C\epsilon^{1-a}\|\dtau\dy(\gt,\g)\|^2_\sigma
  +C\frac{\epsilon^{3+2a}}{\delta^3(\frac{\delta}{\sigma}+\epsilon^a\tau)^2}\nnm\\
&+C\(\sigma^2\frac{\epsilon^{2a}}{\delta^2}+\mathcal{E}(\tau)\)\mathcal{D}(\tau)
  +C\frac{\epsilon^{1+3a}}{\delta^2(\frac{\delta}{\sigma}+\epsilon^a\tau)^2}\|(\phi,\psi_1,\zeta)\|^2_{L^2_y}.
\end{align*}
\end{lem}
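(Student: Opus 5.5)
We prove Lemma \ref{psia} the same way as Lemma \ref{time1}, now applied to the once-differentiated system \eqref{pert-sys-n1} with $\p^\alpha=\dtau$. The idea is to use each equation in \eqref{pert-sys-n1} as an explicit formula for $\dtau^2(\phi,\psi,\zeta)$. No time-derivative energy identity is involved.

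We multiply $\eqref{pert-sys-n1}_1$ by $\epsilon^{1-a}\dtau^2\phi$, $\eqref{pert-sys-n1}_{i+1}$ by $\epsilon^{1-a}\dtau^2\psi_i$ ($i=1,2,3$), and $\eqref{pert-sys-n1}_5$ by $\epsilon^{1-a}\dtau^2\zeta$, and integrate over $\R_y$. For $n$, we apply $\dtau$ to $\eqref{macro-eq-nond}_5$ (rescaled), multiply by $\epsilon^{1-a}\dtau^2 n$ and integrate. This gives $\epsilon^{1-a}\|\dtau^2(\phi,\psi,\zeta,n)\|^2_{L^2_y}$ on the left. Every term on the right is paired with $\dtau^2(\cdot)$, so Cauchy--Schwarz gives $\lambda\epsilon^{1-a}\|\dtau^2(\phi,\psi,\zeta,n)\|^2_{L^2_y}$, which we absorb for small $\lambda$, plus $C\lambda^{-1}\epsilon^{1-a}$ times the $L^2_y$ norm squared of each source term. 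We then bound these source terms one at a time.

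\begin{itemize}
\item \emph{Principal linear terms.} $u_1\dy\dtau(\cdot)$, $\rho\dy\dtau\psi_1$, $R\dy\dtau\zeta$, $\frac{R\theta}{\rho}\dy\dtau\phi$, $R\theta\dy\dtau\psi_1$ and $\dtau\dy(nu_1)$ are bounded by $C\epsilon^{1-a}\|\dtau\dy(\psi,\phi,\zeta,n)\|^2_{L^2_y}$. The cross term $\dy n\,\dtau u_1$ is treated as a nonlinear term.
\item \emph{Wave terms $N_k$.} These carry $\p(\bar\rho,\bar u,\bar\theta)$ or $\p^2(\bar\rho,\bar u,\bar\theta)$. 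By Lemma \ref{rarefaction4}, $\|\dtau\dy\bar u\|_{L^\infty_y}\lesssim \epsilon^{2a}/(\delta(\frac\delta\sigma+\epsilon^a\tau))$, and this produces the term $C\frac{\epsilon^{1+3a}}{\delta^2(\frac{\delta}{\sigma}+\epsilon^a\tau)^2}\|(\phi,\psi_1,\zeta)\|^2_{L^2_y}$. The parts with $|\dtau(\phi,\psi,\zeta)|\,|\dy\bar{\cdot}|$ are bounded by $\sigma^2\frac{\epsilon^{2a}}{\delta^2}\mathcal D(\tau)$, since $\epsilon^{1-a}\|\dtau(\phi,\psi,\zeta)\|^2\subset\mathcal D$.
\item \emph{Nonlinear terms.} $W_k$ and the electromagnetic terms $\epsilon^a\dtau[\frac n\rho(E+u\times B)]$, $\epsilon^a\dtau[\frac1\rho\intr(v\times B)G_2dv]$ are handled with Sobolev embedding in $y$, giving $C\mathcal E(\tau)\mathcal D(\tau)$, exactly as for $J^3_\phi$ and $J^5_\phi$ in Lemma \ref{pphi}.
\item \emph{Kinetic flux terms.} These are $\dtau[\frac1\rho\intr v_1v_i\dy G_1dv]$, the corresponding energy term, and $\intr v_1\dtau\dy G_2dv$ in the $n$ equation. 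We split $G_1=\bar G_1+\sqrt\mu\gt$ and $G_2=\sqrt\mu\g$. The $\gt$ and $\g$ parts give $C\epsilon^{1-a}\|\dtau\dy(\gt,\g)\|^2_\sigma$, after moving velocity weights onto the Gaussian $\sqrt\mu$ so that the $\sigma$-norm suffices. The $\bar G_1$ part is estimated through \eqref{Gb2}--\eqref{Gb3}, as in $J^4_\phi$. This yields $C\frac{\epsilon^{3+2a}}{\delta^3(\frac\delta\sigma+\epsilon^a\tau)^2}$, together with $\sigma^2\frac{\epsilon^2}{\delta^2}\mathcal D$ when the derivative hits $\theta$ or $u$ inside $\bar G_1$. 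Terms where $\dtau$ falls on $\frac1\rho$ give $\mathcal E\mathcal D$.
\end{itemize}

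The main obstacle is the $\dtau\dy\bar G_1$ contribution. One needs $\|\mu^{-1/2}\v^{b}\dtau\dy\bar G_1\|^2$ to decay like $\epsilon^{2+3a}\delta^{-3}(\frac\delta\sigma+\epsilon^a\tau)^{-2}$, up to $\mathcal D$-terms coming from $\dtau\dy(\rho,u,\theta)$ in $M$. I would follow \eqref{Gb3} and expand $\dtau\dy$ of $L_M^{-1}P_1[\cdots M]$ by the product rule. Derivatives landing on $\bar\theta$ or $\bar u$ use the pointwise decay in Lemma \ref{rarefaction4}. Derivatives landing on $M$ or $\theta$ produce factors $\dtau\dy(\phi,\psi,\zeta)$, which are controlled by $\epsilon^{1-a}\sigma^2\epsilon^{2a}\delta^{-2}\mathcal D$. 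Collecting all these bounds and choosing $\lambda$ small completes the argument.
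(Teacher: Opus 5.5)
Your proposal follows the paper's own proof. It uses the same multipliers: $\epsilon^{1-a}\dtau^2(\phi,\psi,\zeta)$ on \eqref{pert-sys-n1} with $\p^\alpha=\dtau$, and $\epsilon^{1-a}\dtau^2 n$ on the time-differentiated $n$-equation. It then applies Cauchy--Schwarz absorption and Sobolev embedding, and uses \eqref{Gb3} for the $\bar G_1$ contribution, exactly as the paper does.

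One small point, which the paper glosses over in the same way: $\dtau\dy(nu_1)$ also contains $n\,\dtau\dy\bar u_1$. This term gives $C\frac{\epsilon^{1+3a}}{\delta^2(\frac{\delta}{\sigma}+\epsilon^a\tau)^2}\|n\|^2_{L^2_y}$, which fits none of your listed categories. So the last term should really read $\|(\phi,\psi_1,\zeta,n)\|^2_{L^2_y}$, as in Lemma \ref{time1}; this is harmless once the lemma is absorbed into Lemma \ref{lolem1}.
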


\begin{proof}
As $\p^\alpha:=\dtau$ in \eqref{pert-sys-n1},
multiplying the equation $\eqref{pert-sys-n1}_1$ by $\epsilon^{1-a}\dtau^2\phi$,
$\eqref{pert-sys-n1}_{i+1}$ by $\epsilon^{1-a}\dtau^2\psi_i~(i=1,2,3)$,
$\eqref{pert-sys-n1}_5$ by $\epsilon^{1-a}\dtau^2\zeta$,
and integrating  over $\R_y$,
we have
\bma
\epsilon^{1-a}\|\dtau^2\phi\|^2_{L^2_y}
 =&-\epsilon^{1-a}\int_{\R}\(u_1\dtau\dy\phi
   -\rho\dtau\dy\psi_1+N_1+W_1\)\dtau^2\phi,\nnm\\
\epsilon^{1-a}\|\dtau^2\psi\|^2_{L^2_y}
 =&-\epsilon^{1-a}\int_{\R}u_1\dtau\dy\psi\cdot\dtau^2\psi dy
   -\epsilon^{1-a}\int_{\R}\[\frac{R\theta}{\rho} \dtau\dy \phi+R\dtau\dy \zeta-N_2-W_2\]\dtau^2\psi_1 \nnm\\
 &+\epsilon^{1-a}\sum^3_{i=2}\int_{\R}W_{i+1}\dtau^2\psi_i dy
  +\epsilon\int_{\R}\dtau\[\frac{n}{\rho}(E+u\times B)\]\cdot\dtau^2\psi dy\nnm\\
 &-\epsilon^{1-a}\int_{\R}\dtau^2\psi\cdot\dtau\[\frac1\rho\intr v_1v\dy G_1dv\]dy
  +\epsilon\int_{\R}\dtau^2\psi\cdot\dtau\[\frac1\rho\intr (v\times B)G_2dv\]dy,\nnm\\
\epsilon^{1-a}\|\dtau^2\zeta\|^2_{L^2_y}
=&-\epsilon^{1-a}\int_{\R}\(u_1\dtau\dy\zeta+R\theta\dtau\dy\psi_1-N_5-W_5\)\dtau^2\zeta\nnm\\
 &+\epsilon^{1-a}\int_{\R}\dtau^2\zeta\[\frac1\rho\intr v_1\((v\cdot u)-\frac12 |v|^2\)\dy G_1dv\]dy\nnm\\
 &+\epsilon\int_{\R}\dtau^2\zeta\dtau\[\frac1\rho\intr \(E+(u\times B)\)\cdot vG_2dv\]dy.
 \ema
Moreover, taking inner product between $\dtau\eqref{macro-eq-nond}_6$ and $\epsilon^{1-a}\dtau^2 n$ to yield
$$\epsilon^{1-a}\|\dtau^2 n\|^2_{L^2_y}
= -\epsilon^{1-a}\int_{\R}\dtau\dy(nu_1)\dtau^2 ndy
 -\epsilon^{1-a}\int_{\R}\dtau^2 n\int_{\R^3}v_1 \dtau\dy G_2dvdy.
$$
By Cauchy-Schwarz inequality and Sobolev embedding inequality, one has
\begin{align*}
\epsilon^{1-a}\|\dtau^2(\psi,\phi,\zeta,n)\|^2_{L^2_y}
\leq&\, \lambda\epsilon^{1-a}\|\dtau^2(\psi,\phi,\zeta,n)\|^2_{L^2_y}
  +C\lambda^{-1}\epsilon^{1-a}\|\dtau\dy(\psi,\phi,\zeta,n)\|^2_{L^2_y}\nnm\\
&+C\lambda^{-1}\frac{\epsilon^{1+3a}}{\delta^2(\frac{\delta}{\sigma}+\epsilon^a\tau)^2}\|(\phi,\psi_1,\zeta)\|^2_{L^2_y}
 +C\lambda^{-1}\(\sigma^2\frac{\epsilon^{2a}}{\delta^2}+\mathcal{E}(\tau)\)\mathcal{D}(\tau)\nnm\\
&+C\lambda^{-1}\epsilon^{1-a}\|\dtau\dy(\gt,\g)\|^2_\sigma
 +C\lambda^{-1}\frac{\epsilon^{3+2a}}{\delta^3(\frac{\delta}{\sigma}+\epsilon^a\tau)^2}.
\end{align*}
Hence, by choosing $\lambda>0$ small enough, we complete the proof of the  lemma.
\end{proof}

\subsection{Estimates on  derivatives of $(\gt,\g)$}
Applying $\p^{\alpha}~(|\alpha|=1)$ to \eqref{g1}-\eqref{g2}, we have
\bma\label{g1a}
&\p_\tau\p^{\alpha}\gt+v_1\p_y\p^{\alpha}\gt
  +\epsilon^a\frac{1}{\sqrt{\mu}}\[(E+v\times B)\cdot\nabla_v(\sqrt{\mu}\p^{\alpha}\g)\]
  -\epsilon^{a-1} L \p^{\alpha}\gt \nnm\\
=&\frac{1}{\sqrt{\mu}}\p^{\alpha}P_0\big[v_1 \sqrt{\mu}\p_y\gt+\epsilon^a(E+v\times B)\cdot\nabla_v(\sqrt{\mu}\g)\big]\nnm\\
&-\frac1{\sqrt{\mu}}\p^{\alpha}P_1\[v_1\(\frac{|v-u|^2\p_y\zeta}{2R\theta^2}+\frac{(v-u)\p_y\psi}{R\theta}\)M\]
 +\p^{\alpha}R_{\Gamma1}+\p^{\alpha}R_{g1}+R_{d1},
\ema
and
\bma\label{g2a}
&\dtau \p^{\alpha}\g+v_1\dy \p^{\alpha}\g
  +\epsilon^a\frac{1}{\sqrt{\mu}}\[(E+v\times B)\cdot\Tdv (\sqrt{\mu}\p^{\alpha}\gt)\]
  -\epsilon^{a-1} \L \p^{\alpha}\g\nnm\\
=&\frac{1}{\sqrt{\mu}}\p^{\alpha}\[P_d (v_1 \sqrt{\mu}\p_y\g)  -\epsilon^a  (E+v\times B)\cdot\Tdv \bar{G}_1\]
  +\p^{\alpha}R_{\Gamma2}+\p^{\alpha}R_{g2}+R_{d2},
\ema
where $R_{\Gamma1},R_{\Gamma2},R_{g1},R_{g2}$ are defined in \eqref{remainder} and
\bma\label{R1d}
R_{d1}:=&-\frac1{\sqrt{\mu}}\epsilon^a\p^{\alpha}(E+v\times B)\cdot\nabla_v(\sqrt{\mu}\g), \nnm\\
R_{d2}:=&-\frac1{\sqrt{\mu}}\epsilon^a\p^{\alpha}(E+v\times B)\cdot\nabla_v(\sqrt{\mu}\gt).
\ema

\begin{lem}\label{GGal}
Under the same assumptions as in Proposition \ref{priori3} and by choosing $\sigma\frac{\epsilon^a}{\delta}$ small enough,
it holds that for $|\alpha|=1$,
\bma
&\frac{d}{d\tau}\|\p^\alpha (\gt,\g)\|^2 +\sigma_1{\epsilon}^{a-1}\|\p^\alpha (\gt,\g)\|_{\sigma}^2\nnm\\
\leq&\, C\(\eta^2_1+\mathcal{E}(\tau)+\sigma^2\frac{\epsilon^{2a}}{\delta^2}\)\mathcal{D}(\tau)
 +C\frac{\epsilon^3}{\delta(\frac{\delta}{\sigma}+\epsilon^a\tau)^2}
 +C\frac{\epsilon^{1+a}}{(\frac{\delta}{\sigma}+\epsilon^a\tau)^2}\mathcal{E}(\tau)
 +C\epsilon^{1+a}\|\p^\alpha(E,B)\|^2_{L^2_y}\nnm\\
&+C\epsilon^{1-a}\|\dy\p^\alpha(\gt,\g)\|_\sigma^2
 +C\epsilon^{1-a}\|\dy\p^\alpha(\psi,\zeta,n)\|^2_{L^2_y}
 + C\mathcal{E}^{\frac12}(\tau)\mathcal{F}_\omega(\tau),
\ema
where $\eta_1>0$ is defined in \eqref{local}.
\end{lem}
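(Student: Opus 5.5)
The plan is to mirror the proof of Lemma \ref{gg1} at the level of first derivatives. We take the $L^2_{y,v}$ inner product of \eqref{g1a} with $\p^\alpha\gt$ and of \eqref{g2a} with $\p^\alpha\g$, and sum. The transport terms $v_1\p_y\p^\alpha$ integrate to zero. The coercivity of $-L$ and $-\L$ on the microscopic parts (the estimate \eqref{H1} used in Lemma \ref{gg1}) gives the dissipation $\sigma_1\epsilon^{a-1}\|\p^\alpha(\gt,\g)\|_\sigma^2$. Here we note that $\p^\alpha\gt$ and $\p^\alpha\g$ are not exactly microscopic with respect to $\mu$. The discrepancy is a commutator between $\p^\alpha$ and $P_1$, $P_d$. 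We handle it exactly as in Lemma \ref{gg1}: it produces $P_0$-type terms proportional to $\p^\alpha(\rho,u,\theta)$ times $\gt$, which are bounded by $C(\eta_1^2+\mathcal{E})\mathcal{D}$.

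The source terms are treated one at a time.

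\textbf{Cross Lorentz term.} The coupling $\epsilon^a\mu^{-1/2}(E+v\times B)\cdot\nabla_v(\sqrt\mu\,\p^\alpha\g)$ in the $\gt$ equation, together with its counterpart in the $\g$ equation, combines after integration by parts in $v$ (using $\nabla_v\cdot(E+v\times B)=0$) into $\epsilon^a\int (E+v\times B)\cdot\nabla_v(\mu^{-1})\,\mu\,\p^\alpha\gt\,\p^\alpha\g$. As in \eqref{Jg1}, this is bounded by $C\epsilon^a\|E\|_{L^\infty}\|\langle v\rangle^{1/2}\p^\alpha(\gt,\g)\|^2\le C\mathcal{E}^{1/2}\mathcal{F}_\omega$.

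\textbf{$P_0$, $P_d$ and fluid-gradient terms.} These are handled as in \eqref{Jg234}. They produce $\lambda\epsilon^{a-1}\|\p^\alpha(\gt,\g)\|_\sigma^2+C\epsilon^{1-a}\|\dy\p^\alpha(\gt,\g)\|^2_\sigma+C\epsilon^{1-a}\|\dy\p^\alpha(\psi,\zeta)\|^2$, plus lower-order products with $\p^\alpha(\rho,u,\theta)$. We control the latter via Lemma \ref{rarefaction4} and $\|\p^\alpha(\bar\rho,\bar u,\bar\theta)\|_{L^\infty}\le C\epsilon^a/(\frac\delta\sigma+\epsilon^a\tau)$. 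The $\bar G_1$ term gives $\epsilon^{1+a}(\frac\delta\sigma+\epsilon^a\tau)^{-2}\mathcal{E}$-type contributions by \eqref{Gb2}.

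\textbf{Remainders $\p^\alpha R_{g1}$ and $\p^\alpha R_{g2}$.} We use \eqref{Gb2}--\eqref{Gb3}, which yield $C\epsilon^3/(\delta(\frac\delta\sigma+\epsilon^a\tau)^2)$ and $\sigma^2\epsilon^{2a}\delta^{-2}\mathcal{D}$. In $R_{g2}$, the term $\epsilon^a\p^\alpha(E+u\times B)P_r(vM)/R\theta$ paired with $\p^\alpha\g$ gives $C\epsilon^{1+a}\|\p^\alpha(E,B)\|^2+\lambda\epsilon^{a-1}\|\p^\alpha\g\|^2_\sigma$, which explains the term $C\epsilon^{1+a}\|\p^\alpha(E,B)\|^2_{L^2_y}$ in the statement. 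The term $\dy\p^\alpha(n/\rho)$ gives $\epsilon^{1-a}\|\dy\p^\alpha n\|^2$.

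\textbf{Collision remainders $\p^\alpha R_{\Gamma i}$.} Lemmas \ref{GMG} and \ref{QGG}, with Leibniz distribution of $\p^\alpha$, give $\lambda\epsilon^{a-1}\|\p^\alpha(\gt,\g)\|^2_\sigma+C(\eta_1^2+\mathcal{E}+\sigma^2\epsilon^{2a}/\delta^2)\mathcal{D}$. Here $\eta_1$ comes from $(M-\mu)/\sqrt\mu$.

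\textbf{Main obstacle.} The hardest terms are $R_{d1}$ and $R_{d2}$ in \eqref{R1d}, which contain $\epsilon^a\p^\alpha(E+v\times B)$, a derivative of the field with no dissipation of its own. I would first integrate by parts in $v$ and split $\p^\alpha(E+v\times B)\cdot\nabla_v(\sqrt\mu\,\g)$ into a piece with $\nabla_v\g$ and a piece with $v\sqrt\mu\,\g$. The first piece is estimated by $\epsilon^a\|\p^\alpha(E,B)\|_{L^2}\|\g\|_{L^\infty_y}$ in the $\sigma$-norm, paired with $\p^\alpha\gt$. This gives $\lambda\epsilon^{a-1}\|\p^\alpha\gt\|^2_\sigma+C\epsilon^{1+a}\|\p^\alpha(E,B)\|^2\,\|\g\|^2_{L^\infty_yH^1_v}$. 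Then $\|\g\|^2_{L^\infty_y}\lesssim\mathcal{E}$, so this term is absorbed into $C\mathcal{E}\mathcal{D}$ through the $\epsilon^{1+a}\|\p^\alpha(E,B)\|^2$ part of $\mathcal{D}$. The second piece is the velocity-growth term $\langle v\rangle|\p^\alpha B||\g||\p^\alpha\gt|$. For it I plan to use the exponential weight in $\omega$ and bound it by $\mathcal{E}^{1/2}\mathcal{F}_\omega$. Collecting all pieces and choosing $\lambda$ small gives the lemma.
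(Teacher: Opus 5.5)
Your overall route is the paper's. It uses the same test functions, reduces the cross Lorentz term to $\epsilon^a\int\!\!\int E\cdot v\,\partial^\alpha\mathbf{g}_1\,\partial^\alpha\mathbf{g}_2$ bounded by $C\mathcal{E}^{1/2}\mathcal{F}_\omega$, and treats the projection terms as in \eqref{pov}. It also uses Lemmas \ref{GMG} and \ref{QGG} for $R_{\Gamma i}$ and \eqref{Gb2}--\eqref{Gb3} for $R_{g1}$, and gets $C\epsilon^{1+a}\|\partial^\alpha(E,B)\|^2_{L^2_y}$ from $R_{g2}$.

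One harmless correction: the coercivity step has no commutator. Since $\int\varphi_j(G_1-\bar G_1)\,dv=0$ for $j=0,\dots,4$ and $\int G_2\,dv=0$, the functions $\mathbf{g}_1$ and $\mathbf{g}_2$ are exactly orthogonal in $L^2_v$ to $\ker L$ and $\ker\mathcal{L}$ respectively. The derivative $\partial^\alpha$ in $(\tau,y)$ preserves this, so \eqref{H1} gives $\sigma_1\epsilon^{a-1}\|\partial^\alpha(\mathbf{g}_1,\mathbf{g}_2)\|_\sigma^2$ directly.

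The step that fails as written is your estimate of $R_{d1},R_{d2}$ from \eqref{R1d}. You put the magnetic velocity growth in the wrong piece, and then estimated the piece that actually carries it without velocity weights. Since $\mu^{-1/2}\nabla_v(\sqrt{\mu}\,\mathbf{g}_2)=\nabla_v\mathbf{g}_2-\frac{v}{2}\mathbf{g}_2$ and $(v\times\partial^\alpha B)\cdot v=0$, your second piece contains only $\partial^\alpha E\cdot v\,\mathbf{g}_2$. Your first piece contains $(v\times\partial^\alpha B)\cdot\nabla_v\mathbf{g}_2\,\partial^\alpha\mathbf{g}_1$, which grows linearly in $v$. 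For $-3\le\gamma<-2$, \eqref{sigma1} shows that $|\cdot|_\sigma$ only controls $|\langle v\rangle^{(\gamma+2)/2}\cdot|_2$. Pairing against $\|\partial^\alpha\mathbf{g}_1\|_\sigma$ therefore forces the weight $\langle v\rangle^{1-(\gamma+2)/2}\le\langle v\rangle^{3/2}$ onto $\nabla_v\mathbf{g}_2$; even the $E$-part of that piece needs $\langle v\rangle^{1/2}$. An unweighted $\|\mathbf{g}_2\|_{L^\infty_yH^1_v}$ cannot close this.

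The paper handles both pieces at once by bounding the whole contribution by
\begin{equation*}
C\epsilon^a\Big\|\,\big|\langle v\rangle^{3/2}\big(|(\mathbf{g}_1,\mathbf{g}_2)|+|\nabla_v(\mathbf{g}_1,\mathbf{g}_2)|\big)\big|_2\Big\|_{L^\infty_y}\|\partial^\alpha(E,B)\|_{L^2_y}\|\partial^\alpha(\mathbf{g}_1,\mathbf{g}_2)\|_\sigma .
\end{equation*}
It then uses $\epsilon^a\|\partial^\alpha(E,B)\|_{L^2_y}\|\partial^\alpha(\mathbf{g}_1,\mathbf{g}_2)\|_\sigma\le\mathcal{D}(\tau)$. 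The weighted $L^\infty_y$ factor is bounded by $C\mathcal{E}^{1/2}(\tau)$ via the Sobolev inequality in $y$ and the weights $\omega(\alpha,\beta)$ ($l\ge2$) on the $\beta$-derivative terms of $\mathcal{E}$.

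Once you insert this weight, your split also closes. The first piece gives $\lambda\epsilon^{a-1}\|\partial^\alpha\mathbf{g}_1\|_\sigma^2+C\lambda^{-1}\mathcal{E}\mathcal{D}$, and the second gives $C\mathcal{E}^{1/2}\mathcal{F}_\omega$.
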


\begin{proof}
Multiplying \eqref{g1a} by $\p^{\alpha}\gt$, \eqref{g2a} by $\p^{\alpha}\g$,  and then integrating over $\R^3_v\times\R_y$, by Lemma \ref{L-weight}, we have
\bma
&\frac12\frac{d}{d\tau}\norm{\p^{\alpha}(\gt,\g)}^2+\sigma_1\epsilon^{a-1}\norms{\p^{\alpha}(\gt,\g)}^2\nnm\\
\leq&\,-\epsilon^a\int_{\R}\int_{\R^3}\frac{1}{\mu}(E+v\times B)\cdot\nabla_v\(\mu\p^{\alpha}\gt\p^{\alpha}\g \)dvdy\nnm\\
&+\int_{\R}\int_{\R^3}\frac{1}{\sqrt{\mu}}\p^{\alpha}P_0\big[v_1 \sqrt{\mu}\p_y\gt
   +\epsilon^a(E+v\times B)\cdot\nabla_v(\sqrt{\mu}\g)\big]\p^{\alpha}\gt dvdy\nnm\\
&+\int_{\R}\int_{\R^3}\frac{1}{\sqrt{\mu}}\p^{\alpha}\[P_d(v_1 \sqrt{\mu}\p_y\g)- \epsilon^a (E+v\times B)\cdot\Tdv \bar{G}_1\] \p^{\alpha}\g dvdy\nnm\\
&-\int_{\R}\int_{\R^3}\frac1{\sqrt{\mu}}\p^{\alpha}P_1\[v_1\(\frac{|v-u|^2\p_y\zeta}{2R\theta^2}
    +\frac{(v-u)\p_y\psi}{R\theta}\)M\]\p^{\alpha}\gt dvdy\nnm\\
&+\int_{\R}\int_{\R^3}(\p^{\alpha}R_{\Gamma1}+\p^{\alpha}R_{g1}+R_{d1})\p^{\alpha}\gt
    +(\p^{\alpha}R_{\Gamma2}+\p^{\alpha}R_{g2}+R_{d2})\p^{\alpha}\g dvdy\nnm\\
:=&\,I_{g}^1+I_{g}^2+I_{g}^3+I_{g}^4+I_{g}^5.
\ema
For $I_{g}^1$, by using $\nabla_v\cdot(E+v\times B)=0$ and  $v\cdot(v\times B)=0$, we have
\bma\label{Ig1}
I_g^1
&=\epsilon^a\int_{\R}\int_{\R^3}\nabla_v\(\frac{1}{\mu}\)\cdot(E+v\times B)\big(\mu\p^{\alpha}\gt\p^{\alpha}\g \big)dvdy\nnm\\
&\leq  C\epsilon^{a}\|E\|_{L^\infty_y}\|\v^{\frac12}\p^{\alpha}(\gt,\g)\|^2
\leq C\mathcal{E}^{\frac12}(\tau)\mathcal{F}_\omega(\tau).
\ema
For $I_g^2$, $I_g^3$ and $I_g^4$, by using   \eqref{pov}, we have
\bma\label{pov1}		
I_g^2+I_g^3+I_g^4
\leq&\,\lambda\epsilon^{a-1}\norm{\p^\alpha(\gt,\g)}_\sigma^2
     +C\lambda^{-1}\epsilon^{1+a}\|(E,B)\|^2_{L^\infty_y}\|\p^\alpha\g\|^2_\sigma\nnm\\
&+C\lambda^{-1}\epsilon^{1-a}\norm{\p^\alpha\dy(\gt,\g)}_\sigma^2
     +C\lambda^{-1}\epsilon^{1-a}\|\p^\alpha\dy(\psi,\zeta)\|^2_{L^2_y}\nnm\\
&+C\lambda^{-1}\epsilon^{1+a}\int_{\R}|\p^\alpha(\rho,u,\theta)|^2|(E,B)|^2|(\gt,\g)|^2_2+|\p^\alpha(E,B)|^2|(\gt,\g)|^2_\sigma dy\nnm\\
&+C\lambda^{-1}\epsilon^{1-a}\int_{\R}|\p^\alpha(\rho,u,\theta)|^2\(|\dy(\gt,\g)|^2_\sigma+|\dy(\psi,\zeta)|^2\)dy\nnm\\
&+C\lambda^{-1}\epsilon^{1+3a}\Big(\|(E,B)\|^2_{L^\infty_y}\|\mu^{-\frac12}\v^{\frac32}\p^{\alpha}\nabla_v\bar{G}_1\|^2
 +\|\p^\alpha(E,B)\|^2_{L^2_y}\||\mu^{-\frac12}\v^{\frac32}\nabla_v\bar{G}_1|_2\|^2_{L^2_y}\Big)\nnm\\
\leq&\,\lambda\epsilon^{a-1}\norm{\p^\alpha(\gt,\g)}_\sigma^2
     +C\lambda^{-1}\(\mathcal{E}(\tau)+\sigma^2\frac{\epsilon^{2a}}{\delta^2}\)\mathcal{D}(\tau)
     +C\frac{\epsilon^{3+a}}{(\frac{\delta}{\sigma}+\epsilon^a\tau)^2}\mathcal{E}(\tau)\nnm\\
&+C\lambda^{-1}\epsilon^{1-a}\norm{\p^\alpha\dy(\gt,\g)}_\sigma^2
 +C\lambda^{-1}\epsilon^{1-a}\|\p^\alpha\dy(\psi,\zeta)\|^2_{L^2_y}.
\ema
Next, we focus on $I_g^5$. For terms involving $R_{g1},R_{g2}$ defined in \eqref{remainder}, letting $\sigma\frac{\epsilon^a}{\delta}\leq1$, we use  \eqref{Gb2}-\eqref{Gb3} and \eqref{barG7-2} to have
\bma
&\int_{\R}\int_{\R^3}\p^\alpha R_{g1}\p^\alpha\gt+\p^\alpha R_{g2}\p^\alpha\g dvdy\nnm\\
\leq&\,\lambda\epsilon^{a-1}\norm{\p^\alpha(\gt,\g)}_\sigma^2
 +C\lambda^{-1}\epsilon^{1-a}\int_{\R}\sum_{|\alpha_1|=2}\left|\v^{\frac12}\frac{\p^{\alpha_1}\bar{G}_1}{\sqrt{\mu}}\right|^2
 + |\p^\alpha(\rho,u,\theta)|^2\left|\frac{\dy\bar{G}_1}{\sqrt{\mu}}\right|^2_2dy\nnm\\
&+C\lambda^{-1}\epsilon^{1-a}\|\p^\alpha\dy n\|^2_{L^2_y}
 +C\lambda^{-1}\epsilon^{1-a}\int_{\R}|n|^2\(|\p^\alpha\dy(\rho,u,\theta)|^2+|\p^\alpha(\rho,u,\theta)|^4\)
                                                  +|\p^\alpha n|^2|\p^\alpha(\rho,u,\theta)|^2dy\nnm\\
&+C\lambda^{-1}\epsilon^{1+a}\|\p^\alpha(E,B)\|^2_{L^2_y}
 +C\lambda^{-1}\epsilon^{1+a}\int_{\R}|(E,B)|^2|\p^\alpha(\rho,u,\theta)|^2dy\nnm\\
\leq&\,\lambda\epsilon^{a-1}\norm{\p^\alpha(\gt,\g)}_\sigma^2
 +C\lambda^{-1}\epsilon^{1-a}\|\p^\alpha\dy n\|^2_{L^2_y}
 +C\lambda^{-1}\epsilon^{1+a}\|\p^\alpha(E,B)\|^2_{L^2_y}
 +C\lambda^{-1}\frac{\epsilon^{1+3a}}{(\frac{\delta}{\sigma}+\epsilon^a\tau)^2}\|(E,B)\|^2_{L^2_y}\nnm\\
&+C\lambda^{-1}\frac{\epsilon^{3+2a}}{\delta^3(\frac{\delta}{\sigma}+\epsilon^a\tau)^2}
  +C\lambda^{-1}\(\sigma^2\frac{\epsilon^{2a}}{\delta^2}+\mathcal{E}(\tau)\)\mathcal{D}(\tau)
  +C\lambda^{-1}\frac{\epsilon^{1+3a}}{\delta^2(\frac{\delta}{\sigma}+\epsilon^a\tau)^2}\|n\|^2_{L^2_y}.
\ema
For terms involving $R_{\Gamma_1}$ and $R_{\Gamma_2}$ defined in \eqref{remainder}, recalling that $F_2=\frac n\rho M+\sqrt{\mu}\g,G_1=\bar{G}_1+\sqrt{\mu}\gt$ and using Lemma \ref{GMG}, Lemma \ref{QGG}, we have
\bma
&\int_{\R}\int_{\R^3}\p^{\alpha}R_{\Gamma1}\p^{\alpha}\gt+\p^{\alpha}R_{\Gamma2}\p^{\alpha}\g dvdy\nnm\\
\leq&\,\lambda\epsilon^{a-1}\|\p^{\alpha}(\gt,\g)\|^2_{\sigma}
 +C\lambda^{-1}\(\eta^2_1+\mathcal{E}(\tau)+\sigma^2\frac{\epsilon^{2a}}{\delta^2}\)\mathcal{D}(\tau)\nnm\\
&+C\lambda^{-1}\frac{\epsilon^3}{\delta(\frac{\delta}{\sigma}+\epsilon^a\tau)^2}
 +C\lambda^{-1}\frac{\epsilon^{1+a}}{(\frac{\delta}{\sigma}+\epsilon^a\tau)^2}\|n\|^2_{L^2_y}.
\ema			
Then we estimate terms involving $R_{d1}$ and $R_{d2}$ defined in \eqref{R1d} as follow:
\bma
&\quad\int_{\R}\int_{\R^3}R_{d1}\p^\alpha\gt+R_{d2}\p^\alpha\g dvdy\nnm\\
&=\epsilon^a\int_{\R}\int_{\R^3}\frac1{\sqrt{\mu}}\p^{\alpha}(E+v\times B)
   \cdot\nabla_v\sqrt{\mu}(\g\p^{\alpha}\gt+\gt\p^{\alpha}\g) dvdy\nnm\\
&\quad+\epsilon^a\int_{\R}\int_{\R^3}\p^{\alpha}(E+v\times B)
   \cdot(\nabla_v\g\p^{\alpha}\gt+\nabla_v\gt\p^{\alpha}\g) dvdy\nnm\\
&\leq C\epsilon^a\left\| |\v^2\v^{-\frac12} (|(\gt,\g)|+|\nabla_v(\gt,\g)| )|_2\right\|_{L^\infty_y}
 \|\p^{\alpha}(E,B)\|_{L^2_y}\|\v^{-\frac12}\p^\alpha(\gt,\g)\|\nnm\\
&\leq C\epsilon^a\|\p^{\alpha}(E,B)\|_{L^2_y} \|\p^\alpha(\gt,\g)\|_\sigma
   \sum_{|\alpha|\leq 1,|\beta|\leq 1}\|\p^{\alpha}_{\beta}(\gt,\g)(\tau)\|_{\sigma,\omega}\nnm\\
&\leq C\epsilon\mathcal{E}^{\frac12}(\tau)\mathcal{D}(\tau).
\ema
Then combining all estimates for $I_g^i$ $(i=1,2,...,5)$ and letting $\lambda$ small enough, we complete the proof of the lemma.
\end{proof}

\subsection{The highest order derivative  on   $(F_1,F_2)$}
In this subsection, we shall estimate the highest order derivatives of $(F_1,F_2)$. By \eqref{L12}, the system \eqref{VML3} can be rewritten as
\bma\label{F11}
&\quad\dtau\(\frac{F_1}{\sqrt{\mu}}\)
 +v_1\dy\(\frac{F_1}{\sqrt{\mu}}\)
 +\epsilon^a(E+v\times B)\cdot \frac{\Tdv F_2}{\sqrt{\mu}} \nnm\\
&=\epsilon^{a-1}\[ L \gt+\Gamma\(\gt,\frac{M-\mu}{\sqrt{\mu}}\)
 +\Gamma\(\frac{M-\mu}{\sqrt{\mu}},\gt\)+\Gamma\(\frac{G_1}{\sqrt{\mu}},\frac{G_1}{\sqrt{\mu}}\)
 +\frac{L_M\bar{G}_1}{\sqrt{\mu}}\],
\ema
and
\bma \label{F21}
&\quad\dtau\(\frac{F_2}{\sqrt{\mu}}\)
 +v_1\dy \(\frac{F_2}{\sqrt{\mu}}\)
 +\epsilon^a(E+v\times B)\cdot \frac{\Tdv F_1}{\sqrt{\mu}}\nnm\\
& =\epsilon^{a-1}\[ \L \g+\Gamma\(\g,\frac{M-\mu}{\sqrt{\mu}}\)
  +\Gamma\(\frac{F_2}{\sqrt{\mu}},\frac{G_1}{\sqrt{\mu}}\)\].
\ema
Applying $\p^{\alpha}$, $|\alpha|=2$ to \eqref{F11} and \eqref{F21}, we have
\bma\label{F1-1}
&\dtau\(\frac{\dya F_1}{\sqrt{\mu}}\)+v_1\dy \(\frac{\dya F_1}{\sqrt{\mu}}\)
 +\epsilon^a(E+v\times B)\cdot \frac{\Tdv\dya F_2}{\sqrt{\mu}}\nnm\\
=&\epsilon^{a-1}\[ L \dya\gt+\dya\Gamma\(\gt,\frac{M-\mu}{\sqrt{\mu}}\)
 +\dya\Gamma\(\frac{M-\mu}{\sqrt{\mu}},\gt\)+\dya\Gamma\(\frac{G_1}{\sqrt{\mu}},\frac{G_1}{\sqrt{\mu}}\)
 +\dya\frac{L_M\bar{G}_1}{\sqrt{\mu}}\]\nnm\\
&-\epsilon^a\sum_{1\leq|\alpha_1|\leq |\alpha|}C_{\alpha}^{\alpha_1}\p^{\alpha_1}(E+v\times B)\cdot
 \frac{\Tdv\p^{\alpha-\alpha_1} F_2}{\sqrt{\mu}} ,
  \ema
and
\bma\label{F2-1}
&\dtau\(\frac{\dya F_2}{\sqrt{\mu}}\)
 +v_1\dy \(\frac{\dya F_2}{\sqrt{\mu}}\)
 +\epsilon^a(E+v\times B)\cdot \frac{\Tdv\dya F_1}{\sqrt{\mu}} \nnm\\
=&\epsilon^{a-1}\[ \L \dya\g+\dya\Gamma\(\g,\frac{M-\mu}{\sqrt{\mu}}\)
 +\dya\Gamma\(\frac{F_2}{\sqrt{\mu}},\frac{G_1}{\sqrt{\mu}}\)\]\nnm\\
&-\epsilon^a\sum_{1\leq|\alpha_1|\leq |\alpha|}C_{\alpha}^{\alpha_1}\p^{\alpha_1}(E+v\times B)\cdot
 \frac{\Tdv\p^{\alpha-\alpha_1} F_1}{\sqrt{\mu}}.
\ema
 We have the following lemma.
\begin{lem}\label{Fa}
Under the same assumptions as in Proposition \ref{priori3} and by letting $\frac{\epsilon^a}{\delta}$ small,
it holds that for $|\alpha|=2$,
\bma
&\epsilon^{2-2a}\frac{d}{d\tau}\|\mu^{-\frac12}\p^\alpha(F_1,F_2)\|^2
 +\sigma_1\epsilon^{1-a}\|\p^\alpha (\gt,\g)\|_{\sigma}^2\nnm\\
& +\epsilon^{2-2a}\frac{d}{d\tau}\intra \frac1{R\theta}(|\dya E|^2+2\dya E\cdot u\times \dya B+|\dya B|^2)dy\nnm\\
\leq&\, \lambda\epsilon^{1-a}\|\p^\alpha(\phi,\psi,\zeta,n)\|^2_{L^2_y}
  +C\lambda^{-1}\(\eta^2_1+\sigma^2\frac{\epsilon^{2a}}{\delta^2}+\mathcal{E}(\tau)\)\mathcal{D}(\tau)
 + C\mathcal{E}^{\frac12}(\tau)\mathcal{F}_\omega(\tau)\nnm\\
&+C\lambda^{-1}\frac{\epsilon^{1+2a}}{\delta(\frac{\delta}{\sigma}+\epsilon^a\tau)^2}
 +C\lambda^{-1}(1+T)\epsilon^{a-1}\mathcal{E}(\tau)\mathcal{D}(\tau)
 +C\frac{\epsilon^a}{\frac{\delta}{\sigma}+\epsilon^a\tau}\mathcal{E}(\tau),\nnm
\ema
where $\lambda>0$ is a small constant, and $\eta_1>0$ is defined in \eqref{local}.
\end{lem}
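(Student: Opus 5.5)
We work directly with the full distributions at the top order rather than with $(\gt,\g)$. The reason is that $\p^\alpha$ with $|\alpha|=2$ applied to the macroscopic part produces third derivatives of $(\rho,u,\theta)$, and these are not controlled by $\mathcal{D}(\tau)$. The plan is as follows.

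\textbf{Step 1: basic energy identity.} We take the $L^2_{y,v}$ inner product of \eqref{F1-1} with $\epsilon^{2-2a}\mu^{-\frac12}\dya F_1$ and of \eqref{F2-1} with $\epsilon^{2-2a}\mu^{-\frac12}\dya F_2$, and add the two. Transport terms vanish after integration by parts in $y$.

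\textbf{Step 2: the coupling terms.} The terms $\epsilon^a(E+v\times B)\cdot\Tdv\dya F_{2,1}/\sqrt\mu$ cross-couple $F_1$ and $F_2$. Integrating by parts in $v$ with $\nabla_v\cdot(E+v\times B)=0$ leaves $\epsilon^{a}\int\int \nabla_v(\mu^{-1})\cdot(E+v\times B)\,\dya F_1\dya F_2$. We split $\dya F_i$ into the Maxwellian parts $\dya M$, $\dya(\frac n\rho M)$ and the microscopic parts.
\begin{itemize}
\item Microscopic--microscopic products have a $\v$ weight and are bounded by $C\mathcal{E}^{\frac12}\mathcal{F}_\omega$, as in \eqref{Jg1}.
\item The product involving $\dya M$ and $\dya F_2$ produces $\epsilon^{2-a}\int\int \frac{v}{R\theta}\cdot E\,\dya M\,\dya F_2/M$ up to harmless factors. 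Using the $F_2$-equation projected on $\chi_0$, this becomes $-\epsilon^{2-2a}\frac{1}{R\theta}\dya E\cdot\dtau\dya E$-type terms, via $\dtau\dya E=\O\dy\dya B-\epsilon^a\int v\dya F_2dv$. That is, the current $\int v\dya F_2dv$ is exchanged for $\dtau\dya E$ and $\O\dy\dya B$.
\item This exchange produces the modified electromagnetic energy $\frac1{R\theta}(|\dya E|^2+2\dya E\cdot u\times\dya B+|\dya B|^2)$ with the weight $\frac{1}{R\theta}$. Commutators of $\frac1{R\theta}$ and $u$ with $\dtau,\dy$ give $\|\p(\bar\theta,\bar u)\|_{L^\infty}\epsilon^{2-2a}\|\dya(E,B)\|^2\le C\frac{\epsilon^a}{\delta/\sigma+\epsilon^a\tau}\mathcal{E}$ by Lemma \ref{rarefaction4}, plus nonlinear pieces.
\item The leftover $\frac1\mu-\frac1M$ discrepancy is the term $I^{32}_f$ from the Remark. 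We treat it exactly as in \eqref{difficult-0}, using $\epsilon(1+T)\le1$ and $\delta/\sigma\le1$, which yields $C\eta_0\mathcal{D}$ and the decaying $\mathcal{E}$ term. This is where $(1+T)\epsilon^{a-1}\mathcal{E}\mathcal{D}$ enters.
\end{itemize}

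\textbf{Step 3: the collision terms.} We write $\mu^{-\frac12}\dya F_1=\dya\gt+\mu^{-\frac12}\dya(M+\bar G_1)$.
\begin{itemize}
\item The coercivity \eqref{H1} of $L,\L$ gives $\sigma_1\epsilon^{1-a}\|\dya(\gt,\g)\|_\sigma^2$.
\item The cross terms $\epsilon^{1-a}\langle L\dya\gt,\mu^{-\frac12}\dya M\rangle$ are handled by Cauchy--Schwarz with $|\mu^{-\frac12}\dya M|_\sigma\lesssim|\dya(\rho,u,\theta)|+$ lower-order terms (using \eqref{norm-m-1}). This gives $\lambda\epsilon^{1-a}\|\dya(\phi,\psi,\zeta,n)\|^2$ plus background errors of size $\epsilon^{1-a}|\p^3(\bar\rho,\bar u,\bar\theta)|^2\lesssim \frac{\epsilon^{1+2a}}{\delta(\cdot)^2}$. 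The $\bar G_1$ pieces are treated via \eqref{Gb2}--\eqref{Gb3}.
\item The $\Gamma$ terms with $\frac{M-\mu}{\sqrt\mu}$ are estimated by Lemma \ref{GMG} (giving $\eta_1^2\mathcal{D}$), and the quadratic ones by Lemma \ref{QGG}.
\end{itemize}

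\textbf{Step 4: commutator terms.} The lower-order terms $\p^{\alpha_1}(E+v\times B)\cdot\Tdv\p^{\alpha-\alpha_1}F$ are estimated by Sobolev in $y$.
\begin{itemize}
\item The case $|\alpha_1|=2$ with Maxwellian $F$ is again of $I^{32}_f$ type.
\item The other cases give $\epsilon^{a-1}\mathcal{E}\mathcal{D}$ or $\mathcal{E}^{\frac12}\mathcal{F}_\omega$.
\end{itemize}

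\textbf{Main obstacle.} We expect the main obstacle to be Step 2, namely extracting the exact electromagnetic energy structure without losing a derivative. The top-order $\dya E$ has no dissipation, so every leftover must either carry time decay or be absorbed by $\epsilon^{1-a}\|\dya F_2\|_\sigma^2$ via the $\eta_0$ smallness.
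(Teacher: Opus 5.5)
The genuine gap is in your Step 3. In the cross term $\epsilon^{1-a}(L\dya\gt,\mu^{-\frac12}\dya M)$, the leading part of $\mu^{-\frac12}\dya M$ is $\sqrt{\mu}\big\{\frac{\dya\rho}{\rho}+\frac{(v-u)\cdot\dya u}{R\theta}+\big(\frac{|v-u|^2}{2R\theta}-\frac32\big)\frac{\dya\theta}{\theta}\big\}$. This has size $|\dya(\phi,\psi,\zeta)|$ (plus background) with no small factor. Cauchy--Schwarz therefore leaves either $C\lambda^{-1}\epsilon^{1-a}\|\dya(\phi,\psi,\zeta)\|^2_{L^2_y}$ or $C\lambda^{-1}\epsilon^{1-a}\|\dya\gt\|^2_\sigma$. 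It never gives $\lambda\epsilon^{1-a}\|\dya(\phi,\psi,\zeta,n)\|^2_{L^2_y}$, and there is no room to absorb such a term later. Lemmas \ref{mac1}, \ref{pphi}, \ref{n1-1}, which supply the second-order macroscopic dissipation, only do so with $O(1)$ constants. At the same time they carry $\epsilon^{1-a}\|\dya(\gt,\g)\|^2_\sigma$ on their right-hand sides with constants $C$ or $C\lambda^{-1}$, so the two families of estimates cannot in general be weighted against each other.

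The missing idea is the splitting \eqref{M1-2}, $\dya M=J^\alpha_1+J^\alpha_2+J^\alpha_3$. Here $\mu^{-\frac12}J^\alpha_1$ is $\sqrt{\mu}$ times a polynomial $a+b\cdot v+c|v|^2$, so it lies in $\ker L$, and $(L\dya\gt,\mu^{-\frac12}J^\alpha_1)=0$ by self-adjointness. Likewise $\frac{\dya n}{\rho}\sqrt{\mu}\in\ker\L$ removes the leading part of $\dya(\frac n\rho M)$ against $\L\dya\g$. Only $J^\alpha_2$ (carrying $M-\mu$, hence $\eta_1$) and $J^\alpha_3$ (quadratic in first derivatives) survive. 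They give $C\lambda^{-1}\eta_1^2\epsilon^{1-a}\|\dya(\phi,\psi,\zeta,n)\|^2_{L^2_y}$ plus $\mathcal{E}\mathcal{D}$ and background terms, which is why the macroscopic coefficient in the lemma is small. Also, for $|\alpha|=2$ the $\Gamma$-terms are paired with $\mu^{-\frac12}\dya F_i$, so the relevant tools are Lemmas \ref{QMGF} and \ref{GGF}, not \ref{GMG} and \ref{QGG}.

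Your Step 2 also attaches the electromagnetic structure to the wrong term. The top-order term $\epsilon^a\iint\nabla_v(\mu^{-1})\cdot(E+v\times B)\,\dya F_1\dya F_2=\epsilon^a\iint\mu^{-1}E\cdot v\,\dya F_1\dya F_2$ contains undifferentiated $E$. Inserting $\dya M$ produces $E\cdot\dya(\rho,u,\theta)$ times a polynomial moment of $\dya F_2$, not $\dya E$ times the current $\int v\dya F_2\,dv$. So no Maxwell exchange turns it into $\dya E\cdot\dtau\dya E$. The paper does not split this term at all. $\mathcal{F}_\omega$ contains $\epsilon^{2-a}\|\v\mu^{-\frac12}\dya(F_1,F_2)\|^2_\omega$ for the full distributions, so after multiplication by $\epsilon^{2-2a}$ the whole term is bounded by $C\mathcal{E}^{\frac12}\mathcal{F}_\omega$.

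The weighted energy $\frac1{R\theta}(|\dya E|^2+2\dya E\cdot u\times\dya B+|\dya B|^2)$ comes instead from the commutator with $|\alpha_1|=2$ in \eqref{If-3}, which your Step 4 files under $I^{32}_f$. Write $\mu^{-1}\nabla_vM=-\frac{v-u}{R\theta}+(\frac1\mu-\frac1M)\nabla_vM$. The first piece is $I^{31}_f=\int\frac{1}{R\theta}\big[(\dya E+u\times\dya B)\cdot\epsilon^a\int v\dya F_2\,dv-u\cdot\dya E\,\epsilon^a\int\dya F_2\,dv\big]dy$, and it carries no $\eta_0$. It is rewritten with Amp\`ere's law $\epsilon^a\int v\dya F_2\,dv=-\dya(\dtau E-\O\dy B)$ and Gauss's law $\epsilon^a\int\dya F_2\,dv=\dya\dy E_1$ from \eqref{VML3}, not with the $\chi_0$-projection of the $F_2$-equation. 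Only the second piece is $I^{32}_f$. The commutators $\p^{\alpha_1}(u,\theta)|\dya(E,B)|^2$, $|\alpha_1|=1$, generated in $I^{31}_f$ are the source of $(1+T)\epsilon^{a-1}\mathcal{E}\mathcal{D}$, not $I^{32}_f$. Given $\epsilon(1+T)\le1$, you could instead bound $I^{31}_f$ crudely as in \eqref{difficult-0}. In that case, however, the electromagnetic energy on the left-hand side of the statement is not produced and would need a separate estimate.
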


\begin{proof}
Taking the inner product of \eqref{F1-1} with $\frac{\dya F_1}{\sqrt{\mu}}$,
and taking the inner product of \eqref{F2-1} with $\frac{\dya F_2}{\sqrt{\mu}}$, by
using $\nabla_v\cdot(E+v\times B)=0$, $v\cdot(v\times B)=0$,
and $F_1=M+\bar{G}_1+\sqrt{\mu}\gt,~F_2=\frac n\rho M+\sqrt{\mu}\g$, we obtain
\bmas
&\frac12\frac{d}{d\tau}\|\frac{1}{\sqrt{\mu}}\dya (F_1,F_2)\|^2
 +\sigma_1\epsilon^{a-1}\|\dya(\gt,\g)\|^2_\sigma\\
\leq&\,\epsilon^{a}\int_{\R}\int_{\R^3} E\cdot v\frac{\dya F_2\dya F_1}{\mu}dydv
 +\epsilon^{a-1}\int_{\R}\int_{\R^3}\frac{\dya F_1}{\mu}\dya{L_M\bar{G}_1}dvdy\\
&-\epsilon^{a}\sum_{1\leq|\alpha_1|\leq|\alpha|}C_{\alpha}^{\alpha_1}
  \int_{\R}\int_{\R^3} \frac{1}{\mu} \p^{\alpha_1}(E+v\times B)\cdot
     (\Tdv\pt^{\alpha-\alpha_1} F_2 \dya F_1+\Tdv\pt^{\alpha-\alpha_1} F_1 \dya F_2)dvdy\\
&+\epsilon^{a-1}\int_{\R}\int_{\R^3}\frac{(\dya M+\dya \bar{G}_1)}{\sqrt{\mu}} L  \dya\gt
 +\frac{\dya\(\frac n\rho M\)}{\sqrt{\mu}} \L  \dya\g dvdy+I_{f,\Gamma}\\
:=&\,I_f^1+I_f^2+I_f^3+I_f^4+I_{f,\Gamma},
\emas
where
\bma
I_{f,\Gamma}
=&\int_{\R}\int_{\R^3}\dya\[\Gamma\(\gt,\frac{M-\mu}{\sqrt{\mu}}\)
  +\Gamma\(\frac{M-\mu}{\sqrt{\mu}},\gt\)
  +\Gamma\(\frac{G_1}{\sqrt{\mu}},\frac{G_1}{\sqrt{\mu}}\)\]\frac{\dya F_1}{\sqrt{\mu}}dvdy\nnm\\
&+\int_{\R}\int_{\R^3}\dya\[\Gamma\(\g,\frac{M-\mu}{\sqrt{\mu}}\)
  +\Gamma\(\frac{F_2}{\sqrt{\mu}},\frac{G_1}{\sqrt{\mu}}\)\]\frac{\dya F_2}{\sqrt{\mu}}dvdy,
\ema
which can be  estimated in Lemma \ref{QMGF} and Lemma \ref{GGF}.

For $I_{f}^1$, we have
\be\label{If1}
I_{f}^1
\leq C\epsilon^a\|E\|_{L^\infty_y}\|\v^{\frac12}\mu^{-\frac12}\dya(F_1,F_2)\|^2
\leq  C\epsilon^{2a-2}\mathcal{E}^{\frac12}(\tau)\mathcal{F}_\omega(\tau).
 \ee
For $I_f^2$, by Lemma \ref{rarefaction4}, \eqref{barG1-1} and $P_1=I-P_0$, we have
\bma
I_f^2
&= \epsilon^{a-1}\int_{\R}\int_{\R^3}\frac{\dya F_1}{\mu}\dya{L_M\bar{G}_1}dvdy\nnm\\
&= \int_{\R}\int_{\R^3}\p^\alpha P_1\[v_1M\(\frac{|v-u|^2\dy\bar{\theta}}{2R\theta^2}+\frac{(v-u)\dy\bar{u}}{R\theta}\)\]
   \frac{\p^\alpha F_1}{\mu}dvdy\nnm\\
&\leq C\bigg\|\v^{\frac12}\frac{1}{\sqrt{\mu}}\p^\alpha P_1\[v_1M\(\frac{|v-u|^2\dy\bar{\theta}}{2R\theta^2}
                               +\frac{(v-u)\dy\bar{u}}{R\theta}\)\]\bigg\|
\bigg\|\v^{-\frac{1}{2}}\frac{\p^\alpha F_1}{\sqrt{\mu}}\bigg\|\nnm\\
&\leq \lambda\epsilon^{a-1}\|\mu^{-\frac12}\p^\alpha F_1\|^2_\sigma
 +C\lambda^{-1}\frac{\epsilon^{1+4a}}{\delta^3(\frac{\delta}{\sigma}+\epsilon^a\tau)^2}
 +C\lambda^{-1}\sigma^2\frac{\epsilon^{2a}}{\delta^2}\mathcal{D}(\tau).
\ema
Next, for $I_f^3$, as $ |\alpha_1|=1$, by using  $F_1=M+\bar{G}_1+\sqrt{\mu}\gt$ and $F_2=\frac{n}{\rho}M+\sqrt{\mu}\g$, we have
\bma
I_{f}^{3}
&= -2\epsilon^{a}
  \int_{\R}\int_{\R^3} \frac{1}{\mu} \p^{\alpha_1}(E+v\times B)\cdot
     (\Tdv\pt^{\alpha-\alpha_1} F_2 \dya F_1+\Tdv\pt^{\alpha-\alpha_1} F_1 \dya F_2)dvdy\nnm\\
&\leq C\epsilon^a\|\p^{\alpha_1}(E,B)\|_{L^\infty_y}					\|\v^2\v^{-\frac12}\mu^{-\frac12}\nabla_v\p^{\alpha-\alpha_1}(F_1,F_2)\| \|\v^{-\frac12}\mu^{-\frac12}\p^{\alpha}(F_1,F_2)\|\nnm\\
&\leq \lambda\epsilon^{a-1}\|\mu^{-\frac12}\p^{\alpha}(F_1,F_2)\|^2_\sigma
 +C\lambda^{-1}\epsilon^{1+a}\|\p^{\alpha_1}(E,B)\|^2_{L^\infty_y}\|\v^2\mu^{-\frac12}\nabla_v\p^{\alpha-\alpha_1}(F_1,F_2)\|^2_\sigma\nnm\\
&\leq \lambda\epsilon^{a-1}\|\mu^{-\frac12}\p^{\alpha}(F_1,F_2)\|^2_\sigma
 +C\lambda^{-1}\epsilon^{3a-1}\mathcal{E}(\tau)\mathcal{D}(\tau)
 +C\lambda^{-1}\frac{\epsilon^{1+2a}}{\frac{\delta}{\sigma}+\epsilon^a\tau}\|\p^{\alpha_1}(E,B)\|^2_{L^\infty_y}.
\ema
For $ |\alpha_1|=2$, by noting $\nabla_v M=-\frac{v-u}{R\theta}M$, we have
\bma\label{If-3}
I_f^{3}
=&\,\epsilon^a\int_{\R}\int_{\R^3} \frac{\dya (E+v\times B)\cdot(v-u){M} \dya F_2}{R\theta{M}}dvdy
-\epsilon^a\int_{\R}\int_{\R^3}  \dya (E+v\times B)\cdot\Tdv M\dya F_2\big(\frac{1}{\mu}-\frac{1}{{M}}\big)dvdy\nnm\\
&-\epsilon^a\int_{\R}\int_{\R^3} \frac{\dya (E+v\times B)\cdot\Tdv G_1\dya F_2}{\mu}dvdy
 -\epsilon^a\int_{\R}\int_{\R^3} \frac{\p^{\alpha}(E+v\times B)\cdot\Tdv F_2 \dya F_1}{\mu}dvdy \nnm\\
:=&\,I_f^{31}+I_f^{32}+I_f^{33} +I_f^{34}.
\ema
Since
$
(v\times B)\cdot v=0,~(v\times B)\cdot u=-(u\times B)\cdot v$ and $ \dy E_1=\epsilon^a n,
$
then by \eqref{VML3} we have
\bma
I_f^{31}
=&\epsilon^a\int_{\R}\int_{\R^3} \frac1{R\theta}(u\times\dya B)\cdot v\dya F_2dvdy
  +\epsilon^a\int_{\R}\int_{\R^3} \frac1{R\theta}{\dya E\cdot(v-u)\dya F_2}dydv\nnm\\
=&\intra \frac1{R\theta} (\dya E+u\times\dya B)\cdot\[\epsilon^a\intr v\dya F_2dv\]dy
  -\intra \frac1{R\theta}\dya E\cdot u\[\epsilon^a\dya\intr F_2dv\] dy\nnm\\
=& -\intra \frac1{R{\theta}} (\dya E+u\times\dya B)\cdot\[\dya(\dtau E-\O\dy B)\]dy
  -\int_{\R}\frac{u\cdot\dya E}{R\theta}\dya\dy E_1 dy.
\ema
By using $\pt_{\tau}B=-\O\dy E$ in \eqref{VML3} and $E\cdot(\O B)=-(\O E)\cdot B$, we have
\bma
-\intra \frac1{R\theta}\dya E\dtau \dya Edy
=&-\frac12\frac{d}{d\tau}\intra \frac1{R\theta}|\dya E|^2dy+\frac12\intra\dtau\(\frac1{R\theta}\)|\dya E|^2dy,\nnm\\
-\intra \frac1{R\theta} (u\times \dya B)\cdot\dtau \dya Edy
=&-\frac{d}{d\tau}\intra \frac1{R\theta} (u\times \dya B)\cdot \dya Edy
  +\intra \frac1{R\theta} (u\times \dya\dtau B)\cdot \dya Edy\nnm\\
&+\intra \[\dtau\(\frac{u}{R\theta}\)\times \dya B\]\cdot \dya Edy,\nnm\\
\intra \frac1{R\theta}\dya E\cdot(\O\dya\dy B)dy
=&\intra \dy\(\frac1{R\theta}\)\O\dya E\cdot\dya Bdy
 -\intra \frac1{R\theta}\dya\dtau B\cdot\dya Bdy\nnm\\
=&-\frac12\frac{d}{d\tau}\intra \frac1{R\theta}|\dya B|^2dy
 +\intra \dtau\(\frac1{2R\theta}\)|\dya B|^2dy\nnm\\
&+\intra \dy\(\frac1{R\theta}\)\O\dya E\cdot\dya Bdy,\nnm\\
\intra \frac1{R\theta} (u\times \dya B)\cdot\O\dya\dy Bdy
=&\int_{\R}\frac{u_1}{R\theta}\sum^3_{i=2}\dya B_i\dya\dy B_idy
=-\frac12\int_{\R}\dy\(\frac{u_1}{R\theta}\)\sum^3_{i=2}|\dya B_i|^2dy,\nnm
\ema
and
\bma
&-\int_{\R}\frac{u\cdot\dya E}{R\theta}\dya\dy E_1 dy
 +\intra \frac1{R\theta} (u\times \dya\dtau B)\cdot \dya Edy\nnm\\
=&-\int_{\R}\frac{u\cdot\dya E}{R\theta}\dya\dy E_1 dy
 -\intra \frac1{R\theta} (u\times \O\dya\dy E)\cdot \dya Edy\nnm\\
=&\int_{\R}\dy\(\frac{u_1}{R\theta}\)|\dya E_1|^2dy
 -\int_{\R}\dy\(\frac{u_1}{R\theta}\)(|\dya E_2|^2+|\dya E_3|^2)dy\nnm\\
&+\int_{\R}\dy\(\frac{u_2}{R\theta}\)\dya E_1\dya E_2+\dy\(\frac{u_3}{R\theta}\)\dya E_1\dya E_3dy.\nnm
\ema
It follows that
\bma
I_f^{31}
\leq&-\frac12\frac{d}{d\tau}\intra \frac1{R\theta}(|\dya E|^2+2\dya E\cdot u\times \dya B+|\dya B|^2)dy
 +C\sum_{|\alpha_1|=1}\int_{\R}|\p^{\alpha_1}(u,\theta)||\dya(E,B)|^2dy\nnm\\
\leq&-\frac12\frac{d}{d\tau}\intra \frac1{R\theta}(|\dya E|^2+2\dya E\cdot u\times \dya B+|\dya B|^2)dy\nnm\\
&+C\frac{\epsilon^a}{\frac{\delta}{\sigma}+\epsilon^a\tau}\|\dya(E,B)\|^2_{L^2_y}
 + C(1+T)\epsilon^{3a-3}\mathcal{E}(\tau)\mathcal{D}(\tau).
\ema
For $I_f^{32}$, we have for $\epsilon(1+T)\leq 1$ and $\frac\delta\sigma\leq 1$ that
\bma
I_f^{32}
&\leq C\eta_0\epsilon^a\intra |\dya(E,B)||\mu^{-\frac12}\v^{-\frac12}\dya F_2|_2dy\nnm\\
&\leq C\eta_0(\epsilon^{1+a}\|\dya(E,B)\|^2_{L^2_y} +\epsilon^{a-1}\|\mu^{-\frac12}\dya F_2\|^2_\sigma)\nnm\\
&\leq C\eta_0\epsilon(1+T)\frac{\epsilon^a}{\frac\delta\sigma+\epsilon^a\tau}\|\dya(E,B)\|^2_{L^2_y}
  +C\eta_0\epsilon^{a-1}\|\mu^{-\frac12}\dya F_2\|^2_\sigma\nnm\\
&\leq C \eta_0\frac{\epsilon^a}{\frac\delta\sigma+\epsilon^a\tau}\mathcal{E}(\tau)
 +C\eta_0\epsilon^{a-1}\|\mu^{-\frac12}\dya F_2\|^2_\sigma.
\ema
Next, for $I_f^{33}$, by using  $F_2=\frac{n}{\rho}M+\sqrt{\mu}\g$, $G_1=\bar{G}_1+\sqrt{\mu}\gt$ and \eqref{barG7-3}, we have
\bmas
I_f^{33}
&\le \epsilon^a\intra |\dya(E,B)||\mu^{-\frac12}\v^2\v^{-\frac12}\Tdv G_1|_2|\mu^{-\frac12}\v^{-\frac12}\dya F_2|_2dy\\
&\le C\epsilon^a\|\dya(E,B)\|_{L^2_y}\| \||\mu^{-\frac12}\v^2\Tdv G_1|_\sigma\|_{L^\infty_y}\|\mu^{-\frac12}\dya F_2\|_\sigma\\				
&\le C\lambda^{-1}\frac{\epsilon^{1+3a}}{(\frac{\delta}{\sigma}+\epsilon^a\tau)^2}\|\dya(E,B)\|_{L^2_y}^2
  +C\epsilon^{2a}\mathcal{E}(\tau)\mathcal{D}(\tau)
  +\lambda\epsilon^{a-1}\|\mu^{-\frac12}\dya F_2\|^2_\sigma.
\emas
Similarly, it holds that
\bmas
I_f^{34}
&\le C\epsilon^a\|\dya(E,B)\|_{L^2_y}\| \||\mu^{-\frac12}\Tdv F_2|_{\sigma,\omega}\|_{L^\infty_y}\|\mu^{-\frac12}\dya F_1\|_\sigma\\
&\le C\lambda^{-1}\epsilon^{1+a}\|\dya(E,B)\|^2_{L^2_y}\| \||\mu^{-\frac12}\Tdv F_2|_{\sigma,\omega}\|^2_{L^\infty_y}
  +\lambda\epsilon^{a-1}\|\mu^{-\frac12}\dya F_1\|^2_\sigma\\				
&\le C\lambda^{-1}\epsilon^{2a-2}\mathcal{E}(\tau)\mathcal{D}(\tau)
 +\lambda\epsilon^{a-1}\|\mu^{-\frac12}\dya F_1\|^2_\sigma.
\emas	
For $I^4_f$ and $|\alpha|=2$, we have
\bma\label{M1-2}
\partial^{\alpha}M
=&\mu\left\{\frac{\partial^{\alpha}\rho}{\rho}
  +\frac{(v-u)\cdot\partial^{\alpha}u}{R\theta}
  +\(\frac{|v-u|^2}{2R\theta}-\frac32\)\frac{\partial^{\alpha}\theta}{\theta}\right\}\nnm\\
&+(M-\mu)\left\{\frac{\partial^{\alpha}\rho}{\rho}
  +\frac{(v-u)\cdot\partial^{\alpha}u}{R\theta}
  +\(\frac{|v-u|^2}{2R\theta}-\frac32\)\frac{\partial^{\alpha}\theta}{\theta}\right\}\nnm\\
 &+\sum_{|\alpha_1|=1}C_{\alpha}^{\alpha_1}\bigg\{\partial^{\alpha_1}\(\frac M\rho\)\partial^{\alpha-\alpha_1}\rho
  +\partial^{\alpha_1}\(M\frac{v-u}{R\theta}\)\cdot\partial^{\alpha-\alpha_1}u\nnm\\
 &+\partial^{\alpha_1}\(M\frac{|v-u|^2}{2R\theta^2}-M\frac{3}{2\theta}\)\partial^{\alpha-\alpha_1}\theta\bigg\}
:=J^\alpha_1+J^\alpha_2+J^\alpha_3.
\ema
Since $\mu^{-\frac12}J^\alpha_1\in {\rm ker} L $, it follows that
$( L \gt, \mu^{-\frac12}J^\alpha_1 )=0$.
From \eqref{L12}, \eqref{norm-mu}, Lemmas \ref{Gamma}-\ref{Gb} and Lemma \ref{rarefaction4}, we have
\bma
I_{f}^4
\leq&\,\lambda\epsilon^{a-1}\|\p^\alpha(\gt,\g)\|^2_{\sigma}
  +C\lambda^{-1}\eta^2_1\epsilon^{a-1}\|\p^\alpha(\rho,u,\theta,n)\|^2_{L^2_y}\nnm\\
&+C\lambda^{-1}\epsilon^{a-1}\sum_{|\alpha_1|=1}\|\p^{\alpha_1}(\rho,u,\theta,n)\|^4_{L^4_y}
  +C\lambda^{-1}\epsilon^{a-1}\|\mu^{-\frac12}\p^\alpha\bar{G}_1\|^2\nnm\\
\leq&\, \lambda\epsilon^{a-1}\|\p^\alpha(\gt,\g)\|^2_{\sigma}
 +C\lambda^{-1}\eta^2_1\epsilon^{a-1}\|\p^\alpha(\phi,\psi,\zeta,n)\|^2_{L^2_y}\nnm\\
&+C\lambda^{-1}\epsilon^{2a-2}\(\sigma^2\frac{\epsilon^{2a}}{\delta^2}+\mathcal{E}(\tau)\)\mathcal{D}(\tau)
 +C\lambda^{-1}\frac{\epsilon^{4a-1}}{\delta(\frac{\delta}{\sigma}+\epsilon^a\tau)^2}.\nnm
\ema
In addition, for the terms $\|\mu^{-\frac12}\p^\alpha F_i\|^2_{\sigma}~(i=1,2)$, by using $F_1=M+\bar{G}_1+\sqrt{\mu}\gt$, $F_2=\frac{n}{\rho}M+\sqrt{\mu}\g$ and \eqref{Gb3},  \eqref{norm-m},
we obtain
\bma
\epsilon^{1-a}\|\mu^{-\frac12}\p^\alpha F_1\|^2_{\sigma}
\leq&\, \epsilon^{1-a}\|\p^\alpha \gt\|^2_{\sigma}
 +\epsilon^{1-a}\|\mu^{-\frac12}\p^\alpha\bar{G}\|^2_{\sigma}+\epsilon^{1-a}\|\mu^{-\frac12}\p^\alpha M\|^2_{\sigma}\nnm\\
\leq&\,\epsilon^{1-a}(\|\p^\alpha \gt\|^2_{\sigma} +C \|\p^\alpha(\phi,\psi,\zeta)\|^2_{L^2_y})
 +C\(\mathcal{E}(\tau)+\sigma^2\frac{\epsilon^{2a}}{\delta^2}\)\mathcal{D}(\tau)
 +C\frac{\epsilon^{1+2a}}{\delta(\frac{\delta}{\sigma}+\epsilon^a\tau)^2},\label{F1-2order}
 \\
\epsilon^{1-a}\|\mu^{-\frac12}\p^\alpha F_2\|^2_{\sigma}
\leq&\, \epsilon^{1-a}\|\p^\alpha \g\|^2_{\sigma} +\epsilon^{1-a}\|\mu^{-\frac12}\p^\alpha (\frac{n}{\rho}M)\|^2_{\sigma}\nnm\\
\leq&\,\epsilon^{1-a}(\|\p^\alpha \g\|^2_{\sigma}+C \|\p^\alpha n\|^2_{L^2_y})
 + C\(\mathcal{E}(\tau)+\sigma^2\frac{\epsilon^{2a}}{\delta^2}\)\mathcal{D}(\tau)
 + C\frac{\epsilon^{1+2a}}{\delta(\frac{\delta}{\sigma}+\epsilon^a\tau)^2}\|n\|^2_{L^\infty_y}.\label{F2-2order}
\ema
Then combining all the above estimates of $I_{f}^i~(i=1,2,3,4)$, multiplying it by $\epsilon^{2-2a}$ and using \eqref{F1-2order}-\eqref{F2-2order},
the lemma follows by choosing $\lambda$ small enough.
\end{proof}
Combining Lemma \ref{mac1}-Lemma \ref{Fa} and by letting $\sigma\frac{\epsilon^a}{\delta},\lambda$ small enough,
we obtain the conclusion in Lemma \ref{lolem1}.

\section{Weighted energy estimates}\setcounter{equation}{0}
\subsection{Weighted estimates on space-time derivatives of  $(\gt,\g)$ }
\begin{lem}\label{GGa2}
Under the same assumptions as in Proposition \ref{priori3}, with 
$\omega=\omega(\alpha,0)$ defined in \eqref{weight}, it holds that for $|\alpha|\leq1$,
\bma
&\frac{d}{d\tau}{ \sum_{|\alpha|\le 1}}\(\|\p^\alpha (\gt,\g)\|^2_\omega
 +\frac{\epsilon^a q_1q_2}{(1+\epsilon^a\tau)^{1+q_1}}\|\v\p^{\alpha}(\gt,\g)\|_\omega^2
 +\epsilon^{a-1}\|\p^\alpha (\gt,\g)\|_{\sigma,\omega}^2\)\nnm\\
\leq&\, C\(\eta^2_1+\mathcal{E}(\tau)+\sigma^2\frac{\epsilon^{2a}}{\delta^2}\)\mathcal{D}(\tau)
 +C\frac{\epsilon^3}{\delta(\frac{\delta}{\sigma}+\epsilon^a\tau)^2}
 + C\mathcal{E}^{\frac12}(\tau)\mathcal{F}_\omega(\tau)
  +C\frac{\epsilon^{1+a}}{(\frac{\delta}{\sigma}+\epsilon^a\tau)^2}\mathcal{E}(\tau)\nnm\\
&+ C\epsilon^{1+a}\sum_{|\alpha'|=1}\|\p^{\alpha'}(E,B)\|^2_{L^2_y}
 +C\epsilon^{1+a}\|E+u\times B\|^2_{L^2_y}\nnm\\
&+C\epsilon^{1-a}{ \sum_{|\alpha|\le 1}}\(\|\dy\p^\alpha(\gt,\g)\|_{\sigma,\omega}^2
  + \|\dy\p^\alpha(\psi,\zeta,n)\|^2_{L^2_y}\)
  +C\epsilon^{a-1}{ \sum_{|\alpha|\le 1}}\|\p^{\alpha}(\gt,\g)\|_{\sigma}^2,\nnm
\ema
where $\eta_1>0$ is defined in \eqref{local}.
\end{lem}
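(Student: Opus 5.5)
The plan is to repeat the unweighted microscopic estimates (Lemma \ref{gg1} for $|\alpha|=0$ and Lemma \ref{GGal} for $|\alpha|=1$), now in the weighted space. For each $|\alpha|\le1$ we multiply \eqref{g1} (resp. its $\p^\alpha$-version \eqref{g1a}) by $\omega^2(\alpha,0)\p^\alpha\gt$, and \eqref{g2} (resp. \eqref{g2a}) by $\omega^2(\alpha,0)\p^\alpha\g$. We then integrate over $\R_y\times\R^3_v$.

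\textbf{The time-dependent weight.} Because $\omega$ depends on $\tau$ through $q(\tau)$, the time derivative produces
\[
\frac12\frac{d}{d\tau}\|\p^\alpha(\gt,\g)\|_\omega^2-\int\!\!\int \p_\tau\!\Big(\tfrac{\omega^2}{2}\Big)|\p^\alpha(\gt,\g)|^2 .
\]
Since $\p_\tau q=-\epsilon^a q_1q_2(1+\epsilon^a\tau)^{-1-q_1}$, the second term is exactly the extra dissipation $\frac{\epsilon^a q_1q_2}{(1+\epsilon^a\tau)^{1+q_1}}\|\v\p^\alpha(\gt,\g)\|_\omega^2$, up to a factor $\frac12$ and the constant part of $\v^2$. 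The transport term $v_1\p_y$ vanishes after integration by parts.

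\textbf{The collision term.} For the linear operator I plan to use the weighted coercivity estimate of the type in Lemma \ref{L-weight}:
\[
-\langle L\h,\omega^2\h\rangle\ge \sigma_1|\h|^2_{\sigma,\omega}-C|\h|^2_{\sigma}
\]
(for the soft-potential Landau operator, the lower-order error is supported in a ball in $v$). The same holds for $\L$. Multiplied by $\epsilon^{a-1}$, the lower-order error is exactly the term $C\epsilon^{a-1}\sum_{|\alpha|\le1}\|\p^\alpha(\gt,\g)\|^2_\sigma$ on the right-hand side. That term is harmless, since it is later absorbed by the unweighted estimates, Lemma \ref{lolem} and Lemma \ref{lolem1}.

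\textbf{The remaining terms.} These are handled as in $J_g^i$ and $I_g^i$, with weights inserted.
\begin{itemize}
\item The Lorentz-force term: $\nabla_v\cdot(E+v\times B)=0$ and $v\cdot(v\times B)=0$ leave only $E\cdot\nabla_v(\omega^2/\mu)$. This factor is bounded by $C\v\,\omega^2/\mu$, because $\nabla_v\omega^2\lesssim\v\,\omega^2$ and $q\le q_2<\tfrac12$. We therefore get $C\epsilon^a\|E\|_{L^\infty}\|\v^{1/2}\p^\alpha(\gt,\g)\|^2_\omega\le C\mathcal{E}^{1/2}\mathcal{F}_\omega$.
\item The $P_0$, $P_d$ and $P_1[\cdots M]$ terms have Maxwellian-decaying kernels, so the weight is irrelevant. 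They give $\lambda\epsilon^{a-1}\|\p^\alpha(\gt,\g)\|^2_\sigma+C\epsilon^{1-a}\|\dy\p^\alpha(\psi,\zeta,n)\|^2+C\epsilon^{1-a}\|\dy\p^\alpha(\gt,\g)\|^2_{\sigma,\omega}$.
\item $R_{g1}$ and $R_{g2}$ are bounded by \eqref{Gb2}-\eqref{Gb3}, together with the remark that $|\v^b\omega\mu^{-1/2}M^{1-\varepsilon_0}|_2\le C$; this is where $q_2<\tfrac12$ is used. They produce the $\epsilon^3/\delta(\cdot)^2$ terms, the $\epsilon^{1+a}\|E+u\times B\|^2$ term and the $\epsilon^{1+a}\|\p^{\alpha'}(E,B)\|^2$ term.
\item The nonlinear $\Gamma$ terms $R_{\Gamma1}$, $R_{\Gamma2}$ require weighted trilinear bounds (weighted versions of Lemma \ref{GMG} and Lemma \ref{QGG}). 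They give $(\eta_1^2+\mathcal{E}+\sigma^2\epsilon^{2a}/\delta^2)\mathcal{D}$.
\item $R_{d1}$ and $R_{d2}$ are bounded as in Lemma \ref{GGal}, again using the $\v$-gain from $\mathcal F_\omega$.
\end{itemize}

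\textbf{Main obstacle.} I expect the hardest part to be the field terms $\epsilon^a(E+v\times B)\cdot\nabla_v$ acting on $\g$ or $\gt$ under the weight. In particular, the cross terms $\omega^2\p^\alpha\gt\,\nabla_v\p^\alpha\g$ grow like $\v$ relative to the dissipation when $\gamma<-2$. They can only be closed using $\mathcal F_\omega$, i.e. the extra dissipation coming from the time-decaying exponent $q(\tau)$. Since $\mathcal E^{1/2}\mathcal F_\omega$ is kept on the right-hand side here, this lemma only needs to isolate that structure. The actual absorption, which uses the lower bound on $q_1$ in \eqref{condition}, is done later. Summing over $|\alpha|\le1$ and taking $\lambda$ small then gives the stated inequality.
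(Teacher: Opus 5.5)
Your proposal is correct and follows essentially the same route as the paper. You multiply \eqref{g1a}--\eqref{g2a} (or \eqref{g1}--\eqref{g2} for $\alpha=0$) by $\omega^2(\alpha,0)\p^\alpha\gt$ and $\omega^2(\alpha,0)\p^\alpha\g$, use Lemma \ref{L-weight} together with the decay of $q(\tau)$ for the extra $\v$-dissipation, get $C\mathcal{E}^{\frac12}\mathcal{F}_\omega$ from the divergence-free cancellation in the Lorentz term, and use Lemma \ref{Gb} for $R_{g1},R_{g2}$. The $\Gamma$-terms go through Lemmas \ref{GMG} and \ref{QGG}, which are already stated with the weight, and $R_{d1},R_{d2}$ are again bounded through $\mathcal{F}_\omega$.

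One small correction concerns the terms with Maxwellian-decaying kernels ($P_0$, $P_d$, $P_1[\cdots M]$, $P_r(vM)$, $\nabla_v\bar{G}_1$). There the weight is not irrelevant: $\omega^2\mu^{-\frac12}M\sim e^{(q-\frac14)|v|^2}$ need not decay when $q_2\in(\frac14,\frac12)$. You should therefore split $\omega^2=\omega\cdot\omega$ and absorb $\lambda\epsilon^{a-1}\|\p^\alpha(\gt,\g)\|^2_{\sigma,\omega}$ into the left-hand side, as the paper does, rather than producing the unweighted $\sigma$-norm.
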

\begin{proof}
Multiplying \eqref{g1a} by $\omega^2(\alpha,0)\p^{\alpha}\gt$, \eqref{g2a} by $\omega^2(\alpha,0)\p^{\alpha}\g$ with $|\alpha|\leq1$, then integrating  over $\R_y\times\R^3_v$,  by  Lemma \ref{L-weight}, we have
\bma
&\frac12\frac{d}{d\tau}\|\p^{\alpha}(\gt,\g)\|^2_\omega
  +\frac{\epsilon^a q_1q_2}{2(1+\epsilon^a\tau)^{1+q_1}}\norm{\v\p^{\alpha}(\gt,\g)}_{\omega}^2\nnm\\
  &+\epsilon^{a-1}\norm{\p^{\alpha}(\gt,\g)}_{\sigma,\omega}^2
  -C\epsilon^{a-1}\|\p^{\alpha}(\gt,\g)\|_{\sigma}^2\nnm\\
\leq&\,-\epsilon^a\int_{\R}\int_{\R^3}\frac{\omega^2(\alpha,0)}{\mu}(E+v\times B)
     \cdot\nabla_v\big(\mu\p^{\alpha}\gt\p^{\alpha}\g \big)dvdy\nnm\\
&+\int_{\R}\int_{\R^3}\frac{\omega^2(\alpha,0)}{\sqrt{\mu}}\p^{\alpha}P_0\Big[v_1\p_y(\sqrt{\mu}\gt)
         +\epsilon^a(E+v\times B)\cdot\nabla_v(\sqrt{\mu}\g)\Big]\p^{\alpha}\gt dvdy\nnm\\
&+\int_{\R}\int_{\R^3}\frac{\omega^2(\alpha,0)}{\sqrt{\mu}}\p^{\alpha}\Big[P_d\big(v_1\p_y(\sqrt{\mu}\g))
          -\epsilon^a(E+v\times B)\cdot\nabla_v\bar{G}_1\Big]\p^{\alpha}\g dvdy\nnm\\					
&-\int_{\R}\int_{\R^3}\frac{\omega^2(\alpha,0)}{\sqrt{\mu}}\p^{\alpha}P_1\[v_1\(\frac{|v-u|^2\p_y\zeta}{2R\theta^2}
          +\frac{(v-u)\p_y\psi}{R\theta}\)M\]\p^{\alpha}\gt dvdy\nnm\\
&+\int_{\R}\int_{\R^3}\omega^2(\alpha,0)[ (\p^{\alpha}R_{\Gamma1}+\p^{\alpha}R_{g1}+R_{d1} )\p^{\alpha}\gt
   + (\p^{\alpha}R_{\Gamma2}+\p^{\alpha}R_{g2}+R_{d2} )\p^{\alpha}\g] dvdy\nnm\\
:=&\,J_{g,\omega}^1+J_{g,\omega}^2+J_{g,\omega}^3+J_{g,\omega}^4+J_{g,\omega}^5,
\ema
where $R_{\Gamma1},R_{\Gamma2},R_{g1},R_{g2}$ are defined in \eqref{remainder}
and $R_{d1},R_{d2}$ are defined in \eqref{R1d}.

For $J_{g,w}^1$, by  $\nabla_v\cdot(E+v\times B)=0$ and $v\cdot(v\times B)=0$, we have
\bma\label{Jgw1}
J_{g,\omega}^1
&= \epsilon^a\int_{\R}\int_{\R^3}\(\frac1{\mu}\nabla_v\omega^2(\alpha,0)+\omega^2(\alpha,0)\nabla_v\(\frac{1}{\mu}\)\)\cdot(E+v\times B)\big(\mu\p^{\alpha}\gt\p^{\alpha}\g \big) dvdy\nnm\\
&\leq C\epsilon^a\|E\|_{L^\infty_y}\|\v^{\frac12}\p^{\alpha}\big(\gt,\g \big)\|_\omega^2
\leq  C\mathcal{E}^{\frac12}(\tau)\mathcal{F}_{\omega}(\tau),
\ema
where we have used
\be
\p_{v_i}\omega(\alpha,0)
=2(l-|\alpha|)\frac{v_i}{\v^2}\omega(\alpha,0)
  +q(\tau)v_i\omega(\alpha,0).\nnm
\ee
For $J_{g,\omega}^2$, $J_{g,\omega}^3$ and $J_{g,\omega}^4$, by the same arguments as for  \eqref{pov}-\eqref{Jg234}, we have
\bma	
J_{g,\omega}^2+J_{g,\omega}^3+J_{g,\omega}^4
&\leq\lambda\epsilon^{a-1}\|\p^\alpha(\gt,\g)\|_{\sigma,\omega}^2
     +C\lambda^{-1}\(\mathcal{E}(\tau)+\sigma^2\frac{\epsilon^{2a}}{\delta^2}\)\mathcal{D}(\tau)
     +C\lambda^{-1}\frac{\epsilon^{3+a}}{(\frac{\delta}{\sigma}+\epsilon^a\tau)^2}\mathcal{E}(\tau)\nnm\\
&\quad+C\lambda^{-1}\epsilon^{1-a}\|\p^\alpha\dy(\gt,\g)\|_{\sigma,\omega}^2
 +C\lambda^{-1}\epsilon^{1-a}\|\p^\alpha\dy(\psi,\zeta)\|^2_{L^2_y}.
\ema
We now turn to estimate $J_{g,\omega}^5$. For terms involving $R_{g1},R_{g2}$  in $J_{g,\omega}^5$,
we use \eqref{remainder}, Lemma \ref{Gb} and \eqref{barG7-2} to have
\bma\label{Jgw5-1}
&\int_{\R}\int_{\R^3}\omega^2(\alpha,0) (\p^\alpha R_{g1}\p^\alpha\gt+\p^\alpha R_{g2}\p^\alpha\g ) dvdy\nnm\\
\leq& \lambda\epsilon^{a-1}\norm{\p^\alpha(\gt,\g)}_{\sigma,\omega}^2
 +C\lambda^{-1}\epsilon^{1-a}\|\p^\alpha\dy n\|^2_{L^2_y}
 + C\lambda^{-1}\epsilon^{1+a}\sum_{|\alpha'|=1}\|\p^{\alpha'}(E,B)\|^2_{L^2_y}\nnm\\
&+{C\lambda^{-1}\epsilon^{1+a}\|E+u\times B\|^2_{L^2_y}}
 +C\lambda^{-1}\frac{\epsilon^{1+3a}}{(\frac{\delta}{\sigma}+\epsilon^a\tau)^2}\|(E,B)\|^2_{L^2_y}
 +C\lambda^{-1}\frac{\epsilon^{1+a}}{(\frac{\delta}{\sigma}+\epsilon^a\tau)^2}\|n\|^2_{L^2_y}\nnm\\
&+C\lambda^{-1}\frac{\epsilon^{3}}{\delta(\frac{\delta}{\sigma}+\epsilon^a\tau)^2}
  +C\lambda^{-1}\sigma^2\frac{\epsilon^{2a}}{\delta^2}\mathcal{D}(\tau)
  +C\lambda^{-1}\mathcal{E}(\tau)\mathcal{D}(\tau).
\ema
For terms involving $R_{\Gamma_1}$ and $R_{\Gamma_2}$ in $J_{g,\omega}^5$, by using Lemma \ref{GMG} and Lemma \ref{QGG}, we have
\bma
&\int_{\R}\int_{\R^3}\omega^2(\alpha,0) (\p^{\alpha}R_{\Gamma1}\p^{\alpha}\gt+\p^{\alpha}R_{\Gamma2}\p^{\alpha}\g ) dvdy\nnm\\
\leq&\lambda\epsilon^{a-1}\|\p^{\alpha}(\gt,\g)\|^2_{\sigma,\omega}
 +C\lambda^{-1}\(\eta^2_1+\mathcal{E}(\tau)+\sigma^2\frac{\epsilon^{2a}}{\delta^2}\)\mathcal{D}(\tau)\nnm\\
&+C\lambda^{-1}\frac{\epsilon^3}{\delta(\frac{\delta}{\sigma}+\epsilon^a\tau)^2}
 +C\lambda^{-1}\frac{\epsilon^{1+a}}{(\frac{\delta}{\sigma}+\epsilon^a\tau)^2}\|n\|^2_{L^2_y}.
\ema		
Then we estimate terms involving $R_{d1}$ and $R_{d2}$ in $J_{g,\omega}^5$. When  $|\alpha|=0$, this term vanishes.
When  $|\alpha| =1$, we have
\bma\label{Jgw5-2}
&\int_{\R}\int_{\R^3}\omega^2(\alpha,0)\big(R_{d1}\p^\alpha\gt+R_{d2}\p^\alpha\g\big) dvdy\nnm\\
=&\,\epsilon^a\int_{\R}\int_{\R^3}\omega^2(\alpha,0)\frac1{\sqrt{\mu}}\p^{\alpha}(E+v\times B)
   \cdot\nabla_v\sqrt{\mu}(\g\p^{\alpha}\gt+\gt\p^{\alpha}\g) dvdy\nnm\\
&+\epsilon^a\int_{\R}\int_{\R^3}\omega^2(\alpha,0)\p^{\alpha}(E+v\times B)
   \cdot(\nabla_v\g\p^{\alpha}\gt+\nabla_v\gt\p^{\alpha}\g) dvdy\nnm\\
\leq&\,C\epsilon^a\int_{\R}|\p^{\alpha}(E,B)|\int_{\R^3}\v\omega(\alpha,0) (|(\gt,\g)|+|\nabla_v(\gt,\g)| )|\omega(\alpha,0)\p^\alpha(\gt,\g)|dvdy\nnm\\
\leq&\,C\epsilon^a\left\| \omega(\alpha,0) (|(\gt,\g)|_2+|\nabla_v(\gt,\g)|_2 )\right\|_{L^\infty_y}
 \|\p^{\alpha}(E,B)\|_{L^2_y}\|\v\omega(\alpha,0)\p^\alpha(\gt,\g)\|\nnm\\
\leq&\, C\epsilon^a\|\p^{\alpha}(E,B)\|_{L^2_y} \|\v\p^\alpha(\gt,\g)\|_{\omega}
   \sum_{|\alpha|\leq 1,|\beta|\leq1}\|\v\p^{\alpha}_{\beta}(\gt,\g)(\tau)\|_\omega\nnm\\
\leq&\, C\mathcal{E}^{\frac12}(\tau)\mathcal{F}_{\omega}(\tau).
\ema
Then combining all estimates for $J_{g,\omega}^i$, $i=1,2,...,5$, we completes the proof of the lemma.
\end{proof}

\subsection{Weighted estimates on mixed-derivatives of $(\gt,\g)$}
Applying $\p^{\alpha}_\beta$ to \eqref{g1}-\eqref{g2} with $|\alpha|+|\beta|\leq 2$, $|\beta|\geq 1$, for $e_1=(1,0,0)$, we have
\bma\label{g1ab}
&\p_\tau\p^{\alpha}_{\beta}\gt+v_1\p_y\p^{\alpha}_{\beta}\gt+ C^{\beta-e_1}_{\beta}\p_y\p^{\alpha}_{\beta-e_1}\gt
  +\epsilon^a\frac{1}{\sqrt{\mu}}\big[(E+v\times B)\cdot\nabla_v(\sqrt{\mu}\p^{\alpha}_{\beta}\g)\big]
  -\epsilon^{a-1}  \p^{\alpha}_{\beta}L\gt \nnm\\
=&\,\p^{\alpha}_{\beta}\[\frac{1}{\sqrt{\mu}}P_0\Big(v_1\sqrt{\mu}\dy\gt
          +\epsilon^a(E+v\times B) \cdot\nabla_v(\sqrt{\mu}\g)\Big)\]\nnm\\
&-\p^{\alpha}_{\beta}\[\frac1{\sqrt{\mu}}P_1\(v_1\(\frac{|v-u|^2\dy\zeta}{2R\theta^2}+\frac{(v-u)\cdot\dy\psi}{R\theta}\)M\)\]
  +\p^{\alpha}_{\beta}R_{\Gamma1}+\p^{\alpha}_{\beta}R_{g1}+R_{2d1},
\ema
and
\bma\label{g2ab}
&\dtau \p^{\alpha}_{\beta}\g+v_1\dy \p^{\alpha}_{\beta}\g+ C^{\beta-e_1}_{\beta}\dy \p^{\alpha}_{\beta-e_1}\g
 +\epsilon^a \frac{1}{\sqrt{\mu}}\big[(E+v\times B)\cdot\nabla_v(\sqrt{\mu}\p^{\alpha}_{\beta}\gt)\big]
 -\epsilon^{a-1} \p^{\alpha}_{\beta}\L \g\nnm\\
=&\,\p^{\alpha}_{\beta}\[\frac{1}{\sqrt{\mu}}P_d\big(v_1\sqrt{\mu}\dy\g\big)-\epsilon^a\frac{1}{\sqrt{\mu}}(E+v\times B)\nabla_v \bar{G}_1\]
 +\p^{\alpha}_{\beta}R_{\Gamma2}+\p^{\alpha}_{\beta}R_{g2}+R_{2d2},
\ema
where $R_{\Gamma1},R_{\Gamma2},R_{g1},R_{g2}$ are defined in \eqref{remainder} and
\bma
R_{2d1}:=&\epsilon^a\sum_{1\leq|\alpha_1|+|\beta_1|\leq 2}\p^{\alpha_1}_{\beta_{1}}
    \(\frac{E+v\times B}{\sqrt{\mu}}\)\cdot\nabla_v\p^{\alpha-\alpha_1}_{\beta-\beta_1}(\sqrt{\mu}\g)\nnm\\
&+\frac1{\sqrt{\mu}}\epsilon^a(E+v\times B)\cdot
     \sum_{1\leq|\beta_1|\leq|\beta|}\nabla_v(\p_{\beta_1}\sqrt{\mu}\p^\alpha_{\beta-\beta_1}\g),\nnm\\
R_{2d2}:=&\epsilon^a\sum_{1\leq|\alpha_1|+|\beta_1|\leq 2}\p^{\alpha_1}_{\beta_{1}}
    \(\frac{E+v\times B}{\sqrt{\mu}}\)\cdot\nabla_v\p^{\alpha-\alpha_1}_{\beta-\beta_1}(\sqrt{\mu}\gt)\nnm\\
&+\frac1{\sqrt{\mu}}\epsilon^a(E+v\times B)\cdot\sum_{1\leq|\beta_1|\leq|\beta|}\nabla_v(\p_{\beta_1}\sqrt{\mu}\p^\alpha_{\beta-\beta_1}\gt).\nnm
\ema

\begin{lem}\label{GGabw}
Under the same assumptions as in Proposition \ref{priori3}, 
for $|\alpha|+|\beta|\leq 2$ and $|\beta|\geq 1$, we have 
\bma
\blue&\sum_{|\alpha|+|\beta|\leq 2,|\beta|\geq 1}\(\frac{d}{d\tau}\|\p^\alpha_\beta (\gt,\g)\|^2_\omega
 +\frac{\epsilon^a q_1q_2}{(1+\epsilon^a\tau)^{1+q_1}}\|\v\p^{\alpha}_\beta(\gt,\g)\|_\omega^2
 +\epsilon^{a-1}\|\p^\alpha_\beta (\gt,\g)\|_{\sigma,\omega}^2\)\nnm\\
&\qquad\leq C\sum_{|\alpha|\leq1}\epsilon^{1-a}\|\p^\alpha\dy(\psi,\zeta,n)\|^2_{L^2_y}
 + C\epsilon^{1+a}\sum_{|\alpha'|=1}\|\p^{\alpha'}(E,B)\|^2_{L^2_y}
 +C\epsilon^{1+a}\|E+u\times B\|^2_{L^2_y}\nnm\\
&\quad\qquad+C\(\eta^2_1+\mathcal{E}(\tau)+\sigma^2\frac{\epsilon^{2a}}{\delta^2}\)\mathcal{D}(\tau)
 +C\frac{\epsilon^3}{\delta(\frac{\delta}{\sigma}+\epsilon^a\tau)^2}
 +  C\mathcal{E}^{\frac12}(\tau)\mathcal{F}_\omega(\tau)
 +C\frac{\epsilon^{1+a}}{(\frac{\delta}{\sigma}+\epsilon^a\tau)^2}\mathcal{E}(\tau)\nnm\\
&\quad\qquad+C\lambda^{-1}\epsilon^{1-a}\sum_{1\leq|\alpha_1|\leq2}\|\p^{\alpha_1}(\gt,\g)\|^2_{\sigma,\omega}
 +\lambda\epsilon^{a-1}\|\p^\alpha(\gt,\g)\|^2_{\sigma,\omega},\nnm
\ema
where $\lambda>0$ is a small constant, and $\eta_1>0$ is defined in \eqref{local}.
\end{lem}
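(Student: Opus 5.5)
We run a weighted energy estimate on the mixed-derivative equations \eqref{g1ab}--\eqref{g2ab}, mirroring Lemma \ref{GGa2}, and handle the new $v$-derivative terms by induction on $|\beta|$. For each $(\alpha,\beta)$ with $|\alpha|+|\beta|\le 2$, $|\beta|\ge1$, we multiply \eqref{g1ab} by $\omega^2(\alpha,\beta)\p^\alpha_\beta\gt$ and \eqref{g2ab} by $\omega^2(\alpha,\beta)\p^\alpha_\beta\g$, then integrate over $\R_y\times\R^3_v$.

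\textbf{The time derivative.} The $\tau$-derivative of $\omega^2$ produces exactly the extra good term
\[
\frac{\epsilon^a q_1q_2}{(1+\epsilon^a\tau)^{1+q_1}}\|\v\p^\alpha_\beta(\gt,\g)\|^2_\omega ,
\]
since $\dtau q(\tau)=-\epsilon^a q_1q_2(1+\epsilon^a\tau)^{-1-q_1}$.

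\textbf{The linear collision operator.} For $-\epsilon^{a-1}\p^\alpha_\beta L\gt$ and $-\epsilon^{a-1}\p^\alpha_\beta\L\g$ we use the weighted coercivity with $v$-derivatives (Lemma \ref{L-weight} in its $\beta$-version). This gives
\[
\epsilon^{a-1}\|\p^\alpha_\beta(\gt,\g)\|^2_{\sigma,\omega}
-\lambda\epsilon^{a-1}\sum_{|\beta'|<|\beta|}\|\p^\alpha_{\beta'}(\gt,\g)\|^2_{\sigma,\omega}
-C\epsilon^{a-1}\|\p^\alpha(\gt,\g)\|^2_\sigma .
\]
The lower-order terms are absorbed by summing over $\beta$ with a hierarchy of small constants, increasing $|\beta|$. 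Where $|\beta'|=0$, they match the $\lambda\epsilon^{a-1}\|\p^\alpha(\gt,\g)\|^2_{\sigma,\omega}$ on the right side of the statement.

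\textbf{The transport commutator.} The new term $C^{\beta-e_1}_\beta\dy\p^\alpha_{\beta-e_1}\gt$ is bounded by Cauchy--Schwarz with a $\v$ gain. The weight loses one power of $\v^2$ per $v$-derivative, so $\omega(\alpha,\beta)\le\v^{-2}\omega(\alpha+e_y,\beta-e_1)$, which makes the weights compatible. This gives
\[
\lambda\epsilon^{a-1}\|\p^\alpha_\beta\gt\|^2_{\sigma,\omega}+C\lambda^{-1}\epsilon^{1-a}\|\dy\p^\alpha_{\beta-e_1}\gt\|^2_{\sigma,\omega}.
\]
When $\beta-e_1=0$, this is the term $C\lambda^{-1}\epsilon^{1-a}\sum_{1\le|\alpha_1|\le2}\|\p^{\alpha_1}(\gt,\g)\|^2_{\sigma,\omega}$. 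Otherwise it is a mixed term of total order $\le2$, with an $\epsilon^{1-a}\ll\epsilon^{a-1}$ coefficient, absorbed by the left side after summation.

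\textbf{Remaining terms.} These are treated exactly as in Lemma \ref{GGa2}.
\begin{itemize}
\item The Lorentz terms $\epsilon^a(E+v\times B)\cdot\nabla_v$ give $C\mathcal{E}^{1/2}\mathcal{F}_\omega$ after integrating by parts, using $\nabla_v\cdot(v\times B)=0$.
\item The $P_0$, $P_d$ and $P_1[\cdots M]$ terms are smooth in $v$ with Gaussian decay. They give $\epsilon^{1-a}\|\p^\alpha\dy(\psi,\zeta,n)\|^2$ plus derivatives of $(\gt,\g)$ in $\sigma$-norm, where $q_2<\tfrac12$ ensures $\omega\mu^{1/2}$ stays integrable.
\item $R_{g1}$ and $R_{g2}$ go through Lemma \ref{Gb} and \eqref{barG7-2}.
\item $R_{\Gamma1}$ and $R_{\Gamma2}$ go through Lemmas \ref{GMG} and \ref{QGG}.
\item $R_{2d1}$ and $R_{2d2}$ are handled by putting the lower-order factor in $L^\infty_y$ via Sobolev, as in \eqref{Jgw5-2}, giving $\mathcal{E}^{1/2}\mathcal{F}_\omega$ or $\mathcal{E}\mathcal{D}$.
\end{itemize}

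\textbf{Main obstacle.} We expect the main difficulty to be $R_{2d}$ and the Lorentz term when $v$-derivatives fall on $\omega$ or on $(E+v\times B)/\sqrt\mu$. Differentiating $\mu^{-1/2}$ or $e^{q\v^2/2}$ creates growing factors $|v|$. These must be controlled by the $\v$-gain in $\mathcal{F}_\omega$, since the $\sigma$-norm alone is too weak for $\gamma<-2$, and the resulting $\mathcal{E}^{1/2}\mathcal{F}_\omega$ is only absorbable through the condition on $q_1$ in \eqref{condition}. We will first check that every such growth is at most one power of $\v$ with the weight index matched, and we will use the polynomial part $\v^{2(l-|\alpha|-|\beta|)}$ of the weight to compensate.
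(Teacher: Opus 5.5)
Your overall scheme matches the paper's: the same multipliers, the good term from $\dtau q<0$, Lemma \ref{L-weight}, and the remaining terms handled as in Lemma \ref{GGa2}. The step that fails is the transport commutator $C^{\beta-e_1}_\beta\dy\p^\alpha_{\beta-e_1}$.

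The weight depends only on $|\alpha|+|\beta|$, so $\omega(\alpha+e_y,\beta-e_1)=\omega(\alpha,\beta)$. The inequality $\omega(\alpha,\beta)\le\v^{-2}\omega(\alpha+e_y,\beta-e_1)$ you rely on is therefore false; the true relation is $\omega(\alpha,\beta)=\v^{-2}\omega(\alpha,\beta-e_1)$. Without it, the bound
\begin{equation*}
\int_{\R}\int_{\R^3}\omega^2(\alpha,\beta)|\dy\p^\alpha_{\beta-e_1}g||\p^\alpha_\beta g|\,dvdy\le\lambda\epsilon^{a-1}\|\p^\alpha_\beta g\|^2_{\sigma,\omega}+C\lambda^{-1}\epsilon^{1-a}\|\dy\p^\alpha_{\beta-e_1}g\|^2_{\sigma,\omega}
\end{equation*}
is not available. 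By \eqref{sigma1}, for $\gamma=-3$ the $\sigma$-norm controls only $|\v^{-1/2}\cdot|_2$ of the function itself. Estimating both factors in $\sigma,\omega$-norms with the same weight thus leaves you a full power of $\v$ short; for data concentrated near $|v|\sim R$, the left side can exceed the right by a factor $\sim R$.

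Nor can that $\v$ be borrowed from $\frac{\epsilon^aq_1q_2}{(1+\epsilon^a\tau)^{1+q_1}}\|\v\p^\alpha_\beta(\gt,\g)\|^2_\omega$. The commutator has an $O(1)$ coefficient, so the leftover would carry a factor of order $\epsilon^{-a}(1+T)^{1+q_1}/(q_1q_2)$. This cannot be absorbed in general: for $|\alpha|=|\beta|=1$, the only dissipation of $\dy\p^\alpha(\gt,\g)$ in $\mathcal{D}(\tau)$ is $\epsilon^{1-a}\|\cdot\|^2_{\sigma,\omega}$.

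The missing idea, which is the paper's argument, is to move the $v$-derivative onto the lower-order, more heavily weighted quantity. Write $\p^\alpha_\beta=\p_{v_1}\p^\alpha_{\beta-e_1}$ and use the gradient part of the $\sigma$-norm, which controls $|\v^{\gamma/2}\omega(\alpha,\beta-e_1)\nabla_v\p^\alpha_{\beta-e_1}g|_2$. Since $\omega(\alpha,\beta-e_1)=\v^2\omega(\alpha,\beta)$ and $\v^{2}\v^{-3/2}=\v^{1/2}$, this gives $\|\v^{1/2}\omega(\alpha,\beta)\p^\alpha_\beta g\|\lesssim\|\p^\alpha_{\beta-e_1}g\|_{\sigma,\omega}$. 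The other factor is handled by $\|\v^{-1/2}\omega(\alpha,\beta)\dy\p^\alpha_{\beta-e_1}g\|\lesssim\|\dy\p^\alpha_{\beta-e_1}g\|_{\sigma,\omega}$.

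For $|\beta|=1$ this yields $C\lambda^{-1}\epsilon^{1-a}\|\dy\p^\alpha(\gt,\g)\|^2_{\sigma,\omega}+\lambda\epsilon^{a-1}\|\p^\alpha(\gt,\g)\|^2_{\sigma,\omega}$; this is where the last term of the statement comes from. For $|\beta|=2$, $\alpha=0$, one gets $\lambda\epsilon^{a-1}\|\dy\p_{\beta-e_1}(\gt,\g)\|^2_{\sigma,\omega}+C\lambda^{-1}\epsilon^{1-a}\|\p_{\beta-e_1}(\gt,\g)\|^2_{\sigma,\omega}$, and both are absorbed on the left. This design of the weight, losing $\v^2$ per $v$-derivative, is the real crux of the lemma, more than the $R_{2d}$ terms you single out.

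Relatedly, Lemma \ref{L-weight} produces the lower-order $v$-derivative terms with a large constant $C_\lambda$, not $\lambda$. So they are not the source of $\lambda\epsilon^{a-1}\|\p^\alpha(\gt,\g)\|^2_{\sigma,\omega}$; they are controlled only through the weighted combination with the lower-$|\beta|$ estimates and Lemma \ref{GGa2}.
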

\begin{proof}
Multiplying \eqref{g1ab} by $\omega^2(\alpha,\beta)\p_{\beta}^{\alpha}\gt$, \eqref{g2ab} by $\omega^2(\alpha,\beta)\p_{\beta}^{\alpha}\g$ and integrating over $\R_y\times\R^3_v$,
by  Lemma \ref{L-weight}, we have
\bma\label{GGabw-1}
&\frac12\frac{d}{d\tau}\|\p_{\beta}^{\alpha}(\gt,\g)\|^2_\omega
 +\frac{\epsilon^a q_1q_2}{2(1+\epsilon^a\tau)^{1+q_1}}\|\v\p_{\beta}^{\alpha}(\gt,\g)\|_{\omega}^2\nnm\\
& +\epsilon^{a-1}\|\p_{\beta}^{\alpha}(\gt,\g)\|_{\sigma,\omega}^2
 -C\epsilon^{a-1}\sum_{|\beta_1|<|\beta|}\|\p^{\alpha}_{\beta_1}(\gt,\g)\|_{\sigma,\omega}^2\nnm\\
\leq&\,-\epsilon^a\int_{\R}\int_{\R^3}\frac{\omega^2(\alpha,\beta)}{\mu}(E+v\times B)
  \cdot\nabla_v\big(\mu\p_{\beta}^{\alpha}\gt\p^{\alpha}_{\beta}\g \big)dvdy\nnm\\
&+\int_{\R}\int_{\R^3}\p_{\beta}^{\alpha}\[\frac{1}{\sqrt{\mu}}P_0\Big(v_1\sqrt{\mu}\dy\gt
  +\epsilon^a(E+v\times B)\cdot \nabla_v(\sqrt{\mu}\g)\Big)\]\omega^2(\alpha,\beta)\p_{\beta}^{\alpha}\gt dvdy\nnm\\
&+\int_{\R}\int_{\R^3}\p_{\beta}^{\alpha}\[\frac1{\sqrt{\mu}}P_d\big(v_1\sqrt{\mu}\dy\g\big)-\epsilon^a\frac{1}{\sqrt{\mu}}(E+v\times B)\nabla_v\bar{G}_1\]\omega^2(\alpha,\beta)\p^{\alpha}_{\beta}\g dvdy\nnm\\				&-\int_{\R}\int_{\R^3}\p_{\beta}^{\alpha}\[\frac{1}{\sqrt{\mu}}P_1\(v_1\(\frac{|v-u|^2\p_y\zeta}{2R\theta^2}
   +\frac{(v-u)\cdot\p_y\psi}{R\theta}\)M\)\]\omega^2(\alpha,\beta)\p_{\beta}^{\alpha}\gt dvdy\nnm\\					
&+\int_{\R}\int_{\R^3} (\p_{\beta}^{\alpha}R_{\Gamma1}+\p_{\beta}^{\alpha}R_{g1}+R_{2d1} )\omega^2(\alpha,\beta)\p^{\alpha}_{\beta}\gt
   + (\p_{\beta}^{\alpha}R_{\Gamma2}+\p_{\beta}^{\alpha}R_{g2}+R_{2d2} )\omega^2(\alpha,\beta)\p_{\beta}^{\alpha}\g dvdy\nnm\\
&-C^{\beta-e_1}_{\beta}\int_{\R}\int_{\R^3}\omega^2(\alpha,\beta)(\p_y\p^{\alpha}_{\beta-e_1}\gt\p^{\alpha}_{\beta}\gt
                              +\p_y\p^{\alpha}_{\beta-e_1}\g\p^{\alpha}_{\beta}\g)dvdy\nnm\\
:=&\,J_{g,\omega}^{\beta,1}+J_{g,\omega}^{\beta,2}+J_{g,\omega}^{\beta,3}+J_{g,\omega}^{\beta,4}+J_{g,\omega}^{\beta,5}
 +J_{g,\omega}^{\beta,6} .
\ema
For $J_{g,\omega}^{\beta,1}$, by noting that $\nabla_v\cdot(E+v\times B)=0,~v\cdot(v\times B)=0$, we have
\bma\label{Jgwb1}
J_{g,\omega}^{\beta,1}
&=\epsilon^a\int_{\R}\int_{\R^3}\bigg|\nabla_v\(\frac{\omega^2(\alpha,\beta)}{\mu}\)(E+v\times B)
    \big(\p_{\beta}^{\alpha}\gt\p^{\alpha}_{\beta}\g \big)\bigg|dvdy\nnm\\
&\le C\epsilon^a\|E\|_{L^\infty_y}\|\v^{\frac12}\p_{\beta}^{\alpha}(\gt,\g)\|_{\omega}^2
\leq  C\mathcal{E}^{\frac12}(\tau)\mathcal{F}_\omega(\tau),
\ema
where we have used
\be
\p_{v_i}\omega(\alpha,\beta)=2(l-|\alpha|-|\beta|)\frac{v_i}{\v^2}\omega(\alpha,\beta)+q(\tau)\frac{v_i}{\v}\omega(\alpha,\beta).\nnm
\ee
For $J_{g,\omega}^{\beta,2}$, $J_{g,\omega}^{\beta,3}$ and $J_{g,\omega}^{\beta,4}$, by using arguments similar to those for \eqref{pov}-\eqref{pov1}, we have
\bma
J_{g,\omega}^{\beta,2}+J_{g,\omega}^{\beta,3}+J_{g,\omega}^{\beta,4}
&\leq\lambda\epsilon^{a-1}\|\p^\alpha_\beta(\gt,\g)\|_{\sigma,\omega}^2
     +C\lambda^{-1}\(\mathcal{E}(\tau)+\sigma^2\frac{\epsilon^{2a}}{\delta^2}\)\mathcal{D}(\tau)
     +C\lambda^{-1}\frac{\epsilon^{3+a}}{(\frac{\delta}{\sigma}+\epsilon^a\tau)^2}\mathcal{E}(\tau)\nnm\\
&\quad+C\lambda^{-1}\epsilon^{1-a}\|\p^\alpha\dy(\gt,\g)\|_{\sigma,\omega}^2
 +C\lambda^{-1}\epsilon^{1-a}\|\p^\alpha\dy(\psi,\zeta)\|^2_{L^2_y}.
\ema
Similar to \eqref{Jgw5-1}-\eqref{Jgw5-2}, we have
\bma
J_{g,\omega}^{\beta,5}
\leq&\lambda\epsilon^{a-1}\|\p^{\alpha}_\beta(\gt,\g)\|^2_{\sigma,\omega}
 +C\lambda^{-1}\(\eta^2_1+\mathcal{E}(\tau)+\sigma^2\frac{\epsilon^{2a}}{\delta^2}\)\mathcal{D}(\tau)
 +C\lambda^{-1}\frac{\epsilon^3}{\delta(\frac{\delta}{\sigma}+\epsilon^a\tau)^2}\nnm\\
&+C\lambda^{-1}\epsilon^{1-a}\|\p^\alpha\dy n\|^2_{L^2_y}
 + C\lambda^{-1}\epsilon^{1+a}\sum_{|\alpha'|=1}\|\p^{\alpha'}(E,B)\|^2_{L^2_y}
 + C\lambda^{-1}\epsilon^{1+a}\|E+u\times B\|^2_{L^2_y}\nnm\\
&+{ C\lambda^{-1}\frac{\epsilon^{1+3a}}{(\frac{\delta}{\sigma}+\epsilon^a\tau)^2}\|(E,B)\|^2_{L^2_y}}
 +C\lambda^{-1}\frac{\epsilon^{1+a}}{(\frac{\delta}{\sigma}+\epsilon^a\tau)^2}\|n\|^2_{L^2_y}
 +C\mathcal{E}^{\frac12}(\tau)\mathcal{F}_{\omega}(\tau).
\ema
For $J_{g,\omega}^{\beta,6}$,  we have
\bma
J_{g,\omega}^{\beta,6}
=&-C^{\beta-e_1}_{\beta}\int_{\R}\int_{\R^3}\omega^2(\alpha,\beta)
 (\p_y\p^{\alpha}_{\beta-e_1}\gt\p^{\alpha}_{\beta}\g+\p_y\p^{\alpha}_{\beta-e_1}\g\p^{\alpha}_{\beta}\gt)dvdy\nnm\\
\leq&C\int_{\R}\int_{\R^3}\(\omega(\alpha+1,\beta-e_1)|\dy\p^{\alpha}_{\beta-e_1}(\gt,\g)|\)
   \( \omega(\alpha,\beta)|\p^\alpha_{\beta}(\gt,\g)|\)dvdy.
\ema
Then by  $\v^2\omega(\alpha,\beta)=\omega(\alpha,\beta-e_1)$ and
$\p^\alpha_\beta=\p_{e_1}\p^\alpha_{\beta-e_1}$, with  $|\beta|=1$, $|\alpha|\leq1$,
we have from \eqref{sigma1} that
\bma
J_{g,\omega}^{\beta,6}
&\leq C\lambda^{-1}\epsilon^{1-a}\|\v^{-\frac12}\omega(\alpha+1,\beta-e_1)\dy\p^{\alpha}_{\beta-e_1}(\gt,\g)\|^2
 +\lambda\epsilon^{a-1}\|\v^2\v^{-\frac32}\omega(\alpha,\beta)\p_{e_1}\p^\alpha_{\beta-e_1}(\gt,\g)\|^2\nnm\\
&\leq C\lambda^{-1}\epsilon^{1-a}\|\omega(\alpha+1,\beta-e_1)\dy\p^{\alpha}_{\beta-e_1}(\gt,\g)\|^2_\sigma
 +\lambda\epsilon^{a-1}\|\v^2\omega(\alpha,\beta)\p^\alpha_{\beta-e_1}(\gt,\g)\|^2_\sigma\nnm\\
&\leq C\lambda^{-1}\epsilon^{1-a}\sum_{1\leq|\alpha_1|\leq2}\|\p^{\alpha_1}(\gt,\g)\|^2_{\sigma,\omega}
 +\lambda\epsilon^{a-1}\|\p^\alpha(\gt,\g)\|^2_{\sigma,\omega}.
\ema	
When  $|\beta|=2$, $|\alpha|=0$, we have from \eqref{sigma1} that
\bma
J_{g,\omega}^{\beta,6}
&\leq \lambda\epsilon^{a-1}\|\v^{-\frac12}\omega(1,\beta-e_1)\dy\p_{\beta-e_1}(\gt,\g)\|^2
 +C\lambda^{-1}\epsilon^{1-a}\|\v^2\v^{-\frac32}\omega(0,\beta)\p_{e_1}\p_{\beta-e_1}(\gt,\g)\|^2\nnm\\
&\leq \lambda\epsilon^{a-1}\|\omega(1,\beta-e_1)\dy\p_{\beta-e_1}(\gt,\g)\|^2_\sigma
 +C\lambda^{-1}\epsilon^{1-a}\|\v^2\omega(0,\beta)\p_{\beta-e_1}(\gt,\g)\|^2_\sigma\nnm\\
&\leq \lambda\epsilon^{a-1}\|\dy\p_{\beta-e_1}(\gt,\g)\|^2_{\sigma,\omega}
 +C\lambda^{-1}\epsilon^{1-a}\|\p_{\beta-e_1}(\gt,\g)\|^2_{\sigma,\omega}.
\ema
Then combining all the above estimates with $\eps,\lambda>0$ being small enough
	completes the proof of the lemma.
\end{proof}			
\subsection{Weighted estimate on the highest order derivatives of  $(F_1,F_2)$}
\begin{lem}\label{Fal}
Under the same assumptions as in Proposition \ref{priori3},   with  $\omega=\omega(\alpha,0)$ defined in \eqref{weight},
we have for $|\alpha|=2$, 
\bmas
\epsilon^{2-2a}&\frac{d}{d\tau}\|\mu^{-\frac12}\dya (F_1,F_2)\|^2_\omega
 +\frac{q_1q_2\epsilon^{2-a}}{2(1+\epsilon^a\tau)^{1+q_1}}\|\v\mu^{-\frac12}\dya (F_1,F_2)\|^2_\omega
 +\epsilon^{1-a}\|\dya (\gt,\g)\|^2_{\sigma,\omega}\\
\leq&\,
 C\(\sigma^2\frac{\epsilon^{2a}}{\delta^2}+\epsilon^{-1}\mathcal{E}(\tau)\)\mathcal{D}(\tau)
 +C\frac{\epsilon^{1+2a}}{\delta(\frac{\delta}{\sigma}+\epsilon^a\tau)^2}
 +C\epsilon^{1-a}\|\dya(\gt,\g)\|^2_\sigma\\
&+ C\lambda^{-1}\frac{\epsilon^a}{\frac{\delta}{\sigma}+\epsilon^a\tau}\mathcal{E}(\tau)
 +C\epsilon^{1-a}\|\p^\alpha(\phi,\psi,\zeta,n)\|^2_{L^2_y}
 +C\(\epsilon^a+\mathcal{E}^{\frac12}(\tau)\)\mathcal{F}_\omega(\tau).
\emas
\end{lem}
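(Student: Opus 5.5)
The plan is to mirror the proof of Lemma \ref{Fa}, but with the weight $\omega^2(\alpha,0)$ inserted. I would take the inner product of \eqref{F1-1} with $\omega^2(\alpha,0)\mu^{-\frac12}\dya F_1$ and of \eqref{F2-1} with $\omega^2(\alpha,0)\mu^{-\frac12}\dya F_2$ for $|\alpha|=2$, integrate over $\R_y\times\R^3_v$, and multiply by $\epsilon^{2-2a}$.

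Since $q(\tau)$ depends on time, the time derivative produces
\[
-\tfrac12\int \p_\tau(\omega^2)|\cdot|^2=\tfrac{\epsilon^a q_1q_2}{2(1+\epsilon^a\tau)^{1+q_1}}\|\v\cdot\|^2_\omega,
\]
which after multiplication gives exactly the $\frac{q_1q_2\epsilon^{2-a}}{2(1+\epsilon^a\tau)^{1+q_1}}$ term on the left. The transport term $v_1\dy$ integrates to zero. For the collision part I would write $\epsilon^{a-1}(L\dya\gt,\omega^2\mu^{-\frac12}\dya F_1)$, substitute $\mu^{-\frac12}\dya F_1=\dya\gt+\mu^{-\frac12}\dya(M+\bar G_1)$, and apply the weighted coercivity Lemma \ref{L-weight}. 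This gives $\epsilon^{1-a}\|\dya(\gt,\g)\|^2_{\sigma,\omega}-C\epsilon^{1-a}\|\dya(\gt,\g)\|^2_\sigma$, and the unweighted error term is simply left on the right-hand side. The pieces pairing $L$ with $\dya M$ and $\dya \bar G_1$ are estimated as $I_f^4$ was. One cannot use $\mu^{-\frac12}J^\alpha_1\in\ker L$ directly, because the weight breaks orthogonality. Instead, Cauchy--Schwarz with the weighted $\sigma$-norm together with \eqref{norm-m-1} (this is where $q_2<\frac12$ is needed) gives $C\epsilon^{1-a}\|\dya(\phi,\psi,\zeta,n)\|^2_{L^2_y}$ plus terms of size $(\sigma^2\epsilon^{2a}/\delta^2+\mathcal{E})\mathcal{D}$ and $\epsilon^{1+2a}/(\delta(\frac\delta\sigma+\epsilon^a\tau)^2)$. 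The term $L_M\bar G_1$ is handled as $I_f^2$, and $I_{f,\Gamma}$ by the weighted versions of Lemmas \ref{QMGF}, \ref{GGF}.

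The Lorentz terms split into three cases.
\begin{itemize}
\item The zeroth-order field term $\epsilon^a(E+v\times B)\cdot\nabla_v$ is integrated by parts in $v$. Here $\nabla_v\omega^2$ contributes $q v\omega^2$, so the bound is $C\epsilon^a\|E,B\|_{L^\infty}\|\v^{\frac12}\mu^{-\frac12}\dya F\|^2_\omega$. After multiplication by $\epsilon^{2-2a}$ this is $\le C\mathcal{E}^{\frac12}\mathcal{F}_\omega$.
\item The terms with $|\alpha_1|=1$ are treated as in Lemma \ref{Fa}, yielding $\epsilon^{-1}\mathcal{E}\mathcal{D}$ and $\epsilon^a\mathcal{F}_\omega$ contributions.
\item The top-order term $|\alpha_1|=2$ is the main obstacle. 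Without the weight, $I_f^{31}$ was absorbed through the Maxwell structure into the $\frac1{R\theta}(|\dya E|^2+\dots)$ energy, but $\omega^2$ destroys that exact identity.
\end{itemize}

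For the $|\alpha_1|=2$ term I would not attempt cancellation. Instead I would bound
\[
\epsilon^{2-a}\int|\dya(E,B)|\,|\v\,\omega^2\mu^{-\frac12}\Tdv F_{1,2}|_2\,|\mu^{-\frac12}\dya F_{2,1}|\,dvdy
\]
directly. The factor $\omega^2\mu^{-\frac12}\Tdv M$ is bounded in $L^2_v$ for $q_2<\frac12$ and $\eta_0$ small. Young's inequality then gives $C\lambda^{-1}\epsilon^{2-a}\epsilon^{-a}\frac{\epsilon^a}{\frac\delta\sigma+\epsilon^a\tau}\cdot(\frac\delta\sigma+\epsilon^a\tau)\epsilon^{a}\|\dya(E,B)\|^2$-type bounds. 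Using $\epsilon(1+T)\le1$ as in \eqref{difficult-0}, these become $C\lambda^{-1}\frac{\epsilon^a}{\frac\delta\sigma+\epsilon^a\tau}\mathcal{E}$ plus $\epsilon^{a}\mathcal{F}_\omega$, where the $\v$ weight on $\dya F$ is absorbed into $\mathcal{F}_\omega$.

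Collecting all terms and choosing $\lambda$ small yields the stated inequality.
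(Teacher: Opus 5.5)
Your route is the paper's: you pair \eqref{F1-1}--\eqref{F2-1} with $\omega^2(\alpha,0)\mu^{-\frac12}\dya F_i$, take the $q_1q_2$ term from $q'(\tau)$, and leave the unweighted error of Lemma \ref{L-weight} on the right. You also give up the $\ker L$ orthogonality in $I^4_{f,\omega}$, which is where $C\epsilon^{1-a}\|\dya(\phi,\psi,\zeta,n)\|^2_{L^2_y}$ comes from. Finally, you bound the $|\alpha_1|=2$ field term directly with $\epsilon(1+T)\le1$ instead of using the Maxwell cancellation.

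That last step does not close as you wrote it, for two reasons.

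\textbf{Placement of the weight.} With all of $\omega^2$ on the $\Tdv M$ factor, $\v\,\omega^2\mu^{-\frac12}\Tdv M\in L^2_v$ needs roughly $q_2<\frac14$: for $R\theta\approx1$ the exponent is $(q-\frac14)|v|^2$. To use only $q_2<\frac12$ you must put one $\omega$ on each factor, i.e. use $|\v^{\frac32}\omega\mu^{-\frac12}\Tdv M|_2\le C$.

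\textbf{Partner of the field factor.} For the $\Tdv M$ piece, the partner of the field factor cannot be $\mathcal{F}_\omega$. If the extra $\v$ goes on $\dya F$, you get
\[
\epsilon^{2-a}\|\dya(E,B)\|_{L^2_y}\|\v\omega\mu^{-\frac12}\dya F\|\le\kappa\mathcal{F}_\omega+(4\kappa)^{-1}\epsilon^{a}\mathcal{E}.
\]
Making the $\mathcal{E}$-part at most $C\frac{\epsilon^a}{\frac\delta\sigma+\epsilon^a\tau}\mathcal{E}$ forces $\kappa\gtrsim\frac\delta\sigma+\epsilon^a\tau$, which is of order $T$ near $\tau_1$. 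But \eqref{convergence1} can only absorb $\mathcal{F}_\omega$ with a coefficient below $\frac{q_1q_2}{(1+T)^{1+q_1}}$.

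\textbf{The paper's splitting.} The paper, like \eqref{difficult-0}, splits through the weighted $\sigma$-dissipation instead. Put $\v^{-\frac12}$ on $\dya F$, so that
\[
\epsilon^{2-a}\|\dya(E,B)\|_{L^2_y}\|\v^{-\frac12}\omega\mu^{-\frac12}\dya F\|
\le\lambda\epsilon^{1-a}\|\mu^{-\frac12}\dya F\|^2_{\sigma,\omega}+C\lambda^{-1}\epsilon^{3-a}\|\dya(E,B)\|^2_{L^2_y}.
\]
Here $\epsilon^{3-a}\|\dya(E,B)\|^2_{L^2_y}\le\epsilon^{1+a}\mathcal{E}\le\frac{\epsilon^a}{\frac\delta\sigma+\epsilon^a\tau}\mathcal{E}$ by $\epsilon(1+T)\le1$ and $\frac\delta\sigma\le1$. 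The first term is absorbed via \eqref{Fa1-2order}--\eqref{Fa2-2order}.

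$\mathcal{F}_\omega$ enters legitimately only through the remainder
\[
\Tdv(F_1-M,F_2)=\Tdv\big(\bar G_1+\sqrt\mu\gt,\tfrac n\rho M+\sqrt\mu\g\big),
\]
which your sketch does not treat. This piece is small in $L^\infty_y$, so there $\v$ may go on $\dya F$, giving $C\epsilon^a\mathcal{F}_\omega+C\epsilon^{-1}\mathcal{E}\mathcal{D}$. This is where the $\epsilon^{-1}\mathcal{E}\mathcal{D}$ of the statement comes from. With these two corrections your argument coincides with the paper's proof.
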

\begin{proof}
Taking the inner product of \eqref{F1-1} with $\omega^2(\alpha,0)\frac{\dya F_1}{\sqrt{\mu}}$,
and taking the inner product of \eqref{F2-1} with $\omega^2(\alpha,0)\frac{\dya F_2}{\sqrt{\mu}}$,
by using Lemma \ref{L-weight}, we obtain
\bmas
&\frac12 \frac{d}{d\tau}\|\mu^{-\frac12}\dya (F_1,F_2)\|^2_\omega
 +\frac{q_1q_2\epsilon^{a}}{2(1+\epsilon^a\tau)^{1+q_1}}\|\v\mu^{-\frac12}\dya (F_1,F_2)\|^2_\omega\\
&+\epsilon^{a-1}\|\dya (\gt,\g)\|^2_{\sigma,\omega}
 -C\epsilon^{a-1}\|\dya (\gt,\g)\|^2_\sigma\\
\leq&-\epsilon^{a}\int_{\R}\int_{\R^3} \frac{\omega^2(\alpha,0)}{{\mu}}(E+v\times B)\cdot\nabla_v(\dya F_2\dya F_1)dvdy
 +\epsilon^{a-1}\int_{\R}\int_{\R^3}\omega^2(\alpha,0)\frac{\dya F_1}{\mu}\dya{L_M\bar{G}_1}dvdy\\
&-\epsilon^{a}\sum_{1\leq|\alpha_1|\leq|\alpha|}C_{\alpha}^{\alpha_1}
  \int_{\R}\int_{\R^3} \frac{\omega^2(\alpha,0)}{\mu} \p^{\alpha_1}(E+v\times B)\cdot
     (\Tdv\pt^{\alpha-\alpha_1} F_2 \dya F_1+\Tdv\pt^{\alpha-\alpha_1} F_1 \dya F_2)dvdy\\
&+\epsilon^{a-1}\int_{\R}\int_{\R^3} \omega^2(\alpha,0)\frac{(\dya M+\dya \bar{G}_1)}{\sqrt{\mu}} L  \dya\gt
 +\omega^2(\alpha,0)\frac{\dya\(\frac n\rho M\)}{\sqrt{\mu}} \L  \dya\g dvdy+I^\omega_{f,\Gamma}\\
:=&\,I_{f,\omega}^1+I_{f,\omega}^2+I_{f,\omega}^3+I_{f,\omega}^4+I^\omega_{f,\Gamma},
\emas
where
\bmas
I^\omega_{f,\Gamma}
=&\int_{\R}\int_{\R^3}\omega^2(\alpha,0)\[\dya\Gamma\(\gt,\frac{M-\mu}{\sqrt{\mu}}\)
  +\dya\Gamma\(\frac{M-\mu}{\sqrt{\mu}},\gt\)
  +\dya\Gamma\(\frac{G_1}{\sqrt{\mu}},\frac{G_1}{\sqrt{\mu}}\)\]\frac{\dya F_1}{\sqrt{\mu}}dvdy\nnm\\
&+\int_{\R}\int_{\R^3}\omega^2(\alpha,0)\[\dya\Gamma\(\g,\frac{M-\mu}{\sqrt{\mu}}\)
  +\dya\Gamma\(\frac{F_2}{\sqrt{\mu}},\frac{G_1}{\sqrt{\mu}}\)\]\frac{\dya F_2}{\sqrt{\mu}}dvdy,
\emas
which is estimated in Lemma \ref{QMGF} and  Lemma \ref{GGF}.

For $I_{f,\omega}^1$, by using  $\nabla_v\cdot(E+v\times B)=0$ and $v\cdot(v\times B)=0$, we have
\be\label{Ifw1}
I_{f,\omega}^1
= \epsilon^{a}\int_{\R}\int_{\R^3} \nabla_v\(\frac{\omega^2(\alpha,0)}{{\mu}}\)\cdot(E+v\times B)(\dya F_2\dya F_1)dvdy
 \leq  C\epsilon^{2a-2}\mathcal{E}^{\frac12}(\tau)\mathcal{F}_\omega(\tau).
 \ee
For $I_{f,\omega}^2$, by Lemma \ref{rarefaction4}, \eqref{barG1-1} and $P_1=I-P_0$, we have
\bma
I_{f,\omega}^2
&= \epsilon^{a-1}\int_{\R}\int_{\R^3}\omega^2(\alpha,0)\frac{\dya F_1}{\mu}\dya{L_M\bar{G}_1}dvdy\nnm\\
&= \int_{\R}\int_{\R^3}\omega^2(\alpha,0)
  \p^\alpha P_1\bigg[v_1M\bigg(\frac{|v-u|^2\dy\bar{\theta}}{2R\theta^2}+\frac{(v-u)\dy\bar{u}}{R\theta}\bigg)\bigg]
   \frac{\p^\alpha F_1}{\mu}dvdy\nnm\\
&\leq C\bigg\|\omega(\alpha,0)\v^{\frac12}\frac{1}{\sqrt{\mu}}
    \p^\alpha P_1\bigg[v_1M\bigg(\frac{|v-u|^2\dy\bar{\theta}}{2R\theta^2}+\frac{(v-u)\dy\bar{u}}{R\theta}\bigg)\bigg]\bigg\|
  \bigg\|\omega(\alpha,0)\v^{-\frac12}\frac{\p^\alpha F_1}{\sqrt{\mu}}\bigg\|\nnm\\
&\leq \lambda\epsilon^{a-1}\|\mu^{-\frac12}\p^\alpha F_1\|^2_{\sigma,\omega}
 +C\lambda^{-1}\frac{\epsilon^{1+4a}}{\delta^3(\frac{\delta}{\sigma}+\epsilon^a\tau)^2}
 +C\lambda^{-1}\sigma^2\frac{\epsilon^{2a}}{\delta^2}\mathcal{D}(\tau).
\ema
Next, for $I_{f,\omega}^3$, as $ |\alpha_1|=1$, by noting  $F_1=M+\bar{G}_1+\sqrt{\mu}\gt$ and $F_2=\frac{n}{\rho}M+\sqrt{\mu}\g$, we have
\bma
I_{f,\omega}^{3}
&= -2\epsilon^{a}
  \int_{\R}\int_{\R^3}\omega^2(\alpha,0) \frac{1}{\mu} \p^{\alpha_1}(E+v\times B)\cdot
     (\Tdv\pt^{\alpha-\alpha_1} F_2 \dya F_1+\Tdv\pt^{\alpha-\alpha_1} F_1 \dya F_2)dvdy\nnm\\
&\leq C\epsilon^a\|\p^{\alpha_1}(E,B)\|_{L^\infty_y}					
  \|\omega(\alpha,0)\v^{\frac32}\v^{-\frac32}\mu^{-\frac12}\p^{\alpha-\alpha_1}\nabla_v(F_1,F_2)\|
  \|\omega(\alpha,0)\v\mu^{-\frac12}\p^{\alpha}(F_1,F_2)\| \nnm\\
&\leq C\epsilon^{2a}\|\omega(\alpha,0)\v\mu^{-\frac12}\p^{\alpha}(F_1,F_2)\|^2
 +C\|\p^{\alpha_1}(E,B)\|^2_{L^\infty_y}\|{\omega(\alpha-\alpha_1,0)}\mu^{-\frac12} \p^{\alpha-\alpha_1}(F_1,F_2)\|^2_\sigma \nnm\\
&\leq C\epsilon^{3a-2}\mathcal{F}_\omega(\tau)
 +C\|\p^{\alpha_1}(E,B)\|^2_{L^\infty_y}
   \(\|\mu^{-\frac12}\p^{\alpha-\alpha_1}(M,\frac{n}{\rho}M,\bar{G}_1)\|^2_{\sigma,\omega}+\|\p^{\alpha-\alpha_1}(\gt,\g)\|^2_{\sigma,\omega}\)\nnm\\
&\leq C\epsilon^{3a-2}\mathcal{F}_\omega(\tau)
 +C\epsilon^{a-2}\mathcal{E}(\tau)\mathcal{D}(\tau)
 +C\frac{\epsilon^a}{\frac{\delta}{\sigma}+\epsilon^a\tau}\|\p^{\alpha_1}(E,B)\|^2_{L^\infty_y},
\ema
where we have used the fact $\v^{\frac32}\omega(\alpha,0)\le\omega(\alpha-1,0).$

As $ |\alpha_1|=|\alpha|=2$, we have
\bma
I_{f,\omega}^3
=&\,-2\epsilon^{a}
  \int_{\R}\int_{\R^3}\omega^2(\alpha,0) \frac{1}{\mu} \p^{\alpha}(E+v\times B)\cdot
     (\Tdv F_2 \dya F_1+\Tdv F_1 \dya F_2)dvdy\nnm\\\
\leq&\, C\epsilon^a\|\p^{\alpha}(E,B)\|_{L^2_y}					
\||\mu^{-\frac12}\v^{\frac32}\omega(\alpha,0)\nabla_vM|_2\|_{L_y^\infty}
 \|\mu^{-\frac12}\v^{-\frac12}\omega(\alpha,0)\p^{\alpha}(F_1,F_2)\|\nnm\\
&+C\epsilon^a\|\p^{\alpha}(E,B)\|_{L^2_y}					
\||\mu^{-\frac12}\omega(\alpha,0)\nabla_v(F_1-M,F_2)|_2\|_{L_y^\infty}
 \|\mu^{-\frac12}\v\omega(\alpha,0)\p^{\alpha}(F_1,F_2)\|\nnm\\
\leq&\, C\lambda^{-1}\epsilon^{1+a}\|\p^{\alpha}(E,B)\|^2_{L^2_y}
 \||\mu^{-\frac12}\v^{\frac32}\omega(\alpha,0)\nabla_v M|_2\|^2_{L_y^\infty}\nnm\\
&+\lambda\epsilon^{a-1}\|\mu^{-\frac12}\v^{-\frac12}\omega(\alpha,0)\p^{\alpha}(F_1,F_2)\|^2\nnm\\
&+C\|\p^{\alpha}(E,B)\|^2_{L^2_y}
 \||\mu^{-\frac12}\omega(\alpha,0)\nabla_v(\frac{n}{\rho}M,\bar{G}_1,\sqrt{\mu}\gt,\sqrt{\mu}\g)|_2\|^2_{L_y^\infty}\nnm\\
&+C\epsilon^{2a}\|\mu^{-\frac12}\v\omega(\alpha,0)\p^{\alpha}(F_1,F_2)\|^2\nnm\\
\leq&\,C\lambda^{-1}\frac{\epsilon^a}{\frac{\delta}{\sigma}+\epsilon^a\tau}\|\p^{\alpha}(E,B)\|^2_{L^2_y}
 +\lambda\epsilon^{a-1}\|\mu^{-\frac12}\p^{\alpha}(F_1,F_2)\|^2_{\sigma,\omega}\nnm\\
 &+C\epsilon^{2a-3}\mathcal{E}(\tau)\mathcal{D}(\tau)
 +C\epsilon^{3a-2}\mathcal{F}_\omega(\tau),
\ema
where we have used for $\frac\delta\sigma\leq1$ and $\epsilon (1+T)\leq 1$,
\bmas
&\epsilon^{1+a}
 \|\p^{\alpha}(E,B)\|^2_{L^2_y}\||\mu^{-\frac12}\v^{\frac32}\omega(\alpha,0)\nabla_vM|_2\|^2_{L_y^\infty}\nnm\\
\leq&C\epsilon(1+T)\frac{\epsilon^a}{(\frac\delta\sigma+\epsilon^a\tau)}\|\p^{\alpha}(E,B)\|^2_{L^2_y}
\leq C\frac{\epsilon^a}{\frac{\delta}{\sigma}+\epsilon^a\tau}\|\p^{\alpha}(E,B)\|^2_{L^2_y},
\emas
\bmas
&\|\p^{\alpha}(E,B)\|^2_{L^2_y}\||\mu^{-\frac12}\omega(\alpha,0)\nabla_v(\frac{n}{\rho}M,\bar{G}_1)|_2\|^2_{L_y^\infty}\nnm\\
\leq&C\|\p^{\alpha}(E,B)\|^2_{L^2_y}\|n\|_{L^2_y}\|\dy n\|_{L^2_y}
  +C\|\p^{\alpha}(E,B)\|^2_{L^2_y}\epsilon^{1-a}|\dy(\bar{u}_1,\bar{\theta})|^2_{L_y^\infty}\nnm\\
\leq&C\epsilon^{2a-3}\mathcal{E}(\tau)\mathcal{D}(\tau)
 +C\frac{\epsilon^{1+a}}{(\frac{\delta}{\sigma}+\epsilon^a\tau)^2}\|\p^{\alpha}(E,B)\|^2_{L^2_y},
\emas
and
\bmas
&\|\p^{\alpha}(E,B)\|^2_{L^2_y}\||\mu^{-\frac12}\omega(\alpha,0)\nabla_v(\sqrt{\mu}\gt,\sqrt{\mu}\g)|_2\|^2_{L_y^\infty}\nnm\\
\leq& C\|\p^{\alpha}(E,B)\|^2_{L^2_y}
       \|\omega(\alpha,0)\v^{\frac32}(\gt,\g)\|_\sigma \|\omega(\alpha,0)\v^{\frac32}\dy(\gt,\g)\|_\sigma \nnm\\
\leq& C\|\p^{\alpha}(E,B)\|^2_{L^2_y}\|\omega(0,0)(\gt,\g)\|_\sigma \|\omega(1,0)\dy(\gt,\g)\|_\sigma \nnm\\
\leq& C\epsilon^{a-1}\mathcal{E}(\tau)\mathcal{D}(\tau).
\emas

Lastly, for $I_{f,\omega}^4$, by \eqref{L12}, \eqref{M1-2}, Lemma \ref{Gamma} and Lemma \ref{Gb}, we have
\bma
I_{f,\omega}^4
\leq&\, \lambda\epsilon^{a-1}\|\p^\alpha(\gt,\g)\|^2_{\sigma,\omega}
 +C\lambda^{-1}\epsilon^{a-1}\|\p^\alpha(\phi,\psi,\zeta,n)\|^2_{L^2_y}\nnm\\
&+C\lambda^{-1}\frac{\epsilon^{4a-1}}{\delta(\frac{\delta}{\sigma}+\epsilon^a\tau)^2}
 +C\lambda^{-1}\epsilon^{2a-2}\(\sigma^2\frac{\epsilon^{2a}}{\delta^2}+\mathcal{E}(\tau)\)\mathcal{D}(\tau).
\ema
Moreover, as for \eqref{F1-2order}-\eqref{F2-2order}, we have from \eqref{norm-m} that
\bma
\epsilon^{1-a}\|\mu^{-\frac12}\p^\alpha F_1\|^2_{\sigma,\omega}
\leq&\,\epsilon^{1-a}\|\p^\alpha \gt\|^2_{\sigma,\omega}
 +C\epsilon^{1-a}\|\p^\alpha(\phi,\psi,\zeta)\|^2_{L^2_y}\nnm\\
 &+C\(\mathcal{E}(\tau)+\sigma^2\frac{\epsilon^2}{\delta^2}\)\mathcal{D}(\tau)
 +C\frac{\epsilon^{1+2a}}{\delta(\frac{\delta}{\sigma}+\epsilon^a\tau)^2},\label{Fa1-2order}\\
\epsilon^{1-a}\|\mu^{-\frac12}\p^\alpha F_2\|^2_{\sigma,\omega}
\leq&\,\epsilon^{1-a}\|\p^\alpha \g\|^2_{\sigma,\omega}+C\epsilon^{1-a}\|\p^\alpha n\|^2_{L^2_y}\nnm\\
&+C\(\mathcal{E}(\tau)+\sigma^2\frac{\epsilon^{2a}}{\delta^2}\)\mathcal{D}(\tau)
 +C\frac{\epsilon^{1+2a}}{\delta(\frac{\delta}{\sigma}+\epsilon^a\tau)^2}\|n\|^2_{L^\infty_y}.\label{Fa2-2order}
\ema
Then combining all the above estimates $I_{f,\omega}^i(i=1,2,3,4)$ and  multiplying the results by $\epsilon^{2-2a}$
and using \eqref{Fa1-2order}-\eqref{Fa2-2order},
choosing $\lambda>0$ small enough, we complete the proof of the lemma.
\end{proof}
\section{Convergence rate }\setcounter{equation}{0}
Based on the energy estimates derived in Sections 3, 4 and 5, in this section we will  complete the
proof of Theorem \ref{mt}: By a suitable linear combination of  Lemma \ref{lolem}, Lemma \ref{lolem1} and Lemma \ref{GGa2}-Lemma \ref{Fal},
and integrating over $[0,s],s\in(0,\tau_1],~\tau_1=T/\epsilon^a$ and using the fact (cf. \cite{HuangYT})
that
$$
E^2+2E\cdot\bar u \times B+B^2\sim E^2+B^2,\quad \eta\sim|(\phi,\psi,\zeta)|^2,
$$
we have for small enough $\lambda>0$ that
\bma\label{convergence1}
&\quad\mathcal{E}(s)
 +\frac{q_1q_2}{(1+T)^{1+q_1}}\int^s_0\mathcal{F}_\omega(\tau)d\tau
 +\int^s_0\mathcal{D}(\tau)d\tau
 \nnm\\
&\leq C\(\eta^2_1+\epsilon^a+\sigma^2\frac{\epsilon^{2a}}{\delta^2}+{\epsilon^{a-1}\sup_{0\leq\tau\leq s}\mathcal{E}^{\frac12}(\tau)
       +(1+T)\epsilon^{-1}\sup_{0\leq\tau\leq s}\mathcal{E}(\tau)}\)\int^s_0\mathcal{D}(\tau)d\tau\nnm\\
&\quad+ C\int^s_0\frac{\epsilon^a}{\frac{\delta}{\sigma}+\epsilon^a\tau}\mathcal{E}(\tau)d\tau
 + C\Big(\epsilon^a+\sup_{0\leq\tau\leq s}\mathcal{E}^{\frac12}(\tau)\Big)\int^s_0\mathcal{F}_\omega(\tau)d\tau\nnm\\
&\quad+C\sigma(1+T)\(\frac{\epsilon^{2-a}}{\delta^2}+\frac{\epsilon^{1+a}}{\delta^2}\)
 +C\mathcal{E}(0).
\ema
Here $\mathcal{E}(\tau)$, $\mathcal{D}(\tau)$ and $\mathcal{F}_\omega(\tau)$ are defined by \eqref{energy-a}-\eqref{energy-F}, respectively.
We need to require the terms on the right hand side of \eqref{convergence1} to be absorbed by the left hand side. From the assumption of Proposition \ref{priori3}, there exists a small positive constants $\epsilon_1$, independent of $\epsilon,~\delta$ and $T$, such that
\bma
&~~\epsilon^{a-1}\sup_{0\leq\tau\leq s}\mathcal{E}^{\frac12}(\tau)
 \leq(1+T)\bigg(\frac{\epsilon^{\frac a2}}{\delta^{\frac32}}+\frac{\epsilon^{\frac32a-\frac12}}{\delta^{\frac32}}\bigg)
 \ll\epsilon_1,\nnm\\
&~~(1+T)\epsilon^{-1}\sup_{0\leq\tau\leq s}\mathcal{E}(\tau)
 \leq(1+T)^3\bigg(\frac{\epsilon^{1-a}}{\delta^3}+\frac{\epsilon^a}{\delta^3}\bigg)
 \ll\epsilon_1,\label{q1q2}\\
&\sup_{0\leq\tau\leq s}\mathcal{E}^{\frac12}(\tau)
 \leq(1+T)\bigg(\frac{\epsilon^{1-\frac a2}}{\delta^{\frac32}}+\frac{\epsilon^{\frac12+\frac a2}}{\delta^{\frac32}}\bigg)
 \leq \frac{q_1q_2}{(1+T)^2}
 <\frac{q_1q_2}{(1+T)^{1+q_1}},\nnm
\ema
that is
$$
q_1\in\bigg[\frac{(1+T)^3}{q_2}
  \bigg(\frac{\epsilon^{1-\frac a2}}{\delta^{\frac32}}+\frac{\epsilon^{\frac12+\frac a2}}{\delta^{\frac32}}\bigg),1\bigg),\quad a\in\Big(\frac13,1\Big).
$$
Hence, by using the smallness of $\eps_1,\eta_1,\frac{\eps}{\delta}> 0$,  we have for $s\in(0,\tau_1]$
\be\label{convergence2}
\mathcal{E}(s)
\leq C\sigma(1+T)\(\frac{\epsilon^{2-a}}{\delta^2}+\frac{\epsilon^{1+a}}{\delta^2}\)
 +C\int^s_0\frac{\epsilon^a}{\frac{\delta}{\sigma}+\epsilon^a\tau}\mathcal{E}(\tau)d\tau,\quad a\in\Big(\frac13,1\Big).
\ee
Then by  the Gronwall's inequality in Lemma \ref{Gronwall}, \eqref{convergence2} implies that for $\tau_1=\frac{T}{\epsilon^a}$ and $a\in(\frac13,1)$,
\be
\sup\limits_{0\leq\tau\leq\tau_1}\mathcal{E}(\tau)
\leq C\sigma^2(\frac{\delta}{\sigma}+T)(1+T)\bigg(\frac{\epsilon^{2-a}}{\delta^3}+\frac{\epsilon^{1+a}}{\delta^3}\bigg)
\leq C\sigma^2(1+T)^2\bigg(\frac{\epsilon^{2-a}}{\delta^3}+\frac{\epsilon^{1+a}}{\delta^3}\bigg) .
\ee
Thus by choosing $\sigma>0$ small enough so that $C\sigma^2<\frac12$,
the a priori assumption \eqref{priori1} is closed and Proposition \ref{priori3} is proved. Therefore, by the uniform a priori estimates and the local existence of the solution, the standard continuity argument gives the existence and uniqueness of local solutions to the VML equation \eqref{VML2} with initial data \eqref{initial0}.

By \eqref{priori4} and using the Sobolev imbedding, we have 
\be\label{priori5}
\sup_{0\leq \tau\leq \tau_1}\|(\phi,\psi,\zeta)\|^2_{L^{\infty}_y}
\leq C\sup_{0\leq \tau\leq \tau_1}\|(\phi,\psi,\zeta)\|_{L^2_y}\|\p_y(\phi,\psi,\zeta)\|_{L^2_y}
\leq C(1+T)^2\bigg(\frac{\epsilon^{2-a}}{\delta^3}+\frac{\epsilon^{1+a}}{\delta^3}\bigg).
\ee
Then \eqref{priori4} and \eqref{priori5} imply that
there exists a positive constant $C$ independent of $\epsilon,\delta,T$ such that 
\be
\sup_{0\leq\tau\leq\tau_1}\left\{\|(\phi,\psi,\zeta,n)\|^2_{L^\infty_y}
 +\|(\gt,\g)\|^2_{L^\infty_yL^2_v}
 +\|\mu^{-\frac12}\bar{G}_1\|^2_{L^\infty_yL^2_v }\right\}
\leq C(1+T)^2\bigg(\frac{\epsilon^{2-a}}{\delta^3}+\frac{\epsilon^{1+a}}{\delta^3}\bigg).\nnm
\ee
Therefore,  the local solution
$[F_1(t,x,v),F_2(t,x,v), E(t,x), B(t,x)]$ to  the VML system \eqref{VML2} satisfies
\be \label{rate1}
\sup_{t\in[0,T]}\bigg\{\bigg\|\frac{F_1-M_{[\bar{\rho},\bar{u},\bar{\theta}]}}{\sqrt{\mu}},\frac{F_2}{\sqrt{\mu}}\bigg\|^2_{ L_x^\infty L_v^2}
  +\|(E,B)\|^2_{L_x^\infty}\bigg\}
\leq C(1+T)^2\bigg(\frac{\epsilon^{2-a}}{\delta^3}+\frac{\epsilon^{1+a}}{\delta^3}\bigg),
\ee
for $a\in (\frac13,1 )$.
This gives \eqref{mt1} and then it completes the proof of Theorem \ref{mt}.

Next, we give the proof of Theorem \ref{mt2} as follows. Since the terms
$(1+T)\epsilon^{-1}\sup\limits_{0\leq\tau\leq s}\mathcal{E}(\tau) $ and $ \epsilon^{a-1}\sup\limits_{0\leq\tau\leq s}\mathcal{E}^{\frac12}(\tau) $ in \eqref{q1q2} lose an $\epsilon$ decay, by  Lemma \ref{rarefaction4}, we have
\bma\label{rate}
&\quad \|(\rho,u,\theta)-(\rho^{r},u^{r},\theta^{r})\|_{L^{\infty}_y}\nnm\\
&\leq {(\eps^{-\frac12}+\eps^{a-1})}\|(\phi,\psi,\zeta)\|_{L^{\infty}_y}
 +\left\| (\bar{\rho},\bar{u},\bar{\theta})-(\rho^{r},u^{r},\theta^{r})\right\|_{L^{\infty}_y}\nnm\\
&\leq C(1+T) \bigg(\frac{\epsilon^{\frac12-\frac a2}}{\delta^\frac32}+{\frac{\epsilon^{\frac{3a}2-\frac12}}{\delta^\frac32}}\bigg)
 +Ct^{-1}\delta\(\ln(1+t)+|\ln\delta|\),
\ema
for any $t\in [0,T]$. We can take $\delta=(1+T)^{\frac25}\max\{\epsilon^{\frac15-\frac a5}, \epsilon^{\frac{3a}5-\frac15} \}$ in  \eqref{rate1} and \eqref{rate}, for any $h>0$, we have for $ a\in(\frac13,1)$,
$$
\sup_{t\in[h,T]}\bigg\{\bigg\|\frac{F_1-M_{[\rho^{r},u^{r},\theta^{r}]}}{\sqrt{\mu}},
   \frac{F_2}{\sqrt{\mu}}\bigg\|_{ L_x^\infty L_v^2}
 +\|(E,B)\|_{L_x^\infty}\bigg\}
\leq C\frac1h(1+T)^{\frac25}\[\ln(1+T)+|\ln\epsilon|\]\big(\epsilon^{\frac15-\frac a5}+\epsilon^{\frac{3a}5-\frac15}\big).
$$
This completes the proof of Theorem \ref{mt2}.
\appendix
\section{Appendix }
\setcounter{equation}{0}
\renewcommand{\theequation}{A.\arabic{equation}}
In this section, we will present some basic estimates, which have been used in the previous calculations. First of all, we list some properties of the smooth rarefaction wave defined in \eqref{smooth}.
Then, regarding the slow Knudsen decay rate of $\|\dy(\bar{u},\bar{\theta})\|^{2}$ in the term $P_1(v_1\dy M)$ in \eqref{G1}, we provide some properties of the Burnett functions and the rapid velocity decay of $\bar{G}_1$ defined in \eqref{barG1-1} to overcome this slow decay rate.
Finally, we establish some fundamental properties of the collision operators  used in the  energy analysis.

Now, we list the properties of the smooth approximate 3-rarefaction wave $(\bar{\rho},\bar{u},\bar{\theta})(t,x)$ constructed in \eqref{smooth}, cf. \cite{HuangLW,xin1993}.

\begin{lem}\label{rarefaction4}
The functions $(\bar{\rho},\bar{u},\bar{\theta})$ constructed by \eqref{smooth} have the following properties:

\text{(1)} $\p_{x}\bar{u}_1(t,x)>0$, $\bar{u}_2(t,x)=\bar{u}_3(t,x)=0$ for $x\in \R$, $t\geq0,\delta>0.$

\text{(2)} For any $1\leq p\leq\infty$, there is a constant $C>0$ 
 such that 
 for all $t>0,~\delta>0,$ $\p^{\alpha}=\p^{\alpha_{1}}_{t}\p^{\alpha_{2}}_{x}$,
\bma
\|\p^{\alpha}(\bar{\rho},\bar{u},\bar{\theta})\|_{L^p_x}
   &\leq C\sigma^{\frac1p}(\frac{\delta}{\sigma}+t)^{-1+\frac1p},
   \quad for~|\alpha|=1,\nnm\\
\|\p^{\alpha}(\bar{\rho},\bar{u},\bar{\theta})\|_{L^p_x}
   &\leq C\delta^{-|\alpha|+1+\frac1p}(\frac{\delta}{\sigma}+t)^{-1},
   \quad for~|\alpha|\geq2,\nnm
\ema
\quad~~where $\sigma=|\rho_{+}-\rho_{-}|+|u_{+}-u_{-}|+|\theta_+-\theta_-|$ is a small constant.

\text{(3)} There exist a constant $\delta_{0}\in(0,1)$, such that for any $\delta\in(0,\delta_{0}]$ and $t>0,$
$$
\|(\bar{\rho},\bar{u},\bar{\theta})(t,\cdot)-(\rho^{r},u^{r},\theta^{r})(\frac{\cdot}{t})\|_{L_{x}^{\infty}}
\leq Ct^{-1}\delta\(\ln(1+t)+|\ln\delta|\).
$$
\end{lem}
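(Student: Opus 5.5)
Since $(\bar\rho,\bar u_1,\bar\theta)$ is an explicit function of $w_\delta$ through the 3-Riemann invariant relations in \eqref{smooth}, I would derive all three properties from the corresponding facts for the Burgers solution $w_\delta$, and then transfer them through the smooth map $w\mapsto(\bar\rho,\bar u_1,\bar\theta)$.

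For (1), the first step is to note that $w_\delta(0,x)$ is strictly increasing, since $w_+>w_-$ and $\tanh$ is increasing. The characteristic formula $w_\delta=w_\delta(x_0)$, $x=x_0+w_\delta(x_0)t$, then gives
\[
\p_x w_\delta=\frac{w_{\delta}'(x_0)}{1+w_{\delta}'(x_0)t}>0 .
\]
Next, from \eqref{smooth}, $\lambda_3=\bar u_1+\sqrt{p_\rho}$ with $\bar u_1-\sqrt{15k}\bar\rho^{1/3}e^{S^*/2}$ constant, so $\bar\rho$ and $\bar u_1$ are strictly increasing smooth functions of $w_\delta$. In particular $\p_x\bar u_1=\frac{d\bar u_1}{dw}\p_xw_\delta>0$. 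The identity $\bar u_2=\bar u_3=0$ holds by definition.

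For (2), the chain rule reduces every derivative of $(\bar\rho,\bar u,\bar\theta)$ to products of derivatives of $w_\delta$ with bounded coefficients. Time derivatives are converted via $\p_tw_\delta=-w_\delta\p_xw_\delta$. It therefore suffices to prove the bounds for $w_\delta$ itself, and these are the classical estimates of Matsumura--Nishihara and Xin \cite{Matsumura,xin1993}. For $|\alpha|=1$, the $L^1$ bound is $\int\p_xw_\delta\,dx=w_+-w_-\le C\sigma$. The $L^\infty$ bound follows from the displayed formula together with $w_\delta'(x_0)\le C\sigma/\delta$, which gives $\p_xw_\delta\le \frac{C\sigma/\delta}{1+C\sigma t/\delta}=C(\frac\delta\sigma+t)^{-1}$. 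Interpolating between these yields $\sigma^{1/p}(\frac\delta\sigma+t)^{-1+1/p}$. For higher derivatives, I would differentiate the characteristic formula repeatedly in $x_0$, using $|w_\delta^{(k)}(x_0)|\le C\sigma\delta^{-k}e^{-2|x_0|/\delta}$. Changing variables $dx=(1+w_\delta't)dx_0$ in the $L^p$ integral then produces the factor $\delta^{-|\alpha|+1+1/p}(\frac\delta\sigma+t)^{-1}$. I expect this bookkeeping to be the main obstacle. In particular, the extra powers of $(1+w'_\delta t)^{-1}$ from repeated differentiation must be shown to yield only a single factor $(\frac\delta\sigma+t)^{-1}$. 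I would handle this by treating the core region $|x_0|\lesssim\delta$ and the exponentially decaying tails separately.

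For (3), I again compare $w_\delta(t,x)$ with $w^r(x/t)$ and transfer the result through the Lipschitz map to $(\rho,u,\theta)$. Outside the fan, $|w_\delta-w_\pm|$ is bounded by the tail of $\tanh$ evaluated at the foot $x_0$ of the characteristic. Inside the fan, $x/t-w_\delta=x_0/t$, and $|x_0|$ is at most of order $\delta\ln(\ldots)$ wherever $w_\delta$ differs from $w_\pm$ by more than the bound. Optimizing the cutoff in $x_0$ between these two regimes gives $Ct^{-1}\delta(\ln(1+t)+|\ln\delta|)$.
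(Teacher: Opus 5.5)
The paper does not prove this lemma itself; it quotes it from \cite{HuangLW,xin1993}. Your plan is exactly the standard argument of those references, and it is correct: reduce to the Burgers solution $w_\delta$, work in the Lagrangian variable $x_0$ with $\p_x w_\delta=w_\delta'(x_0)/(1+w_\delta'(x_0)t)$, and transfer through the smooth monotone map $w\mapsto(\bar\rho,\bar u_1,\bar\theta)$ (in which in fact $\bar u_1=\frac34 w+\mathrm{const}$). For (3), compare with the fan using $x/t-w_\delta=x_0/t$ together with the $\tanh$ tail. The bookkeeping you flag is harmless: since $|w_\delta^{(j+1)}(x_0)|\le C_j\delta^{-j}w_\delta'(x_0)$ and $w_\delta't/(1+w_\delta't)\le 1$, for $k\ge 2$ every term of $\p_x^k w_\delta$ is bounded by $C\delta^{1-k}w_\delta'(1+w_\delta't)^{-k-1}$. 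The $L^\infty$ bound then follows from $w_\delta'/(1+w_\delta't)\le\min\{C\sigma/\delta,1/t\}$, and the $L^1$ bound from $\int_{\R}w_\delta'(1+w_\delta't)^{-2}dx_0\le C\min\{\sigma,\delta/t\}$, which is where the exponential tail of $\tanh$ enters.
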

Note that the scaling transformation $(\tau,y)=(\epsilon^{-a} t,\epsilon^{-a}x)$ is considered through
the proof. The following lemma is directly follows from Lemma \ref{rarefaction4}$_{(2)}$, which has been used in energy estimates.
\begin{lem}\label{rarefaction5}
Let $(\bar{\rho},\bar{u},\bar{\theta})(t,x)$ be the smooth approximate 3-rarefaction wave defined in \eqref{smooth}, then it holds that
$$
\|\p^{k}_y(\bar{\rho},\bar{u},\bar{\theta})(\epsilon^a\tau,\epsilon^a y)\|_{L^p_y}
\leq C\epsilon^{a(k-\frac1p)}\|\p^{k}_x(\bar{\rho},\bar{u},\bar{\theta})(t,x)\|_{L^p_{x}},
\quad k\geq1,
$$
for any $\tau=\frac{t}{\epsilon^a}>0$, $y=\frac{x}{\epsilon^a}$ and $p\in[1,+\infty]$.
\end{lem}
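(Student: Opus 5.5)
This is a pure change of variables, so the plan is to apply the chain rule to the scaled function and then use Lemma \ref{rarefaction4}(2). Write $f(\tau,y):=(\bar{\rho},\bar{u},\bar{\theta})(\epsilon^a\tau,\epsilon^a y)$. Since $x=\epsilon^a y$, each $y$-derivative produces one factor $\epsilon^a$:
$$
\p^k_y f(\tau,y)=\epsilon^{ak}\,(\p^k_x(\bar{\rho},\bar{u},\bar{\theta}))(\epsilon^a\tau,\epsilon^a y).
$$
This holds for every $k\geq1$ because the profile is smooth (the Burgers solution with $\tanh$ data stays smooth for all $t\ge0$ when $w_-<w_+$, and $(\bar\rho,\bar u,\bar\theta)$ are smooth functions of $w_\delta$ via \eqref{smooth}).

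Next I take $L^p_y$ norms. For $1\le p<\infty$, substitute $x=\epsilon^a y$, so $dy=\epsilon^{-a}dx$, which gives
$$
\|\p^k_y f(\tau,\cdot)\|_{L^p_y}=\epsilon^{ak}\Big(\int_{\R}|\p^k_x(\bar{\rho},\bar{u},\bar{\theta})(t,x)|^p\epsilon^{-a}dx\Big)^{1/p}
=\epsilon^{a(k-\frac1p)}\|\p^k_x(\bar{\rho},\bar{u},\bar{\theta})(t,\cdot)\|_{L^p_x},
$$
with $t=\epsilon^a\tau$. For $p=\infty$ a dilation leaves the supremum unchanged, so the factor is just $\epsilon^{ak}$, consistent with $1/p=0$. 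The estimate therefore holds with equality, so any $C\ge1$ works.

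No step is a real obstacle; the only point needing care is the exponent bookkeeping. The chain rule gives $+ak$ and the Jacobian gives $-a/p$, so one must not double count the $\epsilon^a$ from $\tau$: the time variable is only evaluated at $t=\epsilon^a\tau$ and is not differentiated in the statement. If mixed $\p^\alpha$ with $\tau$-derivatives were needed, the same argument gives $\epsilon^{a(|\alpha|-\frac1p)}$. Combined with Lemma \ref{rarefaction4}(2), this yields the bounds used throughout, such as $\|\p_y\bar u_1\|_{L^\infty_y}\le C\epsilon^a(\frac{\delta}{\sigma}+\epsilon^a\tau)^{-1}$.
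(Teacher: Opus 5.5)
Your proof is correct and is the same change-of-variables argument the paper has in mind when it says the lemma ``directly follows'': the chain rule gives $\epsilon^{ak}$ and the substitution $x=\epsilon^a y$ gives $\epsilon^{-a/p}$, so the inequality is in fact an identity with $C=1$. You also correctly observe that the scaling identity itself does not use Lemma~\ref{rarefaction4}(2); that lemma enters only afterwards, when the right-hand side is converted into the decay bounds such as $\|\p_y\bar u_1\|_{L^\infty_y}\le C\epsilon^a(\frac{\delta}{\sigma}+\epsilon^a\tau)^{-1}$ used in the energy estimates.
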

\begin{lem}[Gronwall's inequality \cite{Evans}]\label{Gronwall}
Let $\xi(t),~k(t)$ be two non-negative integrable functions on $[0,T]$
satisfying 
$$
\xi(t)\leq \int_{0}^{t}k(s)\xi(s)ds+C_1,
$$
for a constant $C_1\geq 0$. Then
$$
\xi(t)\leq C_1\exp\(\int^t_0k(s)ds \), \quad 0 \leq t\leq T.
$$
\end{lem}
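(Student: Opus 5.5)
The approach is the standard one for this integral form of Gronwall's inequality: compare $\xi$ with the right-hand side of the assumed inequality, which is absolutely continuous. Set
\[
\Psi(t):=C_1+\int_0^t k(s)\xi(s)\,ds .
\]
One point needs care: $k\xi$ must be integrable. Integrability of $k$ and $\xi$ separately does not guarantee this in general. In the application, \eqref{convergence2}, we have $k(\tau)=\frac{\epsilon^a}{\frac{\delta}{\sigma}+\epsilon^a\tau}$, which is bounded, and $\xi=\mathcal{E}$, which is bounded on $[0,\tau_1]$ by the a priori assumption \eqref{priori1}. So I will read the lemma as including the implicit requirement $k\xi\in L^1(0,T)$. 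Under that requirement, $\Psi$ is absolutely continuous and nondecreasing, $\Psi(0)=C_1$, and $\Psi'(t)=k(t)\xi(t)$ for a.e.\ $t$.

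The key step is a differential inequality for $\Psi$. The hypothesis says $\xi\le\Psi$, and $k\ge0$, so
\[
\Psi'(t)=k(t)\xi(t)\le k(t)\Psi(t)\quad\text{for a.e. } t\in[0,T].
\]
Multiply by the integrating factor $K(t):=\exp\left(-\int_0^t k(s)\,ds\right)$, which is absolutely continuous, positive, and satisfies $K'=-kK$ a.e. The product $\Psi K$ of two absolutely continuous functions on a compact interval is absolutely continuous, and
\[
(\Psi K)'=K(\Psi'-k\Psi)\le 0\quad\text{a.e.}
\]
Hence $\Psi K$ is nonincreasing, so $\Psi(t)K(t)\le\Psi(0)K(0)=C_1$. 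This gives $\Psi(t)\le C_1\exp\left(\int_0^t k\,ds\right)$, and combined with $\xi\le\Psi$ it yields the claimed bound for all $0\le t\le T$.

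The main obstacle is only the regularity bookkeeping. We need absolute continuity of $\Psi$, which is where $k\xi\in L^1$ enters, and we need to justify the product rule for absolutely continuous functions. Both are standard facts. Note that the case $C_1=0$ is covered by the same argument, so no separate treatment or division by $\Psi$ is needed. The integrating-factor formulation was chosen over taking $\log\Psi$ for exactly this reason: it avoids assuming $\Psi>0$.
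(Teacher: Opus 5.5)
Your integrating-factor argument is correct and is the standard proof behind the paper's citation of Evans (the paper gives no proof of its own); working with $\Psi(t)=C_1+\int_0^t k\xi\,ds$ directly, rather than first integrating $\xi$ as in Evans's constant-coefficient version, is exactly what produces the stated bound $C_1\exp(\int_0^t k\,ds)$. Your extra requirement $k\xi\in L^1(0,T)$ is genuinely needed, and it is satisfied in the application \eqref{convergence2}, where both $k$ and $\mathcal{E}$ are bounded. To see that it is needed, take $k=\xi=s^{-1/2}$ on $(0,1]$ with $\xi(0)=0$ and $C_1=0$: the hypothesis holds trivially because $\int_0^t k\xi\,ds=+\infty$, yet the conclusion fails.
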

Now recall the  Burnett functions 
\be\label{Burnett1}
\hat{\A}_j(v)=\frac{|v|^2-5}{2}v_j,\quad \hat{\B}_{ij}(v)=v_{i}v_{j}-\frac13\delta_{ij}|v|^2,
\quad  i,j = 1,2,3.
\ee
Noting that $\hat{\A}_j\(\frac{v-u}{\sqrt{R\theta}}\)M$ and $\hat{\B}_{ij}\(\frac{v-u}{\sqrt{R\theta}}\)M$ are orthogonal to the null space $N_0$ of $L_M$, we can define functions $\A_{j}\(\frac{v-u}{\sqrt{R\theta}}\)$ and $\B_{ij}\(\frac{v-u}{\sqrt{R\theta}}\)$ as
\be\label{Burnett2}
\A_j\(\frac{v-u}{\sqrt{R\theta}}\)=L_M^{-1}\[\hat{\A}_j\(\frac{v-u}{\sqrt{R\theta}}\) M\],\quad
\B_{ij}\(\frac{v-u}{\sqrt{R\theta}}\)=L_M^{-1}\[\hat{\B}_{ij}\(\frac{v-u}{\sqrt{R\theta}}\) M\].
\ee

By using the Burnett functions,  the viscosity coefficient $\kappa_1(\theta)$ and heat conductivity coefficient $\kappa_2(\theta)$ can be represented by
\bma
\kappa_1(\theta)&=-R\theta\int_{\R^3}\hat{\B}_{ij}\(\frac{v-u}{\sqrt{R\theta}}\)\B_{ij}\(\frac{v-u}{\sqrt{R\theta}}\) dv>0,
  \quad i\neq j,\label{viscosity}\\
\kappa_2(\theta)&=-R^2\theta\int_{\R^3}\hat{\A}_{j}\(\frac{v-u}{\sqrt{R\theta}}\)\A_{j}\(\frac{v-u}{\sqrt{R\theta}}\)dv>0.\label{heat}
\ema
Notice that these coefficients are positive smooth functions depending only on $\theta$.

\begin{lem}[\cite{Duan6}]\label{Burnett4}
Suppose that $U(v)$ is any polynomial of $\frac{v-\hat{u}}{\sqrt{R}\hat{\theta}}$ such that $U(v)\hat{M} \in(\ker L_{\hat{M}})^{\bot}$ for any Maxwellian $\hat{M}=M_{[\hat{\rho},\hat{u},\hat{\theta}]}(v)$ where $L_{\hat{M}}$ is defined in \eqref{collision1}. For any $\varepsilon_0 \in(0,1)$ and any multi-index $\beta$, there exists a constant $C_\beta> 0$ such that
\be
|\p_{\beta}L_{\hat{M}}^{-1}(U(v)\hat{M})|\leq C_{\beta}(\hat{\rho},\hat{u},\hat{\theta})\hat{M}^{1-\varepsilon_0}.
\ee
In particular, if the assumptions of \eqref{local} hold, there exists constant $C_{\beta}>0$ such that
\be
\left|\p_{\beta}\A_j\(\frac{v-u}{\sqrt{R\theta}}\) \right| + \left|\p_{\beta}\B_{ij}\(\frac{v-u}{\sqrt{R\theta}}\)\right|
\leq C_{\beta}M^{1-\varepsilon_0},\quad i,j = 1,2,3.
\ee
\end{lem}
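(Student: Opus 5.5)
The approach is to reduce to a fixed normalized Maxwellian, solve the linearized problem in exponentially weighted velocity Sobolev spaces, and conclude with Sobolev embedding in $v$.

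\emph{Step 1: normalization.} Since $U(v)$ is a polynomial in $\frac{v-\hat u}{\sqrt{R\hat\theta}}$, I would change variables $v=\hat u+\sqrt{R\hat\theta}\,\xi$. Under this change, $L_{\hat M}$ becomes (a smooth, positive multiple depending on $(\hat\rho,\hat\theta)$ of) the linearized Landau operator around the standard Maxwellian $\mu$, and $\partial_\beta$ in $v$ becomes $(R\hat\theta)^{-|\beta|/2}\partial_\beta$ in $\xi$. So it suffices to treat $\hat M=\mu$, and all constants will depend continuously on $(\hat\rho,\hat u,\hat\theta)$. Under \eqref{local} these parameters lie in a compact set, which gives the uniform constant $C_\beta$ for $\A_j,\B_{ij}$.

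\emph{Step 2: $L^2$ solvability.} Write $L_\mu^{-1}(U\mu)=\sqrt\mu\,h$, so that $Lh=U\sqrt\mu$ with $h\perp\ker L$, where $L$ is as in \eqref{L12}. The source $U\sqrt\mu$ lies in $(\ker L)^\perp$ and decays like a Gaussian. I would use the coercivity $-\langle Lh,h\rangle\ge \sigma_1|P_1h|_\sigma^2$ together with $|U\sqrt\mu\,\langle v\rangle^{-(\gamma+2)/2}|_2<\infty$ to obtain a unique solution with $|h|_\sigma\le C$. This is a Lax–Milgram argument on $(\ker L)^\perp$ in the $|\cdot|_\sigma$ space.

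\emph{Step 3: weighted regularity.} Take the weight $w=e^{q|v|^2}$ with $0<q<\frac14$ close to $\frac14$. Using the decomposition $L=-A+K$, with $A$ the elliptic diffusion part and $K$ having a Gaussian-localized kernel, I would establish the weighted Gårding inequality
\[
-\langle w^2 Lh,h\rangle\ge c|wh|_\sigma^2-C|\chi_{\{|v|\le R_0\}}h|_2^2 .
\]
This is of the same type as Lemma \ref{L-weight} and is known for Landau with exponential weights of order $<\frac12$ in $f$-scale (Guo, Strain–Guo). Testing $Lh=U\sqrt\mu$ against $w^2h$ and invoking Step 2 to control the local term gives $|wh|_\sigma\le C$. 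Then I apply $\partial_\beta$ and induct on $|\beta|$. The commutators $[\partial_\beta,L]$ involve only lower-order $v$-derivatives with smooth, Gaussian-localized coefficients, while the source stays Gaussian. This yields $|w\partial_\beta h|_\sigma\le C_\beta$ for every $\beta$, and hence $\|w\partial_\beta h\|_{H^2_v}\le C_\beta$.

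\emph{Step 4: pointwise bound.} Sobolev embedding $H^2(\mathbb R^3)\subset L^\infty$ gives $|\partial_\beta h|\le C_\beta e^{-q|v|^2}$. Consequently $|\partial_\beta(\sqrt\mu h)|\le C_\beta e^{-(\frac14+q)|v|^2}\langle v\rangle^{|\beta|}$. Choosing $q$ so that $\frac14+q>\frac{1-\varepsilon_0}{2}$ absorbs the polynomial factor and gives $\le C_\beta\mu^{1-\varepsilon_0}$; rescaling back then gives $\hat M^{1-\varepsilon_0}$.

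The main obstacle is the weighted coercivity in Step 3 for very soft potentials $-3\le\gamma<-2$. Here the radial diffusion degenerates like $\langle v\rangle^{\gamma}$, and the cross term generated by $\nabla_v w^2\sim q v w^2$ must be dominated by the $\langle v\rangle^{(\gamma+2)/2}$ part of $|\cdot|_\sigma$ in \eqref{sigma1}. My first attempt would be to compute the weight commutator explicitly, as in Strain–Guo, exploiting that the coefficient of $|v|^2 h^2$ produced by the weight has a favorable sign when $q<\frac14$ (in the $h$-scale). If that fails, I would fall back on citing the result from \cite{Duan6}.
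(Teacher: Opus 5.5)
The paper gives no proof of this lemma. It is quoted from \cite{Duan6} (the text after Lemma~\ref{Burnett5} points to Lemma~6.1 there) and used as a black box. Your proposal is therefore a self-contained reconstruction. Its route is the standard one and is sound: exact reduction to $\hat M=\mu$, Lax--Milgram on $(\ker L)^\perp$ in the $|\cdot|_\sigma$ topology, weighted estimates with $e^{q|v|^2}$ for $q<\frac14$, and Sobolev embedding with $q>\frac14-\frac{\varepsilon_0}{2}$ at the end.

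Compared with the citation, your argument buys two things. First, $Q$ is translation invariant and satisfies $Q(G_1,G_2)(\hat u+s\xi)=s^{\gamma+3}Q(\tilde G_1,\tilde G_2)(\xi)$ with $s=\sqrt{R\hat\theta}$, so $L_{\hat M}G=\hat\rho s^{\gamma}\big(Q(\tilde G,\mu)+Q(\mu,\tilde G)\big)$. Hence $C_\beta(\hat\rho,\hat u,\hat\theta)$ is an explicit power of $\hat\rho$ and $s$ times one constant computed at $\mu$. For $\A_j,\B_{ij}$ the rescaled problem is literally the same for every $(\rho,u,\theta)$, so uniformity under \eqref{local} needs no continuity or compactness argument.

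Second, the ``main obstacle'' you flag is not one. Write $H=e^{q|v|^2}h$. The cross terms from the weight cancel by symmetry of $\sigma^{ij}$, and the quadratic form of the diffusion part against $w^2h$ equals exactly $\int\sigma^{ij}\partial_iH\partial_jH+(\frac14-4q^2)\sigma^{ij}v_iv_jH^2-(\partial_i\sigma^i)H^2$. Here $\sigma^{ij}v_iv_j=\lambda_1(v)|v|^2\sim\langle v\rangle^{\gamma+2}$ for large $|v|$, while $\partial_i\sigma^i=O(\langle v\rangle^{\gamma})$ is lower order. So weighted coercivity holds for every $q<\frac14$, uniformly in $\gamma\in[-3,-2)$. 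This is Lemma~\ref{L-weight} with $\alpha=0$, in the range $q_2<\frac12$ the paper already uses, so no fallback to \cite{Duan6} is needed.

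Three points in Steps~3--4 need repair as written.

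\emph{Commutators.} $[\partial_\beta,L]$ does not have Gaussian-localized coefficients. Its diffusion part contains $\partial_i(\partial_{\beta_1}\sigma^{ij}\partial_j\partial_{\beta-\beta_1}h)$ with only $|\partial_{\beta_1}\sigma^{ij}|\lesssim\langle v\rangle^{\gamma+2-|\beta_1|}$. After integration by parts this pairs $\nabla\partial_{\beta-\beta_1}h$ against $\nabla(w^2\partial_\beta h)$. Since $|\cdot|_\sigma$ controls the radial gradient only with weight $\langle v\rangle^{\gamma/2}$ (see \eqref{sigma1}), the lower-order factor costs a polynomial weight, so an induction with one fixed $w$ does not close. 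This is why the paper's $\omega(\alpha,\beta)$ decreases with $|\beta|$, and why Lemma~\ref{L-weight} carries larger weights on lower-order terms.

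\emph{The fix.} Use the slack in $q$. For a given $\beta$ choose $\frac14>q_0>q_1>\cdots>q_{|\beta|+2}>\frac14-\frac{\varepsilon_0}{2}$ and prove $|e^{q_k|v|^2}\partial_{\beta'}h|_\sigma\le C$ for $|\beta'|=k$, absorbing each polynomial loss into $e^{(q_{k-1}-q_k)|v|^2}$. The same slack is needed in Step~4: a bound on $\|e^{q|v|^2}\partial_\beta h\|_{H^2_v}$ does not follow from $|w\partial_\beta h|_\sigma$ alone.

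\emph{A priori finiteness.} Testing against $w^2h$ presupposes $|wh|_\sigma<\infty$. Run the estimate with the bounded weight $e^{q|v|^2/(1+\kappa|v|^2)}$, whose conjugated coefficient is still $\ge\frac14-4q^2$. This gives bounds uniform in $\kappa$; then let $\kappa\to0$, with local elliptic regularity of the diffusion part supplying the existence of $\partial_\beta h$.
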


\begin{lem}\label{Burnett5}
Suppose that $U(v)$ is any polynomial of $v$ such that $U(v)\hat{M} \in(\ker \L_{\hat{M}})^{\bot}$ for any Maxwellian $\hat{M}=M_{[\hat{\rho},\hat{u},\hat{\theta}]}(v)$ where $\L_{\hat{M}}$ is defined in  \eqref{collision2}. For any $\varepsilon_0 \in(0,1)$ and any multi-index $\beta$, there exists a constant $C_\beta> 0$ such that
$$
|\p_{\beta}\L_{\hat{M}}^{-1}(U(v)\hat{M})|\leq C_{\beta}\hat{M}^{1-\varepsilon_0}.
$$
\end{lem}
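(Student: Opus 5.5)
The plan is to follow the argument behind Lemma \ref{Burnett4} (cf. \cite{Duan6}), adapted to the one-sided operator $\L_{\hat M}G=Q(G,\hat M)$, whose kernel is spanned by $\hat M$ alone.

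\textbf{Reduction to a normalized Maxwellian.} Substitute $v=\hat u+\sqrt{R\hat\theta}\,w$. Then $\hat M(v)=\hat\rho\,(R\hat\theta)^{-3/2}\mu(w)$ up to a fixed normalization. The Landau kernel \eqref{varphi} is translation invariant and homogeneous of degree $\gamma+2$. Hence $Q(G,\hat M)$ transforms into $c(\hat\rho,\hat\theta)\,Q(\tilde G,\mu)$ in the $w$ variable, and each $v$-derivative produces a factor $(R\hat\theta)^{-1/2}$. The orthogonality condition $U\hat M\perp\ker\L_{\hat M}$ becomes $\int U\mu\,dw=0$. All constants depend continuously on $(\hat\rho,\hat u,\hat\theta)$ in a compact set such as \eqref{local}. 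It therefore suffices to treat $\hat M=\mu$ and to prove $|\p_\beta f|\le C_\beta\mu^{1-\varepsilon_0}$ for $f=\L_\mu^{-1}(U\mu)$.

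\textbf{Solvability and unweighted bound.} Write $f=\sqrt\mu\,h$, so that $\L h=\mu^{-1/2}U\mu=U\sqrt\mu$. I will use the coercivity of $-\L$ on $(\ker\L)^\perp=\{\sqrt\mu\}^\perp$ in the $\sigma$-norm (cf. \cite{Guo1,SG}): $\langle-\L h,h\rangle\ge\sigma_1|h|_\sigma^2$. This gives a unique $h\perp\sqrt\mu$ with $|h|_\sigma\le C|\langle v\rangle^{-(\gamma+2)/2}U\sqrt\mu|_2<\infty$.

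\textbf{Weighted estimates.} Next I pair the equation with $W^2h$, where $W=e^{q|v|^2/2}$ and $q<\tfrac12$ is chosen close to $\tfrac12$. I will use the weighted coercivity inequality of the type in Lemma \ref{L-weight}:
$$
\langle-\L h,W^2h\rangle\ge c|h|^2_{\sigma,W}-C|h|^2_{L^2(|v|\le R)}.
$$
This gives
$$
|h|_{\sigma,W}\le C\bigl(|h|_\sigma+|\langle v\rangle^{b}W U\sqrt\mu|_2\bigr)<\infty,
$$
and the right-hand side is finite because $q<\tfrac12$. For $\p_\beta h$, I differentiate the equation in $v$. The commutator $[\p_\beta,\L]$ consists of terms with fewer $v$-derivatives on $h$, carrying derivatives of $\mu$ and of the kernel. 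By induction on $|\beta|$, I obtain $|\p_\beta h|_{\sigma,W}\le C_\beta$, with a weight that decreases slightly at each step, for example $\omega(0,\beta)$ as in \eqref{weight1}.

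\textbf{Pointwise bound and main obstacle.} Sobolev embedding $H^2(\R^3_v)\subset L^\infty$ applied to $W\p_\beta h$ then gives $|\p_\beta h|\le C_\beta e^{-q|v|^2/2}$. Hence $|\p_\beta f|\le C_\beta\mu^{1/2+q}$, and taking $q=\tfrac12-\varepsilon_0$ yields $\mu^{1-\varepsilon_0}$. I expect the weighted coercivity step to be the main obstacle. For very soft potentials $-3\le\gamma<-2$, the dissipation $|h|_\sigma$ only controls $\langle v\rangle^{(\gamma+2)/2}h$, so each estimate loses polynomial powers of $\langle v\rangle$. Moreover, $\nabla_v W^2$ produces growing terms $q\,v\,W^2$ that have to be absorbed. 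I plan to handle this with the slack in $\varepsilon_0$: every polynomial loss is traded for a slightly smaller Gaussian exponent, which is admissible because the source $U\mu$ decays like a full Gaussian. I will also check that the nonlocal part of $\L$, namely the convolution terms $\varphi*h$, is compact on bounded velocity sets. This is what justifies the lower-order remainder $C|h|^2_{L^2(|v|\le R)}$.
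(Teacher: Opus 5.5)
Your proposal is correct and is essentially the paper's route: the paper gives no argument of its own and simply defers to Lemma 6.1 of \cite{Duan6}, which is the argument you propose to adapt. That argument consists of rescaling to $\mu$, solvability from the coercivity of $-\mathcal{L}$ on $\{\sqrt{\mu}\}^{\perp}$, Gaussian-weighted coercivity, induction on $\beta$, and Sobolev embedding, and your diagnosis of the weight terms is accurate: with $\Psi=e^{q|v|^2/2}h$ they contribute $-q^2\sigma_{ij}v_iv_j\Psi^2$ against the $\frac14\sigma_{ij}v_iv_j\Psi^2$ in $|\Psi|_\sigma^2$, which is precisely why any fixed $q<\frac12$ works. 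The remaining loose ends are routine: justify the pairings with $W^2h$ and $W^2\p_\beta h$ by truncated weights and interior elliptic regularity, and take the final exponent $q=\frac12-\frac{\varepsilon_0}{2}$ rather than $\frac12-\varepsilon_0$, so that the factor $\langle v\rangle^{|\beta|}$ produced by $\p_\beta\sqrt{\mu}$ in $\p_\beta(\sqrt{\mu}h)$ is absorbed into $\mu^{1-\varepsilon_0}$.
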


We refer to Lemma 6.1 in \cite{Duan6} for a proof.



The following lemma is about the velocity decay of the Burnett functions, which is  crucial in our energy estimates, cf. \cite{duan2021,Duan6}.
\begin{lem}\label{Burnett3}
Under the assumptions \eqref{local}, for any $|\beta|\geq 0$ and $m>0$,
\be\label{fast decay}
\int_{\R^3}\frac{\left|\v^m \sqrt{\mu}\p_{\beta}\A_{j}\(\frac{v-u}{\sqrt{R\theta}}\)\right|^2}{M^2}dv
 +\int_{\R^3}\frac{\left|\v^m \sqrt{\mu}\p_{\beta}\B_{ij}\(\frac{v-u}{\sqrt{R\theta}}\)\right|^{2}}{M^2}dv\leq C,
\ee
and
\be
\int_{\R^3}\frac{\left|\v^m \sqrt{\mu}\p_{\beta}\mathcal{L}^{-1}_M(v_iM)\right|^2}{M^2}dv \leq C.
\ee

\end{lem}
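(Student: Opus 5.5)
The statement to prove is Lemma \ref{Burnett3}: weighted $L^2_v$ bounds on $\sqrt{\mu}\,\p_\beta\A_j/M$, $\sqrt{\mu}\,\p_\beta\B_{ij}/M$ and $\sqrt{\mu}\,\p_\beta\mathcal{L}_M^{-1}(v_iM)/M$. The plan is to reduce everything to the pointwise bounds of Lemma \ref{Burnett4} and Lemma \ref{Burnett5}, and then compare Gaussians using the closeness \eqref{local} of $(\rho,u,\theta)$ to $(1,0,\frac32)$.

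\textbf{Step 1 (pointwise bound).} Since $\hat{\A}_j(\frac{v-u}{\sqrt{R\theta}})M$ and $\hat{\B}_{ij}(\frac{v-u}{\sqrt{R\theta}})M$ lie in $(\ker L_M)^\perp$, Lemma \ref{Burnett4} gives, for any fixed $\varepsilon_0\in(0,1)$,
$$|\p_\beta\A_j|+|\p_\beta\B_{ij}|\le C_\beta M^{1-\varepsilon_0},$$
with $C_\beta$ uniform because $(\rho,u,\theta)$ stays in a compact neighborhood of $(1,0,\frac32)$ by \eqref{local}. For the third integral, $v_iM\in(\ker\L_M)^\perp$, since $\ker\L_M$ is spanned by $M$ and $\int v_iM\,dv=\rho u_i$ is controlled. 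Lemma \ref{Burnett5} then gives the same bound $|\p_\beta\mathcal{L}^{-1}_M(v_iM)|\le C_\beta M^{1-\varepsilon_0}$. If needed, one first replaces $v_iM$ by $P_r(v_iM)=(v_i-u_i)M$; the projected part $u_iM$ contributes only a harmless multiple of $M$ in the estimate.

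\textbf{Step 2 (reduce to a Gaussian integral).} Each integrand is then bounded by
$$C\v^{2m}\mu M^{-2\varepsilon_0}=C\v^{2m}(2\pi)^{-3/2}e^{-|v|^2/2}\,\rho^{-2\varepsilon_0}(2\pi R\theta)^{3\varepsilon_0}\exp\!\Big(\varepsilon_0\frac{|v-u|^2}{R\theta}\Big).$$
The exponent is therefore $-\frac{|v|^2}{2}+\varepsilon_0\frac{|v-u|^2}{R\theta}$. With $R\theta$ close to $1$ and $|u|\le\eta_1$ small, choosing for instance $\varepsilon_0=\frac18$ makes the exponent at most $-\frac{|v|^2}{4}+C$. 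The polynomial weight $\v^{2m}$ is then integrable against $e^{-|v|^2/4}$, which yields a bound $C$ depending only on $m$, $\beta$ and $\eta_1$.

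\textbf{Main obstacle.} The only delicate point is uniformity. The constant $C_\beta$ in Lemma \ref{Burnett4} must stay bounded as $(\rho,u,\theta)$ vary over the neighborhood allowed by \eqref{local}, and $\varepsilon_0$ must be fixed independently of $(\tau,y)$. Both follow from the continuous dependence of $C_\beta(\hat\rho,\hat u,\hat\theta)$ on the parameters on a compact set. One also needs $\eta_1$ small enough that $R\theta\ge\frac12$, which keeps the Gaussian comparison valid.
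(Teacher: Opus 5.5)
Your proof is correct and is the standard argument the paper relies on: the lemma is quoted from \cite{duan2021,Duan6}, and the same comparison $|\v^b\omega(0,\beta)\mu^{-\frac12}M^{1-\varepsilon_0}|_2\le C$ is carried out in the proof of Lemma \ref{Gb}. You take the pointwise bound $C_\beta M^{1-\varepsilon_0}$ from Lemmas \ref{Burnett4}--\ref{Burnett5}, then compare Gaussians using the closeness of $(\rho,u,\theta)$ to $(1,0,\frac32)$ from \eqref{local}. The one slip is your first claim that $v_iM\in(\ker\L_M)^\perp$, which fails for $u_i\neq0$ because $\int_{\R^3}v_iM\,dv=\rho u_i$. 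The third quantity must be read as $\L_M^{-1}P_r(v_iM)=\L_M^{-1}\big((v_i-u_i)M\big)$, as in the body of the paper, so your fallback is the right reading, with the kernel part $u_iM$ simply dropped rather than contributing a multiple of $M$, since $\L_M^{-1}$ is not defined on it.
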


For the linear operators  $L$ and $\L$ in \eqref{L12}, we have the following facts \cite{Wang}.
First, $L$ and $\L$ are self-adjoint and non-positive, and  their null spaces are
$$
 \ker L ={\rm span}\{\sqrt{\mu},v\sqrt{\mu},|v|^{2}\sqrt{\mu}\},\quad \ker \L ={\rm span}\{\sqrt{\mu}\}.
$$
Moreover, there are some velocity weighted estimates for $L,~\L$ and $\Gamma$ defined in \eqref{L12}.	
\begin{lem}[\cite{Guo1,Wang}]\label{linear}
Let $\tilde{L}$ be $ L $ or $ \L $ defined in \eqref{L12}. We have
$$
\langle \tilde{L} g,h\rangle=\langle g,\tilde{L} h\rangle,\quad
\langle \tilde{L} g,g\rangle \geq 0,\quad \text{and}\quad \tilde{L} g= 0,
$$
if and only if $g=\tilde{P}g$, where $\widetilde{P}$ is the $L_v^2$ orthogonal projection onto the null space of $\tilde{L}$. Moreover, there exists a $\sigma_1>0$ such that
\be\label{H1}
-\langle\tilde{L}g,g\rangle\ge \sigma_1 |\{I-\tilde{P}\}g |_\sigma^2.
\ee
\end{lem}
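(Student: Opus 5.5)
The plan is to treat $L$ and $\L$ separately but by the same two-stage scheme: first derive an explicit symmetric weak form of $\langle -\tilde L g,h\rangle$, from which self-adjointness, the sign, and the null space follow at once; then prove the coercive lower bound \eqref{H1} by Guo's splitting-plus-compactness argument \cite{Guo1}, using the $\sigma$-norm characterization \eqref{sigma1}. In \eqref{H1}, ``non-positive'' is understood as $\langle -\tilde Lg,g\rangle\ge0$.

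\textbf{Symmetric forms for $L$.} I would write $\sqrt\mu\,\Gamma(\h_1,\h_2)=Q(\sqrt\mu\h_1,\sqrt\mu\h_2)$ in divergence form \eqref{landau}, integrate by parts in $v$, and use the identities $\nabla\mu(v)=-v\mu(v)$ and $\varphi(v-u)(v-u)=0$. This gives, with $G=g/\sqrt\mu$ and $H=h/\sqrt\mu$,
\begin{equation*}
\langle -Lg,h\rangle=\frac12\iint\varphi(v-u)\mu(u)\mu(v)\big[\nabla G(v)-\nabla G(u)\big]\cdot\big[\nabla H(v)-\nabla H(u)\big]\,du\,dv .
\end{equation*}
The integrand is symmetric in $g,h$, and the form is nonnegative because $\varphi\ge0$. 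If $\langle -Lg,g\rangle=0$, then $\nabla G(v)-\nabla G(u)$ is parallel to $v-u$ for a.e.\ $(u,v)$. The classical argument then gives $G\in\mathrm{span}\{1,v,|v|^2\}$, i.e.\ $g=Pg$.

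\textbf{Symmetric form for $\L$.} For $\L$ one of the two slots is frozen at $\mu$. Running the same computation, and again replacing $\nabla\mu(v)/\mu(v)$ by $\nabla\mu(u)/\mu(u)$ via $\varphi(v-u)(v-u)=0$, I expect a Fokker--Planck-type form
\begin{equation*}
\langle -\L g,h\rangle=\int \mu\,\sigma_{ij}\,\p_i G\,\p_j H\,dv,
\end{equation*}
with $\sigma_{ij}=\varphi_{ij}*\mu$, up to possible lower-order symmetric terms. Since $(\sigma_{ij})$ is positive definite, the form vanishes only when $G$ is constant, so $\ker\L=\mathrm{span}\{\sqrt\mu\}$. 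If the paper's ordering convention for $Q$ in \eqref{collision2} instead places $G_2$ in the nonlocal slot, I would first check which assignment makes \eqref{F_2} consistent with $\eqref{VML1z}$, and then use the standard two-species form from \cite{Wang}.

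\textbf{Coercivity.} Following \cite{Guo1}, I would split $-\tilde L=\mathcal A+\mathcal K$. Here $\mathcal A$ is the part whose quadratic form equals $|g|_\sigma^2$ up to a term $\int\chi_{\{|v|\le R\}}|g|^2$, and $\mathcal K$ is compact on $L^2_v$ relative to $|\cdot|_\sigma$. For this I would use the asymptotics of $\sigma_{ij}$ from \cite{SG}: eigenvalues $\sim\v^{\gamma}$ in the direction of $v$ and $\sim\v^{\gamma+2}$ transversally, which is exactly \eqref{sigma1}. Then I argue by contradiction. Suppose $g_k\perp\ker\tilde L$, $|g_k|_\sigma=1$, and $\langle -\tilde Lg_k,g_k\rangle\to0$. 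Weak compactness together with compactness of $\mathcal K$ yields a limit $g$ with $\langle -\tilde L g,g\rangle=0$ and $g\perp\ker\tilde L$, so $g=0$. This contradicts $|g_k|_\sigma\ge c$, which follows from $\langle\mathcal Ag_k,g_k\rangle\ge|g_k|_\sigma^2-C\|\chi g_k\|^2$ and the strong convergence of the localized parts.

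\textbf{Main obstacle.} I expect the hardest step to be this compactness argument in the very soft range $-3\le\gamma<-2$. The weight $\v^{(\gamma+2)/2}$ degenerates at large $|v|$, so one has to check that $\mathcal K$ is still compact relative to the degenerate $\sigma$-norm and that no mass escapes to infinity. I would handle this with Guo's truncation of $\varphi$ near $u=v$ and at large $|v|$, estimating the tails using the Gaussian factor $\mu(u)$.
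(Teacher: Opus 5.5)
The paper gives no proof of this lemma and only cites \cite{Guo1,Wang}. Your plan is the argument from those references: Guo's symmetric weak form, the null-space characterization, then a splitting $-\tilde L=\mathcal A+\mathcal K$ with a contradiction/compactness argument based on \eqref{sigma1}. For $L$ it is correct as written, including your formula for $\langle -Lg,h\rangle$. Your reading of the sign condition as $\langle -\tilde L g,g\rangle\ge 0$ is the intended one.

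The step that fails as stated is the one for $\mathcal{L}$, and the check you propose would not catch it.

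\emph{The literal operator is not coercive.} Take \eqref{landau} literally, so the first argument is evaluated at $u$. Then $\mathcal{L}h=\Gamma(h,\sqrt\mu)$ from \eqref{L12} puts $h$ in the nonlocal slot, and your computation yields
\[
\langle \mathcal{L} g,h\rangle=\iint\mu(u)\mu(v)\,\nabla H(v)\cdot\varphi(v-u)\,\nabla G(u)\,du\,dv .
\]
This is not a Fokker--Planck form. For $\gamma=-3$ one has $\varphi=\nabla_z^2|z|$, whose Fourier symbol is a positive multiple of $\xi\otimes\xi/|\xi|^4$. Hence $\langle-\mathcal{L}g,g\rangle\le 0$, and \eqref{H1} is false for this operator.

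\emph{Why your consistency check misses it.} Comparing \eqref{F_2} with \eqref{VML1z} only confirms this ordering. The reason is that \eqref{VML1z} already has $Q(F_+,F_-)$ in the $F_+$ equation, which under \eqref{landau} makes the other species the target.

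\emph{The fix.} You need to state explicitly that the two arguments in \eqref{landau} must be interchanged. Equivalently, the operator to analyse is $\Gamma(\sqrt\mu,\cdot\,)$, which is what the two-species linearization gives when the target species is the one evaluated at $v$. For that operator your identity is exact, with no lower-order terms:
\[
\langle-\mathcal{L}g,h\rangle=\int\mu\,\sigma_{ij}\,\partial_iG\,\partial_jH\,dv .
\]
Writing $\sqrt\mu\,\partial_iG=\partial_ig+\frac{v_i}{2}g$ gives
\[
\langle-\mathcal{L}g,g\rangle=|g|_\sigma^2-\frac12\int\partial_i(\sigma_{ij}v_j)\,g^2\,dv,\qquad \partial_i(\sigma_{ij}v_j)=O(\langle v\rangle^{\gamma}).
\]
The correction is of lower order than the $\langle v\rangle^{\gamma+2}$ weight in $|\cdot|_\sigma$, so your compactness argument closes for this operator too.
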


\begin{lem}[\cite{Wang,SG}]\label{L-weight}
Let $\tilde{L}$ be $L$ or  $\L$  defined in \eqref{L12} and
$\omega=\omega(\alpha,\beta)$ in \eqref{weight} with sufficiently small $q_1>0$ and $q_2\in(0,\frac12)$.
For any small constant $\lambda>0$, there exists  $C_\lambda>0$ such that
\begin{equation}
-\langle\p^\alpha_\beta \tilde{L}g,\omega^2(\alpha,\beta)\p^\alpha_\beta g\rangle
 \geq |\omega(\alpha,\beta)\p^\alpha_\beta g|_\sigma^2-\lambda\sum_{|\beta_1|=|\beta|}|\omega(\alpha,\beta_1)\p^\alpha_{\beta_1} g|_\sigma^2
	-C_\lambda\sum_{|\beta_1|<|\beta|}|\omega(\alpha,\beta_1)\p^\alpha_{\beta_1} g|_\sigma^2.
\end{equation}
If $|\beta|= 0$, then we have
\begin{equation}
-\langle\p^\alpha\tilde{L}g,\omega^2(\alpha,0)\p^\alpha g\rangle
 \geq C_{\lambda,q}|\omega(\alpha,0)\p^\alpha g|_\sigma^2-C_\lambda|\chi_\lambda(v)\p^\alpha g|_\sigma^2,
\end{equation}
where $\chi_\lambda(v)$ is a general cutoff function, that is,
$\chi_\lambda(v)=1,~\text{for}~|v|\geq\lambda;~\chi_\lambda(v)=0,~ \text{for}~ |v|\leq\lambda.$
\end{lem}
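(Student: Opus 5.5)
The plan is to follow the weighted energy method for the linearized Landau operator of Guo and Strain--Guo \cite{Guo1,SG}, adapted to the weight $\omega(\alpha,\beta)=\v^{2(l-|\alpha|-|\beta|)}e^{q(\tau)\v^2/2}$ as in \cite{Wang}. Space--time derivatives $\p^\alpha$ commute with $\tilde L$, so $\alpha$ only enters through the weight index and we may set $\p^\alpha g\to g$. I will use the standard splitting $\tilde L=-\mathcal A-\mathcal K$. Here
\[
\mathcal A g=-\p_{v_i}(\sigma^{ij}\p_{v_j}g)+\sigma^{ij}\tfrac{v_iv_j}{4}g-\p_{v_i}\sigma^i g ,
\]
with $\sigma^i=\sigma^{ij}v_j/2$, and $\mathcal K$ collects the terms carrying a factor $\sqrt{\mu}$ (convolutions of $\varphi$ and its derivatives against $\sqrt\mu\,g$). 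For $\L$ only $\Gamma(\cdot,\sqrt\mu)$ appears, which has the same structure.

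\textbf{Step 1 (principal part).} Apply $\p_\beta$ to $\mathcal A g$. Leibniz gives $\mathcal A\p_\beta g$ plus commutators in which $\p_{\beta_1}\sigma^{ij}$ ($|\beta_1|\ge1$) hits lower $v$-derivatives of $g$. Then pair $\mathcal A\p_\beta g$ with $\omega^2\p_\beta g$ and integrate by parts once in $v$. The main term is $\int\omega^2[\sigma^{ij}\p_i\p_\beta g\,\p_j\p_\beta g+\sigma^{ij}\frac{v_iv_j}{4}|\p_\beta g|^2]$, which is exactly $|\omega\p_\beta g|_\sigma^2$. The error comes from $\p_{v_j}\omega^2$, which I compute from
\[
\p_{v_j}\omega=\Big(2(l-|\alpha|-|\beta|)\tfrac{v_j}{\v^2}+q(\tau)v_j\Big)\omega .
\]
The polynomial part gives terms of order $\v^{-1}$ relative to the dissipation, and these go into the $\lambda$- and $C_\lambda$-terms. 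The commutator terms with $\p_{\beta_1}\sigma^{ij}$ satisfy $|\p_{\beta_1}\sigma^{ij}|\lesssim\v^{\gamma+2-|\beta_1|}$. By Cauchy--Schwarz they are bounded by $\lambda|\omega\p_{\beta}g|_\sigma^2+C_\lambda\sum_{|\beta_1|<|\beta|}|\omega(\alpha,\beta_1)\p_{\beta_1}g|_\sigma^2$. This uses $\omega(\alpha,\beta)\le\omega(\alpha,\beta_1)\v^{-2}$, which absorbs the extra velocity growth.

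\textbf{Step 2 (exponential factor, the main obstacle).} The $q\,v_j$ part of $\p_{v_j}\omega$ does not gain a power of $\v$. It produces the cross term $-2q\int\omega^2\sigma^{ij}v_i\,\p_\beta g\,\p_j\p_\beta g$. The key structural fact is that $\sigma^{ij}v_i=\lambda_1(v)v_j$ with $\lambda_1\sim\v^{\gamma}$: the radial eigenvalue is the weak one, while the cross term involves only the radial derivative, which the norm controls with weight $\v^{\gamma/2}$ in \eqref{sigma1}. Completing the square in the radial direction gives
\[
\lambda_1\Big[|v\cdot\nabla_v\p_\beta g|^2/|v|^2\cdot|v|^2\ \text{-type term}-2q\,|v|^2\p_\beta g\,\tfrac{v}{|v|}\cdot\nabla_v\p_\beta g+\tfrac14|v|^2|\p_\beta g|^2\Big]
\]
up to bounded factors. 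To make this quantitative I plan to set $h=\omega\p_\beta g$ and rewrite the quadratic form in $h$; a second-order term $q^2\lambda_1|v|^2|h|^2$ then appears. Positivity reduces to $q^2<\tfrac14$, i.e.\ $q\le q_2<\tfrac12$, with a constant depending on $\tfrac14-q_2^2$. This is the delicate point, and it is exactly where $q_2\in(0,\tfrac12)$ and $\gamma\ge-3$ enter. Since $q(\tau)$ is only bounded by $q_2$ and not small, this step cannot be done by perturbation; it must use the exact form of $\sigma^{ij}$. In the very soft case I will also check that the remaining terms $\p_i\sigma^{ij}\sim\v^{\gamma+1}$ still close, as in \cite{SG}.

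\textbf{Step 3 (compact part and the case $\beta=0$).} Every term of $\p_\beta\mathcal K g$ contains $\p_{\beta'}\sqrt\mu$ convolved with $\varphi$-type kernels. Their pairing with $\omega^2\p_\beta g$ is therefore controlled by $\lambda$ times the weighted $\sigma$-norm at large $|v|$, plus $C_\lambda$ times low-velocity, lower-order norms. The Gaussian decay beats $e^{q\v^2}$ because $q<\tfrac12$. This gives the first inequality. For $|\beta|=0$ there are no lower $\beta$-terms, so the whole remainder is localized by a velocity cutoff and collected into $C_\lambda|\chi_\lambda(v)\p^\alpha g|_\sigma^2$. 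The coercive constant $C_{\lambda,q}$ is the one produced by Step 2.
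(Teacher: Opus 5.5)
\textbf{Gap: Step 2 gives the wrong constant when $|\beta|\ge1$.} The paper does not prove this lemma; it quotes it from the cited works. Your route is the standard one used there: split $\tilde L=-\mathcal A-\mathcal K$, integrate the diffusion by parts against $\omega^2\partial^\alpha_\beta g$, and localize the compact part using the Gaussian decay of $\omega\sqrt\mu$. Your Step 2 is also the right mechanism for $\beta=0$. The problem is that you use it for $|\beta|\ge1$ as well. There, completing the square only gives a coercivity constant of about $1-4q_2^2$ in front of $|\omega(\alpha,\beta)\partial^\alpha_\beta g|_\sigma^2$. The first inequality asserts the constant $1$, up to the $\lambda$-term. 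So your argument does not prove the first inequality as stated, and your claim that this step cannot be done perturbatively is false when $|\beta|\ge1$.

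\textbf{The missing observation.} For $|\beta|\ge1$ the exponential factor is a lower-order effect. With $f=\partial^\alpha_\beta g$, use the pointwise identity, valid because $\sigma^{ij}$ is symmetric:
\[
\sigma^{ij}\partial_{v_j}f\,\partial_{v_i}(\omega^2f)=\sigma^{ij}\partial_{v_i}(\omega f)\,\partial_{v_j}(\omega f)-\sigma^{ij}\partial_{v_i}\omega\,\partial_{v_j}\omega\,f^2 .
\]
This also corrects Step 1. Your main term $\int\omega^2[\sigma^{ij}\partial_{v_i}f\partial_{v_j}f+\sigma^{ij}\frac{v_iv_j}{4}f^2]dv$ is $|f|^2_{\sigma,\omega}$, not $|\omega f|^2_\sigma$, and the two differ by exactly these $q$-dependent terms. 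Since $\nabla_v\omega$ is radial with $|\nabla_v\omega|\le C\langle v\rangle\omega$, we have $\sigma^{ij}\partial_{v_i}\omega\,\partial_{v_j}\omega=\lambda_1|\nabla_v\omega|^2\le C\langle v\rangle^{\gamma+2}\omega^2$. Choosing $k$ with $\beta_k\ge1$, this gives
\[
\int\sigma^{ij}\partial_{v_i}\omega\,\partial_{v_j}\omega\,f^2dv\le C\int\langle v\rangle^{\gamma-2}\omega^2(\alpha,\beta-e_k)|\partial_{v_k}\partial^\alpha_{\beta-e_k}g|^2dv\le C|\omega(\alpha,\beta-e_k)\partial^\alpha_{\beta-e_k}g|^2_\sigma .
\]
Here we used $\omega(\alpha,\beta-e_k)=\langle v\rangle^2\omega(\alpha,\beta)$ and \eqref{sigma1}, which says $|h|_\sigma$ controls both $|\langle v\rangle^{\gamma/2}\nabla_vh|_2$ and $|\langle v\rangle^{(\gamma+2)/2}h|_2$.

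Hence for $|\beta|\ge1$:
\begin{itemize}
\item[(i)] The whole weight contribution is absorbed into $C_\lambda\sum_{|\beta_1|<|\beta|}$, and the constant $1$ survives.
\item[(ii)] The condition $q_2<\frac12$ is used only to make $\omega\sqrt\mu$ decay in the compact part.
\end{itemize}
The sharp condition $q<\frac12$ from completing the square is needed only for $\beta=0$. That is why the second inequality carries the constant $C_{\lambda,q}$.

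\textbf{A side remark on $\mathcal{L}$.} With \eqref{landau} as written, $Q(G_1,G_2)$ diffuses $G_2$. So $\Gamma(h,\sqrt\mu)$ contains no diffusion acting on $h$. Your assertion that $\mathcal{L}$ ``has the same structure'' holds only for the intended operator $\Gamma(\sqrt\mu,h)$.
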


\begin{lem}[\cite{SG,SZ,Wang}]\label{Gamma}
Let $\Gamma(g_1,g_2)$ be defined in \eqref{L12} and $\omega=\omega(\alpha,\beta)$ in \eqref{weight}
with  sufficiently small $q_1>0$ and $q_2\in(0,\frac12)$.
For any small constant $\varepsilon_0>0$, it holds that
\begin{equation}
\langle\p^\alpha \Gamma(g_1,g_2), g_3\rangle
\leq C\sum_{\alpha_1\leq\alpha}|\mu^{\varepsilon_0}\p^{\alpha_1}g_1|_2| \p^{\alpha-\alpha_1}g_2|_\sigma | g_3|_\sigma,
\end{equation}
and
\begin{equation}
\langle\p^\alpha_\beta \Gamma(g_1,g_2), \omega^2(\alpha,\beta)g_3\rangle
\leq C\sum_{\alpha_1\leq\alpha}\sum_{\bar{\beta}\leq\beta_1\leq\beta}|\mu^{\varepsilon_0}\p^{\alpha_1}_{\bar{\beta}}g_1|_2
  |\omega(\alpha,\beta) \p^{\alpha-\alpha_1}_{\beta-\beta_1}g_2|_\sigma |\omega(\alpha,\beta)g_3|_{\sigma}.
\end{equation}
\end{lem}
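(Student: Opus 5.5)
The plan is to reduce both inequalities to the classical trilinear estimate for the Landau $\Gamma$ in the $\sigma$-norm, as in \cite{SG,SZ,Wang}. The $x$-independence of $\mu$ makes the $\p^\alpha$ part pure Leibniz, and the weight $\omega(\alpha,\beta)$ is handled by commuting it through the divergence structure.

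\textbf{Divergence form.} Using \eqref{landau} and $\nabla_v\sqrt\mu=-\frac v2\sqrt\mu$, I write
\begin{align*}
\Gamma(g_1,g_2)
={}&\p_{v_i}\big[\{\varphi^{ij}*(\sqrt\mu g_1)\}\p_{v_j}g_2\big]
-\{\varphi^{ij}*(\tfrac{v_i}{2}\sqrt\mu g_1)\}\p_{v_j}g_2\\
&-\p_{v_i}\big[\{\varphi^{ij}*(\sqrt\mu\,\p_{v_j}g_1)\}g_2\big]
+\{\varphi^{ij}*(\tfrac{v_i}{2}\sqrt\mu\,\p_{v_j}g_1)\}g_2 .
\end{align*}
Since $\Gamma$ is bilinear and $\mu$ does not depend on $(\tau,y)$, $\p^\alpha\Gamma(g_1,g_2)=\sum_{\alpha_1\le\alpha}C^{\alpha_1}_\alpha\Gamma(\p^{\alpha_1}g_1,\p^{\alpha-\alpha_1}g_2)$. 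This reduces the first inequality to the case $\alpha=0$.

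\textbf{Velocity derivatives.} For $\p_\beta$ I distribute derivatives over the convolution, $\p_\beta(\varphi*(\sqrt\mu h))=\varphi*\p_\beta(\sqrt\mu h)$. Every derivative landing on $\sqrt\mu$ produces a polynomial times $\sqrt\mu$, which is absorbed into $\mu^{\varepsilon_0}$. This produces the sum over $\bar\beta\le\beta_1\le\beta$: $\beta_1$ of the derivatives hit the $g_1$-factor and $\bar\beta\le\beta_1$ of them actually fall on $g_1$.

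\textbf{Pairing and kernel bounds.} I pair each term with $\omega^2 g_3$ and integrate by parts once in $v$ in the divergence terms. When the derivative falls on $\omega^2$, I use
\[
|\nabla_v\omega^2(\alpha,\beta)|\le C\v\,\omega^2(\alpha,\beta),
\]
with $q(\tau)\le q_2<\frac12$ uniformly. I then split every vector field into its components parallel and orthogonal to $v/|v|$, as in \eqref{sigma1}. The key kernel bounds, valid for $-3\le\gamma<-2$ and any small $\varepsilon_0$, are
\[
|\varphi^{ij}*(\mu^{\frac12-\varepsilon_0}h)|\le C\v^{\gamma+2}|h|_2,\qquad
|\varphi^{ij}v_iv_j*(\cdots)|+|\varphi^{ij}v_i*(\cdots)|\le C\v^{\gamma}|h|_2 .
\]
The second bound uses $\varphi^{ij}(v-u)v_i=\varphi^{ij}(v-u)u_i$, so the $v$-direction sees one fewer power of $\v$. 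This is exactly the gain needed to absorb the extra $\v$ coming from $\nabla_v\omega^2$ and from $\frac{v_i}{2}$. The local singularity $|v-u|^{\gamma+2}$, with $\gamma+2\ge -1$, is square-integrable near the diagonal in $\R^3$, so Cauchy–Schwarz in $u$ against $\mu^{\varepsilon_0}h$ gives $|\mu^{\varepsilon_0}h|_2$. With these bounds, each term is controlled by $|\mu^{\varepsilon_0}\p g_1|_2\,|\omega\,\p g_2|_\sigma\,|\omega g_3|_\sigma$ via \eqref{sigma1}.

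\textbf{Main obstacle.} The hard part is the terms carrying $\p_{v_j}g_1$ inside the convolution. Moving that derivative onto the kernel gives $\p_j\varphi^{ij}=-2|v|^\gamma v_i$. At $\gamma=-3$ this behaves like $|v-u|^{-2}$, which is not square-integrable near $u=v$. I would first try to avoid moving the derivative onto the kernel. Instead, I would keep $\p_j g_1$ inside, so the corresponding factor on the right is $|\mu^{\varepsilon_0}\p_{\bar\beta}g_1|_2$ with one more derivative. This is allowed because the sum over $\bar\beta\le\beta_1$ in the weighted estimate permits it. For the first, unweighted inequality the same reduction applies with $\beta=0$.

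If that does not close, I would fall back on the Coulomb treatment of Guo. There one splits the $u$-integral into $|v-u|\le1$ and $|v-u|>1$, and on the singular part uses the weak-$L^{3/2}$ bound for $|v|^{-2}$ with Hardy–Littlewood–Sobolev, transferring half a derivative via the $\sigma$-norm of $g_2$ and $g_3$. Finally, I would check that no step loses the $\omega$ weight beyond the $\lambda$-absorbable terms allowed in Lemma~\ref{L-weight}.
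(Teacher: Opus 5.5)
The paper does not prove this lemma; it quotes it from \cite{SG,SZ,Wang}, so your proposal has to stand on its own. Most of your outline is the standard route: Leibniz in $\alpha$, and in $\beta$ through the convolution; the divergence form; one integration by parts; anisotropic bounds for $\varphi^{ij}*(\mu^{\frac12-\varepsilon_0}h)$; and the radial direction of $\nabla_v\omega^2$. However, your primary treatment of the main obstacle fails.

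\textbf{The gap.} The index set $\bar\beta\le\beta_1\le\beta$ is exactly what Leibniz produces when $\p_{\beta_1}$ is distributed over $\sqrt\mu\,g_1$. It leaves no room for an extra velocity derivative. Whenever $\bar\beta=\beta_1$, keeping $\p_jg_1$ inside the convolution produces $|\mu^{\varepsilon_0}\p_{v_j}\p^{\alpha_1}_{\beta_1}g_1|_2$. This is a derivative of order $|\beta_1|+1$, which is neither on the right-hand side nor controlled by it. For the first inequality ($\beta=0$) the right-hand side contains no $v$-derivative of $g_1$ at all, so your remark that the same reduction applies there is false. That route only gives a weaker bound in which $g_1$ carries one extra $v$-derivative. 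Note also that the obstruction is not confined to $\gamma=-3$: the kernel $\p_j\varphi^{ij}(w)=-2|w|^{\gamma}w_i$ fails to be locally square integrable for every $\gamma\le-\frac52$.

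\textbf{The fix.} What you list as a fallback is the argument that is actually needed, and it closes. Write $\p_{\beta_1}(\sqrt\mu\,\p_j g_1)=\p_j\p_{\beta_1}(\sqrt\mu\,g_1)-\p_{\beta_1}(g_1\p_j\sqrt\mu)$ and move $\p_j$ onto the kernel. No delta mass appears, since $\p_j\varphi^{ij}$ is homogeneous of degree $\gamma+1\geq-2$. Now only $\p_{\bar\beta}g_1$ with $\bar\beta\le\beta_1$ occurs. Then cut the resulting kernel at $|w|=1$.
\begin{itemize}
\item \emph{Far part.} It is bounded pointwise by $C\v^{\gamma+1}|\mu^{\varepsilon_0}\p_{\bar\beta}g_1|_2$. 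This suffices against the full gradient of $\omega^2 g_3$, because $\v^{\gamma+1}=\v^{\frac{\gamma}{2}}\v^{\frac{\gamma+2}{2}}$ and $|\nabla_v\omega|\le C\v\,\omega$.
\item \emph{Near part.} On $\{|w|\le1\}$ one has $|w|^{\gamma+1}\le|w|^{-2}$. Also $\mu^{\frac12-\varepsilon_0}(u)\le C\mu^{c}(v)$ for $|u-v|\le1$ and some $c>0$, so this part decays like a Gaussian in $v$. Hardy--Littlewood--Sobolev then bounds $\v^{m}$ times it in $L^6_v$ by $|\mu^{\varepsilon_0}\p_{\bar\beta}g_1|_2$, for any $m$.
\end{itemize}
Apply H\"older with $\v^{-m}\omega\,\p^{\alpha-\alpha_1}_{\beta-\beta_1}g_2\in L^3_v$ (via $H^1\subset L^3$) and $\v^{-m}\omega\nabla_v g_3\in L^2_v$. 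Both are controlled by the weighted $\sigma$-norms once $m\ge2$, which gives exactly the stated form. The lemma has no $\lambda$-absorbable remainder, unlike Lemma~\ref{L-weight}, so this step must be the main argument rather than a contingency.

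\textbf{A smaller correction.} Your second kernel bound is misstated. The quantities $|\{\varphi^{ij}*(\mu^{\frac12-\varepsilon_0}h)\}v_i|$ and $|\{\varphi^{ij}*(\mu^{\frac12-\varepsilon_0}h)\}v_iv_j|$ are only $O(\v^{\gamma+2})|h|_2$. The gains to $\v^{\gamma+1}$ and $\v^{\gamma}$ hold for contractions with the unit vector $v/|v|$ (one and two contractions, respectively).
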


\subsection{Estimates on  $\bar{G}_1$ and $M$}
\begin{lem}\label{Gb}
Recalling $\bar{G}_1$ defined in \eqref{barG1-1} and $\langle v\rangle=\sqrt{1+|v|^2}$.
Under the same assumptions as in  Proposition \ref{priori3}, for any $b \geq 0$ and $|\beta| \geq 0$, we have
\be\label{Gb1}
\left\|\langle v\rangle^b \omega(0,\beta)\p_\beta\(\frac{\bar{G}_1}{\sqrt{\mu}}\)\right\|^2
 +\left\|\langle v\rangle^b \omega(0,\beta)\p_\beta\(\frac{\bar{G}_1}{\sqrt{\mu}}\)\right\|_{\sigma}^2
\leq C\sigma\frac{\epsilon^{2-a}}{(\frac{\delta}{\sigma}+\epsilon^a\tau)} .
\ee
For $|\alpha|=1$, we have
\be\label{Gb2}
\left\|\langle v\rangle^b\omega(\alpha,\beta) \p_\beta^\alpha\(\frac{\bar{G}_1}{\sqrt{\mu}}\)\right\|^2
 +\left\|\langle v\rangle^b\omega(\alpha,\beta) \p_\beta^\alpha\(\frac{\bar{G}_1}{\sqrt{\mu}}\)\right\|_{\sigma}^2
 \leq C\frac{\epsilon^{2+a}}{\delta(\frac{\delta}{\sigma}+\epsilon^a\tau)^2}+C\sigma^2\frac{\epsilon^{1+a}}{\delta^2}\mathcal{D}(\tau).
\ee
And for $|\alpha|=2$,  by choosing $\frac{\epsilon^a}{\delta}$ small enough, it holds that
\be\label{Gb3}
 \left\|\langle v\rangle^b\omega(\alpha,\beta)\p_\beta^\alpha\(\frac{\bar{G}_1}{\sqrt{\mu}}\)\right\|^2
 +\left\|\langle v\rangle^b\omega(\alpha,\beta)\p_\beta^\alpha\(\frac{\bar{G}_1}{\sqrt{\mu}}\)\right\|_{\sigma}^2
\leq C\frac{\epsilon^{2+3a}}{\delta^3(\frac{\delta}{\sigma}+\epsilon^a\tau)^2}
  +C\sigma^2\frac{\epsilon^{1+a}}{\delta^2}\mathcal{D}(\tau).
\ee
\end{lem}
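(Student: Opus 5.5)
The statement to prove is Lemma \ref{Gb}, the bounds on $\bar{G}_1$. I would start by writing $\bar{G}_1$ in Burnett form. Since $P_1[v_1(\frac{|v-u|^2}{2\theta}\dy\bar\theta+v\cdot\dy\bar u)M]$ is a combination of $\hat{\A}_1(\frac{v-u}{\sqrt{R\theta}})M$ and $\hat{\B}_{1j}(\frac{v-u}{\sqrt{R\theta}})M$ with coefficients smooth in $(\rho,u,\theta)$, definition \eqref{barG1-1} becomes
$$\bar{G}_1=\epsilon^{1-a}\Big[c_1(\theta)\,\dy\bar\theta\,\A_1\Big(\tfrac{v-u}{\sqrt{R\theta}}\Big)+c_2(\theta)\sum_j\dy\bar u_j\,\B_{1j}\Big(\tfrac{v-u}{\sqrt{R\theta}}\Big)\Big].$$
Everything then reduces to two ingredients:
\begin{itemize}
\item velocity bounds on the Burnett functions and their derivatives, divided by $\sqrt\mu$ and weighted by $\v^b\omega(\alpha,\beta)$;
\item $L^2_y$ bounds on the macroscopic coefficients.
\end{itemize}

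\textbf{Velocity factor.} By Lemma \ref{Burnett4}, $|\p_\beta\A_j|+|\p_\beta\B_{ij}|\le C_\beta M^{1-\varepsilon_0}$. Using \eqref{local} and $q_2<\frac12$, I would check that $\v^b\omega(\alpha,\beta)\mu^{-1/2}M^{1-\varepsilon_0}\in L^2_v$, uniformly in $(\tau,y)$. The condition needed is $q_2<\frac1{R\theta}-\frac12-\frac{\varepsilon_0}{R\theta}$, as noted in the Remark after Theorem \ref{mt}. The $\sigma$-norm is handled the same way via \eqref{sigma1}, at the cost of one more $\v$ power and one more $v$-derivative, both of which are harmless. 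Derivatives $\p^\alpha$ in $(\tau,y)$ falling on $\A_j(\frac{v-u}{\sqrt{R\theta}})$ produce factors $\p^{\alpha}(u,\theta)$ times polynomially weighted Burnett derivatives, which obey the same pointwise bound.

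\textbf{Case $|\alpha|=0$.} The bound becomes $\epsilon^{2-2a}\|\dy(\bar u,\bar\theta)\|^2_{L^2_y}$. Lemmas \ref{rarefaction5} and \ref{rarefaction4} with $p=2$ give $\epsilon^{2-2a}\cdot\epsilon^{a}\sigma(\frac\delta\sigma+\epsilon^a\tau)^{-1}$, which is \eqref{Gb1}.

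\textbf{Case $|\alpha|=1$.} By Leibniz, the terms are of two types.
\begin{itemize}
\item $\epsilon^{1-a}\p^\alpha\dy(\bar u,\bar\theta)$: the $|\alpha|=2$ rarefaction bound gives $\epsilon^{2-2a}\epsilon^{3a}\delta^{-1}(\frac\delta\sigma+\epsilon^a\tau)^{-2}$, i.e.\ $\epsilon^{2+a}\delta^{-1}(\cdot)^{-2}$. Here the $(\frac\delta\sigma+t)^{-1}$ of Lemma \ref{rarefaction4} in $L^2$ is squared. I would verify that $\|\p^2\bar u\|_{L^2_x}^2\le \|\p^2\bar u\|_{L^\infty}\|\p^2\bar u\|_{L^1}$ yields the squared denominator.
\item $\epsilon^{1-a}\dy(\bar u,\bar\theta)\,\p^\alpha(\rho,u,\theta)$: split $\p^\alpha(\rho,u,\theta)=\p^\alpha(\bar\rho,\bar u,\bar\theta)+\p^\alpha(\phi,\psi,\zeta)$. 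The background part gives $\epsilon^{2-2a}\|\p\bar u\|_{L^\infty}^2\|\p\bar u\|_{L^2}^2$, which is absorbed into the first type. For the perturbation part, bound $\epsilon^{2-2a}\|\dy\bar u\|^2_{L^\infty_y}\|\p^\alpha(\phi,\psi,\zeta)\|^2$ with $\|\dy\bar u\|_{L^\infty_y}\le C\epsilon^a\sigma/\delta$. This gives $\sigma^2\epsilon^{2}\delta^{-2}\|\p^\alpha(\phi,\psi,\zeta)\|^2\le \sigma^2\frac{\epsilon^{1+a}}{\delta^2}\epsilon^{1-a}\|\p^\alpha(\cdot)\|^2$, and the last factor is part of $\mathcal{D}$.
\end{itemize}
This yields \eqref{Gb2}.

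\textbf{Case $|\alpha|=2$.} This is the main obstacle because of the bookkeeping of all product terms. The new pieces are:
\begin{itemize}
\item $\p^3$ of the background, giving $\epsilon^{2-2a}\epsilon^{5a}\delta^{-3}(\cdot)^{-2}$, i.e.\ $\epsilon^{2+3a}\delta^{-3}$;
\item $\dy\bar u\,\p^2(\phi,\psi,\zeta)$, which is controlled by $\mathcal D$ as above;
\item cubic products such as $\dy\bar u\,(\p(\phi,\psi,\zeta))^2$ and $\p^2\bar u\,\p(\phi,\psi,\zeta)$.
\end{itemize}
For the cubic and mixed products I would put the background factor in $L^\infty$ and use Sobolev $\|\p f\|_{L^\infty}^2\le\|\p f\|\|\p^2 f\|$. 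This controls them by $\frac{\epsilon^{2a}}{\delta^2}\sigma^2\mathcal D$, with the a priori smallness \eqref{priori1} absorbing the extra factor, and the smallness of $\epsilon^a/\delta$ absorbing the lower-order background contributions into the leading term.
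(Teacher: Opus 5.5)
Your proposal is correct and follows essentially the same route as the paper. You write $\bar G_1$ through the Burnett functions $\A_1,\B_{11}$, control the velocity factor by Lemma~\ref{Burnett4} together with $q_2<\frac1{R\theta}-\frac12-\frac{\varepsilon_0}{R\theta}$, and estimate the $(\tau,y)$-coefficients by Lemmas~\ref{rarefaction4}--\ref{rarefaction5} after splitting $\p^\alpha(\rho,u,\theta)=\p^\alpha(\bar\rho,\bar u,\bar\theta)+\p^\alpha(\phi,\psi,\zeta)$. You are even slightly more complete than the paper, which does not write out the terms $\dy\bar u\,(\p(\phi,\psi,\zeta))^2$ in \eqref{Gbsd}.

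There is one bookkeeping slip, in the power you record at the end of the case $|\alpha|=2$. Keep the prefactor $\epsilon^{2-2a}$ and the weight $\epsilon^{1-a}$ carried by $\|\p^\alpha(\phi,\psi,\zeta)\|^2_{L^2_y}$ in $\mathcal D$. Then the cubic terms are bounded by $C\sigma^2\frac{\epsilon^{1+a}}{\delta^2}\mathcal E(\tau)\mathcal D(\tau)$, and the terms $\p^2\bar u\,\p(\phi,\psi,\zeta)$ by $C\frac{\epsilon^{2a}}{\delta^2}\,\sigma^2\frac{\epsilon^{1+a}}{\delta^2}\mathcal D(\tau)$. The bound $\frac{\epsilon^{2a}}{\delta^2}\sigma^2\mathcal D$ that you state would be too weak for \eqref{Gb3}, because $\epsilon^{2a}\gg\epsilon^{1+a}$ when $a<1$.

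The mixed terms $\p^2\bar u\,\p(\phi,\psi,\zeta)$ are where smallness of $\frac{\epsilon^a}{\delta}$ is actually needed. The pure background terms do not need it: they are absorbed into the leading term simply because $\frac{\delta}{\sigma}+\epsilon^a\tau\geq\frac{\delta}{\sigma}$.
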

\begin{proof}
We first use the Burnett functions to rewrite $\bar{G}_1$ defined in \eqref{barG1-1} as follows
$$
\bar{G}_1=\epsilon^{1-a} \frac{\sqrt{R}}{\sqrt{\theta}} \dy \bar{\theta} \A_1\(\frac{v-u}{\sqrt{R \theta}}\)
 +\epsilon^{1-a} \dy\bar{u}_1 \B_{11}\(\frac{v-u}{\sqrt{R \theta}}\) .
$$
Next, for the velocity derivatives,  direct calculation yields, for $k=1,2,3$,
\be\label{Gbv}
\frac{\p \bar{G}_1}{\p v_k}
=\epsilon^{1-a}\frac{\sqrt{R}}{\sqrt{\theta}} \dy \bar{\theta}\p_{v_k} \A_1\(\frac{v-u}{\sqrt{R \theta}}\)\frac{1}{\sqrt{R \theta}}
 +\epsilon^{1-a} \dy \bar{u}_1 \p_{v_k} \B_{11}\(\frac{v-u}{\sqrt{R \theta}}\) \frac{1}{\sqrt{R \theta}}.
\ee
Similarly, we have
\bma\label{Gby}
\frac{\p \bar{G}_1}{\p y}
=&\epsilon^{1-a}\Bigg\{\frac{\sqrt{R}}{\sqrt{\theta}}\p_y^2 \bar{\theta} \A_1\(\frac{v-u}{\sqrt{R \theta}}\)
 -\frac{\sqrt{R}}{2 \sqrt{\theta^3}}\p_y\bar{\theta}\p_y \theta \A_1\(\frac{v-u}{\sqrt{R \theta}}\)\nnm\\
&-\frac{\sqrt{R}}{\sqrt{\theta}}\p_y \bar{\theta}\nabla_v \A_1\(\frac{v-u}{\sqrt{R \theta}}\)\cdot\(\frac{1}{\sqrt{R \theta}}\p_y u+\frac{v-u}{2\sqrt{R \theta^3}}\p_y\theta\)
\nnm\\
&+\p_y^2\bar{u}_1 \B_{11}\(\frac{v-u}{\sqrt{R \theta}}\) -\p_y \bar{u}_1\nabla_v \B_{11}\(\frac{v-u}{\sqrt{R \theta}}\)  \cdot\(\frac{1}{\sqrt{R\theta}}\p_y u
+\frac{v-u}{2 \sqrt{R \theta^3}}\p_y\theta\)\Bigg\},
\ema
and $\p_\tau \bar{G}_1$ has the similar expression as \eqref{Gby}.

Then, combining the above expansion and using Lemma \ref{Burnett4},
for any $|\beta|\geq 0$, we can obtain
\bma\label{barG7-1}
\left|\v^b \omega(0,\beta)\p_{\beta}\(\frac{\bar{G}_1}{\sqrt{\mu}}\)\right|_{2}
&\leq C\epsilon^{1-a}|\dy(\bar{u}_1,\bar{\theta})|
  \left|\v^b \omega(0,\beta)\p_\beta \(\mu^{-\frac12} \A_1\(\frac{v-u}{\sqrt{R \theta}}\)
                                             +\mu^{-\frac12}\B_{11}\(\frac{v-u}{\sqrt{R \theta}}\)\)\right|_2\nnm\\
&\leq C\epsilon^{1-a}|\dy(\bar{u}_1,\bar{\theta})||\v^b \omega(0,\beta)\mu^{-\frac12} M^{1-\varepsilon_0}|_2\nnm\\
&\leq C\epsilon^{1-a}|\dy(\bar{u}_1,\bar{\theta})|.
\ema
Here we have used the fact that $|\v^b \omega(0,\beta)\mu^{-\frac12} M^{1-\varepsilon_0}|_2\leq C$
for any $b\geq 0$ and $\varepsilon_0>0$ small enough, that is from the definition of $\omega$ in \eqref{weight}
\be
\v^{b+2( l-|\beta|)}e^{\frac{q_2}{(1+\epsilon^a\tau)^{q_1}}\frac{1+|v|^2}2}e^{\frac{|v|^2}4}e^{-(1-\varepsilon_0)\frac{|v-u|^2}{2R\theta}}
\leq C,\nnm
\ee
then, we need
\be
\frac{1-\varepsilon_0}{2R\theta}-\frac14-\frac{q_2}2>0\Longrightarrow  q_2<\frac1{R\theta}-\frac12-\frac{\varepsilon_0}{R\theta}<\frac12.\nnm
\ee

Similarly, we have
\be\label{barG7-3}
\left|\langle v\rangle^b \omega(0,\beta)\p_{\beta}\(\frac{\bar{G}_1}{\sqrt{\mu}}\)\right|_{\sigma}
\leq C\epsilon^{1-a}|\dy(\bar{u}_1,\bar{\theta})|,
\ee
and for $|\alpha|\ge 1$,
\bma\label{barG7-2}
&\left|\langle v\rangle^b \omega(\alpha,\beta)\p_{\beta}^{\alpha}\(\frac{\bar{G}_1}{\sqrt{\mu}}\)\right|_{2}
 +\left|\langle v\rangle^b \omega(\alpha,\beta)\p_{\beta}^{\alpha}\(\frac{\bar{G}_1}{\sqrt{\mu}}\)\right|_{\sigma}\nnm\\
\leq&\, C\epsilon^{1-a}\(|\p^{\alpha}\dy(\bar{u}_1,\bar{\theta})|+\cdots
 +|\dy(\bar{u}_1,\bar{\theta})||\p^{\alpha}(\rho,u,\theta)|\).
\ema
Then by Lemma \ref{rarefaction4}, we have  for $|\alpha|=0$,
\be
 \left\|\langle v\rangle^b\omega(0,\beta) \p_\beta\(\frac{\bar{G}_1}{\sqrt{\mu}}\)\right\|^2
 +\left\|\langle v\rangle^b\omega(0,\beta) \p_\beta\(\frac{\bar{G}_1}{\sqrt{\mu}}\)\right\|^2_{\sigma}
\leq C\epsilon^{2-2a}\|\dy(\bar{u}_1,\bar{\theta})\|^2_{L^2_y}
\leq C\sigma\frac{\epsilon^{2-a}}{\frac{\delta}{\sigma}+\epsilon^a\tau},\nnm
\ee
 for $|\alpha| =1$,
\bma
&\left\|\langle v\rangle^b\omega(\alpha,\beta) \p_\beta^\alpha\(\frac{\bar{G}_1}{\sqrt{\mu}}\)\right\|^2
 +\left\|\langle v\rangle^b\omega(\alpha,\beta) \p_\beta^\alpha\(\frac{\bar{G}_1}{\sqrt{\mu}}\)\right\|^2_{\sigma}\nnm\\
\leq&C\epsilon^{2-2a}\(\|\p^{\alpha}\dy(\bar{u}_1,\bar{\theta})\|^2_{L^2_y}
   +\|\p^{\alpha}(\bar{\rho},\bar{u}_1,\bar{\theta})\|^2_{L^4_y}
    +\|\dy(\bar{u}_1,\bar{\theta})\|^2_{L^\infty_y}\|\p^{\alpha}(\phi,\psi,\zeta)\|^2_{L^2_y}\)\nnm\\
\leq &C\frac{\epsilon^{2+a}}{\delta(\frac{\delta}{\sigma}+\epsilon^a\tau)^2}
 +C\sigma^2\frac{\epsilon^{1+a}}{\delta^2}\mathcal{D}(\tau),
\ema
and for $|\alpha|=2$, 
\bma\label{Gbsd}
&\left\|\langle v\rangle^b\omega(\alpha,\beta) \p_\beta^\alpha\(\frac{\bar{G}_1}{\sqrt{\mu}}\)\right\|^2
 +\left\|\langle v\rangle^b\omega(\alpha,\beta) \p_\beta^\alpha\(\frac{\bar{G}_1}{\sqrt{\mu}}\)\right\|_{\sigma}^2\nnm\\
\leq&C\epsilon^{2-2a}\int_{\R}|\p^\alpha\dy(\bar{u}_1,\bar{\theta})|^2
  +\sum_{|\alpha_1|=1}|\p^{\alpha_1}\dy(\bar{u}_1,\bar{\theta})|^2|\p^{\alpha-\alpha_1}(\rho,u, \theta)|^2
  +|\dy(\bar{u}_1,\bar{\theta})|^2|\p^\alpha(\rho,u,\theta)|^2d y\nnm\\
\leq&C\frac{\epsilon^{2+3a}}{\delta^3(\frac{\delta}{\sigma}+\epsilon^a\tau)^2}
 +C\sigma^2\frac{\epsilon^{1+a}}{\delta^2}\mathcal{D}(\tau).
\ema
This completes the proof of lemma.
\end{proof}
\begin{lem}\label{M}
Under the same assumptions as in Proposition \ref{priori3}, for any $|\beta|$ and $b\geq 0$ we have
\begin{equation}\label{norm-mu}
\bigg| \v^{b}\omega(0,\beta)\p _{\beta}\Big(\frac{M-\mu}{\sqrt{\mu}}\Big)\bigg|^2_{\sigma}
 +\bigg| \v^{b}\omega(0,\beta)\p _{\beta}\Big(\frac{M-\mu}{\sqrt{\mu}}\Big)\bigg|^2_{2}
\leq C\eta_1^2,
\end{equation}
and for $|\alpha|=2$,
\bma\label{norm-m}
&\|\v^b \omega(\alpha,0)\mu^{-\frac12}\p^{\alpha}M\|_{\sigma}^2
  +\|\v^b \omega(\alpha,0)\mu^{-\frac12}\p^{\alpha}M\|^2\nnm\\
\leq&\, C\|\p^\alpha(\phi,\psi,\zeta)\|^2_{L^2_y}
 +C\(\epsilon^{a-1}\mathcal{E}(\tau)+\sigma^2\frac{\epsilon^{3a-1}}{\delta^2}\)\mathcal{D}(\tau)
 +C\frac{\epsilon^{3a}}{\delta(\frac{\delta}{\sigma}+\epsilon^a\tau)^2}.
\ema
\end{lem}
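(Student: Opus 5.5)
The statement to prove is Lemma \ref{M}, which has two parts. The first is the pointwise-in-$(\tau,y)$ bound \eqref{norm-mu}. The second is the $L^2_y$ bound \eqref{norm-m} for second derivatives of $M$.

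\textbf{Part (i): the bound \eqref{norm-mu}.} The Maxwellian $M=M_{[\rho,u,\theta]}$ depends smoothly on $(\rho,u,\theta)$, and $\mu=M_{[1,0,\frac32]}$. By the mean value theorem in the parameters,
$$M-\mu=\int_0^1 \frac{d}{ds}M_{[\rho_s,u_s,\theta_s]}\,ds,\qquad (\rho_s,u_s,\theta_s)=(1,0,\tfrac32)+s\big(\rho-1,u,\theta-\tfrac32\big).$$
Each parameter derivative equals $M_s$ times a polynomial in $v$. So $\partial_\beta\big((M-\mu)/\sqrt\mu\big)$ is bounded by
$$C\big(|\rho-1|+|u|+|\theta-\tfrac32|\big)\sup_s \langle v\rangle^{k}M_s^{1-\varepsilon_0}\mu^{-1/2}.$$
This is where the condition on $q_2$ enters, exactly as in the proof of Lemma \ref{Gb}. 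The weight $\omega(0,\beta)$ contributes at most $e^{q_2\langle v\rangle^2/2}$. Since $q_2<\frac12$ and $\theta_s$ is close to $\frac32$, the Gaussian $e^{-(1-\varepsilon_0)|v-u_s|^2/(2R\theta_s)}e^{|v|^2/4}e^{q_2\langle v\rangle^2/2}$ still decays. Hence the weighted $L^2_v$ and $|\cdot|_\sigma$ norms (the latter via \eqref{sigma1}, which needs one further $v$-derivative, again a polynomial times $M_s$) are bounded by $C\eta_1$. Squaring gives \eqref{norm-mu}.

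\textbf{Part (ii): the bound \eqref{norm-m}.} I start from the expansion \eqref{M1-2}, which writes $\partial^\alpha M$ as a sum of three pieces.
\begin{itemize}
\item The linear terms have the form $\partial^\alpha(\rho,u,\theta)\times(\text{polynomial})\times M$.
\item The quadratic terms have the form $\partial^{\alpha_1}(\rho,u,\theta)\,\partial^{\alpha-\alpha_1}(\rho,u,\theta)\times(\text{polynomial})\times M$, with $|\alpha_1|=1$.
\end{itemize}
Using the same Gaussian domination as in part (i), now with weight $\langle v\rangle^b\omega(\alpha,0)$, the polynomial$\times M/\sqrt\mu$ factors are uniformly bounded in both $|\cdot|_2$ and $|\cdot|_\sigma$. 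This yields
$$\|\cdot\|^2\le C\|\partial^\alpha(\rho,u,\theta)\|^2_{L^2_y}+C\sum_{|\alpha_1|=1}\|\partial^{\alpha_1}(\rho,u,\theta)\|^4_{L^4_y}.$$

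For the linear term I split $\partial^\alpha(\rho,u,\theta)=\partial^\alpha(\phi,\psi,\zeta)+\partial^\alpha(\bar\rho,\bar u,\bar\theta)$. By Lemma \ref{rarefaction4}(2) with $p=2$ and the scaling of Lemma \ref{rarefaction5} for $k=2$, the second piece satisfies $\|\partial^\alpha(\bar\rho,\bar u,\bar\theta)\|_{L^2_y}^2\le C\epsilon^{a(4-1)}\delta^{-1}(\frac\delta\sigma+\epsilon^a\tau)^{-2}$. This gives the term $\epsilon^{3a}/(\delta(\frac\delta\sigma+\epsilon^a\tau)^2)$.

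For the quartic term I also split into perturbation and background. Gagliardo–Nirenberg gives $\|\partial^{\alpha_1}\phi\|_{L^4}^4\le C\|\partial^{\alpha_1}\phi\|^3_{L^2}\|\partial^{\alpha_1}\partial_y\phi\|_{L^2}$. Bounding this by $\mathcal E\cdot\epsilon^{a-1}\mathcal D$ uses that $\epsilon^{1-a}\|\partial^{\alpha_1}\partial_y(\phi,\psi,\zeta)\|^2$ and $\epsilon^{1-a}\|\partial^{\alpha_1}(\phi,\psi,\zeta)\|^2$ are both contained in $\mathcal D$. Concretely,
$$\|\partial^{\alpha_1}\|^3\|\partial^{\alpha_1}\partial_y\|\le \mathcal E\,\|\partial^{\alpha_1}\|\,\|\partial^{\alpha_1}\partial_y\|\le \epsilon^{a-1}\mathcal E\mathcal D.$$
For the background piece, $\|\partial^{\alpha_1}\bar{(\cdot)}\|^4_{L^4_y}\le C\epsilon^{3a}\sigma(\frac\delta\sigma+\epsilon^a\tau)^{-3}$ is absorbed into the source term using $\frac{\delta}{\sigma}\le 1$. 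Any mixed products are handled by Cauchy–Schwarz into these two cases. Where needed, I put an $L^\infty$ factor $|\partial_y\bar{(\cdot)}|^2\le C\epsilon^{2a}\delta^{-2}\sigma^2$ against $\|\partial^{\alpha_1}(\phi,\psi,\zeta)\|^2\le\epsilon^{a-1}\mathcal D$; this produces the $\sigma^2\epsilon^{3a-1}\delta^{-2}\mathcal D$ term.

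\textbf{Main obstacle.} The only delicate point is checking that the exponential weight $e^{q(\tau)\langle v\rangle^2/2}$ combined with $\mu^{-1/2}$ is still dominated by $M^{1-\varepsilon_0}$, uniformly in the parameters. This is precisely the computation $q_2<\frac{1}{R\theta}-\frac12-\frac{\varepsilon_0}{R\theta}$ already carried out in Lemma \ref{Gb}. It requires $\eta_1$ to be small, so that $\theta$ stays near $\frac32$.
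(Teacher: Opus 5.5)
Your proof is correct and follows the paper's route for \eqref{norm-m}: expand $\partial^\alpha M$ via \eqref{M1-2}, dominate the velocity factors by $|\langle v\rangle^b\omega(\alpha,0)\mu^{-1/2}M^{1-\varepsilon_0}|_2$ (using $q_2<\frac12$), and bound the remaining macroscopic integral with Lemma \ref{rarefaction4}; for \eqref{norm-mu} the paper only cites Lemma 6.6 of \cite{Duan6}, and your mean-value-in-parameters argument is the standard content of that citation. One small correction: absorbing $\epsilon^{3a}\sigma(\frac{\delta}{\sigma}+\epsilon^a\tau)^{-3}$ into $\epsilon^{3a}\delta^{-1}(\frac{\delta}{\sigma}+\epsilon^a\tau)^{-2}$ uses $(\frac{\delta}{\sigma}+\epsilon^a\tau)^{-1}\le\sigma/\delta$ together with $\sigma^2\le 1$, not the condition $\frac{\delta}{\sigma}\le 1$.
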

\begin{proof}
Firstly, the proof of \eqref{norm-mu} can be found in Lemma 6.6 of \cite{Duan6}. 						
Next, for \eqref{norm-m}, we only estimate $\|\v^b \omega(\alpha,0)\mu^{-\frac12}\p^{\alpha}M\|^2$ because the other one is similar.
By \eqref{M1-2} and Lemma \ref{rarefaction4}, for $|\alpha|=2$, we have
\bma\label{norm-m-1}
&\|\v^b \omega(\alpha,0)\mu^{-\frac12}\p^{\alpha}M\|^2\nnm\\
\leq&\, C|\v^b \omega(\alpha,0)\mu^{-\frac12} M^{1-\varepsilon_0}|_2
   \int_{\R}|\p^\alpha(\rho,u,\theta)|^2
       +\sum_{|\alpha_1|=1}|\p^{\alpha-\alpha_1}(\rho,u,\theta)|^2|\p^{\alpha_1}(\rho,u,\theta)|^2 dy\nnm\\
\leq&\, C\|\p^\alpha(\phi,\psi,\zeta)\|^2_{L^2_y}
 +C\epsilon^{a-1}\Big(\mathcal{E}(\tau)+\sigma^2\frac{\epsilon^{2a}}{\delta^2}\Big)\mathcal{D}(\tau)
 +C\frac{\epsilon^{3a}}{\delta(\frac{\delta}{\sigma}+\epsilon^a\tau)^2},
\ema
which yields \eqref{norm-m}.
\end{proof}

\subsection{The interaction of macro-micro components}
In the following, we  consider the interaction of macroscopic and microscopic components $\Gamma(\frac{M-\mu}{\sqrt{\mu}},\mathbf{g}_i)$, $i=1,2$. 
\begin{lem}\label{GMG}
Under the same assumptions as in Proposition \ref{priori3}, by letting $g$ be $\gt$ or $\g$ and $l\geq 2$ in $\omega(\alpha,\beta)$ defined in \eqref{weight}, then for either $|\alpha|+|\beta|\leq 2$ and $|\beta|\geq 1$, or  $|\beta|=0$ and $|\alpha|\leq 1$,
 we have for any well-defined function $h$,
\bma\label{GMG1}
&\epsilon^{a-1}\left|\(\p_\beta^\alpha \Gamma\(g,\frac{M-\mu}{\sqrt{\mu}}\),\omega^2(\alpha,\beta)h\)\right|
 +\epsilon^{a-1}\left|\(\p_\beta^\alpha \Gamma\(\frac{M-\mu}{\sqrt{\mu}},g\), \omega^2(\alpha,\beta)h\)\right|\nnm\\
&\quad\leq\, C\lambda^{-1}\(\eta^2_1+\mathcal{E}(\tau)+\sigma^2\frac{\epsilon^{2a}}{\delta^2}\)\mathcal{D}(\tau)
 +\lambda\epsilon^{a-1}\|\omega(\alpha,\beta)h\|^2_{\sigma},
\ema
where $\lambda>0$ is a small constant and $\eta_1>0$ is defined in \eqref{local}.
\end{lem}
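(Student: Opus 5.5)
Expand $\p^\alpha_\beta$ of each bilinear term by the Leibniz rule and apply the weighted trilinear estimate of Lemma \ref{Gamma}, with the Maxwellian factor always placed in the slot carrying $\mu^{\varepsilon_0}$ or treated as a smooth coefficient. For $\Gamma(g,\frac{M-\mu}{\sqrt\mu})$ this gives
\[
\epsilon^{a-1}\Big|\Big(\p^\alpha_\beta\Gamma\Big(g,\tfrac{M-\mu}{\sqrt\mu}\Big),\omega^2(\alpha,\beta)h\Big)\Big|
\le C\epsilon^{a-1}\sum\int_{\R}\big|\mu^{\varepsilon_0}\p^{\alpha_1}_{\bar\beta}g\big|_2\,\Big|\omega(\alpha,\beta)\p^{\alpha-\alpha_1}_{\beta-\beta_1}\tfrac{M-\mu}{\sqrt\mu}\Big|_\sigma\,|\omega(\alpha,\beta)h|_\sigma\,dy .
\]
For $\Gamma(\frac{M-\mu}{\sqrt\mu},g)$ the roles are swapped: $\mu^{\varepsilon_0}\p^{\alpha_1}_{\bar\beta}\frac{M-\mu}{\sqrt\mu}$ sits in the $L^2_v$ slot and $\omega(\alpha,\beta)\p^{\alpha-\alpha_1}_{\beta-\beta_1}g$ in the $\sigma$-slot. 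In both cases Cauchy--Schwarz with weight $\lambda$ extracts $\lambda\epsilon^{a-1}\|\omega(\alpha,\beta)h\|_\sigma^2$. What remains is $C\lambda^{-1}\epsilon^{a-1}$ times squared products, each to be bounded by $(\eta_1^2+\mathcal{E}+\sigma^2\epsilon^{2a}/\delta^2)\mathcal{D}$.

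I would split into cases according to how many $\tau,y$ derivatives hit the Maxwellian factor.

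\emph{Case $\alpha_1=\alpha$, all space-time derivatives on $g$.} Only $v$-derivatives fall on $\frac{M-\mu}{\sqrt\mu}$, so \eqref{norm-mu} bounds that factor by $C\eta_1$ pointwise in $y$ (the weight $\omega\v^b$ is harmless by the same $q_2<1/2$ argument). The term becomes $C\lambda^{-1}\eta_1^2\epsilon^{a-1}\|\p^{\alpha}_{\beta'}g\|_{\sigma,\omega}^2$ with $|\beta'|\le|\beta|$, which lies in $\eta_1^2\mathcal{D}$ because $|\alpha|\le1$ or $|\alpha|+|\beta|\le2$.

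\emph{Case $|\alpha-\alpha_1|\ge1$, derivatives on the Maxwellian.} Write $\p^{\alpha-\alpha_1}M$ via the chain rule as $(\p^{\alpha-\alpha_1}(\rho,u,\theta))$ times polynomial$\,\times M$, together with $|\p^{\alpha_1}(\rho,u,\theta)|^2$-type products when $|\alpha|=2$, as in \eqref{M1-2}. The velocity factor is again bounded using $q_2<1/2$. Split $\p(\rho,u,\theta)=\p(\bar\rho,\bar u,\bar\theta)+\p(\phi,\psi,\zeta)$.
\begin{itemize}
\item The background part uses Lemma \ref{rarefaction4} and Lemma \ref{rarefaction5}: $\|\p_y(\bar\rho,\bar u,\bar\theta)\|_{L^\infty_y}\le C\sigma\epsilon^a/\delta$. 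This gives $\sigma^2\epsilon^{2a}\delta^{-2}\epsilon^{a-1}\|g\|_{\sigma}^2$, using $\mu^{\varepsilon_0}$ to pass from $|\cdot|_2$ to $|\cdot|_\sigma$, hence $\le\sigma^2\epsilon^{2a}\delta^{-2}\mathcal{D}$.
\item The perturbation part uses $\|\p(\phi,\psi,\zeta)\|_{L^\infty_y}$ or $L^4_y$ with $g$ in $L^\infty_y L^2_v$ or $L^4_y$ via Sobolev. Since $\|\p^{\alpha}(\phi,\psi,\zeta)\|^2$ is in $\mathcal{E}$ and $\epsilon^{a-1}\|g\|_{\sigma}^2$-type norms are in $\mathcal{D}$, this gives $\mathcal{E}\mathcal{D}$.
\end{itemize}

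The main obstacle is the top case $|\alpha|=1$, $|\beta|=1$ (or $|\alpha|=2$ not permitted here), where a first derivative of $(\phi,\psi,\zeta)$ multiplies $g$ or $\p_\beta g$. There I would put $(\phi,\psi,\zeta)$-derivatives in $L^2_y$, using $\mathcal{E}$ with no $\epsilon$ loss since $|\alpha|\le1$. I would put $g$ in $L^\infty_y$ through $\|g\|_\sigma^{1/2}\|\p_yg\|_\sigma^{1/2}$, both carrying $\epsilon^{a-1}$ weight in $\mathcal{D}$, giving exactly $\mathcal{E}\mathcal{D}$. I would double-check that the weight $\omega(\alpha,\beta)$ placed on $g$ can be dominated by $\omega(\alpha',\beta')$ of the lower-order term, using $\omega(\alpha,\beta)\le\omega(\alpha',\beta')$ when $|\alpha'|+|\beta'|\le|\alpha|+|\beta|$; $l\ge2$ guarantees nonnegative exponents.
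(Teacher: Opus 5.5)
Your proposal is correct and is essentially the paper's proof: Lemma~\ref{Gamma} plus Cauchy--Schwarz; the case where all space-time derivatives fall on $g$ closed by \eqref{norm-mu} to give $\eta_1^2\mathcal{D}(\tau)$; and the case $|\alpha|=1$ with the derivative on the Maxwellian closed by putting the first derivative of $(\rho,u,\theta)$ in $L^2_y$ and $g$ in $L^\infty_y$ via $\|\p_{\bar\beta}g\|_\sigma\|\p_y\p_{\bar\beta}g\|_\sigma$. The one difference is that you split off the rarefaction-wave part and bound $\p(\bar\rho,\bar u,\bar\theta)$ in $L^\infty_y$, which gives the factor $\sigma^2\epsilon^{2a}/\delta^2$ exactly; the paper keeps $\|\p^\alpha(\rho,u,\theta)\|_{L^2_y}^2$ whole, and for the background piece that literally yields only $\sigma^2\epsilon^{a}/\delta$ (still small, so harmless).
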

\begin{proof}
For the first term on the left hand of \eqref{GMG1}, by Lemma \ref{Gamma}, for $|\alpha|+|\beta| \leq 2$, $|\beta| \geq 1$ and any small constant ${\varepsilon_0}>0$, we have
\bma\label{GMG2}
&\epsilon^{a-1}\left|\(\p_\beta^\alpha \Gamma\(g,\Mt\), \omega^2(\alpha,\beta)h\)\right|\nnm\\
\leq& C \sum_{\alpha_1 \leq \alpha} \sum_{\bar{\beta}\leq\beta_1\leq\beta}
 \epsilon^{a-1} \int_{\R}\Big|\mu^{\varepsilon_0}\p_{\bar{\beta}}^{\alpha_1}g\Big|_2
   \left|\omega(\alpha,\beta)\p_{\beta-\beta_1}^{\alpha-\alpha_1}\(\Mt\)\right|_\sigma
     \big|\omega(\alpha,\beta)h\big|_\sigma dy\nnm\\
\leq& C\lambda^{-1}\sum_{\alpha_1 \leq \alpha} \sum_{\bar{\beta}\leq\beta_1\leq\beta}
 \epsilon^{a-1} \int_{\R}\Big|\mu^{\varepsilon_0}\p_{\bar{\beta}}^{\alpha_1}g\Big|^2_2
   \left|\omega(\alpha,\beta)\p_{\beta-\beta_1}^{\alpha-\alpha_1}\(\Mt\)\right|^2_\sigma dy
  +\lambda\epsilon^{a-1}\|\omega(\alpha,\beta)h\|^2_\sigma\nnm\\
:=& C\lambda^{-1}\sum_{\alpha_1 \leq \alpha} \sum_{\bar{\beta}\leq\beta_1\leq\beta} J_1
 +\lambda\epsilon^{a-1}\|\omega(\alpha,\beta)h\|^2_\sigma.
\ema
Note that $|\beta|\geq 1$, we have $|\alpha|\leq 1$.

As $|\alpha|=0$, or $|\alpha|=|\alpha_1|=1$, by \eqref{norm-mu}, we have
\be
J_1
\leq C\epsilon^{a-1}\Big\|\Big| \omega(\alpha,\beta) \p_{\beta-\beta_1}\Big(\frac{M-\mu}{\sqrt{\mu}}\Big)\Big|_{\sigma}\Big\|^2_{L_y^\infty}
					\|\mu^{\varepsilon_0}\p^{\alpha}_{\bar{\beta}}g\|^2
\leq C\eta^2_1\mathcal{D}(\tau).\nnm
\ee
As $|\alpha|=1,~|\alpha_1|=0$, by  the Sobolev inequality, we have
\bma
J_1&
\leq C\epsilon^{a-1}\Big\||\mu^{\varepsilon_0}\p_{\bar{\beta}}g|_2\Big\|^2_{L_y^\infty}
  \Big\|\Big| \omega(\alpha,\beta) \p^{\alpha}_{\beta-\beta_1}\Big(\frac{M-\mu}{\sqrt{\mu}}\Big)\Big|_{\sigma}\Big\|^2\nnm\\
&\leq C\epsilon^{a-1}\|\p^{\alpha}(\rho,u,\theta)\|^2_{L^2_y}
   \|\p_{\bar{\beta}} g\|_\sigma\|\p_{\bar{\beta}}\p_y g\|_\sigma\nnm\\
&\leq C\(\mathcal{E}(\tau)+\sigma^2\frac{\epsilon^{2a}}{\delta^2}\)\mathcal{D}(\tau).\nnm
\ema
Consequently, putting the above two estimates into \eqref{GMG2} gives
\begin{equation}\label{GMG1-1}
\epsilon^{a-1}\left|\(\p^\alpha_\beta \Gamma\(g,\frac{M-\mu}{\sqrt{\mu}}\),\omega^2(\alpha,\beta)h\)\right|
\leq C\lambda^{-1}\(\eta^2_1+\mathcal{E}(\tau)+\sigma^2\frac{\epsilon^{2a}}{\delta^2}\)\mathcal{D}(\tau)
 +\lambda\epsilon^{a-1}\|\omega(\alpha,\beta)h\|^2_{\sigma}.
\end{equation}
Similarly,  the second term on the left hand of \eqref{GMG1}  satisfies \eqref{GMG1-1}.
Thus, the proof of \eqref{GMG1} is completed.
The case for $|\beta|=0$ and $|\alpha|\leq 1$ is similar to the above, so that we omits the details.
\end{proof}

\begin{lem}\label{QMGF}
Under the same assumptions as in Proposition \ref{priori3}, and let $g$ be $\gt$ or $\g$ and $\omega(\alpha,0)=\omega$, then for any $\alpha$ with $|\alpha|=2$, it holds that for $i=1,2,$
\bma\label{QMGF1}
&\epsilon^{a-1} \left|\(\p^\alpha \Gamma\(\Mt,g\)+\p^\alpha \Gamma\(g,\Mt\),\omega^2\frac{\p^\alpha F_i}{\sqrt{\mu}}\)\right|\nnm\\
\leq&\, C\lambda^{-1}\epsilon^{2a-2}\(\mathcal{E}(\tau)+\sigma^2\frac{\epsilon^2}{\delta^2}+\eta^2_1\)\mathcal{D}(\tau)
 +C\frac{\epsilon^{4a}}{\delta^2(\frac{\delta}{\sigma}+\epsilon^a\tau)^2}\|n\|^2_{L^2_y}\nnm\\
 &+C\frac{\epsilon^{4a-1}}{\delta(\frac{\delta}{\sigma}+\epsilon^a\tau)^2}+\lambda\epsilon^{2a-2}\mathcal{D}(\tau),
\ema
where $\lambda>0$ is a small constant, and $\eta_1>0$ is defined in \eqref{local}.
\end{lem}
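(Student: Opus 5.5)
The plan is to mirror the proof of Lemma \ref{GMG}, now with $|\alpha|=2$ and test function $\omega^2\mu^{-\frac12}\p^\alpha F_i$. First I apply Lemma \ref{Gamma} with $g_3=\omega^2\mu^{-1/2}\p^\alpha F_i$. This gives
\[
\epsilon^{a-1}\sum_{\alpha_1\le\alpha}\int_{\R}\Big|\mu^{\varepsilon_0}\p^{\alpha_1}g\Big|_2\,\Big|\omega\,\p^{\alpha-\alpha_1}\Big(\Mt\Big)\Big|_\sigma\,\Big|\omega\mu^{-\frac12}\p^\alpha F_i\Big|_\sigma\,dy,
\]
plus the symmetric expression with the roles of $g$ and $\Mt$ exchanged. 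Cauchy--Schwarz with weight $\lambda$ puts the last factor into $\lambda\epsilon^{a-1}\|\mu^{-1/2}\p^\alpha F_i\|^2_{\sigma,\omega}$. Multiplying by $\epsilon^{1-a}$, this factor is controlled as in \eqref{Fa1-2order}--\eqref{Fa2-2order}: by $\epsilon^{a-1}\mathcal{D}(\tau)$ plus the terms $\frac{\epsilon^{4a-1}}{\delta(\frac{\delta}{\sigma}+\epsilon^a\tau)^2}$ and $\frac{\epsilon^{4a}}{\delta^2(\cdots)^2}\|n\|^2$ that appear in \eqref{QMGF1}. The $\|n\|_{L^\infty}$ factor is converted via Sobolev and the rarefaction bounds. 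Together these produce the $\lambda\epsilon^{2a-2}\mathcal{D}$ term and the two explicit source terms.

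Next I split the sum over $\alpha_1$ into three cases and bound the remaining factor $C\lambda^{-1}\epsilon^{a-1}\int|\cdot|_2^2|\cdot|_\sigma^2$.

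\textbf{Case $\alpha_1=\alpha$.} All derivatives fall on $g$. I use \eqref{norm-mu} to put $\Mt$ in $L^\infty_y$ with bound $\eta_1^2$, and then $\epsilon^{a-1}\|\p^\alpha g\|^2_\sigma\le\epsilon^{2a-2}\mathcal{D}$, since the dissipation only carries $\epsilon^{1-a}$ at second order. This gives $\eta_1^2\epsilon^{2a-2}\mathcal{D}$.

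\textbf{Case $|\alpha_1|=1$.} I put $|\mu^{\varepsilon_0}\p^{\alpha_1}g|_2$ in $L^\infty_y$ by Sobolev, which produces $\mathcal{E}^{1/2}$ times dissipation-type norms. The factor $|\omega\p^{\alpha-\alpha_1}(M-\mu)/\sqrt\mu|_\sigma$ is bounded by $|\p^{1}(\rho,u,\theta)|$, split into its rarefaction part $\sigma\epsilon^a/\delta$ and its perturbation part. This gives $(\mathcal{E}+\sigma^2\epsilon^{2}/\delta^2)\epsilon^{2a-2}\mathcal{D}$.

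\textbf{Case $\alpha_1=0$.} Now $\p^\alpha(M-\mu)/\sqrt\mu$ carries two derivatives. I expand it via \eqref{M1-2} and estimate it by \eqref{norm-m}, keeping $|\mu^{\varepsilon_0}g|_2$ in $L^\infty_y$. This yields $\epsilon^{a-1}\|g\|_{L^\infty L^2}^2$ times $\|\p^\alpha(\phi,\psi,\zeta)\|^2$ plus lower-order terms. Because $\|g\|^2_{L^\infty}\lesssim\|g\|\|\p_yg\|\lesssim\epsilon^{1-a}\mathcal{D}$ and $\|\p^\alpha(\phi,\psi,\zeta)\|^2\le\epsilon^{2a-2}\mathcal{E}$, this is at most $\epsilon^{2a-2}\mathcal{E}\mathcal{D}$. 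The rarefaction remainder in \eqref{norm-m}, multiplied by $\epsilon^{a-1}\|g\|^2_{L^\infty}\lesssim\mathcal{E}$, I absorb into the source term after using smallness of $\mathcal{E}$.

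The main obstacle is this last case. Two $\tau,y$ derivatives fall on the Maxwellian difference, and the $\epsilon$-weights must be balanced so that the result lands with the factor $\epsilon^{2a-2}$ rather than something worse. If a direct bound loses a power of $\epsilon$, I would instead place $\p^\alpha(M-\mu)/\sqrt\mu$ in $L^2_y$ and $g$ in $L^\infty_y$, using the $\epsilon^{a-1}$ dissipation of $\|(\gt,\g)\|_\sigma$ and $\|\p_y(\gt,\g)\|_\sigma$ from $\mathcal{D}$.
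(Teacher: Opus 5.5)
Your proposal is correct and follows essentially the same route as the paper: Lemma \ref{Gamma} with Cauchy--Schwarz in $\lambda$, the three cases $|\alpha_1|=2,1,0$ handled by \eqref{norm-mu}, Sobolev and \eqref{M1-2}/\eqref{norm-m}, and the second-order bounds \eqref{Fa1-2order}--\eqref{Fa2-2order} to turn $\lambda\epsilon^{a-1}\|\mu^{-\frac12}\p^\alpha F_i\|^2_{\sigma,\omega}$ into $\lambda\epsilon^{2a-2}\mathcal{D}(\tau)$ plus the explicit source terms. There are two harmless slips. First, for $|\alpha_1|=1$ the rarefaction part of $\p^{\alpha-\alpha_1}(\rho,u,\theta)$ should be taken in $L^\infty_y$ (size $\sigma\epsilon^a/\delta$) with $\p^{\alpha_1}g$ in $L^2$, as the paper does, because putting $g$ in $L^\infty_y$ there loses a factor $\delta/\epsilon$ against $\sigma^2\epsilon^{2a}\delta^{-2}\mathcal{D}(\tau)$. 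Second, for $\alpha_1=0$ one has $\epsilon^{a-1}\||\mu^{\varepsilon_0}g|_2\|^2_{L^\infty_y}\le C\epsilon^{a-1}\mathcal{E}(\tau)$ (or $\le C\mathcal{D}(\tau)$) rather than $\le C\mathcal{E}(\tau)$; this still places the rarefaction remainder of \eqref{norm-m} inside the source term $C\epsilon^{4a-1}\delta^{-1}(\frac{\delta}{\sigma}+\epsilon^a\tau)^{-2}$.
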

\begin{proof}
We only consider the first term on the left hand of \eqref{QMGF1} because the other terms can be
estimated similarly.
 As $|\alpha|=2$, for a small constant ${\varepsilon_0}>0$, by Lemma \ref{Gamma}, we have
\bma\label{QMGF1-1}
&\epsilon^{a-1} \(\p^\alpha \Gamma\(\Mt,g\), \omega^2\frac{\p^\alpha F_i}{\sqrt{\mu}}\)\nnm\\
\leq&\, C \sum_{\alpha_1 \leq \alpha}
 \epsilon^{a-1} \int_{\R}|\mu^{\varepsilon_0}\p^{\alpha_1}g|_2
   \left|\omega\p^{\alpha-\alpha_1}\(\Mt\)\right|_\sigma
     \left|\omega \frac{\p^\alpha F_i}{\sqrt{\mu}}\right|_\sigma dy\nnm\\
\leq&\,C\lambda^{-1}\sum_{\alpha_1 \leq \alpha}
  \epsilon^{a-1} \int_{\R}|\mu^{\varepsilon_0}\p^{\alpha_1}g|^2_2 \left|\omega\p^{\alpha-\alpha_1}\(\Mt\)\right|^2_\sigma dy
 +\lambda\epsilon^{a-1}\|\mu^{-\frac12}\p^\alpha F_i\|^2_{\sigma,\omega}\nnm\\
:=&\,C\lambda^{-1}\sum_{\alpha_1 \leq \alpha}K_1
 +\lambda\epsilon^{a-1}\|\mu^{-\frac12}\p^\alpha F_i\|^2_{\sigma,\omega}.
\ema
As for $K_1$ in \eqref{QMGF1-1}, if $|\alpha_1|=|\alpha|=2$, by \eqref{norm-mu}, we have
\bma\label{QMGF1-2}
K_1
=\epsilon^{a-1} \int_{\R}|\mu^{\varepsilon_0}\p^{\alpha}g|^2_2 \left|\omega\(\Mt\)\right|^2_\sigma dy
\leq C\eta_1^2\epsilon^{a-1}\|\mu^{\varepsilon_0}\p^{\alpha}g\|^2_{\sigma,\omega}.
\ema
For the case of lower-order derivatives on $\gt$, if $|\alpha_1|=0$ by Lemma \ref{M}, Lemma \ref{rarefaction4} and the Sobolev embedding, we have
\bma
K_1
&=\epsilon^{a-1} \int_{\R}|\mu^{\varepsilon_0} g|^2_2 \left|\omega\p^\alpha\(\Mt\)\right|^2_\sigma dy\nnm\\
&\leq C\epsilon^{a-1}\||\mu^{\varepsilon_0} g|_2\|^2_{L^\infty_y}
  \bigg(\|\p^\alpha(\rho,u,\theta)\|^2_{L^2_y}+\sum_{|\alpha'|=1}\|\p^{\alpha'}(\rho,u,\theta)\|^4_{L^4_y}\bigg)\nnm\\
&\leq C\epsilon^{2a-2}\mathcal{E}(\tau)\mathcal{D}(\tau)
  +C\sigma^2\frac{\epsilon^{4a}}{\delta^4}\mathcal{D}(\tau).
\ema
If $|\alpha_1|=1$, we can estimate $K_1$ as follows.
\bma\label{QMGF1-3}
K_1
&=\epsilon^{a-1}\sum_{|\alpha_1|=1}\int_{\R}|\mu^{\varepsilon_0}\p^{\alpha_1}g|^2_2 \left|\omega\p^{\alpha-\alpha_1}\(\Mt\)\right|^2_\sigma dy\nnm\\
&\leq  C\epsilon^{a-1}\sum_{|\alpha_1|=1}\|\p^{\alpha-\alpha_1}(\rho,u,\theta)\|^2_{L^\infty_y}\|\mu^{\varepsilon_0} \p^{\alpha_1}g\|^2\nnm\\
&\leq  C\epsilon^{2a-2}\mathcal{E}(\tau)\mathcal{D}(\tau)
  +C\sigma^2\frac{\epsilon^{2a}}{\delta^2}\mathcal{D}(\tau).
\ema
Substituting the estimates \eqref{QMGF1-2}-\eqref{QMGF1-3} into \eqref{QMGF1-1}, and by using \eqref{F1-2order}-\eqref{F2-2order} with  $\frac{\epsilon^a}{\delta}$ being small,
we obtain \eqref{QMGF1}.
\end{proof}

\subsection{Interaction between microscopic components}
\begin{lem}\label{QGG}
Under the same assumptions as in Proposition \ref{priori3} and by letting $\frac{\epsilon^a}{\delta}$ small enough, for either $|\alpha|+|\beta|\leq 2$ and $|\beta|\geq1$, or  $|\beta|=0$ and $|\alpha|\leq 1$, we have for any well-defined function $h$,
\bma\label{QGG3}
&\epsilon^{a-1}\left|\(\p^\alpha_\beta\Gamma\(\frac{G_1}{\sqrt{\mu}},
  \frac{G_1}{\sqrt{\mu}}\),\omega^2(\alpha,\beta)h\)\right|\nnm\\
\leq&\, \lambda\epsilon^{a-1}\|\omega(\alpha,\beta)h\|^2_{\sigma}
	+C\lambda^{-1}\frac{\epsilon^3}{\delta(\frac{\delta}{\sigma}+\epsilon^a\tau)^2}
    +C\lambda^{-1}\(\sigma\frac{\epsilon^2}{\delta^2}+\mathcal{E}(\tau)\)\mathcal{D}(\tau),
\\
\label{QF2G}
&\epsilon^{a-1}\left|\(\p^\alpha_\beta\Gamma\(\frac{F_2}{\sqrt{\mu}},\frac{G_1}{\sqrt{\mu}}\),\omega^2(\alpha,\beta)h\)\right|\nnm\\
\leq&\,\lambda\epsilon^{a-1}\|\omega(\alpha,\beta)h\|^2_{\sigma}
 +C\lambda^{-1}\(\mathcal{E}(\tau)+\sigma^2\frac{\epsilon^{2a}}{\delta^2}\)\mathcal{D}(\tau)
 +C\lambda^{-1}\frac{\epsilon^{1+a}}{(\frac{\delta}{\sigma}+\epsilon^a\tau)^2}\|n\|^2_{L^2_y},
\ema
where $\lambda>0$ is a small constant.
\end{lem}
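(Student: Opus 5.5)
Both bounds come from the trilinear estimate of Lemma \ref{Gamma}, after splitting the first argument and distributing derivatives. We use $G_1=\bar G_1+\sqrt{\mu}\gt$ and $F_2=\frac n\rho M+\sqrt{\mu}\g$, so
\[
\Gamma\Big(\frac{G_1}{\sqrt\mu},\frac{G_1}{\sqrt\mu}\Big)=\Gamma\Big(\frac{\bar G_1}{\sqrt\mu},\frac{\bar G_1}{\sqrt\mu}\Big)+\Gamma\Big(\frac{\bar G_1}{\sqrt\mu},\gt\Big)+\Gamma\Big(\gt,\frac{\bar G_1}{\sqrt\mu}\Big)+\Gamma(\gt,\gt).
\]
Similarly $\Gamma(\frac{F_2}{\sqrt\mu},\frac{G_1}{\sqrt\mu})$ splits into pieces with first argument $\frac{n}{\rho}\frac{M}{\sqrt\mu}$ or $\g$ and second argument $\frac{\bar G_1}{\sqrt\mu}$ or $\gt$. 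For each piece we apply the weighted bound of Lemma \ref{Gamma}, summing over $\alpha_1\le\alpha$ and $\bar\beta\le\beta_1\le\beta$. Then Cauchy–Schwarz in $y$ with weight $\lambda$ produces $\lambda\epsilon^{a-1}\|\omega(\alpha,\beta)h\|_\sigma^2$. What remains is $C\lambda^{-1}\epsilon^{a-1}\int_{\R}|\mu^{\varepsilon_0}\p^{\alpha_1}_{\bar\beta}g_1|_2^2\,|\omega\p^{\alpha-\alpha_1}_{\beta-\beta_1}g_2|_\sigma^2dy$. Since $|\alpha|+|\beta|\le2$, one of the two factors always carries at most one derivative, and that factor is put in $L^\infty_y$ by the 1D Sobolev inequality $\|f\|_{L^\infty}^2\le\|f\|\|\p_yf\|$.

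For the pure $\gt$ term, the factor in $L^\infty_y$ is bounded by $\mathcal{E}(\tau)$ (using $\|\p_y\cdot\|\le\mathcal{E}^{1/2}$ for first derivatives). The factor in $L^2_y$ gives $\epsilon^{a-1}\|\p^{\alpha'}_{\beta'}\gt\|_{\sigma,\omega}^2$ with $|\alpha'|+|\beta'|\le2$. For $|\alpha'|\le1$ or $|\beta'|\ge1$ this is part of $\mathcal{D}(\tau)$. In the only other case, $|\alpha'|=2$, the available dissipation is $\epsilon^{1-a}\|\p^{\alpha'}\gt\|_{\sigma,\omega}^2$, so we first have to place $\p^{\alpha'}$ on the $L^\infty$ factor, or else compensate for the factor $\epsilon^{2a-2}$. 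Because $|\beta|=0$ forces $|\alpha|\le1$, this case never occurs. The mixed terms with $\bar G_1$ are handled through \eqref{barG7-1}--\eqref{barG7-2}. Up to lower-order products of $\bar G_1$ with $\p^\alpha(\phi,\psi,\zeta)$, each velocity norm of $\p^{\alpha_1}_\beta(\bar G_1/\sqrt\mu)$ is pointwise $\le C\epsilon^{1-a}|\p^{\alpha_1}\p_y(\bar u,\bar\theta)|$. Taking $\sup_y$ and using Lemma \ref{rarefaction4}/\ref{rarefaction5} gives $\epsilon^{a-1}\cdot\epsilon^{2-2a}\epsilon^{2a}\delta^{-2}\sigma^2\sim\sigma^2\epsilon^{1+a}\delta^{-2}$ times $\mathcal{D}$-type dissipation of $\gt$, which lies within $\sigma\frac{\epsilon^2}{\delta^2}\mathcal{D}(\tau)$ since $a<1$. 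The term $\Gamma(\bar G_1/\sqrt\mu,\bar G_1/\sqrt\mu)$ is a pure source. It is bounded by $\epsilon^{a-1}\|\,|\mu^{\varepsilon_0}\bar G_1/\sqrt\mu|_2\|_{L^\infty}^2\,\|\p^\alpha_\beta\bar G_1/\sqrt\mu\|^2_{\sigma,\omega}$, with sup factor $\le C\epsilon^{2-2a}\epsilon^{2a}\sigma^2\delta^{-2}$ times an $L^2$ factor controlled by Lemma \ref{Gb}. This gives $\epsilon^{a-1}\epsilon^{2}\delta^{-2}\cdot\epsilon^{2-a}(\ldots)$, which is dominated by $\frac{\epsilon^3}{\delta(\frac\delta\sigma+\epsilon^a\tau)^2}$ once $\epsilon^a/\delta$ is small. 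Here we take $L^\infty$ decay $\epsilon^a/(\frac\delta\sigma+\epsilon^a\tau)$ on one factor and the $L^2$ decay from \eqref{Gb1}/\eqref{Gb2} on the other.

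For \eqref{QF2G}, the $\g$-parts are treated exactly as above. The new ingredient is the first argument $\frac n\rho\frac{M}{\sqrt\mu}$, for which $|\mu^{\varepsilon_0}\p^{\alpha_1}_{\bar\beta}(\frac n\rho M/\sqrt\mu)|_2\le C\sum_{\alpha''\le\alpha_1}|\p^{\alpha''}n|(1+|\p(\rho,u,\theta)|)$ by \eqref{norm-mu}-type bounds. Paired with $\gt$, this yields $\epsilon^{a-1}\|n\|_{L^\infty}^2\|\gt\|_\sigma^2\le C\mathcal{E}(\tau)\mathcal{D}(\tau)$, or the analogous term with derivatives on $n$ in $L^2$ and $\gt$ in $L^\infty_y$. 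Paired with $\bar G_1$, it yields $\epsilon^{a-1}\epsilon^{2-2a}\int|\p^{\alpha''}n|^2|\p\p_y(\bar u,\bar\theta)|^2dy$. With $\alpha''=0$ this is $\le \epsilon^{1-a}\epsilon^{2a}(\frac\delta\sigma+\epsilon^a\tau)^{-2}\|n\|^2=\frac{\epsilon^{1+a}}{(\frac\delta\sigma+\epsilon^a\tau)^2}\|n\|^2$. Higher $\alpha''$ are absorbed into $\sigma^2\frac{\epsilon^{2a}}{\delta^2}\mathcal{D}(\tau)$ through $\epsilon^{1-a}\|\p n\|^2\subset\mathcal{D}$.

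The main obstacle is bookkeeping the $\epsilon$-powers so that every $L^2$ factor that carries two derivatives matches a dissipation term actually present in $\mathcal{D}(\tau)$. In particular, $\epsilon^{a-1}\|\p^2\gt\|^2$ is not in $\mathcal{D}$ except for mixed $\p^\alpha_\beta$ with $|\beta|\ge1$. We therefore check case by case that the restriction in the lemma ($|\beta|=0\Rightarrow|\alpha|\le1$) always routes the heavy derivative into a term with $\epsilon^{a-1}$ dissipation.
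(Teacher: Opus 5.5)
Your proposal follows the paper's route. It uses the same splitting $G_1=\bar G_1+\sqrt{\mu}\gt$, and $F_2=\frac{n}{\rho}M+\sqrt{\mu}\g$ for \eqref{QF2G}, which the paper only calls ``similar''. It then applies Lemma \ref{Gamma} with a $\lambda$-weighted Cauchy--Schwarz and the 1D Sobolev inequality on one factor, and handles the $\bar G_1$ pieces with \eqref{barG7-1}--\eqref{barG7-2}, Lemma \ref{Gb} and Lemma \ref{rarefaction4}. Your treatment of the $\frac{n}{\rho}M$ part of \eqref{QF2G} is correct and more explicit than the paper's. The outline is sound, but some of the bookkeeping you state would not close as written.

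\textbf{The $L^\infty_y$ factor.} It is not always controlled by $\mathcal{E}(\tau)$. Suppose the $\mu^{\varepsilon_0}$-factor carries the single $\tau$- or $y$-derivative, e.g.\ $|\alpha|=1$, $\alpha_1=\alpha$ in $\Gamma(\gt,\gt)$ or $\Gamma(\g,\gt)$. Then Sobolev needs $\|\p_y\p^\alpha\gt\|$, which lies in $\mathcal{E}(\tau)$ only with the weight $\epsilon^{2-2a}$, so you get $\epsilon^{a-1}\mathcal{E}(\tau)\mathcal{D}(\tau)$. If instead you send the other factor to $L^\infty_y$, your assignment ``$L^\infty_y$-factor $\le\mathcal{E}$'' fails as well. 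The fix is to reverse the roles:
$\epsilon^{a-1}\big\||\omega\gt|_\sigma\big\|^2_{L^\infty_y}\le\epsilon^{a-1}\|\gt\|_{\sigma,\omega}\|\p_y\gt\|_{\sigma,\omega}\le\mathcal{D}(\tau)$, while $\|\mu^{\varepsilon_0}\p^\alpha\gt\|^2\le\mathcal{E}(\tau)$. The paper does this for the mixed term with $|\alpha_1|=|\alpha|=1$, and you do it for the $n$-terms of \eqref{QF2G}. So the real obstruction is the extra $\p_y$ produced by Sobolev. The configuration $|\alpha'|=2$ in the $L^2_y$ factor cannot occur, as you say.

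\textbf{The source term.} In $\Gamma(\bar G_1/\sqrt\mu,\bar G_1/\sqrt\mu)$, the crude sup bound $C\sigma^2\epsilon^2\delta^{-2}$ throws away the time decay. It yields $\sigma^3\epsilon^3\delta^{-2}(\frac{\delta}{\sigma}+\epsilon^a\tau)^{-1}$, which is not dominated by $\frac{\epsilon^3}{\delta(\frac{\delta}{\sigma}+\epsilon^a\tau)^2}$ uniformly in $\tau$, however small $\epsilon^a/\delta$ is. Only the version in your last sentence closes. There the sup factor is $\le C\epsilon^2(\frac{\delta}{\sigma}+\epsilon^a\tau)^{-2}$, and with \eqref{Gb1} you get $C\sigma\epsilon^3(\frac{\delta}{\sigma}+\epsilon^a\tau)^{-3}\le C\frac{\sigma^2\epsilon^3}{\delta(\frac{\delta}{\sigma}+\epsilon^a\tau)^2}$. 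For $|\alpha|=1$, the smallness of $\epsilon^a/\delta$ is needed in this step together with \eqref{Gb2}.

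\textbf{The mixed $\bar G_1$--$\gt$ term.} It is $\le C\sigma^2\frac{\epsilon^2}{\delta^2}\mathcal{D}(\tau)$ directly, because the relevant $\|\p^{\alpha'}_{\beta'}\gt\|^2_{\sigma,\omega}$ are $\le\epsilon^{1-a}\mathcal{D}(\tau)$. Your appeal to $a<1$ runs the wrong way, since $\epsilon^{1+a}>\epsilon^2$.
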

\begin{proof}
In the following, we only prove \eqref{QGG3} because the other estimates  can be treated similarly.
Let $|\alpha|+|\beta|\leq 2$ and $|\beta|\geq1$, by $G_1=\bar{G}_1+\sqrt{\mu}\gt$, we have
\bma\label{QGG1}
&\epsilon^{a-1}\(\p^\alpha_\beta\Gamma\(\frac{G_1}{\sqrt{\mu}},\frac{G_1}{\sqrt{\mu}}\), \omega^2(\alpha,\beta)h\)\nnm\\
=&\,\epsilon^{a-1}\(\p^\alpha_\beta\Gamma\(\frac{\bar{G}_1}{\sqrt{\mu}},\frac{\bar{G}_1}{\sqrt{\mu}}\), \omega^2(\alpha,\beta)h\)
 +\epsilon^{a-1}\(\p^\alpha_\beta\Gamma\(\frac{\bar{G}_1}{\sqrt{\mu}},\gt\), \omega^2(\alpha,\beta)h\)\nnm\\
&+\epsilon^{a-1}\(\p^\alpha_\beta\Gamma\(\gt,\frac{\bar{G}_1}{\sqrt{\mu}}\), \omega^2(\alpha,\beta)h\)
 +\epsilon^{a-1}\(\p^\alpha_\beta\Gamma\(\gt,\gt\), \omega^2(\alpha,\beta)h\).
\ema
For the first term on the right hand side of \eqref{QGG1}, and $\alpha=0$, we deduce from Lemma \ref{Gamma} and \eqref{barG7-1} that
for any small constant ${\varepsilon_0}>0$,
\bma
&\epsilon^{a-1}\left|\(\p^\alpha_\beta \Gamma\(\frac{\bar{G}_1}{\sqrt{\mu}},\frac{\bar{G}_1}{\sqrt{\mu}}\),
   \omega^2(\alpha,\beta)h\)\right|\nnm\\
\leq &\, C\epsilon^{a-1}\sum_{\bar{\beta}\leq\beta_1\leq\beta}
 \int_{\R}\bigg|\mu^{\varepsilon_0}\p_{\bar{\beta}}\(\frac{\bar{G}_1}{\sqrt{\mu}}\)\bigg|_2
   \bigg|\omega(0,\beta) \p_{\beta-\beta_1}\(\frac{\bar{G}_1}{\sqrt{\mu}}\)\bigg|_{\sigma}
    \big| \omega(\alpha,\beta)h\big|_{\sigma}dy\nnm\\
\leq&\,\lambda\epsilon^{a-1}\|\omega(\alpha,\beta)h\|^2_{\sigma}
		+C\lambda^{-1}\frac{\epsilon^3}{\delta(\frac{\delta}{\sigma}+\epsilon^a\tau)^2}.\nnm
\ema
As $|\alpha|=1$, by Lemma \ref{Gamma} and \eqref{barG7-2}, Lemma \ref{rarefaction4}, we have
\bma
&\epsilon^{a-1}\left|\(\p^\alpha_\beta \Gamma\(\frac{\bar{G}_1}{\sqrt{\mu}},\frac{\bar{G}_1}{\sqrt{\mu}}\),
   \omega^2(\alpha,\beta)h\)\right|\nnm\\
\leq&\,C\epsilon^{a-1}\sum_{\alpha_1\leq\alpha}\sum_{\bar{\beta}\leq\beta_1\leq\beta}
  \int_{\R}\bigg|\mu^{\varepsilon_0}\p^{\alpha_1}_{\bar{\beta}}\(\frac{\bar{G}_1}{\sqrt{\mu}}\)\bigg|_2
     \bigg|\omega(\alpha,\beta) \p^{\alpha-\alpha_1}_{\beta-\beta_1}\(\frac{\bar{G}_1}{\sqrt{\mu}}\)\bigg|_{\sigma}
     \big| \omega(\alpha,\beta)h\big|_{\sigma}dy\nnm\\
\leq&\,\lambda\epsilon^{a-1}\|\omega(\alpha,\beta)h\|^2_{\sigma}
	+C\lambda^{-1}\epsilon^{a-1}\int_{\R}\epsilon^{4-4a}|\dy(\bar{u}_{1},\bar{\theta})|^2
        \big(|\p^\alpha\dy(\bar{u}_{1},\bar{\theta})|+|\dy(\bar{u}_{1},\bar{\theta})| |\p^{\alpha}(\rho,u,\theta)|\big)^2dy\nnm\\
\leq&\,\lambda\epsilon^{a-1}\|\omega(\alpha,\beta)h\|^2_{\sigma}
	+C\lambda^{-1}\frac{\epsilon^{3+2a}}{\delta(\frac{\delta}{\sigma}+\epsilon^a\tau)^4}
    +C\lambda^{-1}\sigma^4\frac{\epsilon^{2+2a}}{\delta^4}\mathcal{D}(\tau).\nnm
\ema
Therefore, for the first term on the right hand side of \eqref{QGG1}, by letting $\frac{\epsilon^a}{\delta}\leq1$, we have
\be\label{QGG1-1}
\epsilon^{a-1}\left|\(\p^\alpha_\beta \Gamma\(\frac{\bar{G}_1}{\sqrt{\mu}},\frac{\bar{G}_1}{\sqrt{\mu}}\),
   \omega^2(\alpha,\beta)h\)\right|
\leq\lambda\epsilon^{a-1}\|\omega(\alpha,\beta)h\|^2_{\sigma}
	+C\lambda^{-1}\frac{\epsilon^3}{\delta(\frac{\delta}{\sigma}+\epsilon^a\tau)^2}
    +C\lambda^{-1}\sigma^2\frac{\epsilon^{2+2a}}{\delta^4}\mathcal{D}(\tau).
\ee
For the second term on the right-hand side of \eqref{QGG1}, by using Lemma \ref{Gamma} again, we have for any small constant ${\varepsilon_0}>0$,
\bma
&\epsilon^{a-1}\(\p^\alpha_\beta\Gamma\(\frac{\bar{G}_1}{\sqrt{\mu}},\gt\), \omega^2(\alpha,\beta)h\)\nnm\\
\leq& C\sum_{\alpha_1\leq\alpha}\sum_{\bar{\beta}\leq\beta_1\leq\beta}
				\epsilon^{a-1}\int_{\R}\left|\mu^{\varepsilon_0}\p^{\alpha_1}_{\bar{\beta}}\(\frac{\bar{G}_1}{\sqrt{\mu}}\)\right|_2
   |\omega(\alpha,\beta) \p^{\alpha-\alpha_1}_{\beta-\beta_1}\gt|_{\sigma}|\omega(\alpha,\beta)h|_{\sigma}dy\nnm\\
\leq&C\lambda^{-1}\sum_{\alpha_1\leq\alpha}\sum_{\bar{\beta}\leq\beta_1\leq\beta}
				\epsilon^{a-1}\int_{\R}\left|\mu^{\varepsilon_0}\p^{\alpha_1}_{\bar{\beta}}\(\frac{\bar{G}_1}{\sqrt{\mu}}\)\right|^2_2
   |\omega(\alpha,\beta) \p^{\alpha-\alpha_1}_{\beta-\beta_1}\gt|^2_{\sigma}dy
 +\lambda\epsilon^{a-1}\|h\|^{2}_{\sigma,\omega}\nnm\\
:=&C\lambda^{-1}J^1_g+\lambda\epsilon^{a-1}\|\omega(\alpha,\beta)h\|^2_{\sigma}.\nnm
\ema
As $|\alpha_1|=0$, by \eqref{barG7-1} and  Lemma \ref{rarefaction4}, we have
\bma
J^1_g=\epsilon^{a-1}\int_{\R}\left|\mu^{\varepsilon_0}\p_{\bar{\beta}}\(\frac{\bar{G}_1}{\sqrt{\mu}}\)\right|^2_2
   |\omega(\alpha,\beta) \p^\alpha_{\beta-\beta_1}\gt|^2_{\sigma}dy
\leq C\sigma^2\frac{\epsilon^2}{\delta^2} \mathcal{D}(\tau).\nnm
\ema
As $|\alpha_1|=|\alpha|=1$, by the Sobolev imbedding inequality, we have
\bma
&\epsilon^{a-1}\int_{\R}\left|\mu^{\varepsilon_0}\p^{\alpha}_{\bar{\beta}}\(\frac{\bar{G}_1}{\sqrt{\mu}}\)\right|^2_2
   |\omega(\alpha,\beta) \p_{\beta-\beta_1}\gt|^2_{\sigma}dy\nnm\\
\leq&C\epsilon^{1-a}\big\|\big|\omega(\alpha,\beta) \p_{\beta-\beta_1}\gt\big|_{\sigma}\big\|^2_{L^\infty_y}
 \int_{\R}\(|\p^{\alpha}\dy(\bar{u}_{1},\bar{\theta})|^2
  +|\dy(\bar{u}_{1},\bar{\theta})|^2|\p^{\alpha}(\rho,u,\theta)|^2\)dy\nnm\\
\leq&C\sigma^2\frac{\epsilon^{2+a}}{\delta^3}\mathcal{D}(\tau)
  +C\sigma^2\frac{\epsilon^2}{\delta^2}\mathcal{E}(\tau)\mathcal{D}(\tau).\nnm
\ema
Thus, for the second term on the right-hand side of \eqref{QGG1}, we have
\bma\label{QGG1-2}
\epsilon^{a-1}\(\p^\alpha_\beta\Gamma\(\frac{\bar{G}_1}{\sqrt{\mu}},\gt\), \omega^2(\alpha,\beta)h\)
\leq \lambda\epsilon^{a-1}\|\omega(\alpha,\beta)h\|^2_{\sigma}
 +C\lambda^{-1}\sigma^2\frac{\epsilon^{2+a}}{\delta^3} \mathcal{D}(\tau)
 +C\lambda^{-1}\sigma^2\frac{\epsilon^2}{\delta^2}\mathcal{E}(\tau)\mathcal{D}(\tau).
\ema
Note that the third term on the right-hand side of \eqref{QGG1} has the same bound as the above.
			
It now remains to handle the last term of \eqref{QGG1}. From Lemma \ref{Gamma}, we have for any small constant ${\varepsilon_0}>0$,
\bma\label{QGG1-3}
&\epsilon^{a-1}|(\p^\alpha_\beta \Gamma(\gt,\gt),\omega^2(\alpha,\beta)h)|\nnm\\
\leq& C\sum_{\alpha_1\leq\alpha}\sum_{\bar{\beta}\leq\beta_1\leq\beta}
		\epsilon^{a-1}\int_{\R}|\mu^{\varepsilon_0}\p^{\alpha_1}_{\bar{\beta}}\gt|_2
 |\omega(\alpha,\beta) \p^{\alpha-\alpha_1}_{\beta-\beta_1}\gt|_{\sigma}|\omega(\alpha,\beta)h|_{\sigma}dy\nnm\\
\leq&\lambda\epsilon^{a-1}\|\omega(\alpha,\beta)h\|^2_{\sigma}+C\lambda^{-1}\mathcal{E}(\tau)\mathcal{D}(\tau).
\ema
In summary, plugging \eqref{QGG1-1}-\eqref{QGG1-3} into \eqref{QGG1} gives \eqref{QGG3}.
\end{proof}
		
\begin{lem}\label{GGF}
Under the same assumptions as in Proposition \ref{priori3}, and let $\frac{\epsilon^a}{\delta}$ small enough and $\omega=\omega(\alpha,0)$, then for any $\alpha$ with $|\alpha|=2$, it holds that
\bma\label{GGF1}					&\epsilon^{a-1}\left|\(\p^{\alpha}\Gamma\(\frac{G_1}{\sqrt{\mu}},\frac{G_1}{\sqrt{\mu}}\),\omega^2\frac{\p^{\alpha}F_1}{\sqrt{\mu}}\)\right|
 +\epsilon^{a-1}\left|\(\p^{\alpha}\Gamma\(\frac{G_2}{\sqrt{\mu}},\frac{G_1}{\sqrt{\mu}}\),\omega^2\frac{\p^{\alpha}F_2}{\sqrt{\mu}}\)\right|\nnm\\
\leq& C\lambda^{-1}\(\sigma^2\frac{\epsilon^2}{\delta^2}+\epsilon^{2a-2}\mathcal{E}(\tau)\)\mathcal{D}(\tau)
  +C\lambda^{-1}\frac{\epsilon^{3+4a}}{\delta^3(\frac{\delta}{\sigma}+\epsilon^a\tau)^4}
  +\lambda\epsilon^{a-1}\|\mu^{-\frac12}\p^{\alpha}(F_1,F_2)\|^2_{\sigma,\omega},
\ema
where $\lambda>0$ is a small constant.
\end{lem}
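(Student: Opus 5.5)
The plan mirrors the proof of Lemma~\ref{QGG} and Lemma~\ref{QMGF}, now placing all derivatives on the collision term at top order $|\alpha|=2$, where the test functions are $\omega^2\mu^{-1/2}\p^\alpha F_i$. I only treat the first term; the second, $\Gamma(G_2/\sqrt{\mu},G_1/\sqrt{\mu})$ with $G_2=\sqrt{\mu}\g$, is handled the same way with $\g$ in place of one copy of $G_1/\sqrt{\mu}$.

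\textbf{Step 1: reduce to a bilinear bound.} Apply Lemma~\ref{Gamma} with $g_3=\omega^2\mu^{-1/2}\p^\alpha F_1$. This gives
\[
C\sum_{\alpha_1\le\alpha}\epsilon^{a-1}\int_\R|\mu^{\varepsilon_0}\p^{\alpha_1}(G_1/\sqrt\mu)|_2\,|\omega\p^{\alpha-\alpha_1}(G_1/\sqrt\mu)|_\sigma\,|\omega\mu^{-1/2}\p^\alpha F_1|_\sigma\,dy .
\]
Cauchy--Schwarz then produces $\lambda\epsilon^{a-1}\|\mu^{-1/2}\p^\alpha F_1\|^2_{\sigma,\omega}$ plus $C\lambda^{-1}\epsilon^{a-1}\int|\cdot|_2^2|\cdot|_\sigma^2\,dy$.

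\textbf{Step 2: split $G_1=\bar G_1+\sqrt\mu\gt$.} This gives four bilinear pieces, and each is estimated according to how the two derivatives are distributed.

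For the $(\bar G_1,\bar G_1)$ piece, use the pointwise bounds \eqref{barG7-1} and \eqref{barG7-2}. Each factor carries $\epsilon^{1-a}|\p_y(\bar u_1,\bar\theta)|$ or its derivatives, so the integrand is $\epsilon^{a-1}\epsilon^{4-4a}|\p_y\bar w|^2(|\p^\alpha\p_y\bar w|+\cdots)^2$. Lemma~\ref{rarefaction4}(2) with the scaling of Lemma~\ref{rarefaction5} ($k=3$ contributes $\delta^{-2}$) yields the explicit source $\epsilon^{3+4a}\delta^{-3}(\frac\delta\sigma+\epsilon^a\tau)^{-4}$. Terms containing $|\p^\alpha(\rho,u,\theta)|^2=|\p^\alpha(\phi,\psi,\zeta)+\p^\alpha\bar w|^2$ are absorbed into $\sigma^2\frac{\epsilon^2}{\delta^2}\mathcal D$ after using $\frac{\epsilon^a}{\delta}$ small.

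For the mixed $(\bar G_1,\gt)$ pieces:
\begin{itemize}
\item When the derivatives fall on $\gt$, bound $\|\,|\mu^{\varepsilon_0}\bar G_1/\sqrt\mu|_2\|^2_{L^\infty}\le C\epsilon^{2-2a}\sigma^2\epsilon^{2a}/\delta^2$. Since $\epsilon^{a-1}\|\p^\alpha\gt\|^2_{\sigma,\omega}\le\epsilon^{2a-2}\mathcal D$ by the weight $\epsilon^{1-a}$ in $\mathcal D$, this gives $\sigma^2\frac{\epsilon^2}{\delta^2}\mathcal D$ up to powers of $\epsilon$ that are harmless.
\item When the derivatives fall on $\bar G_1$, put $\gt$ in $L^\infty_y$ via Sobolev and use \eqref{Gb3}, producing $\sigma^2\epsilon^2\delta^{-2}\mathcal D$ plus $\epsilon^{2a-2}\mathcal E\mathcal D$.
\end{itemize}

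For the $(\gt,\gt)$ piece, place the lower-order factor in $L^\infty_y$ via $\|f\|_{L^\infty}^2\le\|f\|\|\p_yf\|$, so that one factor is charged to $\mathcal E$ and the other to $\mathcal D$. When $|\alpha_1|=2$, the top-order $\p^\alpha\gt$ appears only in the dissipation norm with coefficient $\epsilon^{1-a}$. Converting $\epsilon^{a-1}$ into $\epsilon^{1-a}$ costs $\epsilon^{2a-2}$, which is exactly the stated factor $\epsilon^{2a-2}\mathcal E\mathcal D$.

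\textbf{Main obstacle.} The hardest step is the bookkeeping of the top-order factor $\p^\alpha(G_1/\sqrt\mu)$, because $\mathcal E$ controls it only with weight $\epsilon^{2-2a}$ and $\mathcal D$ only with $\epsilon^{1-a}$. Whenever both derivatives sit on one factor, that factor must go into $\mathcal D$ and the other into $L^\infty_y\subset\mathcal E$, and the loss $\epsilon^{2a-2}$ has to be accepted. Likewise, in the $\bar G_1$ top-order terms I must track the decay $(\frac\delta\sigma+\epsilon^a\tau)^{-2}$ from each of the two factors to obtain the fourth-power source term.
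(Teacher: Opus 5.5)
Your plan follows the paper's proof essentially line by line. You split $G_1=\bar G_1+\sqrt{\mu}\,\mathbf{g}_1$, apply Lemma~\ref{Gamma} with Cauchy--Schwarz to peel off $\lambda\epsilon^{a-1}\|\mu^{-1/2}\partial^\alpha F_1\|^2_{\sigma,\omega}$, and distribute the two derivatives over the four bilinear pieces. The $(\bar G_1,\bar G_1)$ piece, the mixed pieces with at least one derivative on $\bar G_1$, and the $(\mathbf g_1,\mathbf g_1)$ piece work as you describe. (A labelling note for the $(\mathbf g_1,\mathbf g_1)$ piece: the case with $\partial^\alpha\mathbf g_1$ in the $\sigma$-norm is $|\alpha_1|=0$ in the convention of Lemma~\ref{Gamma}. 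For $|\alpha_1|=2$ one instead charges $\|\mu^{\varepsilon_0}\partial^\alpha\mathbf g_1\|^2$ to $\mathcal E$. Both cases give $\epsilon^{2a-2}\mathcal E\mathcal D$.)

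The step that fails is the mixed piece $\Gamma(\bar G_1/\sqrt\mu,\partial^\alpha\mathbf g_1)$ with both derivatives on $\mathbf g_1$. There $\partial^\alpha\mathbf g_1$, $|\alpha|=2$, sits in the $\sigma$-norm. $\mathcal E$ does not control that norm at all, and $\mathcal D$ controls it only with weight $\epsilon^{1-a}$. Hence $\epsilon^{a-1}\|\partial^\alpha\mathbf g_1\|^2_{\sigma,\omega}\le\epsilon^{2a-2}\mathcal D$, and your bound is $C\lambda^{-1}\sigma^2\frac{\epsilon^2}{\delta^2}\epsilon^{2a-2}\mathcal D=C\lambda^{-1}\sigma^2\frac{\epsilon^{2a}}{\delta^2}\mathcal D$. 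Since $a<1$, the factor $\epsilon^{2a-2}$ is large, not harmless. Your bound is therefore strictly weaker than the $\sigma^2\frac{\epsilon^2}{\delta^2}\mathcal D$ claimed in \eqref{GGF1}, and the analogous piece with $\partial^\alpha\mathbf g_2$ in the second term loses the same factor.

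The paper's proof has the same defect. Its bound $K^2_g\le C\sigma^2\frac{\epsilon^2}{\delta^2}\mathcal D$ for $|\alpha_1|=0$ tacitly treats $\epsilon^{a-1}\|\partial^\alpha\mathbf g_1\|^2_{\sigma,\omega}$ as part of $\mathcal D$. What either argument actually proves is \eqref{GGF1} with $\sigma^2\frac{\epsilon^2}{\delta^2}$ replaced by $\sigma^2\frac{\epsilon^{2a}}{\delta^2}$, i.e.\ with $\epsilon^{2a-2}$ multiplying the whole bracket, as in Lemma~\ref{QMGF}.

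That weaker form suffices. Lemma~\ref{GGF} is used only after multiplication by $\epsilon^{2-2a}$ in Lemmas~\ref{Fa} and~\ref{Fal}, where it produces $\sigma^2\frac{\epsilon^2}{\delta^2}\mathcal D\le\sigma^2\frac{\epsilon^{2a}}{\delta^2}\mathcal D$. You should state and prove that version explicitly rather than dismissing the loss as harmless.
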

\begin{proof}
We  only consider the first term on the left hand of \eqref{GGF1} because the other terms can be treated  similarly.
Since $G_1=\bar{G}_1+\sqrt{\mu}\gt$, the first term on the left-hand side of \eqref{GGF1} is equivalent to
\begin{equation}\label{GGF2}				
\epsilon^{a-1}\left|\(\p^\alpha\Gamma\(\frac{\bar{G}_1}{\sqrt{\mu}},\frac{\bar{G}_1}{\sqrt{\mu}}\)
  +\p^\alpha\Gamma\(\frac{\bar{G}_1}{\sqrt{\mu}},\gt\)
  +\p^\alpha\Gamma\(\gt,\frac{\bar{G}_1}{\sqrt{\mu}}\)
  +\p^\alpha\Gamma\(\gt,\gt\),\omega^2\frac{\p^{\alpha}F_1}{\sqrt{\mu}}\)\right|.
\end{equation}
For the first term of \eqref{GGF2}, by using  Lemma \ref{Gamma}, we arrive at for any small constant ${\varepsilon_0}>0$,
\allowdisplaybreaks\begin{align*}
&\epsilon^{a-1}\left|\(\p^{\alpha}\Gamma\(\frac{\bar{G}_1}{\sqrt{\mu}},\frac{\bar{G}_1}{\sqrt{\mu}}\),\omega^2\frac{\p^{\alpha}F_1}{\sqrt{\mu}}\)\right|\\
\leq& C\epsilon^{a-1} \sum_{\alpha_1 \leq \alpha}
  \int_{\R}\bigg|\mu^{\varepsilon_0}\p^{\alpha_1}\(\frac{\bar{G}_1}{\sqrt{\mu}}\)\bigg|_2
	 \bigg|\omega\p^{\alpha-\alpha_1}\(\frac{\bar{G}_1}{\sqrt{\mu}}\)\bigg|_\sigma
     \bigg|\omega \frac{\p^\alpha F_1}{\sqrt{\mu}}\bigg|_\sigma dy\nnm\\
\leq&C\lambda^{-1}\epsilon^{a-1}\sum_{\alpha_1\leq\alpha}\int_{\R}
     \bigg|\mu^{\varepsilon_0}\p^{\alpha_1}\(\frac{\bar{G}_1}{\sqrt{\mu}}\)\bigg|^2_{2}
	 \bigg|\omega\p^{\alpha-\alpha_1}\(\frac{\bar{G}_1}{\sqrt{\mu}}\)\bigg|_{\sigma}^2dy
  +\lambda\epsilon^{a-1}\|\mu^{-\frac12}\p^{\alpha}F_1\|^2_{\sigma,\omega}\nnm\\
:=&C\lambda^{-1}\sum_{\alpha_1 \leq \alpha}K^1_g
  +\lambda\epsilon^{a-1}\|\mu^{-\frac12}\p^{\alpha}F_1\|^2_{\sigma,\omega}.
\nnm
\end{align*}
For $|\alpha_1|=0$ and $|\alpha_1|=2$, by \eqref{barG7-1}, \eqref{Gb3} and Lemma \ref{rarefaction4}, we have
\bma
K^1_g
=&\epsilon^{a-1}
 \int_{\R}\bigg|\mu^{\varepsilon_0}\(\frac{\bar{G}_1}{\sqrt{\mu}}\)\bigg|^2_2\bigg|\omega\p^\alpha\(\frac{\bar{G}_1}{\sqrt{\mu}}\)\bigg|_{\sigma}^2
  +\bigg|\mu^{\varepsilon_0}\p^\alpha\(\frac{\bar{G}_1}{\sqrt{\mu}}\)\bigg|^2_2\bigg|\omega\(\frac{\bar{G}_1}{\sqrt{\mu}}\)\bigg|_{\sigma}^2dy\nnm\\
\leq& \epsilon^{1-a}\|\dy(\ub,\thetab)\|^2_{L^\infty_y}
 \(\left\|\omega \frac{\p^\alpha\bar{G}_1}{\sqrt{\mu}}\right\|_{\sigma}^2+\left\|\frac{\p^\alpha\bar{G}_1}{\sqrt{\mu}}\right\|^2\)\nnm\\
\leq& C\frac{\epsilon^{3+4a}}{\delta^3(\frac{\delta}{\sigma}+\epsilon^a\tau)^4}
  +\sigma^4\frac{\epsilon^{2+2a}}{\delta^4}\mathcal{D}(\tau).\nnm
\ema
For $|\alpha_1|=1$, by \eqref{barG7-2},  Lemma \ref{rarefaction4} and the Sobolev imbedding inequality, we have
\bma
K^1_g
&=\epsilon^{a-1}\int_{\R}\bigg|\mu^{\varepsilon_0}\p^{\alpha_1}\(\frac{\bar{G}_1}{\sqrt{\mu}}\)\bigg|^2_{2}
				\bigg|\omega\p^{\alpha-\alpha_1}\(\frac{\bar{G}_1}{\sqrt{\mu}}\)\bigg|_{\sigma}^2\,dy\nnm\\
&\leq C\epsilon^{3-3a}\sum_{|\alpha_1|=1}\int_{\R}\(|\p^{\alpha_1}\dy(\bar{u}_{1},\bar{\theta})|
       +|\dy(\bar{u}_{1},\bar{\theta})||\p^{\alpha_1}(\rho,u,\theta)|\)^4dy\nnm\\
&\leq C\frac{\epsilon^{3+4a}}{\delta^3(\frac{\delta}{\sigma}+\epsilon^a\tau)^4}
  +C\sigma^4\frac{\epsilon^{2+2a}}{\delta^4}\mathcal{D}(\tau).\nnm
\ema
Then we have
\bma\label{GGF2-1}
&\epsilon^{a-1}\left|\(\p^{\alpha}\Gamma\(\frac{\bar{G}_1}{\sqrt{\mu}},\frac{\bar{G}_1}{\sqrt{\mu}}\),\omega^2\frac{\p^{\alpha}F_1}{\sqrt{\mu}}\)\right|\nnm\\
\leq&\, C\lambda^{-1}\frac{\epsilon^{3+4a}}{\delta^3(\frac{\delta}{\sigma}+\epsilon^a\tau)^4}
 +C\lambda^{-1}\sigma^2\frac{\epsilon^{2 }}{\delta^2}\mathcal{D}(\tau)
 +\lambda\epsilon^{a-1}\|\mu^{-\frac12}\p^{\alpha}F_1\|^2_{\sigma,\omega},
\ema
where we have used $\frac{\epsilon^a}{\delta}\leq1$. Next, for the second term of \eqref{GGF2}, by using  Lemma \ref{Gamma}, we have
\begin{align*}
&\epsilon^{a-1}\left|\(\p^{\alpha}\Gamma\(\frac{\bar{G}_1}{\sqrt{\mu}},\gt\),\omega^2\frac{\p^{\alpha}F_1}{\sqrt{\mu}}\)\right|\\
\leq& C\epsilon^{a-1} \sum_{\alpha_1 \leq \alpha}
  \int_{\R}\bigg|\mu^{\varepsilon_0}\p^{\alpha_1}\(\frac{\bar{G}_1}{\sqrt{\mu}}\)\bigg|_2
	 \big|\omega\p^{\alpha-\alpha_1}\gt\big|_\sigma
     \bigg|\omega \frac{\p^\alpha F_1}{\sqrt{\mu}}\bigg|_\sigma dy\nnm\\
\leq&C\lambda^{-1}\epsilon^{a-1}\sum_{\alpha_1\leq\alpha}
   \int_{\R}\bigg|\mu^{\varepsilon_0}\p^{\alpha_1}\(\frac{\bar{G}_1}{\sqrt{\mu}}\)\bigg|^2_{2}
	 \big|\omega\p^{\alpha-\alpha_1}\gt\big|_{\sigma}^2dy
  +\lambda\epsilon^{a-1}\|\mu^{-\frac12}\p^{\alpha}F_1\|^2_{\sigma,\omega}\nnm\\
:=&C\lambda^{-1}\sum_{\alpha_1 \leq \alpha}K^2_g
  +\lambda\epsilon^{a-1}\|\mu^{-\frac12}\p^{\alpha}F_1\|^2_{\sigma,\omega}.\nnm
\end{align*}
For $|\alpha_1|=0$, by \eqref{barG7-1} and Lemma \ref{rarefaction4}, we have
\be
K^2_g
=\epsilon^{a-1}\int_{\R}\bigg|\mu^{\varepsilon_0}\(\frac{\bar{G}_1}{\sqrt{\mu}}\)\bigg|^2_2 \big|\omega\p^{\alpha}\gt\big|_{\sigma}^2dy
\leq C\sigma^2\frac{\epsilon^2}{\delta^2}\mathcal{D}(\tau).\nnm
\ee
For $|\alpha_{1}|=1$, by using $\omega(\alpha,0)\leq \omega(\alpha-\alpha_1,0)$, \eqref{barG7-2} and  Lemma \ref{rarefaction4}, it holds that
\allowdisplaybreaks\begin{align*}
K^2_g
&\leq C\epsilon^{a-1}\bigg\|\bigg|\mu^{\varepsilon_0}\p^{\alpha_1}\(\frac{\bar{G}_1}{\sqrt{\mu}}\)\bigg|_2\bigg\|^2_{L^\infty_y}
		 \big\|\omega\p^{\alpha-\alpha_{1}}\gt \big\|^2_{\sigma}\nnm\\
&\leq C\epsilon^{2-2a}\Big\|\(|\p^{\alpha_1}\dy(\bar{u}_{1},\bar{\theta})|
   +|\dy(\bar{u}_{1},\bar{\theta})||\p^{\alpha_1}(\rho,u,\theta)|\)^2\Big\|_{L^\infty_y}\mathcal{D}(\tau)\nnm\\
&\leq C\sigma^2\frac{\epsilon^{2+2a}}{\delta^4}\mathcal{D}(\tau)
	+C\sigma^2\frac{\epsilon^{1+a}}{\delta^2}\mathcal{E}(\tau)\mathcal{D}(\tau).
\end{align*}
For $|\alpha_{1}|=2$, by using $\omega(\alpha,0)\leq \omega(0,0)$ and \eqref{Gbsd}, we have
\allowdisplaybreaks\begin{align*}
K^2_g&=\epsilon^{a-1}\int_{\R}\bigg|\mu^{\varepsilon_0}\p^{\alpha}\(\frac{\bar{G}_1}{\sqrt{\mu}}\)\bigg|^2_{2}
	 \big|\omega\gt\big|_{\sigma}^2dy
  \leq C\epsilon^{a-1}\bigg\|\mu^{\varepsilon_0}\p^{\alpha}\(\frac{\bar{G}_1}{\sqrt{\mu}}\)\bigg\|^2
     \big\||\omega(0,0)\gt|_{\sigma}\big\|^2_{L^\infty_y}\nnm\\
&\leq C\(\frac{\epsilon^{2+3a}}{\delta^3(\frac{\delta}{\sigma}+\epsilon^a\tau)^2}
  +\frac{\epsilon^{2+2a}}{\delta^2(\frac{\delta}{\sigma}+\epsilon^a\tau)^2}\sum_{|\alpha_1|=1}\|\p^{\alpha_1}(\phi,\psi,\zeta)\|^2_{L^2_y}
  +\frac{\epsilon^2}{(\frac{\delta}{\sigma}+\epsilon^a\tau)^2} \|\p^{\alpha}(\phi,\psi,\zeta)\|^2_{L^2_y}\)\mathcal{D}(\tau)\nnm\\
&\leq C\sigma^2\frac{\epsilon^{2+3a}}{\delta^5}\mathcal{D}(\tau)
  +C\sigma^2\frac{\epsilon^{2a}}{\delta^2}\mathcal{E}(\tau)\mathcal{D}(\tau).
\end{align*}
Combining the above estimates yields
\bma\label{GGF2-2}
&\epsilon^{a-1}\bigg|\(\p^{\alpha}\Gamma\(\frac{\bar{G}_1}{\sqrt{\mu}},\gt\),\omega^2\frac{\p^{\alpha}F_1}{\sqrt{\mu}}\)\bigg|\nnm\\
\leq&\, C\lambda^{-1}\sigma^2\frac{\epsilon^2}{\delta^2}\mathcal{D}(\tau)
  +C\lambda^{-1}\sigma^2 \mathcal{E}(\tau)\mathcal{D}(\tau)
  +\lambda\epsilon^{a-1}\|\mu^{-\frac12}\p^{\alpha}F_1\|^2_{\sigma,\omega}.
\ema
where we have used $\frac{\epsilon^a}{\delta}\leq1$.
Similar estimates also hold for the third term on the left-hand side of \eqref{GGF2}. For the last term in \eqref{GGF2}, by  Lemma \ref{Gamma}, we have for any small constant ${\varepsilon_0}>0$,
\bma
&\epsilon^{a-1}\bigg|\(\p^{\alpha}\Gamma(\gt,\gt),\omega^2\frac{\p^{\alpha}F_1}{\sqrt{\mu}}\)\bigg|\nnm\\
\leq&C\lambda^{-1}\epsilon^{a-1}\sum_{\alpha_1\leq\alpha}
   \int_{\R}|\mu^{\varepsilon_0}\p^{\alpha_1}\gt|^2_2
	 |\omega\p^{\alpha-\alpha_1}\gt|_{\sigma}^2dy
  +\lambda\epsilon^{a-1}\|\mu^{-\frac12}\p^{\alpha}F_1\|^2_{\sigma,\omega}\nnm\\
:=&C\lambda^{-1}\sum_{\alpha_1 \leq \alpha}K^3_g
  +\lambda\epsilon^{a-1}\|\mu^{-\frac12}\p^{\alpha}F_1\|^2_{\sigma,\omega}.\nnm
\ema
For $|\alpha_{1}|=0$ and $|\alpha_1|=2$, by using the Sobolev imbedding, we have
\allowdisplaybreaks\begin{align*}
K^3_g &\leq C\epsilon^{a-1}\||\mu^{\varepsilon_0}\gt|_2\|^2_{L_y^\infty}\|\p^{\alpha}\gt\|^2_{\sigma,\omega}+C\epsilon^{a-1}\|\mu^{\varepsilon_0}\p^{\alpha}\gt\|^2
				\||\gt|_{\sigma,\omega}\|^2_{L^\infty_y}\nnm\\
 &\leq C\epsilon^{a-1}\|\mu^{\varepsilon_0}\gt\| \|\mu^{\varepsilon_0}\dy\gt\|\|\p^{\alpha}\gt\|^2_{\sigma,\omega}+C\epsilon^{a-1}\|\mu^{\varepsilon_0}\p^{\alpha}\gt\|^2\|\gt\|_{\sigma,\omega}\|\dy\gt\|_{\sigma,\omega}\nnm\\
&\leq C\epsilon^{2a-2}\mathcal{E}(\tau)\mathcal{D}(\tau),
\end{align*}
and for $|\alpha_{1}|=1$,
\allowdisplaybreaks\begin{align*}
K^3_g
 &\leq C\epsilon^{a-1}\sum_{|\alpha_1|=1}\||\mu^{\varepsilon_0}\p^{\alpha_1}\gt|_2\|^2_{L_y^\infty}\|\p^{\alpha-\alpha_1}\gt\|^2_{\sigma,\omega}\nnm\\
 &\leq C\epsilon^{a-1}\sum_{|\alpha_1|=1}\|\mu^{\varepsilon_0}\p^{\alpha_1}\gt\| \|\mu^{\varepsilon_0}\p^{\alpha_1}\dy\gt\|  \|\p^{\alpha-\alpha_1}\gt\|^2_{\sigma,\omega}\nnm\\
&\leq C\epsilon^{a-1}\mathcal{E}(\tau)\mathcal{D}(\tau).
\end{align*}

From the above estimates on $K^3_g$, we obtain	
\begin{equation}\label{GGF2-3}
\epsilon^{a-1}\bigg|\(\p^{\alpha}\Gamma(\gt,\gt),\omega^2\frac{\p^{\alpha}F_1}{\sqrt{\mu}}\)\bigg|
\leq C\lambda^{-1}\epsilon^{2a-2}\mathcal{E}(\tau)\mathcal{D}(\tau)
  +\lambda\epsilon^{a-1}\|\mu^{-\frac12}\p^{\alpha}F_1\|^2_{\sigma,\omega}.
\end{equation}
Hence, plugging all the above estimates \eqref{GGF2-1}-\eqref{GGF2-3} into \eqref{GGF2} gives
\bma					
&\epsilon^{a-1}\left|\(\p^\alpha\Gamma\(\frac{G_1}{\sqrt{\mu}},\frac{G_1}{\sqrt{\mu}}\),\omega^2\frac{\p^{\alpha}F_1}{\sqrt{\mu}}\)\right|\nnm\\
\leq& C\lambda^{-1}\Big(\sigma^2\frac{\epsilon^2}{\delta^2}+\epsilon^{2a-2}\mathcal{E}(\tau)\Big)\mathcal{D}(\tau)
+C\lambda^{-1}\frac{\epsilon^{3+4a}}{\delta^3(\frac{\delta}{\sigma}+\epsilon^a\tau)^4}
  +\lambda\epsilon^{a-1}\|\mu^{-\frac12}\p^{\alpha}F_1\|^2_{\sigma,\omega},
\ema
which gives  estimate \eqref{GGF1} and  then  the Lemma \ref{GGF} follows.
\end{proof}

\medskip
\noindent {\bf Acknowledgements:} Mingying Zhong's research is  supported by the special foundation for Guangxi Ba Gui Scholars,  the National Natural Science Foundation of China  grants No. 12171104 and Innovation Project of Guangxi Graduate Education YCBZ2024004. Tong Yang's research is supported by a fellowship award from the Research Grants Council of the Hong Kong Special Administrative Region, China (Proiect no. SRFS2021-1S01). Tong Yang would also like to thank the Kuok foundation for its generous support and the Center for Nonlinear Analysis in The Hong Kong Polytechnic University.

\end{document}